\documentclass[aop,preprint]{imsart}

\RequirePackage{amsthm,amsmath,amsfonts,amssymb,amstext}
\RequirePackage{mathtools}
\RequirePackage{xcolor, tikz, comment, cite, enumitem, colortbl,rotating}
\RequirePackage[authoryear]{natbib}
\RequirePackage[colorlinks,citecolor=blue,urlcolor=blue]{hyperref}
\RequirePackage{graphicx}
\RequirePackage[utf8]{inputenc}

\startlocaldefs

\theoremstyle{plain} 
\newtheorem{theorem}{Theorem}[section]
\newtheorem{lemma}[theorem]{Lemma}
\newtheorem{proposition}[theorem]{Proposition}
\newtheorem{corollary}[theorem]{Corollary}

\theoremstyle{remark}
\newtheorem{definition}[theorem]{Definition}
\newtheorem{example}[theorem]{Example}
\newtheorem{remark}[theorem]{Remark}

\newcommand{\p}{\mathbb{P}}
\newcommand{\PP}{\mathbb{P}}

\newcommand{\e}{\mathbb{E}}

\newcommand{\D}{\mathrm{d}}

\renewcommand{\a}{{\alpha}}

\newcommand{\eps}{{\varepsilon}}

\renewcommand{\L}{\p}
\newcommand{\norm}[1]{{\|#1\|}}
\newcommand{\br}[2]{\left\langle #1,#2\right\rangle}
\renewcommand{\i}{{\rm i}}
\newcommand{\indepp}{\perp \!\!\! \perp}
\newcommand{\indep}{\perp}

\newcommand{\indepABC}{A \indep B \mid C\;[\Lambda]}
\newcommand{\indepABCwrtD}{A \indep B \mid C\;[\Lambda_D]}
\newcommand{\indepAB}{A \indep B\;[\Lambda]}
\newcommand{\indepABnonpar}{A \indep B\;[\Lambda_{A\cup B}]}
\newcommand{\indepABzero}{A \indep B\;[\Lambda^0_{A\cup B}]}
\newcommand{\cE}{\mathcal{E}}
\newcommand{\cH}{\mathcal{H}}
\newcommand{\cA}{\mathcal{A}}
\newcommand{\cR}{\mathcal{R}}
\newcommand{\cS}{\mathcal{S}}
\newcommand{\cZ}{\mathcal{Z}}
\newcommand{\cC}{\mathcal{C}}
\newcommand{\cL}{\mathcal{L}}
\newcommand{\cB}{\mathcal{B}}

\newcommand{\olambda}{\overline\lambda}
\newcommand{\cG}{\mathcal{G}}

\DeclareMathOperator{\pa}{pa}

\newcommand{\ind}[1]{\mbox{\rm\large 1}_{\{#1\}}}

\newcommand{\R}{\mathbb{R}}
\newcommand{\RR}{\mathbb{R}}

\endlocaldefs

\begin{document}

\begin{frontmatter}

\title{Graphical models for infinite measures\\ with applications to extremes}
\runtitle{Graphical models for infinite measures}

\begin{aug}

\author[A]{\fnms{Sebastian}~\snm{Engelke}\ead[label=e1]{sebastian.engelke@unige.ch}},
\author[B]{\fnms{Jevgenijs}~\snm{Ivanovs}\ead[label=e2]{jevgenijs.ivanovs@math.au.dk}}
\and
\author[C]{\fnms{Kirstin}~\snm{Strokorb}\ead[label=e3]{strokorbk@cardiff.ac.uk}}

\address[A]{Research Center for Statistics, University of Geneva, 1205 Geneva, Switzerland\printead[presep={,\ }]{e1}}
\address[B]{Department of Mathematics, Aarhus University, DK-8000 Aarhus C, Denmark\printead[presep={,\ }]{e2}}
\address[C]{School of Mathematics, Cardiff University, Cardiff CF24 4AG, United Kingdom\printead[presep={,\ }]{e3}}

\end{aug}

\begin{abstract}

 Conditional independence and graphical models are well studied for probability distributions on product spaces. We propose a new notion of conditional independence for any measure $\Lambda$ on the punctured Euclidean space $\mathbb R^d\setminus \{0\}$ that explodes at the origin. The importance of such measures stems from their connection to infinitely divisible and max-infinitely divisible distributions, where they appear as L\'evy measures and exponent measures, respectively. We characterize independence and conditional independence for $\Lambda$ in various ways through kernels and factorization of a modified density, including a Hammersley--Clifford type theorem for undirected graphical models. As opposed to the classical conditional independence, our notion is intimately connected to the support of the measure $\Lambda$. Our general theory unifies and extends recent approaches to graphical modeling in the fields of extreme value analysis and L\'evy processes. Our results for the corresponding undirected and directed graphical models lay the foundation for new statistical methodology in these areas.
 
\end{abstract}

\begin{keyword}[class=MSC]
\kwd[Primary ]{62H22}
\kwd{60E07}
\kwd[; secondary ]{60G70}
\kwd{60G51}
\end{keyword}



\begin{keyword}
\kwd{conditional independence}
\kwd{exponent measure}
\kwd{extremes}
\kwd{infinite divisibility}
\kwd{L\'evy measure}
\kwd{stable distribution}
\end{keyword}

\end{frontmatter}

\tableofcontents

\section{Introduction}

Conditional independence is a central concept in probability theory that enables the definition of graphical models. It is a cornerstone of modern statistical methodology for understanding latent structures in the data, construction of parsimonious models in high dimensions and estimation of causal relationships \citep{daw1979, maathuis2018handbook, Lauritzen}.
The classical notion of probabilistic conditional independence is defined for random vectors through factorizations of conditional probabilities. In fact, following an axiomatic approach, conditional independence can be seen as a more general notion of irrelevance with applications beyond random vectors \citep[e.g.,][Chapter 3]{Lauritzen}.

In this paper we define a new notion of conditional independence and graphical models for the class of Borel measures $\Lambda$ on the punctured $d$-dimensional Euclidean space $\cE = \R^d\setminus\{0\}$, where $\Lambda$ is finite on all Borel sets that are bounded away from the origin.
For disjoint subsets $A,B,C$ of the index set $V = \{1,\dots, d\}$, we denote this conditional independence of $A$ and $B$ given $C$ for the measure $\Lambda$ by
\begin{align}\label{CI_intro}
  \indepABC.
\end{align}
The essence of our definition in Section~\ref{sec:def} is that we require classical conditional independence on the normalized restrictions of $\Lambda$ to any charged product form set in $\cE$ that is bounded away from the origin.


Measures $\Lambda$ of the above kind are indeed fundamental in various fields of probability and statistics. Notably, they arise as L\'evy measures of infinitely divisible probability laws with respect to semi-group operations on $\RR^d$ for which the origin is a neutral element \citep{ber1984}. We focus here on the following important examples. A random vector $X$ is (sum-)infinitely divisible or max-infinitely divisible, if for every $n\in\mathbb N$ it has the stochastic representation
\begin{align}\label{id_distr}
  X\stackrel{d}{=}\sum_{i=1}^n X^{(n,i)},\quad \text{or}\quad X\stackrel{d}{=}\bigvee_{i=1}^n X^{(n,i)},\end{align}
respectively, for some independent, identically distributed vectors $X^{(n,1)},\ldots,X^{(n,n)}$, where the sums and maxima are taken in the component-wise sense. In the context of maxima, the term  exponent measure is often preferred over L\'evy measure for $\Lambda$ and we adopt the same convention here \citep{res2008}. Importantly, infinitely divisible distributions appear as limit laws of triangular arrays; for sums this observation dates back to \citet{skorohod57} and \citet[Part XVII.11]{feller2}, for instance, and we refer to \citet{balres77} and \citet{dHres77} for maxima. 
This explains the ubiquity of measures $\Lambda$ and their appearance in the vast literature on related point processes, random sets or large deviation principles \citep[e.g.,][]{dombryribatet2015,hultlind07,dmz2008}. 
In most situations, including typical cases of sum- and max-infinitely divisible distributions, the measure $\Lambda$ explodes at the origin and therefore has infinite mass on $\cE$. Therefore, classical probabilistic conditional independence is not meaningful for such measures. Our new notion in~\eqref{CI_intro} resolves this issue in an arguably natural way.

A first fundamental result for our new conditional independence is that, under a mild explosiveness assumption, the four axioms of a semi-graphoid are satisfied \citep[][Chapter 3]{Lauritzen}. This directly implies various useful properties of the corresponding  graphical models.
Moreover, we provide several alternative characterizations. If $\Lambda$ possesses a density~$\lambda$ with respect to some product measure, then the conditional independence~\eqref{CI_intro} is equivalent to the factorization of (a modified version of) this density. 
Under the mild explosiveness condition,~\eqref{CI_intro} induces certain constraints on the support of the measure $\Lambda$. In particular, the independence $\indepAB$ is completely characterized by the condition that $\Lambda$ can have mass only on certain sub-faces of $\cE$. This is in sharp contrast to usual probabilistic independence, which would suggest a factorization of the measure into the product of marginals. 
Graphical models for measures $\Lambda$ as above can be defined readily by means of suitable Markov properties. We reveal several fundamental links between properties of the measure $\Lambda$ and the graph structure. For instance, for an undirected, decomposable graph we prove a Hammersley--Clifford type theorem that states that the global Markov property for $\Lambda$ is equivalent to a factorization of the modified density of $\Lambda$ on the cliques of the graph. 

The general theory of conditional independence and graphical models for $\Lambda$ measures translates into new insights in the respective areas of probability theory, where such measures arise naturally. 
In particular, many of our findings in this manuscript have been motivated by recent developments and open questions from multivariate extreme value theory, which studies the dependence structure in the distributional tail of a random vector. The most fundamental limiting models that appear in this field are max-stable distributions, which are max-infinitely divisible distributions with homogeneous exponent measures $\Lambda$, that is,
\begin{align}\label{def:homog}
\Lambda(t A)=t^{-\a}\Lambda(A),\qquad t>0,
\end{align}
for some $\alpha>0$ and all Borel sets $A\subset \cE$ bounded away from the origin.
While classical theory in this field intensively studied stochastic processes with spatial domains \citep[e.g.,][]{deh1984, kab2009}, recent research concentrates on finding sparse structures in multivariate extreme value models; see~\citet{eng2021} for an overview. One line of work develops recursive max-linear models on directed acyclic graphs, which correspond to a specific class of spectrally discrete max-stable distributions~\citep{gis2018, klu2019}. However, if a max-stable distribution admits a positive density, it cannot satisfy non-trivial (classical) conditional independence \citep{papastathopoulos2016conditional}. Instead,  \citet{eng2018} define a notion of extremal conditional independence and undirected graphical models on the level of multivariate Pareto distributions \citep{roo2006}, which can also be characterized by the homogeneous exponent measure $\Lambda$; see Section~\ref{sec:CIextremes} for details.
In the case of tree graphs, these models arise as extremal limits of regularly varying Markov trees \citep{seg2019}.

These two active research directions have so far co-existed without a clear connection.
Since max-linear models and multivariate Pareto distributions are both described by
the exponent measure $\Lambda$, it seems natural that this object also encodes the graphical properties. Indeed, it turns out that our conditional independence notion in~\eqref{CI_intro} unifies and extends both approaches. We show that for recursive max-linear models, any classical conditional independence implied by its directed acyclic graph is also present on the level of $\Lambda$ in the sense of~\eqref{CI_intro}.
Concerning the extremal conditional independence in~\citet{eng2018}, we note that it is originally only defined if $\Lambda$ has a Lebesgue density on $\cE$ and no mass on sub-faces. As a consequence, no extremal independence can exist and their extremal graphical models are always connected. Several discussion contributions\footnote{More precisely, we refer to the contributions of J.~Wadsworth, I.~Papastathopoulos, K.~Strokorb, C.~Darne and A.~C.~Davison, L.~Mhalla, P.~Wan, Y.~Zhang and L.~Wang in the discussion part of \citet{eng2018} (here listed in order of appearance therein).} of their paper raised these points as limitations for statistical modeling.
Our conditional independence~\eqref{CI_intro} extends the notion of~\citet{eng2018} to arbitrary exponent measures $\Lambda$ and thereby overcomes several of these restrictions.
\begin{enumerate}[label=(\roman*)]
\item
  It allows for disconnected extremal graphs with mass on sub-faces and, as an illustration, we provide an extension of H\"usler--Reiss tree models \citep{eng2020,ase2021} to forests with richer dependence structures. Since no densities are required, our notion also includes max-linear models. 
\item
  It extends to max-infinitely divisible distributions that can exhibit asymptotic independence \citep{hus2021}, and therefore opens the door to graphical modeling in this large field of research, in which many approaches go back to the conditional approach for extreme value modelling by \citet{HeffernanTawn2004}.
\item
  The support constraints from the conditional independence of $\Lambda$ connect our theory to sparsity notions for tails of multivariate distributions and the field of concomitant extremes \citep[e.g.,][]{chi2017}.  
\end{enumerate}
We discuss these extensions in Section~\ref{sec:extremes} and provide some basic examples that motivate the potential for the construction of new models. A more in-depth study of these new modelling routes and the
development of corresponding statistical methodology is beyond the scope of this paper.

Another promising application of our theory lies in the field of infinitely divisible distributions and L\'evy processes \citep{iva2020}, where $\Lambda$ plays the role of the  L\'evy measure. Much less literature on graphical models and sparsity exists in this area. In the case of $\alpha$-stable distributions, that is, if the L\'evy measure is homogeneous as in~\eqref{def:homog}, \citet{stableModels} introduce recursive stable models that are similar to \citet{gis2018}. Again, the classical conditional independence is reflected in the corresponding (degenerate) L\'evy measure $\Lambda$ through our conditional independence.
For more general L\'evy measures, we can define graphical models with respect to our $\Lambda$-based notion of conditional independence. We provide an outlook on how this translates into probabilistic properties of the corresponding L\'evy processes in the concluding Section~\ref{sec:outlook}; the details of this direction are part of future research.

\section{Preliminaries}

\subsection{Conditional independence}\label{pre_CI}
Consider a probability space $(\Omega,\cA,\p)$ and let $X,Y,Z$ be random elements with values in some Borel spaces.
\emph{Conditional independence} of $X$ and $Y$ given~$Z$ with respect to $\p$, denoted by \[X \indepp Y \mid Z,\] 
is defined as the factorization of conditional probabilities
\begin{align*}
\p(X\in E_X,Y\in E_Y|Z)=\p(X\in E_X|Z)\p(Y\in E_Y|Z),\qquad \text{a.s.},
\end{align*}
for all $E_X,E_Y$ in the respective $\sigma$-algebras; we refer to~\citet[Chapter\ 6]{kallenberg} for the basic theory and further properties.
In terms of the regular conditional probability $\p(\cdot|Z=z)$ it reads
\[\p(X\in E_X,Y\in E_Y|Z=z)=\p(X\in E_X|Z=z)\p(Y\in E_Y|Z=z),\quad \text{ for }\p_Z\text{-almost all }z, \]
where $\p_Z$ denotes the law of~$Z$. By a standard argument \citep[see][Thm.\ 6.3]{kallenberg}, this factorization holds for all such $E_X,E_Y$ and all $z$ outside some fixed $\p_Z$-null set.
In other words, $X$ and $Y$ are independent under $\p(\cdot|Z=z)$ for $\p_Z$-almost all~$z$.

When $(X,Y,Z)$ possesses a joint density $f(x,y,z)$ with respect to some product measure $\mu=\mu_X\otimes\mu_Y\otimes\mu_Z$, where each component is a $\sigma$-finite measure, the conditional independence $X \indepp Y \mid Z$ is equivalent to the following density decomposition:
\[f(x,y,z)f_Z(z)=f_{XZ}(x,z)f_{YZ}(y,z) \qquad \text{ for }\mu\text{-almost all }(x,y,z).\]
Here, we used the naturally induced notation for the marginal densities; see for instance~\citet{daw1979} or \citet[Chapter 3]{Lauritzen}.
In particular, if $(X,Y,Z)$ is discrete, then its probability mass function factorizes for all $(x,y,z)$.
If $\mu$ is Lebesgue and all the involved densities are continuous, then this equality must be true for all $(x,y,z)$.
Note that continuity of $f$ does not imply continuity of the marginal densities, which explains the formulation of the last sentence.

\subsection{Infinitely divisible distributions for sums and maxima}\label{sec:motivation}
For $d\geq 1$ consider the domain $\cE = \R^d\setminus\{0\}$, which is a  $d$-dimensional space of real numbers punctured at the origin. Let $\Lambda$ be a Borel measure on this domain, such that the following basic boundedness condition is satisfied:
\begin{align}\label{eq:Lambda}
\tag{B}
\Lambda(A)<\infty\qquad \text{for any Borel set }A\subset \cE \text{ bounded away from the origin}.
\end{align}
As discussed in the introduction, such measures appear naturally in various areas of probability and statistics. In particular, they arise in the limits of sums and maxima of random variables in the theory of infinitely divisible and max-infinitely divisible distributions as in~\eqref{id_distr}.

In the max-infinitely divisible case we assume that each component $X_j$ is non-negative with lower endpoint $0$, that is, each $X_j$ has some mass arbitrarily close to 0: 
\begin{align}\label{eq:support}
 \quad\inf\{x:\p(X_j\leq x)>0\} = 0.
\end{align}
Then the distribution function of $X$ is given by
\begin{align}\label{eq:exp_measure}
\p(X\leq x)=\exp\big[-\Lambda(\cE_+ \setminus[0,x])\big],\qquad x\geq 0,
\end{align}
where $\Lambda$ is null outside of the domain $\cE_+ =[0,\infty)^d\setminus\{0\}$
and satisfies~\eqref{eq:Lambda}, while $x \geq 0$ is meant componentwise; see \citet[][Prop.\ 5.8]{res2008}. 
Conversely, every such measure $\Lambda$, called an \emph{exponent measure}, leads to a max-infinitely divisible distribution.
The standard assumption $\Lambda(\cE_+)=\infty$ corresponds to no point mass of $X$ at the origin.
One may also define a Poisson point process with intensity measure $\D t\otimes \Lambda(\D x)$ and retrieve $X$ as the maximum over the space-points with $t\leq 1$.

In the infinitely divisible case the characteristic function of $X$ is given by
\[\e e^{\i\br{\theta}{X}}=\exp\bigg[-\frac{1}{2}\br{\theta}{\Sigma\theta}+\i\br{\gamma}{\theta}+\int_{\cE}\big(e^{\i\br{\theta}{x}}-1-\i\br{\theta}{x}\ind{\norm{x}\leq 1} \big)\Lambda(\D x) \bigg],\,\, \theta\in \R^d,\]
where $\gamma\in \R^d$ is a drift parameter, $\Sigma$ is the symmetric, non-negative definite $d\times d$ covariance matrix of the Gaussian component, and $\Lambda$ is the so-called \emph{L\'evy measure}, which satisfies~\eqref{eq:Lambda} together with
$\int_{\norm{x}\in (0,1)}\norm{x}^2\Lambda(\D x)<\infty$, where $\norm{\cdot}$ is an arbitrary norm on $\R^d$; see~\citet[Thm.\ 8.1]{sato}. The L\'evy measure is used to define a Poisson point process of jumps with intensity measure $\D t\otimes \Lambda(\D x)$, the compensated sum of which  yields the non-Gaussian part of~$X$ in the limit sense.
Conversely, every such characteristic triplet $(\gamma,\Sigma,\Lambda)$ leads to an infinitely divisible law.

\begin{example}\label{ex:gauss}
  Let $\Phi_\rho:\mathbb R^2 \to [0,1]$ be the distribution function of a bivariate normal distribution with correlation coefficient $\rho \in [0,1]$ and standard normal margins. When $\rho > 0$, the bivariate normal distribution is max-infinitely divisible \citep[][Section 5.2]{res2008}, a property inherited by the corresponding log-normal distribution $\Phi_\rho \circ \log$. Thus, by~\eqref{eq:exp_measure}
  \[\Lambda_\rho(\cE_+ \setminus[0,x]) = -\log \Phi_\rho( \log x),\quad  x \geq 0,\]
  is a valid measure satisfying~\eqref{eq:Lambda}. In addition, it explodes at the origin. 

\end{example}

Various connections between the infinitely divisible and max-infinitely divisible laws have been explored in~\citet{kabluchko_summax} and \citet{stoev_maxsum}. \citet{dmz2008} and \citet{mo2009} draw further attention to multivariate $\ell_p$-infinitely divisible laws, or more generally, infinitely divisible laws with respect to semi-group operations. All of these works focus on the important special case where the involved {L\'evy measures} (or {exponent measures}) $\Lambda$ are \emph{$-\a$-homogeneous}, that is, the property~\eqref{def:homog} holds for some $\alpha>0$.
A max-infinitely divisible $X$ with (standard) $\a$-Fr\'echet marginals is \emph{max-stable} if and only if $\Lambda$ is $-\a$-homogeneous \citep[Prop.\ 5.11]{res2008}. An infinitely divisible $X$ is \emph{$\a$-stable} with $\a\in (0,2)$ if and only if $\Sigma=0$ and $\Lambda$ is $-\a$-homogeneous \citep[Thm.\ 14.3(ii)]{sato}.

\begin{example}\label{ex:HR}
  The exponent measure corresponding to the bivariate max-stable H\"usler--Reiss distributions \citep{Husler1989} with parameter $\Gamma > 0$ is
  \[\Lambda_\Gamma(\cE_+ \setminus[0,x]) = - \frac{1}{x_1}\Phi\left( \frac{\sqrt{\Gamma}}{2} + \frac{\log(x_2/x_1)}{\sqrt{\Gamma}} \right) - \frac{1}{x_2}\Phi\left( \frac{\sqrt{\Gamma}}{2} + \frac{\log(x_1/x_2)}{\sqrt{\Gamma}} \right) ,\quad  x \geq 0,\]
  where $\Phi$ denotes the univariate standard normal cumulative distribution function. This measure also satisfies~\eqref{eq:Lambda}. Since it is $-1$-homogeneous, it also explodes at the origin. 

\end{example}

\newpage
\section{Definition of conditional independence for $\Lambda$ and first properties}\label{sec:theory}

\subsection{Setup and notation}
As previously, let $V=\{1,\ldots,d\}$ be a finite index set and $\Lambda$ a measure on $\cE = \R^V\setminus\{0_V\}$ satisfying~\eqref{eq:Lambda}. 
For a subset $A \subset \cE$, we say that a property holds for $\Lambda$-almost all $y \in A$ if the subset of $A$ where it does not hold is a $\Lambda$-null set. 
As test sets for conditional independence we consider the class $\cR(\Lambda)$ of charged product-form sets in $\cE$ not containing the origin $0_V$ in their closure
\begin{align}\label{eq:Rdef}
\cR(\Lambda)=\bigg\{R = \bigtimes_{v\in V}R_v \, : \, R_v\in  \cB(\R), \, \Lambda(R)>0, \, 0_V\notin \overline R\bigg\}.
\end{align}
By definition, we have $\Lambda(R)\in (0,\infty)$ for any $R \in \cR(\Lambda)$, which allows us to define a probability measure $\L_{R}$ on $R$ via
  \begin{align}\label{eq:probabR}
    \L_{R}(\D y)  = \frac{\Lambda(\D y)}{\Lambda(R)}.
    \end{align}
  We write $Y\sim \L_R$ for a $d$-dimensional random vector $Y$ distributed according to~$\L_R$.

For a non-empty subset $D$ of $V$, the measure $\Lambda$ induces two natural measures on the lower dimensional domain 
\begin{align} \label{eq:lower-dim-domain}
\cE^D = \R^{D}\setminus\{0_D\},
\end{align}
namely the marginal measure $\Lambda_D$ and the restricted measure $\Lambda_D^0$ as given by
\begin{align}\label{eq:Lambda0}
&\Lambda_D(\cdot)=\Lambda(y_D\in \cdot ), &\Lambda_{D}^0(\cdot)=\Lambda(y_{D}\in \cdot\, , \, y_{V\setminus  D}=0_{V\setminus  D}),
\end{align}
respectively.
Both measures also satisfy the basic assumption~\eqref{eq:Lambda} in their common domain~$\cE^D$. 

The class $\cR(\Lambda_D)$ is understood to contain product form sets $R_D=\bigtimes_{d\in D}R_d$ (satisfying the other conditions as in \eqref{eq:Rdef} with respect to $\RR^D$ instead of $\RR^V$) and the respective probability measures are denoted by~$\p_{R_D}$. The product set $R$ will sometimes be written as $R_D\times R_{V\setminus D}$ irrespective of the indices in~$D$. Note that $R_D\in \mathcal R(\Lambda_D)$ implies $R_D \times \R^{V \setminus D} \in \cR(\Lambda)$. 
Some basic consistency properties are stated in Appendix~\ref{app:basic}.

\subsection{Definition of conditional independence for $\Lambda$}\label{sec:def}
  We are now ready to give our definition of conditional independence for the measure~$\Lambda$ based on the above test sets.

\begin{definition}   \label{def:CILambda}
  For disjoint sets $A, B, C\subset V$ that form a partition of $V$, we say that $\Lambda$ admits \emph{conditional independence} of $A$ and $B$ given $C$, denoted by
  \[\indepABC,\]
  if we have the classical conditional independence
  \begin{align}\label{def_part}
  Y_A \indepp Y_B \mid Y_C \qquad \text{for } \quad Y\sim \L_{R} \quad \text{ for all } \quad R\in \cR(\Lambda).
  \end{align}
  This is trivially true for $A$ or $B$ being empty, and for $C=\emptyset$ we say that $\Lambda$ admits \emph{independence} of $A$ and $B$, and write \[\indepAB.\]
  If the sets $A, B$ and $C$ are not a partition of $V$, then the above definition remains the same with
  the test class in~\eqref{def_part} replaced by $ \cR(\Lambda_{A \cup B\cup C})$.
\end{definition}

\begin{remark} \label{rk:innerTransformations}
If $\Lambda$ is a zero measure, the test classes $\cR(\Lambda_{A \cup B \cup C})$  are empty, hence any such (conditional) independence statement with respect to $\Lambda$ is true.
Moreover, for strictly monotone, continuous transformations $T_v: \R \to \R$ with $T_v(0) = 0$, $v \in V$, the conditional independence of $\Lambda$ in Definition~\ref{def:CILambda} is equivalent to the corresponding statement for the pushforward measure $\Lambda \circ T^{-1}$ for $T(x) = (T_v(x_v))_{v\in V}$.  Definition~\ref{def:CILambda} is similar to the notion of conditional inner independence as suggested by~\citet{linbo} for a random vector supported by a non-product-form space.
\end{remark}

It is useful to observe that for $A \cup B \cup C \subset D \subset V$, the two statements
\begin{align}\label{subset_equivalenece}
  \indepABC \qquad \text{and} \qquad \indepABCwrtD
\end{align}
are equivalent. However, some caution is needed. We would like to stress that the conditional independence $\indepABC$ does not(!) imply
\[Y_A \indepp Y_B \mid Y_C \qquad \text{for } \quad Y\sim \L_R \quad \text{ for all } \quad R\in \cR(\Lambda)\]
unless $(A,B,C)$ is a partition of $V$, because restriction according to the additional dimensions in $V\setminus (A \cup B \cup C)$ amounts to conditioning on the respective components; see Lemma~\ref{lemma:marginalcompatibility} in Appendix~\ref{app:basic}.

A second simple observation, which will accompany us throughout this work, is the following:
\begin{lemma}\label{lem:Lambda0}
$\indepABC$ implies $A \indep B  \, [\Lambda^0_{A\cup B}]$.
\end{lemma}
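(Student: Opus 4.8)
The plan is to unfold the conclusion and realize it using the hypothesis on a single, carefully chosen test set. By Definition~\ref{def:CILambda}, the statement $\indepABzero$ means that for every product-form set $R_A\times R_B\in\cR(\Lambda^0_{A\cup B})$ one has $Y_A\indepp Y_B$ for $Y$ distributed according to the normalized restriction of $\Lambda^0_{A\cup B}$ to $R_A\times R_B$. If $\Lambda^0_{A\cup B}$ is the zero measure there is nothing to prove (Remark~\ref{rk:innerTransformations}); otherwise I would fix such an $R_A\times R_B$, write $S:=V\setminus(A\cup B)$ (so that $C\subseteq S$ and, by~\eqref{eq:Lambda0}, $\Lambda^0_{A\cup B}(\,\cdot\,)=\Lambda(y_{A\cup B}\in\,\cdot\,,\;y_S=0_S)$), and consider the product-form set $\tilde R:=R_A\times R_B\times\{0_S\}$, in which the coordinates outside $A\cup B$ are pinned to the origin $0_S\in\R^S$. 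The key point of this choice is that on $\tilde R$ the block $Y_S$ --- in particular the conditioning block $Y_C$ --- is deterministic, so that conditioning on it is vacuous; this is exactly what turns ``conditional independence given $C$'' into ``plain independence of $A$ and $B$''.

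Carrying this out, I would first verify $\tilde R\in\cR(\Lambda)$: it is product-form, $\Lambda(\tilde R)=\Lambda^0_{A\cup B}(R_A\times R_B)\in(0,\infty)$, and $0_V\notin\overline{\tilde R}=\overline{R_A}\times\overline{R_B}\times\{0_S\}$, precisely because $0_{A\cup B}\notin\overline{R_A\times R_B}$ --- the $A\cup B$-part is bounded away from the origin even though the $S$-part is not. A direct computation from~\eqref{eq:probabR} and~\eqref{eq:Lambda0} then shows that the law of $Y_{A\cup B}$ under $\p_{\tilde R}$ equals the normalized restriction of $\Lambda^0_{A\cup B}$ to $R_A\times R_B$, so it is enough to prove $Y_A\indepp Y_B$ for $Y\sim\p_{\tilde R}$. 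Since $\p_{\tilde R}(Y_S=0_S)=1$, the block $Y_S$ --- equivalently $Y_C$ together with $Y_{V\setminus(A\cup B\cup C)}$ --- is almost surely constant, hence $Y_A\indepp Y_B$ under $\p_{\tilde R}$ is equivalent to $Y_A\indepp Y_B\mid Y_C,\,Y_{V\setminus(A\cup B\cup C)}$ under $\p_{\tilde R}$. If $(A,B,C)$ is a partition of $V$ this is literally the instance of $\indepABC$ given by Definition~\ref{def:CILambda} applied to $\tilde R\in\cR(\Lambda)$, and the proof is complete; in general it follows from $\indepABC$ by Lemma~\ref{lemma:marginalcompatibility} in Appendix~\ref{app:basic}, which makes precise that restricting along the extra coordinates in $V\setminus(A\cup B\cup C)$ amounts to conditioning on them. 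Since $R_A\times R_B$ was arbitrary, $\indepABzero$ follows.

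The step I expect to be the main obstacle is that last one in the non-partition case: by definition $\indepABC$ is a statement about the marginal $\Lambda_{A\cup B\cup C}$ and its charged rectangles, which do not mention the coordinates in $V\setminus(A\cup B\cup C)$ at all, whereas the test set $\tilde R$ above additionally constrains those coordinates to $0$. Bridging the two viewpoints --- reading ``restriction along the extra coordinates'' as ``conditioning on them'' --- is exactly the content of Lemma~\ref{lemma:marginalcompatibility}; everything else (that $\tilde R$ is an admissible test set, and that its $A\cup B$-marginal is the desired probability measure) is routine bookkeeping. Alternatively, one may split the argument into two clean halves: first establish $A\indep B\mid C\,[\Lambda^0_{A\cup B\cup C}]$ from $\indepABC$ (restriction to the sub-face $\{y_{V\setminus(A\cup B\cup C)}=0\}$), where now $(A,B,C)$ partitions the index set, and then apply the partition-case argument to $\Lambda^0_{A\cup B\cup C}$, using $(\Lambda^0_{A\cup B\cup C})^0_{A\cup B}=\Lambda^0_{A\cup B}$.
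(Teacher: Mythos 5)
Your argument is exactly the paper's: for a charged rectangle $R_A\times R_B$ of the restricted measure, pass to the test set obtained by pinning the remaining coordinates to the origin, note it lies in $\cR(\Lambda)$ because the $A\cup B$-part is already bounded away from $0$, and observe that the conditioning block is almost surely constant there, so $\indepABC$ degenerates to plain independence of $Y_A$ and $Y_B$. Your additional care with the non-partition case (where the extra coordinates in $V\setminus(A\cup B\cup C)$ must also be pinned to $0$, bridged via Lemma~\ref{lemma:marginalcompatibility} or the two-step reduction through $\Lambda^0_{A\cup B\cup C}$) is a point the paper's proof leaves implicit, but it is the same underlying idea.
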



\begin{proof}
If $\Lambda^0_{A\cup B}$ is null, there is nothing to prove. Else
consider $R_{A\cup B} \in \cR(\Lambda^0_{A\cup B})$ and the respective $Y_{A\cup B}$.
Set $Y_C=0_C$ and note that the law of such a vector $Y$ alternatively arises from 
the measure $\Lambda$ and the set $R_{A\cup B}\times \{0_C\}\in \cR(\Lambda)$.
Now $\indepABC$ implies $Y_A \indepp Y_B \mid Y_C$ and thus also $Y_A \indepp Y_B$, because $Y_C=0_C$ a.s.
\end{proof} 

\begin{remark}\label{rk:CIreduced}
Slightly more general, the same reasoning shows that $\indepABC$ 
implies $A \indep B \,|\, C' \, [\Lambda^0_{A\cup B \cup C'}]$ for any subset $C' \subset C$. When $C=\emptyset$, the statement in Lemma~\ref{lem:Lambda0} is a tautology.
\end{remark}

\subsection{Semi-graphoid properties}\label{sec:semi-graphoid}
A semi-graphoid is an abstract independence model that satisfies a set of algebraic properties \citep{pearl1988probabilistic, Lauritzen, maathuis2018handbook}. For our conditional independence defined in Section~\ref{sec:def} for a measure $\Lambda$, if $A,B,C,D$ are disjoint subsets of~$V$, these properties read as follows.
\begin{itemize}
  \itemsep2mm
\item[\textbf{(L1)}]  If $A \indep B \,|\, C \, [\Lambda]$, then $B \indep A \,|\, C \, [\Lambda]$. \hfill (Symmetry)
\item[\textbf{(L2)}]  If $A \indep (B \cup D) \,|\, C \, [\Lambda]$,  then $A \indep B \,|\, C \, [\Lambda]$. \hfill (Decomposition)
\item[\textbf{(L3)}]  If $A \indep (B\cup D) \,|\, C \, [\Lambda]$, then $A \indep B \,|\, C \cup D \, [\Lambda]$. \hfill (Weak union)
\item[\textbf{(L4)}]  If $A \indep B \,|\, C \, [\Lambda]$ and $A \indep D \,|\, B \cup C \, [\Lambda]$, then $A \indep (B \cup D) \,|\, C \, [\Lambda]$. \hfill (Contraction)
\end{itemize}

These conditions are crucial in the study of graphical models as they directly imply certain equivalences or implications among fundamental Markov properties usually considered in directed and undirected graphical models; see~Sections~\ref{sec:undirected} and~\ref{sec:directed}. 

If we restrict our attention to homogeneous measures $\Lambda$ that admit a Lebesgue density on $\mathcal E_+$ as considered in~\citet{eng2018} in the context of extreme value theory, Steffen Lauritzen argues already in his discussion contribution that their extremal conditional independence is a semi-graphoid. We will confirm this finding below (Section~\ref{sec:semi-graphoid}) and even show that homogeneous measures  $\Lambda$ with a positive continuous Lebesgue-density  on $\mathcal E_+$ (and independent concatenations thereof) give rise to a \emph{graphoid} (Appendix~\ref{app:graphoids}). 

However, our general setting here goes far beyond this situation in several ways. Our $\Lambda$ is not necessarily homogeneous, it does not necessarily have a density, its mass is not restricted to the positive upper orthant and it may even have mass on sub-faces  of $\mathcal E$. 
This leads to a situation, where only the properties (L1)--(L3) always hold. The contraction condition (L4) is more involved. 

\begin{proposition} \label{prop:LauritzenEasyPart}
  Conditional independence with respect to $\Lambda$ as defined in Definition~\ref{def:CILambda} satisfies (L1), (L2) and (L3). 
\end{proposition}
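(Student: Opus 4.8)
The plan is to reduce each of (L1)--(L3) to the corresponding semi-graphoid axiom for classical conditional independence, applied uniformly over the test class $\cR(\Lambda_{A\cup B\cup C\cup D})$ (or the relevant sub-index set), using the fact that restriction of $\Lambda$ to a product-form set $R$ and normalization produces an honest probability measure $\L_R$ for which classical theory applies. The one subtlety to handle with care is that the test class changes when the conditioning set changes — e.g. in (L3) we move from conditioning on $C$ to conditioning on $C\cup D$, so the relevant test sets pass from $\cR(\Lambda_{A\cup B\cup C})$-type sets to $\cR(\Lambda_{A\cup B\cup C\cup D})$-type sets — so I first want to fix the ambient index set $D^* := A\cup B\cup C\cup D$ and, invoking the equivalence~\eqref{subset_equivalenece}, rephrase every statement appearing in (L1)--(L3) as a statement relative to $\cR(\Lambda_{D^*})$. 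After this normalization, all hypotheses and conclusions refer to the same family of probability measures $\{\L_R : R\in\cR(\Lambda_{D^*})\}$, and it suffices to verify the axioms for each individual $\L_R$.

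For \textbf{(L1)}, symmetry is immediate: for each $R\in\cR(\Lambda_{D^*})$, classical conditional independence $Y_A\indepp Y_B\mid Y_C$ is symmetric in $A$ and $B$ by its definition via factorization of conditional probabilities, so the same holds for $\Lambda$. For \textbf{(L2)}, decomposition, I use that for $Y\sim\L_R$ with $R = R_{D^*}$ a product set, the sub-vector $Y_{A\cup B\cup C}$ is distributed according to $\L_{R_{A\cup B\cup C}}$ where $R_{A\cup B\cup C} = \bigtimes_{v\in A\cup B\cup C}R_v$, by the basic consistency/marginalization properties (Appendix~\ref{app:basic}, in the spirit of Lemma~\ref{lemma:marginalcompatibility}): marginalizing a normalized restriction of $\Lambda$ over a product set just gives the normalized restriction of the lower-dimensional marginal measure over the corresponding lower-dimensional product set. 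Then classical decomposition ($X\indepp (W\cup U)\mid Z \Rightarrow X\indepp W\mid Z$, obtained by marginalizing out $U$) applied to each such $\L_{R}$ yields $A\indep B\mid C\,[\Lambda]$. For \textbf{(L3)}, weak union, I again work at the level of each $\L_R$: classical conditional independence satisfies $X\indepp(W\cup U)\mid Z \Rightarrow X\indepp W\mid Z\cup U$ (this is a standard computation with regular conditional probabilities — conditioning further on $Y_U$ preserves the product factorization). Applying this to $Y\sim\L_R$ gives $Y_A\indepp Y_B\mid Y_{C\cup D}$ for every $R\in\cR(\Lambda_{D^*})$, which is exactly $A\indep B\mid C\cup D\,[\Lambda]$ after translating back through~\eqref{subset_equivalenece}.

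The main obstacle — though it is more bookkeeping than genuine difficulty — is getting the test-class juggling exactly right, in particular making sure that when I restrict attention to a product set $R_{D^*}\in\cR(\Lambda_{D^*})$ and then pass to a sub-vector, the induced law is again of the form $\L_{R'}$ for a legitimate $R'$ in the appropriate sub-class, and that \emph{every} $R'$ in that sub-class arises this way (so that no test set is missed when verifying the conclusion). The first direction is the consistency lemma in the appendix; the second follows because any product set $R'$ over a sub-index set $D'\subset D^*$ can be completed to $R' \times \R^{D^*\setminus D'}\in\cR(\Lambda_{D^*})$, and restricting $\L_{R'\times\R^{D^*\setminus D'}}$ back to the $D'$-coordinates returns $\L_{R'}$. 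Once these two facts are in place, (L1)--(L3) follow from their classical counterparts with no further work; crucially, none of these three arguments requires homogeneity, a density, or support on the positive orthant, which is why — unlike (L4) — they hold in full generality.
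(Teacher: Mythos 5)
Your proof is correct and follows essentially the same route as the paper's one-sentence argument: (L1) and (L3) reduce directly to their classical counterparts applied to each $\L_R$ (for (L3) the test class does not even change, since $A\cup B\cup C\cup D$ is the same union for hypothesis and conclusion), while (L2) additionally requires the marginal-compatibility Lemma~\ref{lemma:marginalcompatibility}, which you invoke correctly via the completion $R'\times\R^{D}$. The only nitpick is that your intermediate claim that $Y_{A\cup B\cup C}\sim\L_{R_{A\cup B\cup C}}$ for an \emph{arbitrary} product set $R$ is false in general — it holds only when the coordinates being marginalized out are unrestricted — but that is exactly the case your final paragraph actually uses, so the argument stands.
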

\begin{proof}
  Properties (L1) and (L3) follow directly from their respective counterparts for probability laws, whereas (L2) follows similarly if one takes into account the compatibility with marginal measures as stated in Lemma~\ref{lemma:marginalcompatibility}.
  \end{proof}
It will be shown in Theorem~\ref{thm:L4} below that also property (L4) is true under the explosiveness assumption~\eqref{eq:infinite}. This property is not true, however, for general finite measures $\Lambda$, in which case our Definition~\ref{def:CILambda} might  not be natural. 
The problem arises from the exclusion of the origin in the domain $\cE=\RR^V \setminus \{0_V\}$, which is neither necessary nor natural in the finite case. The following simple example illustrates this.

\begin{example}[Violation of (L4) for a finite measure]
\label{ex:violation}
Consider $A=\{1\}, B=\{2\}, C=\emptyset, D=\{3\}$ and $\Lambda$ putting mass 1 at each of the points $(0,0,1)$ and $(1,1,1)$; see Figure~\ref{fig:ViolationL4} for an illustration. Observe that $\Lambda_{A\cup B}$ is a point mass at $(1,1)$ and so we trivially have $A \indep B \; [\Lambda]$. Furthermore, we also have $A \indep D \mid B\; [\Lambda]$, because the two point masses have a different $B$ coordinate and so any admissible $\L_R$ results in the corresponding conditional independence statement.
Finally, the statement $A \indep (B\cup D) \; [\Lambda]$ is not true, because there exists a product set, namely $R=\R\times\R\times \{1\}$, for which we get that $Y\sim\L_R$ can only assume the values $(0,0,1)$ or $(1,1,1)$, each with probability $1/2$, and thus, $Y_1$ and $(Y_2,Y_3)$ are not independent. 
\end{example}

\begin{figure}[t!]
\mbox{\small   
    \begin{tikzpicture}[scale = 1.5, >=stealth]
      \coordinate (O) at (0,0,0);
      \coordinate (I) at (1,1,1);
      \coordinate (e1+) at (0,0,1);
      \coordinate (e2+) at (1,0,0);
      \coordinate (e3+) at (0,1,0);
      \coordinate (z1+) at (0,0,1.5);
      \coordinate (z2+) at (1.5,0,0);
      \coordinate (z3+) at (0,1.5,0);
      \coordinate (e1-) at (0,0,-1);
      \coordinate (e2-) at (-1,0,0);
      \coordinate (e3-) at (0,-1,0);
            \coordinate (h3-) at (0,-0.7,0);
      \coordinate (e1+2+) at (1,0,1);
      \coordinate (e1-2-) at (-1,0,-1);
      \coordinate (e1+2-) at (-1,0,1);
      \coordinate (e1-2+) at (1,0,-1);
      \coordinate (e2+3+) at (1,1,0);
      \coordinate (e2-3-) at (-1,-1,0);
      \coordinate (e2-3+) at (-1,1,0);
      \coordinate (e2+3-) at (1,-1,0);
          \coordinate (e1+3+) at (0,1,1);
      \coordinate (e1-3-) at (0,-1,-1);
      \coordinate (e1-3+) at (0,1,-1);
      \coordinate (e1+3-) at (0,-1,1);
       \coordinate (T) at (1.4,1.4,0);
      \draw[thick, black, ->] (e1-) to (z1+) node[anchor=east]{$y_{1}$};
      \draw[thick, black, ->] (e2-) to (z2+) node[anchor=west]{$y_{2}$};
      \draw[thick, black, ->] (h3-) to (z3+) node[anchor=west]{$y_{3}$} ;
         \draw[dashed, black] (I) to (e1+2+);
         \draw[dashed, black] (I) to (e2+3+);
         \draw[dashed, black] (I) to (e1+3+);
         \draw[dashed, black] (e1+) to (e1+2+);
         \draw[dashed, black] (e2+) to (e1+2+);
         \draw[dashed, black] (e2+) to (e2+3+);
         \draw[dashed, black] (e3+) to (e2+3+);
         \draw[dashed, black] (e1+) to (e1+3+);
         \draw[dashed, black] (e3+) to (e1+3+);
      \filldraw(O) circle (1pt) node[anchor=south east] {$0$};
      \filldraw[blue](e3+) circle (2pt) node[anchor=south east] {$(0,0,1)$};
      \filldraw[red](I) circle (2pt) node[anchor=north west] {$(1,1,1)$};
\node at (T) {$\Lambda$};
      \coordinate (Otwo) at (3.2,0,0);
      \coordinate (twoe1+) at (3.2,0,1);
      \coordinate (twoe2+) at (4.2,0,0);
      \coordinate (twoz1+) at (3.2,0,1.5);
      \coordinate (twoz2+) at (4.7,0,0);
      \coordinate (twoe1-) at (3.2,0,-1);
      \coordinate (twoe2-) at (2.2,0,0);
      \coordinate (twoe1+2+) at (4.2,0,1);
            \coordinate (Ttwo) at (4.6,0.4,0);
      \draw[thick, black, ->] (twoe1-) to (twoz1+) node[anchor=east]{$y_{1}$};
      \draw[thick, black, ->] (twoe2-) to (twoz2+) node[anchor=west]{$y_{2}$};
         \draw[dashed, black] (twoe1+) to (twoe1+2+);
         \draw[dashed, black] (twoe2+) to (twoe1+2+);
      \filldraw(Otwo) circle (1pt) node[anchor=south east] {$0_{12}$};
      \filldraw[red](twoe1+2+) circle (2pt) node[anchor=north west] {$(1,1)$};
      \node at (Ttwo) {\color{red} $\Lambda_{12}$};
      \coordinate (Othree) at (-3.2,1,0);
      \coordinate (threee1+) at (-3.2,1,1);
      \coordinate (threee2+) at (-2.2,1,0);
      \coordinate (threez1+) at (-3.2,1,1.5);
      \coordinate (threez2+) at (-1.7,1,0);
      \coordinate (threee1-) at (-3.2,1,-1);
      \coordinate (threee2-) at (-4.2,1,0);
      \coordinate (threee1+2+) at (-2.2,1,1);
            \coordinate (Tthree) at (-1.8,1.4,0);
      \draw[thick, black, ->] (threee1-) to (threez1+) node[anchor=east]{$y_{1}$};
      \draw[thick, black, ->] (threee2-) to (threez2+) node[anchor=west]{$y_{2}$};
         \draw[dashed, black] (threee1+) to (threee1+2+);
         \draw[dashed, black] (threee2+) to (threee1+2+);
      \filldraw[blue](Othree) circle (2pt) node[anchor=south east] {$(0,0,1)$};
      \filldraw[red](threee1+2+) circle (2pt) node[anchor=north west] {$(1,1,1)$};
      \node at (Tthree) {$\Lambda|_{\RR \times \RR \times \{1\}}$};
        \end{tikzpicture}
        }

         \caption{\small  
Illustration of Example~\ref{ex:violation}. The measure $\Lambda$ consists of two point masses with different $y_2$-coordinate, so that  $\{1\} \indep \{3\} \mid \{2\}~[\Lambda]$ (center). Its marginal measure $\Lambda_{12}$ is a point mass (right), hence $\{1\} \indep \{2\}~[\Lambda]$. However, the restriction of $\Lambda$ to the admissible product set $\RR \times \RR \times \{1\}$ does not factorize (left); the L4 conclusion $\{1\} \indep \{2,3\}~[\Lambda]$ is violated.}
  \label{fig:ViolationL4}
\end{figure}
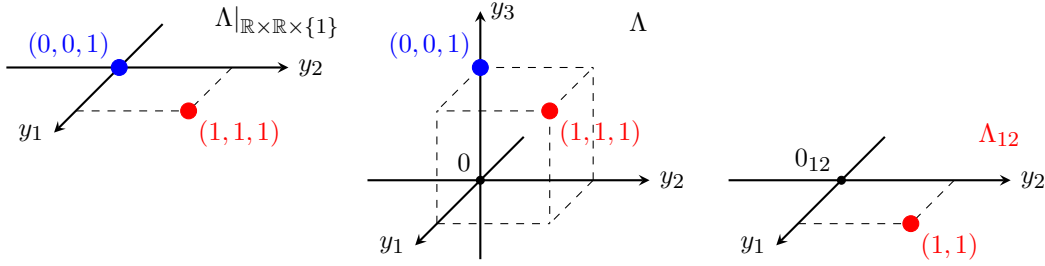

Even stronger properties than those of a semi-graphoid are represented by the notion of a graphoid \citep{Lauritzen}. 
The $\Lambda$-induced conditional independence defines a \emph{graphoid}
\emph{graphoid}, when not only (L1)--(L4) are satisfied, but additionally the axiom  
\begin{itemize}
  \itemsep2mm
\item[\textbf{(L5)}]  If $A \indep B \,|\, C \cup D \, [\Lambda]$ and  $A \indep C \,|\, B \cup D \, [\Lambda]$, then  $A \indep B \cup C \,|\, D \, [\Lambda]$ \hfill (Intersection)
\end{itemize}
In Appendix~\ref{app:graphoids}, we document situations when the graphoid axioms are satisfied, and both trivial and subtle cases under which (L1)--(L4) hold, but (L5) is violated.


\section{Alternative characterizations of $\Lambda$-conditional independence} \label{sec:alternative}
We establish three alternative characterizations of the conditional independence  $\indepABC$ in this section, each of which further demonstrates that our notion is both natural and intuitive. The first characterization, Theorem~\ref{thm:test_class}, is a reduction to test sets of a simple form. The second, Theorem~\ref{thm:kernel}, is in terms of the underlying probability (Markov) kernel $\nu(y_C,\cdot)$ from $\cE^C$ to $\R^{A\cup B}$.
It allows to view our notion of conditional independence as the classical independence for a family of probability laws. 
The third, Theorem~\ref{thm:density_factorization}, is in terms of the factorization of the density of $\Lambda$ with respect to some dominating measure, assuming it exists. 

Without loss of generality, we focus on the case, where the sets $A$, $B$ and $C$ form a partition of $V$. The independence case $C=\emptyset$ is excluded, unless mentioned otherwise, but we return to it in Section~\ref{sec:independence}.  The more technical  proofs and auxiliary results are given in Appendix~\ref{sec:CIproofs}.

\subsection{Reduction of the test class}

Our first result reduces the test class $\cR=\cR(\Lambda)$ to sets of the form
\begin{align}\label{eq:Rvep}
R_{v,\eps}=\{y\in \R^d:|y_v|\geq \eps\}= (\RR \setminus (-\eps,\eps))^{\{v\}} \times \RR^{V \setminus \{v\}},\qquad v\in V,\,\eps>0.
\end{align}
When writing $Y\sim \L_{R_{v,\eps}}$, we implicitly assume $\Lambda(R_{v,\eps})>0$ and hence $R_{v,\eps}\in\cR(\Lambda)$. 
The proof is postponed to Appendix~\ref{sec:CIproofs}.

\begin{theorem}[CI via test classes]\label{thm:test_class}
Let $(A,B,C)$ be a partition of $V$.\\ Then   $\indepABC$ is equivalent to any of the following statements:
\begin{enumerate}[label=(\roman*)]
\item $Y_A \indepp Y_B\mid Y_C$ with $Y\sim \L_{R_{v,\eps}}$ for all $v\in V,\,\eps>0$,
\item $Y_A \indepp Y_B\mid Y_C$ with $Y\sim \L_{R_{c,\eps}}$ for all $c\in C,\,\eps>0$, and additionally, $A \indep B  \; [\Lambda^0_{A\cup B}]$.
\end{enumerate}
Moreover, $\indepAB$ is equivalent to (i) with $C=\emptyset$.
\end{theorem}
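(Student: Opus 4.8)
The plan is to prove only the non-obvious implications. That $\indepABC$ implies (i) is immediate, since $R_{v,\eps}\in\cR(\Lambda)$ whenever $\Lambda(R_{v,\eps})>0$; and $\indepABC$ implies (ii) by combining this with Lemma~\ref{lem:Lambda0}. So the real content is (i)$\Rightarrow\indepABC$ and (ii)$\Rightarrow$(i), together with the parallel $C=\emptyset$ claim, and throughout I would lean on two elementary stability properties of classical conditional independence. \emph{(a) Product-form conditioning:} if $Y_A\indepp Y_B\mid Y_C$ under some law and the event $\{Y_A\in S_A,\,Y_B\in S_B,\,Y_C\in S_C\}$ has positive probability, then $Y_A\indepp Y_B\mid Y_C$ persists under conditioning on it — one simply restricts and renormalizes the disintegration given $Y_C$. \emph{(b) $\sigma(Y_C)$-measurable mixing:} if $\{Q_i\}$ is a countable Borel partition of the range of $Y_C$, a law $\mu$ satisfies $\mu=\sum_i w_i\,\mu(\cdot\mid Y_C\in Q_i)$ with all $w_i>0$, and $Y_A\indepp Y_B\mid Y_C$ holds under each $\mu(\cdot\mid Y_C\in Q_i)$, then it holds under $\mu$, because the regular conditional law of $(Y_A,Y_B)$ given $Y_C=y_C$ under $\mu$ agrees with that of the unique part charging the block containing $y_C$.

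For (i)$\Rightarrow\indepABC$, I would take an arbitrary $R=\bigtimes_{v\in V}R_v\in\cR(\Lambda)$. Since $0_V\notin\overline R=\bigtimes_{v\in V}\overline{R_v}$, there are $v\in V$ and $\eps>0$ with $R_v\subset\RR\setminus(-\eps,\eps)$, hence $R\subset R_{v,\eps}$ and $\Lambda(R_{v,\eps})>0$. Now $\L_R$ and $\L_{R_{v,\eps}}(\cdot\mid Y\in R)$ are both the normalization of $\Lambda$ restricted to $R$, hence equal, and $\{Y\in R\}=\{Y_A\in\bigtimes_{a\in A}R_a\}\cap\{Y_B\in\bigtimes_{b\in B}R_b\}\cap\{Y_C\in\bigtimes_{c\in C}R_c\}$ is of product form with respect to the partition $(A,B,C)$; so property~(a) upgrades (i) to $Y_A\indepp Y_B\mid Y_C$ under $\L_R$. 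As $R$ was arbitrary, $\indepABC$ follows. The same argument, read verbatim with $C=\emptyset$, yields the final assertion of the theorem (the reverse being trivial).

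For (ii)$\Rightarrow$(i) it remains, by the symmetry $A\leftrightarrow B$, to handle a slab $R_{a,\eps}$ with $a\in A$ and $\Lambda(R_{a,\eps})>0$. I would partition $\RR^C$ into the singleton $\{0_C\}$ and a countable family of product-form boxes $Q_i$ ($i\ge1$) with $0_C\notin\overline{Q_i}$ (sup-norm dyadic shells subdivided into dyadic boxes, say), splitting $R_{a,\eps}$ along the $\sigma(Y_C)$-measurable blocks $R^{(0)}=\{|y_a|\ge\eps,\,y_C=0_C\}$ and $R^{(i)}=(\RR\setminus(-\eps,\eps))^{\{a\}}\times\RR^{(A\cup B)\setminus\{a\}}\times Q_i$. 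For $i\ge1$: picking $c\in C$ and $\delta>0$ with $Q_i$ contained in $R_{c,\delta}$ in the $C$-coordinates (possible since $0_C\notin\overline{Q_i}$), one gets $R^{(i)}\subset R_{c,\delta}$ and $\L_{R_{a,\eps}}(\cdot\mid Y\in R^{(i)})=\L_{R_{c,\delta}}(\cdot\mid Y\in R^{(i)})$; since $\{Y\in R^{(i)}\}$ is product-form and $\L_{R_{c,\delta}}$ satisfies $Y_A\indepp Y_B\mid Y_C$ by (ii), property~(a) gives the CI for this conditioned law. For $i=0$ (if it carries mass): the pushforward of $\L_{R_{a,\eps}}(\cdot\mid Y\in R^{(0)})$ under $y\mapsto y_{A\cup B}$ is the normalization of $\Lambda^0_{A\cup B}$ on $\tilde R=(\RR\setminus(-\eps,\eps))^{\{a\}}\times\RR^{(A\cup B)\setminus\{a\}}\in\cR(\Lambda^0_{A\cup B})$, so $\indepABzero$ from (ii) gives $Y_A\indepp Y_B$ under $\p_{\tilde R}$, hence $Y_A\indepp Y_B\mid Y_C$ since $Y_C\equiv0_C$ on $R^{(0)}$. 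Property~(b) then assembles the blocks into $Y_A\indepp Y_B\mid Y_C$ under $\L_{R_{a,\eps}}$, which is~(i).

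The main obstacle I anticipate is organizational rather than conceptual: one must check at each step that the conditioning event is genuinely of product form relative to $(A,B,C)$ so that property~(a) applies, and must isolate the degenerate block $\{y_C=0_C\}$ — which lies outside $\cE^C$ — as a separate piece of the $\sigma(Y_C)$-partition, since it is precisely there that the hypothesis $\indepABzero$ in (ii), rather than any $C$-slab, is needed. Some discipline is also required to invoke only Definition~\ref{def:CILambda}, Lemma~\ref{lem:Lambda0} and standard facts about classical conditional independence, because Theorems~\ref{thm:kernel} and~\ref{thm:density_factorization} come later and are proved using the present result.
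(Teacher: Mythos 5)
Your proof is correct and follows essentially the same route as the paper: your stability properties (a) and (b) are precisely the paper's restriction and extension lemmas (Lemmas~\ref{lem:restrict} and~\ref{lem:extension}, proved via Lemma~\ref{lem:condProbs}), and your treatment of (ii)$\Rightarrow$(i) — splitting the slab $R_{v,\eps}$ into the $\{y_C=0_C\}$ block (handled by $\indepABzero$) and countably many pieces each sitting inside some $R_{c,\delta}$, then reassembling by $\sigma(Y_C)$-measurable mixing — mirrors the paper's decomposition, which uses the overlapping family $R^{(n,c)}=\{|y_v|\ge\eps,\,|y_c|\ge 1/n\}$ in place of your disjoint dyadic boxes. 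The only cosmetic difference is that the paper proves (a) and (b) as standalone lemmas in the appendix rather than citing them as standard facts.
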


\begin{remark} \label{rk:test_class}
The characterizations (i) and (ii) in Theorem~\ref{thm:test_class} can be modified in various ways. For instance in item (i) a single $v\in V$ is sufficient when $\Lambda^0_{V\setminus\{v\}}$ is a zero measure; see Lemma~\ref{lem:single_v} below.
If the measure $\Lambda$ is homogeneous, it suffices to consider a fixed $\eps>0$ instead of all $\eps>0$, e.g., $\eps=1$.
\end{remark}

\subsection{Probability kernel}
We verify first that the measure $\Lambda$ can be represented in terms of a probability kernel~\citep[p.\ 20]{kallenberg} when restricted to the set $\{y \in \cE : y_C\neq 0\}$.
To this end we use the notation that a set $R$ is of \emph{product form} if it can be represented as $R = \bigtimes_{v\in V}R_v = R_{A \cup B} \times R_C$ with $R_{v} \in \cB(\R)$ and $0_C \not\in R_C$. We postpone the technical proof to Appendix~\ref{sec:CIproofs}.

\begin{lemma}[Probability kernel representation]\label{lem:kernel}
There exists a $\Lambda_C$-unique probability kernel $\nu_C$ from $\cE^C$ to $\R^{A\cup B}$ such that for all $R$ of product form we have
\[\Lambda(R)=\int_{y_C\in R_C}\nu_C(y_C,R_{A\cup B})\Lambda_C(\D y_C)\in[0,\infty],\qquad \text{if }0_C\notin R_C.\]
Moreover, 
for any $c\in C$ and $\eps>0$ such that $\Lambda(R_{c,\eps})>0$ there is the identity
\[\nu_C(y_C,R_{A\cup B})=\p(Y_{A\cup B}\in R_{A\cup B}|Y_C=y_C),\qquad Y\sim \L_{R_{c,\eps}}\]
for $\Lambda_C$-almost all $y_C$ with $|y_c|\geq \eps$.
\end{lemma}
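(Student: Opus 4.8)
The plan is to disintegrate $\Lambda$ over its $C$-marginal $\Lambda_C$ on the region $\{y\in\cE:y_C\neq 0_C\}$, handling the infiniteness of $\Lambda$ by exhausting this region with slices of finite mass. First I would note that each set $R_{c,\eps}=\{y:|y_c|\geq\eps\}$, $c\in C$, $\eps>0$, is bounded away from the origin, since any norm of $y$ dominates a fixed multiple of $|y_c|$; hence $\Lambda(R_{c,\eps})<\infty$ by~\eqref{eq:Lambda}, and likewise $\Lambda_C(\{|y_c|\geq\eps\})=\Lambda(R_{c,\eps})<\infty$. Consequently $\cE^C=\{y_C\neq 0_C\}=\bigcup_{c\in C}\bigcup_{n\geq 1}\{|y_c|\geq 1/n\}$ can be written as a countable disjoint union $\cE^C=\bigsqcup_k S_k$ with $S_k\subseteq\{|y_{c_k}|\geq\eps_k\}$ for suitable $c_k\in C$, $\eps_k>0$. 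For each $k$ with $\Lambda(\{|y_{c_k}|\geq\eps_k\})>0$, the finite Borel measure $\mu_k(\,\cdot\,):=\Lambda(\,\cdot\,\cap\{|y_{c_k}|\geq\eps_k\})$ on the Borel space $\cE$ admits, after normalisation and passing to the image under $y\mapsto(y_{A\cup B},y_C)$, a regular conditional distribution \citep[Thm.\ 6.3]{kallenberg}: a probability kernel $\kappa_k$ from $\R^C$ to $\R^{A\cup B}$, defined for $\Lambda_C$-almost every $y_C\in\{|y_{c_k}|\geq\eps_k\}$, with $\mu_k(R_{A\cup B}\times R_C)=\int_{R_C}\kappa_k(y_C,R_{A\cup B})\Lambda_C(\D y_C)$ for all Borel $R_{A\cup B},R_C$, where I used that $(\mu_k)_C=\Lambda_C(\,\cdot\,\cap\{|y_{c_k}|\geq\eps_k\})$. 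I then set $\nu_C(y_C,\cdot):=\kappa_k(y_C,\cdot)$ for $y_C\in S_k$ whenever the right-hand side is defined, and fix $\nu_C$ arbitrarily on the remaining $\Lambda_C$-null set (which includes all $S_k$ with $\Lambda(\{|y_{c_k}|\geq\eps_k\})=0$); gluing these measurable-in-$y_C$ maps over the measurable partition $\{S_k\}$ produces a probability kernel from $\cE^C$ to $\R^{A\cup B}$.

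Next I would verify the displayed integral identity. Given $R=R_{A\cup B}\times R_C$ of product form with $0_C\notin R_C$, decompose $R_C=\bigsqcup_k(R_C\cap S_k)$; since $R_C\cap S_k\subseteq\{|y_{c_k}|\geq\eps_k\}$, the slice identity gives $\Lambda\big(R_{A\cup B}\times(R_C\cap S_k)\big)=\mu_k\big(R_{A\cup B}\times(R_C\cap S_k)\big)=\int_{R_C\cap S_k}\nu_C(y_C,R_{A\cup B})\Lambda_C(\D y_C)$, and summing the non-negative terms over $k$ yields the formula, with both sides possibly $+\infty$ (which happens precisely when $R_C$ accumulates at $0_C$). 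For $\Lambda_C$-uniqueness I would argue by a $\pi$-$\lambda$/monotone-class argument: the Borel subsets of $\cE^C$ bounded away from $0_C$ form a $\pi$-system generating $\cB(\cE^C)$; for a fixed $R_{A\cup B}$, both $R_C\mapsto\int_{R_C}\nu_C(y_C,R_{A\cup B})\Lambda_C(\D y_C)$ and the analogous quantity for a competing kernel $\tilde\nu_C$ equal $\Lambda(R_{A\cup B}\times R_C)$ on this $\pi$-system, hence agree as ($\sigma$-finite) measures on $\cB(\cE^C)$; since $\Lambda_C$ is $\sigma$-finite on $\cE^C$ this forces $\nu_C(\cdot,R_{A\cup B})=\tilde\nu_C(\cdot,R_{A\cup B})$ $\Lambda_C$-a.e., and running this along a countable generating algebra of $\cB(\R^{A\cup B})$ yields $\nu_C(y_C,\cdot)=\tilde\nu_C(y_C,\cdot)$ for $\Lambda_C$-almost all $y_C$.

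Finally, for the ``moreover'' part, fix $c\in C$, $\eps>0$ with $\Lambda(R_{c,\eps})>0$ and let $Y\sim\L_{R_{c,\eps}}$; the law of $Y_C$ is $\p_{Y_C}=\Lambda_C(\,\cdot\,\cap\{|y_c|\geq\eps\})/\Lambda(R_{c,\eps})$, and for Borel $R_{A\cup B}$, $S$ the already-established identity applied to $R_{A\cup B}\times(S\cap\{|y_c|\geq\eps\})$ gives $\p(Y_{A\cup B}\in R_{A\cup B},\,Y_C\in S)=\int_S\nu_C(y_C,R_{A\cup B})\,\p_{Y_C}(\D y_C)$. This says precisely that $y_C\mapsto\nu_C(y_C,\cdot)$ is a version of the regular conditional distribution of $Y_{A\cup B}$ given $Y_C$, so by its $\p_{Y_C}$-a.e.\ uniqueness the stated identity holds for $\Lambda_C$-almost all $y_C$ with $|y_c|\geq\eps$. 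The main obstacle is not computational but structural: because $\Lambda$ is infinite it cannot be disintegrated globally, so the argument hinges on (i) the finite-mass exhaustion of $\{y_C\neq 0_C\}$, (ii) checking that the locally constructed conditional distributions are consistent enough to glue into a single kernel, and (iii) transporting the resulting identity to all product-form test sets, including those with $\Lambda(R)=\infty$. Both (ii) and (iii) are subsumed by the slice identity together with the $\sigma$-finiteness of $\Lambda_C$ on $\cE^C$, so beyond careful bookkeeping no genuinely new difficulty appears.
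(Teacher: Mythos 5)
Your proof is correct and follows essentially the same route as the paper's: both exhaust $\{y_C\neq 0_C\}$ by countably many product-form slices of finite $\Lambda$-mass, disintegrate each normalized slice via Kallenberg's Theorem~6.3, and glue the resulting regular conditional distributions into a single kernel that is then identified with $\p(Y_{A\cup B}\in\cdot\mid Y_C=y_C)$ for $Y\sim\L_{R_{c,\eps}}$. The only cosmetic difference is that you establish the $\Lambda_C$-uniqueness by a $\pi$-$\lambda$/measure-uniqueness argument over a countable generating class, whereas the paper obtains both the uniqueness and the consistency of the locally defined kernels from its restriction result (Lemma~\ref{lem:condProbs}).
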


We can now characterize conditional independence as given by Definition~\ref{def:CILambda} in terms of this kernel representation.

\begin{theorem}[CI via kernel factorization]\label{thm:kernel}
Let $(A,B,C)$ be a partition of~$V$. Then
$A \indep B \mid C \; [\Lambda]$ if and only if the probability kernel factorizes,
\[\nu_C(y_C,R_{A\cup B})=\nu_C(y_C,R_{A}\times \R^B)\nu_C(y_C,R_{B}\times \R^A)\]
 for $\Lambda_C$-almost all $y_C\neq 0_C$ and any $R_{A\cup B}$ of product form, and additionally $A \indep B \; [\Lambda_{A\cup B}^0]$.
\end{theorem}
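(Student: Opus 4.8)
The plan is to peel off the ``sub-face'' clause $\indepABzero$, which appears verbatim both in the characterization of Theorem~\ref{thm:test_class}(ii) and in the conclusion of this theorem (and is already known to be implied by $\indepABC$ via Lemma~\ref{lem:Lambda0}). It therefore suffices to prove the equivalence of the following two statements: \textbf{(a)} $Y_A\indepp Y_B\mid Y_C$ for $Y\sim\L_{R_{c,\eps}}$, for every $c\in C$ and $\eps>0$ with $\Lambda(R_{c,\eps})>0$; and \textbf{(b)} the probability measure $\nu_C(y_C,\cdot)$ on $\R^{A\cup B}$ equals the product of its $A$-marginal $\nu_C(y_C,\cdot\times\R^B)$ and its $B$-marginal $\nu_C(y_C,\R^A\times\cdot)$, for $\Lambda_C$-almost all $y_C\neq 0_C$. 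Since the measurable rectangles $R_A\times R_B$ form a $\pi$-system generating $\cB(\R^{A\cup B})$, statement (b) is exactly the displayed factorization ``for any $R_{A\cup B}$ of product form'' holding off a single $\Lambda_C$-null set; and if $\Lambda_C$ is the zero measure then both (a) and (b) are vacuous, so we may assume it is not. Once (a)$\Leftrightarrow$(b) is established, combining it with Theorem~\ref{thm:test_class}(ii) closes the argument.

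For (a)$\Rightarrow$(b) I would fix an admissible pair $(c,\eps)$, put $Y\sim\L_{R_{c,\eps}}$, and invoke the standard fact that $Y_A\indepp Y_B\mid Y_C$ is equivalent to $\kappa(y_C,\cdot)=\kappa(y_C,\cdot\times\R^B)\otimes\kappa(y_C,\R^A\times\cdot)$ for $\L_{Y_C}$-almost all $y_C$, for any (equivalently, some) regular conditional distribution $\kappa$ of $(Y_A,Y_B)$ given $Y_C$. By Lemma~\ref{lem:kernel}, $\nu_C(y_C,\cdot)$ is such a $\kappa$ for $\Lambda_C$-almost all $y_C$ with $|y_c|\geq\eps$, and since the law of $Y_C$ under $\L_{R_{c,\eps}}$ is $\Lambda_C$ restricted to $\{|y_c|\geq\eps\}$ and renormalized, $\L_{Y_C}$-null and $\Lambda_C$-null sets agree inside $\{|y_c|\geq\eps\}$; hence (b) holds $\Lambda_C$-a.e.\ on $\{|y_c|\geq\eps\}$. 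Because $\{y_C\neq0_C\}=\bigcup_{c\in C}\bigcup_{n\in\NN}\{|y_c|\geq 1/n\}$ and $\Lambda_C(\{|y_c|\geq\eps\})=\Lambda(R_{c,\eps})$ (so the pieces carrying no mass are $\Lambda_C$-null and can be ignored), a countable union of the exceptional sets produced above yields (b) on all of $\{y_C\neq 0_C\}$. The direction (b)$\Rightarrow$(a) runs in reverse: for each admissible $(c,\eps)$, statement (b) together with Lemma~\ref{lem:kernel} says that the regular conditional distribution $\nu_C(y_C,\cdot)$ of $(Y_A,Y_B)$ given $Y_C$ is, $\L_{Y_C}$-a.e., the product of its marginals, which is precisely $Y_A\indepp Y_B\mid Y_C$; ranging over all admissible $(c,\eps)$ gives (a), and then (a) and $\indepABzero$ give $\indepABC$ by Theorem~\ref{thm:test_class}(ii).

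The substantive difficulty, and the step I expect to require the most care, is the bookkeeping of null sets and of versions of conditional distributions rather than any nontrivial estimate: each hypothesis ``$Y_A\indepp Y_B\mid Y_C$ for $Y\sim\L_{R_{c,\eps}}$'' is an almost-everywhere statement with respect to a measure that only charges $\{|y_c|\geq\eps\}$, whereas (b) is one almost-everywhere statement over all of $\{y_C\neq0_C\}$, and reconciling the two needs the countable exhaustion above together with the identity $\Lambda_C(\{|y_c|\geq\eps\})=\Lambda(R_{c,\eps})$. One must also use that any two regular conditional distributions of the same random element agree $\L_{Y_C}$-a.e., and that the $A$-marginal of a regular conditional distribution of $(Y_A,Y_B)$ given $Y_C$ is a regular conditional distribution of $Y_A$ given $Y_C$ (and likewise for $B$), so that the concrete objects $\nu_C(y_C,\cdot\times\R^B)$ and $\nu_C(y_C,\R^A\times\cdot)$ may legitimately replace the abstract conditional marginal laws in the classical characterization of conditional independence.
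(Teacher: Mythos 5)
Your proposal is correct and follows essentially the same route as the paper's proof: reduce via Theorem~\ref{thm:test_class}(ii) to the statements for $Y\sim\L_{R_{c,\eps}}$, identify $\nu_C$ with the regular conditional distribution of $Y_{A\cup B}$ given $Y_C$ through Lemma~\ref{lem:kernel}, note that $\p_{Y_C}$-null and $\Lambda_C$-null sets agree on $\{|y_c|\geq\eps\}$, and patch the almost-everywhere statements together by letting $c$ and $\eps$ range over a countable exhaustion of $\{y_C\neq 0_C\}$. The additional care you take with versions of regular conditional distributions and the $\pi$-system argument for a single exceptional null set only makes explicit what the paper's terser proof leaves implicit.
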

\begin{proof}
According to Theorem~\ref{thm:test_class} (ii) it is sufficient to show that  
$Y_A\indepp Y_B \mid Y_C$ with $Y\sim \L_{R_{c,\eps}}$ for all $c\in C,\,\eps>0$ if and only if the kernel factorizes for $\Lambda_C$-almost all $y_C\neq 0_C$.
Fix $c\in C,\,\eps>0$ and note that $Y_A\indepp Y_B \mid Y_C$ is equivalent to 
\[\p(Y_{A\cup B}\in R_{A\cup B}|Y_C=y_C)=\p(Y_{A}\in R_{A}|Y_C=y_C)\p(Y_{B}\in R_{B}|Y_C=y_C)\]
for $\p_{Y_C}$-almost all $y_C$ and any product form~$R_{A\cup B}$. These are $\Lambda_C$-almost all $y_C$ satisfying $|y_c|\geq \eps$ and the factorization can be stated in terms of the kernel $\nu_C$ according to its representation in Lemma~\ref{lem:kernel}. 
This factorization readily extends to $\Lambda_C$-almost all $y_C\neq 0_C$, since $c\in C,\,\eps>0$ are arbitrary, which completes the proof.
\end{proof}

\subsection{Density factorization}
An important case concerns the measure $\Lambda$ having density $\lambda$ on $\cE$ with respect to some product measure 
\[\mu=\bigotimes_{v\in V}\mu_v,\]
where each $\mu_v$ is a $\sigma$-finite measure on $(\R,\mathcal B(\R))$.
For any non-empty $D\subset V$, the marginal density $\lambda_D$, the density of $\Lambda_D$ in~\eqref{eq:Lambda0}, with respect to $\mu_D=\bigotimes_{v\in D}\mu_v$ is 
\[\lambda_D(y_D)=\int \lambda(y_D,y_{V\setminus D})\mu_{V\setminus D}(\D y_{V\setminus D}),\qquad y_D\neq 0_D,\]
which must be finite for $\mu_D$-almost all $y_D\neq 0_D$. We also note that the density $\lambda^0_{D}$ of $\Lambda_{D}^0$ with respect to $\mu_{D}$ is given by
\[\lambda^0_{D}(y_{D})=\lambda(y_{D},0_{V\setminus D})\mu_{V\setminus D} (\{0_{V\setminus D}\}),\qquad y_{D}\neq 0_D.\]

\begin{lemma}[Kernel density]\label{lem:density}
Assume that $\Lambda$ has a $\mu$-density $\lambda$. Then the probability kernel $\nu(y_C,\cdot)$ has density
\[f_{y_C}(y_{A\cup B})=\frac{\lambda(y)}{\lambda_C(y_C)}\]
with respect to $\mu_{A\cup B}$ for $\Lambda_C$-almost all~$y_C\neq 0_C$. 
\end{lemma}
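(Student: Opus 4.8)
The plan is to combine the defining relation of the probability kernel $\nu_C$ from Lemma~\ref{lem:kernel} with the existence of the $\mu$-density $\lambda$, and then invoke essential uniqueness of densities (Radon--Nikodym) together with the $\Lambda_C$-uniqueness of the kernel. Concretely, I would start from the identity in Lemma~\ref{lem:kernel}: for every product-form set $R = R_{A\cup B}\times R_C$ with $0_C\notin R_C$,
\[\Lambda(R)=\int_{y_C\in R_C}\nu_C(y_C,R_{A\cup B})\,\Lambda_C(\D y_C).\]
On the other hand, using the $\mu$-density $\lambda$ of $\Lambda$ and Tonelli's theorem on the product measure $\mu=\mu_{A\cup B}\otimes\mu_C$,
\[\Lambda(R)=\int_{y_C\in R_C}\Bigl(\int_{y_{A\cup B}\in R_{A\cup B}}\lambda(y_{A\cup B},y_C)\,\mu_{A\cup B}(\D y_{A\cup B})\Bigr)\mu_C(\D y_C),\]
and since $\lambda_C(y_C)=\int \lambda(y_{A\cup B},y_C)\,\mu_{A\cup B}(\D y_{A\cup B})$ is the $\mu_C$-density of $\Lambda_C$, we may rewrite this last expression (on the set $\{\lambda_C(y_C)>0\}$, which carries all the $\Lambda_C$-mass) as
\[\int_{y_C\in R_C}\Bigl(\int_{y_{A\cup B}\in R_{A\cup B}}\frac{\lambda(y_{A\cup B},y_C)}{\lambda_C(y_C)}\,\mu_{A\cup B}(\D y_{A\cup B})\Bigr)\Lambda_C(\D y_C).\]

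Comparing the two expressions for $\Lambda(R)$, we see that the map $R_{A\cup B}\mapsto \int_{R_{A\cup B}} f_{y_C}(y_{A\cup B})\,\mu_{A\cup B}(\D y_{A\cup B})$, with $f_{y_C}(y_{A\cup B}) = \lambda(y)/\lambda_C(y_C)$, satisfies the same integral identity against $\Lambda_C$ as $\nu_C(y_C,\cdot)$ does, for all product-form $R_{A\cup B}$ and all $R_C$ not containing $0_C$. Since product-form sets (or more precisely measurable rectangles in $R_{A\cup B}$ times such $R_C$) generate the relevant $\sigma$-algebras and form a $\pi$-system, a monotone-class / Dynkin argument upgrades the equality of the two kernels (as set functions in $R_{A\cup B}$, integrated against $\Lambda_C$ over arbitrary $\Lambda_C$-measurable subsets of $\cE^C$) to: for each fixed measurable $R_{A\cup B}$,
\[\nu_C(y_C,R_{A\cup B}) = \int_{R_{A\cup B}} f_{y_C}(y_{A\cup B})\,\mu_{A\cup B}(\D y_{A\cup B})\qquad \text{for }\Lambda_C\text{-almost all }y_C\neq 0_C.\]
Finally, to pass from "for each fixed $R_{A\cup B}$, for $\Lambda_C$-a.e.\ $y_C$" to "for $\Lambda_C$-a.e.\ $y_C$, for all $R_{A\cup B}$ simultaneously" — i.e.\ to conclude that $f_{y_C}$ is genuinely the $\mu_{A\cup B}$-density of the probability measure $\nu_C(y_C,\cdot)$ — I would restrict attention to a countable generating $\pi$-system of rectangles (e.g.\ products of intervals with rational endpoints, intersected with $\cE^{A\cup B}$), take the countable union of the corresponding $\Lambda_C$-null exceptional sets, and then use that two finite measures agreeing on a countable generating $\pi$-system agree everywhere; the measurability of $(y_C,R_{A\cup B})\mapsto \int_{R_{A\cup B}} f_{y_C}\,\D\mu_{A\cup B}$ in $y_C$ follows from Tonelli again.

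The main obstacle is the bookkeeping in that last step — the interchange of the quantifiers "$\forall R_{A\cup B}$" and "for $\Lambda_C$-a.e.\ $y_C$" — together with handling the exceptional set $\{\lambda_C(y_C)=0\}$ (and the set where $\lambda_C(y_C)=\infty$, which is $\mu_C$-null hence $\Lambda_C$-null, so harmless) so that the division $\lambda(y)/\lambda_C(y_C)$ is well-defined $\Lambda_C$-a.e. Everything else is routine Tonelli/Radon--Nikodym manipulation, and the $\Lambda_C$-uniqueness clause in Lemma~\ref{lem:kernel} guarantees that the $f_{y_C}$ we have exhibited is indeed \emph{the} kernel density (up to $\mu_{A\cup B}$-null sets, for $\Lambda_C$-a.e.\ $y_C$), which is exactly the assertion of the lemma.
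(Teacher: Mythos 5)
Your proposal is correct and follows essentially the same route as the paper's proof: rewrite $\Lambda(R)$ via Tonelli as $\int_{R_C}\bigl(\int_{R_{A\cup B}}\lambda(y)/\lambda_C(y_C)\,\mu_{A\cup B}(\D y_{A\cup B})\bigr)\Lambda_C(\D y_C)$ and then invoke the $\Lambda_C$-uniqueness clause of Lemma~\ref{lem:kernel} to identify the inner integral with $\nu_C(y_C,R_{A\cup B})$. The additional bookkeeping you describe (countable generating $\pi$-system, the quantifier interchange, and the $\Lambda_C$-null sets where $\lambda_C$ vanishes or is infinite) is exactly the detail the paper leaves implicit behind its citation of Lemma~\ref{lem:kernel}, so there is no gap.
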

\begin{proof}
Take any $R$ of product form such that $0_C\notin R_C$, and observe that
\[\Lambda(R)=\int_{y_C\in R_C}\Big( \int_{y_{A\cup B}\in R_{A\cup B}} \frac{\lambda(y)}{\lambda_C(y_C)}\mu_{A\cup B}(\D y_{A\cup B})\Big)\Lambda_C(\D y_C).\]
Hence the expression in brackets coincides with $\nu(y_C,R_{A\cup B})$ for all $R_{A\cup B}$ and $\Lambda_C$-all $y_C\neq 0_C$, see Lemma~\ref{lem:kernel}. 
\end{proof}

This readily yields another characterization result of our conditional independence (Definition~\ref{def:CILambda} with $C \neq \emptyset$).

\begin{theorem}[CI via density factorization]\label{thm:density_factorization}
Let $(A,B,C)$ be a partition of $V$ and assume that $\Lambda$ has a $\mu$-density $\lambda$. Then $A \indep B \mid C \; [\Lambda]$ if and only if the factorization
\[\lambda(y)\lambda_C(y_C)=\lambda_{A\cup C}(y_{A\cup C})\lambda_{B\cup C}(y_{B\cup C})\]
holds for $\mu$-almost all $y\in \cE,\,y_C\neq 0_C$, and additionally,
$A \indep B\; [\Lambda^0_{A\cup B}]$.
\end{theorem}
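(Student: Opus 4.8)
The plan is to combine the kernel characterization from Theorem~\ref{thm:kernel} with the kernel density formula from Lemma~\ref{lem:density}, so that the abstract factorization of the probability kernel $\nu_C$ becomes a concrete factorization of the $\mu$-density $\lambda$. First I would invoke Lemma~\ref{lem:density}: the kernel $\nu_C(y_C,\cdot)$ has $\mu_{A\cup B}$-density $f_{y_C}(y_{A\cup B}) = \lambda(y)/\lambda_C(y_C)$ for $\Lambda_C$-almost all $y_C \neq 0_C$. By Theorem~\ref{thm:kernel}, $A \indep B \mid C\;[\Lambda]$ is equivalent to the conjunction of (a) the kernel factorization $\nu_C(y_C,R_{A\cup B}) = \nu_C(y_C,R_A\times\R^B)\,\nu_C(y_C,R_B\times\R^A)$ for $\Lambda_C$-almost all $y_C\neq 0_C$ and all product-form $R_{A\cup B}$, and (b) $A \indep B\;[\Lambda^0_{A\cup B}]$. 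Since (b) appears verbatim in the claimed statement, the whole task reduces to showing that, given $\Lambda$ has a $\mu$-density, condition (a) is equivalent to the displayed density factorization $\lambda(y)\lambda_C(y_C) = \lambda_{A\cup C}(y_{A\cup C})\lambda_{B\cup C}(y_{B\cup C})$ for $\mu$-almost all $y\in\cE$ with $y_C\neq 0_C$.

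Next I would translate (a) into density language. For a fixed $y_C$ with $\lambda_C(y_C)\in(0,\infty)$, the probability measure $\nu_C(y_C,\cdot)$ on $\R^{A\cup B}$ has $\mu_{A\cup B}$-density $f_{y_C}$; its $A$-marginal has $\mu_A$-density $f_{y_C}^A(y_A) = \int f_{y_C}(y_A,y_B)\,\mu_B(\D y_B) = \lambda_{A\cup C}(y_{A\cup C})/\lambda_C(y_C)$, and similarly the $B$-marginal has density $f_{y_C}^B(y_B) = \lambda_{B\cup C}(y_{B\cup C})/\lambda_C(y_C)$. By the classical density characterization of independence recalled in Section~\ref{pre_CI} (applied to the probability law $\nu_C(y_C,\cdot)$ on the product space $\R^A\times\R^B$ with the $\sigma$-finite product measure $\mu_A\otimes\mu_B$), the factorization of the kernel in (a) holds for a given $y_C$ if and only if $f_{y_C}(y_{A\cup B}) = f_{y_C}^A(y_A)f_{y_C}^B(y_B)$ for $\mu_{A\cup B}$-almost all $y_{A\cup B}$, i.e.\ $\lambda(y)/\lambda_C(y_C) = \big(\lambda_{A\cup C}(y_{A\cup C})/\lambda_C(y_C)\big)\big(\lambda_{B\cup C}(y_{B\cup C})/\lambda_C(y_C)\big)$, which rearranges to $\lambda(y)\lambda_C(y_C) = \lambda_{A\cup C}(y_{A\cup C})\lambda_{B\cup C}(y_{B\cup C})$. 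Requiring this for $\Lambda_C$-almost all $y_C\neq 0_C$ is, via Fubini and the fact that $\Lambda_C$ has $\mu_C$-density $\lambda_C$, equivalent to requiring the displayed identity for $\mu$-almost all $y$ with $\lambda_C(y_C)>0$; and on the $\mu$-null-complement where $\lambda_C(y_C)=0$ one has $\lambda_{A\cup C}=\lambda_{B\cup C}=0$ as well (each is bounded above by $\lambda_C$ after integrating out the extra coordinate), so the identity $0=0$ holds trivially there. Hence (a) is equivalent to the density factorization for $\mu$-almost all $y\in\cE$ with $y_C\neq0_C$, and combining with (b) finishes the proof.

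The main obstacle is the careful bookkeeping of null sets when passing between "$\Lambda_C$-almost all $y_C$" and "$\mu$-almost all $y$": one must verify that the exceptional $\Lambda_C$-null set of bad $y_C$, unioned over a countable determining family of product sets $R_{A\cup B}$ (reducing the uncountable quantifier over all product-form $R_{A\cup B}$ to a countable one generating $\cB(\R^{A\cup B})$), pulls back to a $\mu$-null set in $\cE$, and conversely that a $\mu$-null set of bad $y$ projects appropriately. This is where the $\sigma$-finiteness of each $\mu_v$ and the finiteness a.e.\ of the marginal densities $\lambda_D$ (noted just before Lemma~\ref{lem:density}) are used, together with a standard Fubini argument; it is routine but needs to be stated cleanly. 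A minor secondary point is to handle the degenerate locus $\{\lambda_C(y_C)=0\}$ correctly, which I addressed above by the domination $\lambda_{A\cup C}(y_{A\cup C}) = \int \lambda(y_{A\cup C},y_B)\,\mu_B(\D y_B) \le \int \lambda_C(y_C)\cdot(\cdots)$ — more simply, integrating the $\mu$-a.e.\ identity $\lambda(y)\lambda_C(y_C)=\lambda_{A\cup C}\lambda_{B\cup C}$ is circular, so instead one argues directly that $\lambda_{A\cup C}(y_{A\cup C})>0$ forces $\lambda_C(y_C)>0$ (since $\lambda_C(y_C)=\int\lambda_{A\cup C}(y_{A\cup C},\,\cdot\,)\,\mu_{?}$ — more precisely $\lambda_C$ is the $A$-and-$B$-marginal of $\lambda$, dominating $\lambda_{A\cup C}$ after integrating out $A$), so the factorization is vacuous off $\{\lambda_C>0\}$.
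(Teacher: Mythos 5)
Your proposal is correct and follows essentially the same route as the paper's own proof: reduce to the kernel factorization via Theorem~\ref{thm:kernel}, identify the kernel density and its marginals via Lemma~\ref{lem:density}, and rearrange, with the only delicate point being that $\{y:\lambda_C(y_C)=0,\ \lambda_{A\cup C}(y_{A\cup C})>0\}$ is $\mu$-negligible — which you ultimately handle the same way the paper does.
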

\begin{proof}
According to Theorem~\ref{thm:kernel} it will suffice to establish equivalence to the kernel factorization.
By Lemma~\ref{lem:density} kernel factorization is equivalent to $\mu_{A\cup B}$-almost everywhere factorization of the density~$f_{y_C}$ for $\Lambda_C$-almost all $y_C\neq 0_C$.
The marginal density of $f_{y_C}$ with respect to $A$ is given by $\lambda_{A\cup C}(y_{A\cup C})/\lambda_C(y_C)$ and the analogous result holds for the set~$B$.
Thus, we have 
\[\frac{\lambda(y)}{\lambda_C(y_C)}=\frac{\lambda_{A\cup C}(y_{A\cup C})}{\lambda_C(y_C)}\,\frac{\lambda_{B\cup C}(y_{B\cup C})}{\lambda_C(y_C)}\]
for $\mu_{A\cup B}\otimes \Lambda_C$-almost all $(y_{A\cup B},y_C)$ with $y_C\neq 0_C$.
It is left to note that the set $\{y:\lambda_C(y_C)=0,\lambda_{A\cup C}(y_{A\cup C})>0\}$ is $\mu$-negligible.
\end{proof}

Importantly, we do not claim that the independence statement $A\indep B\; [\Lambda]$ implies the factorization $\lambda(y)=\lambda_A(y_A)\lambda_B(y_B)$. The marginals $\lambda_A$ and $\lambda_B$ are not even defined for $y_A=0_A$ or $y_B=0_B$, respectively, and considering a restriction to $y_A\neq 0_A$, $y_B\neq 0_B$ does not resolve the matter either; see Section~\ref{subsec:backtodensity} below. 
Instead, as will be also shown in Section~\ref{subsec:backtodensity},
one needs to consider a modified density  
in order to achieve an equivalence to a (modified) density factorization.

\section{Independence and the support of $\Lambda$}\label{sec:independence}
\subsection{Independence characterization via the support of $\Lambda$}
Throughout this section we make the following explosiveness assumption
\begin{align}\tag{E0}\label{eq:infinite0} 
\Lambda\big(y_v\neq 0\big)=\Lambda_{\{v\}}\big(\cE^{\{v\}}\big)
=\Lambda\big(\cE^{\{v\}} \times \RR^{V \setminus \{v\}}\big) \in\{0,\infty\}
\qquad \text{for all }v\in V,
\end{align} 
which means that all one-dimensional marginal measures $\Lambda_{\{v\}}$ of $\Lambda$ are either infinite or zero measures. 
It is much less restrictive than imposing homogeneity on the measure $\Lambda$; however note that \eqref{eq:infinite0} is always met for a $-\alpha$-homogeneous $\Lambda$, since $\cE^{\{v\}} \times \RR^{V \setminus \{v\}}$ is a scale-invariant set.

\begin{proposition}[Independence]\label{prop:indep}
Consider a partition $(A,B)$ of $V$ and assume~\eqref{eq:infinite0}.
Then
\[\indepAB\qquad  \iff \qquad \Lambda(y_A\neq 0_A \text{ and }\, y_B\neq 0_B )=0.\]
\end{proposition}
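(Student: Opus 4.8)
The plan is to prove both implications of Proposition~\ref{prop:indep}, using the explosiveness assumption~\eqref{eq:infinite0} crucially in the forward direction. Write $\{y_A \neq 0_A \text{ and } y_B \neq 0_B\}$ for the ``fully charged'' set; call its complement (within $\cE$) the union of the two sub-faces $F_A = \{y_B = 0_B\}$ and $F_B = \{y_A = 0_A\}$. Note $F_A \cap F_B = \emptyset$ in $\cE$ since the origin is excluded, so the assertion is that $\Lambda$ is concentrated on $F_A \sqcup F_B$.

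For the easy direction ($\Leftarrow$), suppose $\Lambda$ puts no mass on the fully charged set. Take any $R = R_A \times R_B \in \cR(\Lambda)$ and $Y \sim \PP_R$. Since $0_V \notin \overline R$, at least one of $R_A, R_B$ is bounded away from $0$; say $0_A \notin \overline{R_A}$. Then $\{y \in R : y_A \neq 0_A\} = R$, so $\Lambda$-a.s.\ on $R$ we have $y_A \neq 0_A$, hence by hypothesis $y_B = 0_B$, i.e.\ $Y_B = 0_B$ almost surely under $\PP_R$. A degenerate coordinate is independent of everything, so $Y_A \indepp Y_B$. The symmetric argument handles the case $0_B \notin \overline{R_B}$. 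This gives $\indepAB$.

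For the forward direction ($\Rightarrow$), the contrapositive is cleanest: assume $\Lambda(y_A \neq 0_A, y_B \neq 0_B) > 0$ and produce a test set $R \in \cR(\Lambda)$ on which the factorization fails. On the fully charged set, $\|y_A\|$ and $\|y_B\|$ (in any norms on $\R^A$, $\R^B$) are both strictly positive, so by $\sigma$-continuity there exist $\eps > 0$ and a set $S = \{\|y_A\| \geq \eps,\ \|y_B\| \geq \eps\}$ with $\Lambda(S) > 0$; by~\eqref{eq:Lambda} also $\Lambda(S) < \infty$ (this set is bounded away from the origin). The idea is then to find a product-form subset $R = R_A \times R_B \subset S$ (using, say, rectangles built from the supports of the marginals of $\Lambda|_S$) that is still charged, $\Lambda(R) > 0$, and on which $Y_A$ and $Y_B$ fail to be independent. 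Here I would invoke a disintegration / kernel argument (Lemma~\ref{lem:kernel} with $C = \emptyset$, or a direct measure-theoretic selection): if on \emph{every} charged product subrectangle of $S$ independence held, one could patch these together to conclude $\Lambda|_S$ is a product measure $\mu_A \otimes \mu_B$ on a rectangle — but then, using homogeneity-free scaling via the explosiveness~\eqref{eq:infinite0}, one reaches a contradiction with $\Lambda(\|y_A\| \geq \eps) = \infty$ while $\Lambda|_S$ being a finite product forces the $A$-marginal mass on $\{\|y_B\|\geq\eps\}$ to be finite and this must match across all thresholds, which is incompatible once one also charges arbitrarily small $\|y_B\|$. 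More concretely, I expect the argument to go: pick the smallest charged rectangle, shrink $R_B$ to a set of small but positive $\Lambda$-mass concentrated near a ``thin'' slice, enlarge to a second slice of different size, and observe that the conditional law of $Y_A$ given $Y_B \in (\text{slice 1})$ versus given $Y_B \in (\text{slice 2})$ must differ because the explosiveness assumption forces $\Lambda$ to have ``infinitely much'' $A$-mass that cannot be distributed in product fashion over a set where the $B$-marginal is finite.

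The main obstacle will be making the last step rigorous: extracting from ``$\Lambda(y_A \neq 0_A, y_B \neq 0_B) > 0$'' a \emph{specific} charged product set witnessing dependence, rather than merely a set of positive mass. The cleanest route is probably to argue by contradiction at the level of the kernel $\nu_\emptyset$ of Lemma~\ref{lem:kernel} (taking $C = \emptyset$, so $\nu_\emptyset$ is a single probability measure on $\R^{A \cup B}$ obtained after normalizing $\Lambda$ on some $R_{v,\eps}$): $\indepAB$ combined with Theorem~\ref{thm:test_class} forces $Y_A \indepp Y_B$ under every $\PP_{R_{v,\eps}}$; in particular the normalized restriction of $\Lambda$ to $\{\|y_A\| \geq \eps\}$ has $Y_A, Y_B$ independent for all $\eps$, and letting the relevant threshold vary while using~\eqref{eq:infinite0} to rule out that $\Lambda$ is a finite measure on the charged region, one contradicts the assumed positivity of mass on $\{y_A \neq 0_A, y_B \neq 0_B\}$. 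I would structure the write-up so that the explosiveness assumption enters exactly at the point of saying ``the $B$-marginal of $\Lambda$ cannot be finite, so a product representation on the charged set is impossible.''
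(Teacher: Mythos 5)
Your backward direction is correct and is essentially the paper's: since $0_V\notin\overline{R_A}\times\overline{R_B}$ forces at least one of $R_A$, $R_B$ to be bounded away from zero, the support hypothesis makes the other block degenerate at $0$ under $\L_R$, and a degenerate coordinate is independent of everything. (As the paper notes, \eqref{eq:infinite0} is not needed here.)

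The forward direction, however, has a genuine gap: you sketch three candidate strategies (patching product measures over subrectangles, comparing conditional laws over two slices, arguing via the kernel of Lemma~\ref{lem:kernel}) but carry none of them to completion, and you say yourself that the main step still needs to be made rigorous. Two concrete problems. First, your set $S=\{\|y_A\|\geq\eps,\ \|y_B\|\geq\eps\}$ is not in the test class $\cR(\Lambda)$ unless $|A|=|B|=1$, since membership requires a product over the individual coordinates $v\in V$; the fix — and the paper's move — is to pass to single coordinates: $\{y_A\neq 0_A,\,y_B\neq 0_B\}=\bigcup_{a\in A,\,b\in B}\{y_a\neq 0,\,y_b\neq 0\}$, so by continuity from below some $R_{a,\delta}\cap R_{b,\delta}=\{|y_a|\geq\delta,\,|y_b|\geq\delta\}$ is charged, and this \emph{is} of product form (so the "specific charged product set" you worry about is actually the easy part). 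Second, and more importantly, the actual contradiction is a short computation that never appears in your write-up. Testing the assumed independence on $Y\sim\L_{R_{a,\eps}}$ with $\eps\in(0,\delta)$ against the events $\{|Y_a|\geq\delta\}$ and $\{|Y_b|\geq\delta\}$ (which depend only on $Y_A$ and $Y_B$, respectively) yields
\[
\Lambda(R_{a,\delta}\cap R_{b,\delta})=\frac{\Lambda(R_{a,\delta})\,\Lambda(R_{b,\delta}\cap R_{a,\eps})}{\Lambda(R_{a,\eps})}.
\]
As $\eps\downarrow 0$ the numerator stays bounded, since $\Lambda(R_{b,\delta}\cap R_{a,\eps})\leq\Lambda(R_{b,\delta})<\infty$ by~\eqref{eq:Lambda}, while the denominator tends to $\Lambda(y_a\neq 0)=\infty$ by~\eqref{eq:infinite0} (this marginal cannot be the zero measure because it dominates the charged set). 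Hence $\Lambda(R_{a,\delta}\cap R_{b,\delta})=0$, the desired contradiction. Your final paragraph gestures at exactly this mechanism — a diverging normalizing mass against a finite joint mass — but without this identity, or an equivalent quantitative statement, the argument is not a proof. The "patch to a product measure" route in particular does not obviously lead anywhere: $\Lambda$ restricted to a single set bounded away from the origin may perfectly well be a product measure; the contradiction only emerges when comparing across a family of test sets whose total mass diverges.
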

\begin{proof}
The if statement follows trivially from the definition, since  any $Y \sim \PP_R$ for an admissible $R \in \cR(\Lambda)$ is such that either $Y_A=0_A$ a.s.\ or $Y_B=0_B$ a.s. (the assumption~\eqref{eq:infinite0} is not needed for this direction).

Now assume $\indepAB$ and that the support of $\Lambda$ contains some $y$ with $y_a\neq 0$ and $y_b\neq 0$ for some $a\in A$, $b\in B$. Note that $\Lambda_{\{a\}}$ is infinite by~\eqref{eq:infinite0}, since it can not be a zero measure.
Furthermore, we may take $\delta>0$ small such that $\Lambda(R_{a,\delta}\cap R_{b,\delta})>0$.
Letting $Y\sim \PP_{R_{a,\eps}}$ with $\eps\in(0,\delta)$ we observe that
\[\frac{\Lambda(R_{a,\delta}\cap R_{b,\delta})}{\Lambda(R_{a,\eps})}=\p(|Y_a|\geq \delta,|Y_b|\geq \delta)=\p(|Y_a|\geq \delta)\p(|Y_b|\geq \delta)
=\frac{\Lambda(R_{a,\delta})}{\Lambda(R_{a,\eps})}\frac{\Lambda(R_{b,\delta}\cap R_{a,\eps})}{\Lambda(R_{a,\eps})}.\]
Multiply both sides by $\Lambda(R_{a,\eps})\in(0,\infty)$ and observe that $\Lambda(R_{b,\delta}\cap R_{a,\eps})$ stays bounded, whereas $\Lambda(R_{a,\eps})\to\Lambda(y_a\neq 0)=\infty$ as $\eps\downarrow 0$. Hence $\Lambda(R_{a,\delta}\cap R_{b,\delta})=0$, a contradiction.
\end{proof}

Proposition~\ref{prop:indep} is closely related to the concept of asymptotic independence in extremes, which we discuss in~Section~\ref{sec:extremes}.
We may rephrase the condition in Proposition~\ref{prop:indep} in a number of ways, for instance in terms of the sets 
\begin{align}\label{eq:face}
\cH_D = \{y \in \cE \,:\, y_{V\setminus D} = 0_{V \setminus D}\} = \bigcup_{\emptyset \neq A \subset D} \cE_A,
\end{align}
where 
\begin{align}\label{eq:elementary-face}
\cE_A = \{y\in \cE \,:\, y_i\neq 0 \,\,\forall\, i\in A\, \text{ and }\,  y_i=0\,\forall\, i\in V \setminus A\}=(\RR \setminus \{0\})^A \times \{0_{V \setminus A}\}
\end{align}
is the sub-face of $\cE$ corresponding to the subset $A\subset V$; it should not be confused with $\cE^A= \RR^A\setminus \{0_A\}$.
For convenience we summarize a few of these reformulations as follows. Figure~\ref{fig:EH} provides an illustration.
\begin{corollary} \label{cor:indepRephrase}
Consider a partition $(A,B)$ of $V$ and assume~\eqref{eq:infinite0}. The following statements are equivalent. 
\begin{enumerate}[label=(\roman*)]
\item $\indepAB$
\item $\Lambda(\cE^A \times \cE^B)=\Lambda(y_A\neq 0_A \text{ and }\, y_B\neq 0_B )=0$
\item ${\rm supp}(\Lambda)\subset \cH_A\cup \cH_B$
\item $\Lambda_A=\Lambda_A^0$
\item $\Lambda_B=\Lambda_B^0$
\end{enumerate}
\end{corollary}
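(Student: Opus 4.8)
The plan is to prove Corollary~\ref{cor:indepRephrase} as a chain of elementary equivalences, leaning entirely on Proposition~\ref{prop:indep} for the analytical content and treating the remaining implications as bookkeeping about the support and the marginal/restricted measures. First I would record that (i) $\iff$ the event $\{y_A\neq 0_A \text{ and } y_B\neq 0_B\}$ is $\Lambda$-null — this is exactly Proposition~\ref{prop:indep}. Then I would observe that $\{y \in \cE : y_A \neq 0_A \text{ and } y_B \neq 0_B\}$ is precisely $\cE^A \times \cE^B$ (viewing $\cE$ as $\R^A \times \R^B$, the set where the $A$-block is a nonzero vector and the $B$-block is a nonzero vector), which gives (i) $\iff$ (ii) with essentially no work.

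Next I would handle (ii) $\iff$ (iii). Since $(A,B)$ partitions $V$, every $y \in \cE$ falls into exactly one of three categories: $y_A = 0_A$ (hence $y_B \neq 0_B$, so $y \in \cE^B$, and since $B = V \setminus A$ this means $y \in \cH_A$), or $y_B = 0_B$ (symmetrically $y \in \cH_B$), or both $y_A \neq 0_A$ and $y_B \neq 0_B$ (i.e. $y \in \cE^A \times \cE^B$). In other words $\cE = \cH_A \cup \cH_B \cup (\cE^A \times \cE^B)$ is a disjoint union (using the description $\cH_D = \bigcup_{\emptyset \neq A' \subset D}\cE_{A'}$ from~\eqref{eq:face}, with the convention that $\cH_A$ collects exactly the sub-faces indexed by nonempty subsets of $A$). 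Hence $\Lambda(\cE^A \times \cE^B) = 0$ is equivalent to $\Lambda$ being carried by $\cH_A \cup \cH_B$, i.e. to $\mathrm{supp}(\Lambda) \subset \cH_A \cup \cH_B$; here I would use that $\cH_A \cup \cH_B$ is closed in $\cE$, so that the statement "$\Lambda$ gives no mass to the (relatively open) complement $\cE^A \times \cE^B$" is genuinely the same as the support inclusion.

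For (ii) $\iff$ (iv) (and symmetrically (v)), recall the definitions $\Lambda_A(\cdot) = \Lambda(y_A \in \cdot)$ and $\Lambda_A^0(\cdot) = \Lambda(y_A \in \cdot,\, y_B = 0_B)$ from~\eqref{eq:Lambda0}, both measures on $\cE^A = \R^A \setminus \{0_A\}$. Their difference is the measure $B \mapsto \Lambda(y_A \in B,\, y_B \neq 0_B)$ on $\cE^A$; restricting to $\cE^A$ automatically forces $y_A \neq 0_A$, so the total mass of this difference measure is $\Lambda(y_A \neq 0_A,\, y_B \neq 0_B) = \Lambda(\cE^A \times \cE^B)$. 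Since $\Lambda_A^0 \leq \Lambda_A$ always, $\Lambda_A = \Lambda_A^0$ holds if and only if this difference measure is zero, i.e. if and only if $\Lambda(\cE^A \times \cE^B) = 0$, which is (ii). The argument for (v) is identical after swapping $A$ and $B$. Assembling, (i) $\iff$ (ii) $\iff$ (iii), (ii) $\iff$ (iv), and (ii) $\iff$ (v), so all five are equivalent.

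I do not anticipate a serious obstacle: the only non-formal input is Proposition~\ref{prop:indep} (which already uses~\eqref{eq:infinite0}), and everything else is a matter of carefully unravelling set-theoretic definitions. The one place to be slightly careful is the support statement (iii): I should make sure the decomposition $\cE = \cH_A \sqcup \cH_B \sqcup (\cE^A\times\cE^B)$ is stated correctly (in particular that $\cH_A \cap \cH_B = \emptyset$ because a point of $\cE$ cannot have both $y_A = 0_A$ and $y_B = 0_B$), and that passing between "$\Lambda$ null on an open set" and "support disjoint from that open set" is legitimate — which it is, since the support is by definition the complement of the largest open $\Lambda$-null set. A remark that $A$ or $B$ empty is excluded because $(A,B)$ is assumed to be a partition of $V$ (and $V \neq \emptyset$) would round things off, though strictly the statements remain vacuously consistent in degenerate cases.
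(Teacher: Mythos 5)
Your proof is correct and follows exactly the route the paper intends: the corollary is presented there as an immediate rephrasing of Proposition~\ref{prop:indep}, with (ii)--(v) obtained by the same set-theoretic decomposition $\cE=\cH_A\sqcup\cH_B\sqcup(\cE^A\times\cE^B)$ and the observation that $\Lambda_A-\Lambda_A^0$ is the ($\sigma$-finite, by~\eqref{eq:Lambda}) measure $E\mapsto\Lambda(y_A\in E,\,y_B\neq 0_B)$. One cosmetic slip: with the paper's convention $\cH_D=\{y\in\cE:\,y_{V\setminus D}=0_{V\setminus D}\}$, a point with $y_A=0_A$ lies in $\cH_B$ rather than $\cH_A$ (and symmetrically), but since only the union $\cH_A\cup\cH_B$ enters, the argument is unaffected.
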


The independence characterization of Proposition~\ref{prop:indep} (or its extended reformulation Corollary~\ref{cor:indepRephrase}) only treats the case where $(A,B)$ is a partition of $V$. Suppose that this is not the case and denote by $(A,B,C)$ a partition of $V$. We recall that by~\eqref{subset_equivalenece}, $\indepAB$ is equivalent to $\indepABnonpar$ and that \eqref{eq:infinite0} for $\Lambda$ implies \eqref{eq:infinite0} for $\Lambda_{A \cup B}$. Thus, applying Proposition~\ref{prop:indep} to the measure $\Lambda_{A \cup B}$, we obtain 
\begin{align} \label{eq:Inopartition} \indepAB \,\,\,\, \iff  \,\,\,\, \Lambda(y_A\neq 0_A, \, y_B\neq 0_B )=0 \,\,\,\, \iff \,\,\,\, {\rm supp}(\Lambda)\subset \cH_{A \cup C}\cup \cH_{B \cup C}.
\end{align}
Due to Lemma~\ref{lem:Lambda0}, we will often work with the statement $\indepABzero$. 
If \eqref{eq:infinite0} holds for the reduced measure $\Lambda^0_{A \cup B}$,
Proposition~\ref{prop:indep} translates into the equivalence
\begin{align*}
\indepABzero \quad &\iff \quad \Lambda(y_A\neq 0_A, \, y_B\neq 0_B, \, y_C=0_C )=0\\
&\iff \quad {\rm supp}(\Lambda)
\subset 
\cH_A\cup \cH_B \cup (\cE \setminus \cH_{A \cup B}).
\end{align*}
When translating items (iv) and (v) from Corollary~\ref{cor:indepRephrase} to these situations, where $A$ and $B$ are not a partition of $V$, but only disjoint, it is useful to take Lemma~\ref{lemma:marginalreducedcompatibility} from Appendix~\ref{app:basic} into account.

\begin{figure}[b!]
  \mbox{\small
        \begin{tikzpicture}[scale = 1.5, >=stealth]
      \coordinate (O) at (0,0,0);
      \coordinate (e1+) at (0,0,1);
      \coordinate (e2+) at (1,0,0);
      \coordinate (e3+) at (0,1,0);
      \coordinate (e1-) at (0,0,-1);
      \coordinate (e2-) at (-1,0,0);
      \coordinate (e3-) at (0,-1,0);
      \coordinate (e1+2+) at (1,0,1);
      \coordinate (e1-2-) at (-1,0,-1);
      \coordinate (e1+2-) at (-1,0,1);
      \coordinate (e1-2+) at (1,0,-1);
      \coordinate (e2+3+) at (1,1,0);
      \coordinate (e2-3-) at (-1,-1,0);
      \coordinate (e2-3+) at (-1,1,0);
      \coordinate (e2+3-) at (1,-1,0);
      \filldraw[dashed, color=red, fill=red!10] (e1+2+) -- (e1+2-) -- (e1-2-) -- (e1-2+) -- (e1+2+) node[anchor=north east, xshift=1cm]{$\cH_{\{1,2\}}$};
      \draw[thick, red, ->] (e1-) to (e1+);
      \draw[thick, red, ->] (e2-) to (e2+);
      \draw[thick, blue, ->] (e3-) to (e3+) node[anchor=west]{$\cH_{\{3\}}$} ;
      \filldraw(O) circle (1pt) node[anchor=south east] {$0$};
        \end{tikzpicture}
        \hspace{5mm}
     \begin{tikzpicture}[scale = 1.5, >=stealth]
       \filldraw[dashed, color=red, fill=red!10]   (e1-2+) -- (e2+) -- (e2-) -- (e1-2-) --  (e1-2+)  node[anchor=west]{$\cH_{\{1,2\}}$};
       \draw[thick, red] (e1-) to (O);
       \filldraw[dashed, color=blue, fill=blue!10, opacity=0.75] (e2+3+) -- (e2-3+) -- (e2-) -- (e2+) -- (e2+3+);
              \filldraw[dashed, color=blue, fill=blue!10, opacity=0.9] (e2+3-) -- (e2-3-) -- (e2-) -- (e2+) -- (e2+3-) node[anchor=west]{$\cH_{\{2,3\}}$};
              \draw[thick, blue, ->] (e3-) to (e3+);
              \filldraw[dashed, color=red, fill=red!10, opacity=0.75]  (e1+2+) -- (e2+) -- (e2-) -- (e1+2-) --  (e1+2+);
              \draw[thick, blue!50!red, ->] (e2-) to (e2+);
      \draw[thick, red, ->] (O) to (e1+);
      \filldraw(O) circle (1pt) node[anchor=south east] {$0$};
            \end{tikzpicture}
 }   

  \caption{\small  Illustration of the sets $\cH_D$ for $V=\{1,2,3\}$. No mass of $\Lambda$ outside of $\cH_{\{1,2\}}$ and $\cH_{\{3\}}$ corresponds to $\{1,2\} \indep \{3\}\; [\Lambda]$, see Proposition~\ref{prop:indep}. No mass of $\Lambda$ outside of $\cH_{\{1,2\}}$ and $\cH_{\{2,3\}}$ corresponds to $\{1\} \indep \{3\}\; [\Lambda]$, see~\eqref{eq:Inopartition}.}
  \label{fig:EH}
\end{figure}
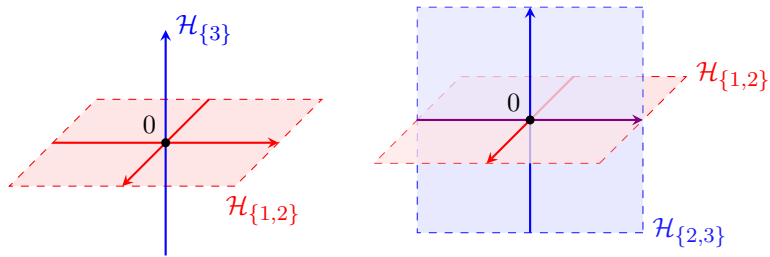

\subsection{Contraction semi-graphoid property (L4)}
We are now in a position to establish the remaining contraction semi-graphoid property (L4). In order to do so, we need to strengthen the explosiveness assumption~\eqref{eq:infinite0}, so that it also holds for all underlying restricted measures:
\begin{align}\tag{E1}\label{eq:infinite}
\Lambda^0_D\big(y_d\neq 0\big)&=(\Lambda^0_D)_{\{d\}}\big(\cE^{\{d\}}\big)\\
\notag
&=\Lambda\big(\cE^{\{d\}} \times \RR^{D \setminus \{d\}} \times \{0_{V \setminus D}\}\big) \in\{0,\infty\}
\qquad \text{for all }d \in D\subset V.
\end{align} 
Assumption~\eqref{eq:infinite} means that all one-dimensional marginals $(\Lambda^0_D)_{\{d\}}$ of all restricted measures $\Lambda^0_D$  (including $\Lambda=\Lambda^0_V$) are either infinite or zero measures. 
This necessarily entails that also all one-dimensional marginals 
$(\Lambda_D)_{\{d\}}=\Lambda_{\{d\}}$
of all marginal measures $\Lambda_D$ are infinite or zero measures, hence \eqref{eq:infinite0}. 

\begin{theorem}\label{thm:L4}
Under Assumption~\eqref{eq:infinite} conditional independence with respect to $\Lambda$ as defined in Definition~\ref{def:CILambda} satisfies all four semi-graphoid properties (L1)--(L4). 
\end{theorem}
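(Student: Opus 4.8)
By Proposition~\ref{prop:LauritzenEasyPart} the properties (L1)--(L3) already hold for arbitrary $\Lambda$, so the only task is to establish the contraction property (L4) under the explosiveness assumption~\eqref{eq:infinite}. Thus assume $A\indep B\mid C\;[\Lambda]$ and $A\indep D\mid B\cup C\;[\Lambda]$ for disjoint sets $A,B,C,D\subset V$, and aim to prove $A\indep (B\cup D)\mid C\;[\Lambda]$. By the equivalence~\eqref{subset_equivalenece} we may assume without loss of generality that $(A,B,C,D)$ is a partition of $V$, replacing $V$ by $A\cup B\cup C\cup D$ if necessary; note that~\eqref{eq:infinite} passes to the marginal measure on this smaller index set. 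I would treat two regimes separately, according to whether the conditioning set in the conclusion is empty ($C=\emptyset$) or not.

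\textbf{Case $C=\emptyset$ via the support characterization.} When $C=\emptyset$, the hypotheses read $A\indep B\;[\Lambda]$ and $A\indep D\mid B\;[\Lambda]$, and the goal is $A\indep (B\cup D)\;[\Lambda]$. By Proposition~\ref{prop:indep} (applied to the marginal measure $\Lambda_{A\cup B}$ and $\Lambda_{A\cup B\cup D}$, respectively, which inherit~\eqref{eq:infinite0}), it suffices to show that $\Lambda(y_A\neq 0_A,\ y_{B\cup D}\neq 0_{B\cup D})=0$. From $A\indep B\;[\Lambda]$ and Proposition~\ref{prop:indep} we already know $\Lambda(y_A\neq 0_A,\ y_B\neq 0_B)=0$, so it remains to rule out mass on $\{y_A\neq 0_A,\ y_B=0_B,\ y_D\neq 0_D\}$. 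Here I would invoke Lemma~\ref{lem:Lambda0} (in the Remark~\ref{rk:CIreduced} form) applied to the second hypothesis $A\indep D\mid B\;[\Lambda]$ with $C'=\emptyset\subset B$: this gives $A\indep D\;[\Lambda^0_{A\cup D}]$, i.e.\ independence of $A$ and $D$ for the restricted measure that lives on $\{y_B=0_B\}$ (after also restricting $y_C$, but $C=\emptyset$). Since~\eqref{eq:infinite} guarantees~\eqref{eq:infinite0} for $\Lambda^0_{A\cup D}$, Proposition~\ref{prop:indep} forces $\Lambda^0_{A\cup D}(y_A\neq 0_A,\ y_D\neq 0_D)=0$, which is exactly the missing piece $\Lambda(y_A\neq 0_A,\ y_B=0_B,\ y_D\neq 0_D)=0$. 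Combining the two null sets yields the claim.

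\textbf{Case $C\neq\emptyset$ via test classes and the kernel.} When $C\neq\emptyset$ I would argue via Theorem~\ref{thm:test_class}(ii): to conclude $A\indep(B\cup D)\mid C\;[\Lambda]$ it is enough to verify (a) $Y_A\indepp Y_{B\cup D}\mid Y_C$ for $Y\sim\L_{R_{c,\eps}}$ for every $c\in C$, $\eps>0$, and (b) $A\indep(B\cup D)\;[\Lambda^0_{A\cup B\cup D}]$. For (b), observe that $\Lambda^0_{A\cup B\cup D}$ is itself a measure satisfying~\eqref{eq:infinite} (it is a restriction), and that the two hypotheses restrict to $A\indep B\;[\Lambda^0_{A\cup B\cup D}]$ and $A\indep D\mid B\;[\Lambda^0_{A\cup B\cup D}]$ — the first by Lemma~\ref{lem:Lambda0} with $C'=\emptyset$, the second by the Remark~\ref{rk:CIreduced} form with $C'=\emptyset\subset C$, followed by the marginal-compatibility equivalence~\eqref{subset_equivalenece}. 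So (b) is exactly the already-settled $C=\emptyset$ case applied to $\Lambda^0_{A\cup B\cup D}$. For (a), fix $c\in C$, $\eps>0$ and write $\p$ for $\L_{R_{c,\eps}}$: the first hypothesis gives (via Theorem~\ref{thm:test_class}(i), which is implied since $R_{c,\eps}\in\cR(\Lambda)$) $Y_A\indepp Y_B\mid Y_C$, and the second gives $Y_A\indepp Y_D\mid Y_{B\cup C}$; classical contraction for the probability measure $\p$ (property (L4) for probability laws, e.g.\ \citealp[Chapter~3]{Lauritzen}) then yields $Y_A\indepp Y_{B\cup D}\mid Y_C$. This establishes (a), and Theorem~\ref{thm:test_class}(ii) closes the argument.

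\textbf{Main obstacle.} The routine half is (a): once one is restricted to a single probability measure $\L_{R_{c,\eps}}$, the classical semi-graphoid contraction applies verbatim. The genuinely new content — and the place where assumption~\eqref{eq:infinite}, as opposed to merely~\eqref{eq:infinite0}, is indispensable — is the $C=\emptyset$ base case and, through it, part (b): one must convert a \emph{conditional} independence statement ($A\indep D\mid B$) into an \emph{unconditional} support statement on the restricted measure $\{y_B=0_B\}$, and this conversion relies on Proposition~\ref{prop:indep} being available for the restricted measures $\Lambda^0_{A\cup D}$ and $\Lambda^0_{A\cup B\cup D}$, which is precisely what~\eqref{eq:infinite} buys. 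Example~\ref{ex:violation} shows that without this (for a finite measure, where~\eqref{eq:infinite} fails badly) the implication is genuinely false, so the care in bookkeeping which restricted measures inherit the explosiveness assumption is not a formality but the heart of the matter.
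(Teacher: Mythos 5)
Your proof is correct and follows essentially the same route as the paper's: properties (L1)--(L3) from Proposition~\ref{prop:LauritzenEasyPart}, the $C=\emptyset$ base case via the support characterization of Proposition~\ref{prop:indep} together with Lemma~\ref{lem:Lambda0}, and the $C\neq\emptyset$ case via Theorem~\ref{thm:test_class}(ii), reducing the restricted-measure part to the base case applied to $\Lambda^0_{A\cup B\cup D}$ and handling the test sets $R_{c,\eps}$ by classical contraction. One bookkeeping slip: to get $A \indep B\;[\Lambda^0_{A\cup B\cup D}]$ from $A \indep B \mid C\;[\Lambda]$ you cannot simply take $C'=\emptyset$ in Remark~\ref{rk:CIreduced}, which yields only the weaker $A \indep B\;[\Lambda^0_{A\cup B}]$ (restricting $y_D=0_D$ as well); the correct route, as in the paper, is to apply Lemma~\ref{lem:Lambda0} to the marginal $\Lambda_{A\cup B\cup C}$ and identify $(\Lambda_{A\cup B\cup C})^0_{A\cup B}=(\Lambda^0_{A\cup B\cup D})_{A\cup B}$ via Lemma~\ref{lemma:marginalreducedcompatibility}.
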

\begin{proof}
Properties (L1)--(L3) are already established in Proposition~\ref{prop:LauritzenEasyPart}.
To prove (L4), it is sufficient to assume that the union of $A,B,C,D$ is~$V$; see Lemma~\ref{lemma:marginalcompatibility}.

First we establish (L4) for $C=\emptyset$. That is, we show that for a partition $(A,B,D)$ of $V$
the statements $\indepAB$ and $A \indep D \mid B\;[\Lambda]$
imply $A \indep B\cup D\;[\Lambda]$.
According to Proposition~\ref{prop:indep} we need to show that $\Lambda$ has no mass outside of $\cH_A \cup \cH_{B\cup D}$.
Note that $\indepAB$ implies that there is no mass outside of $\cH_{A\cup D}\cup \cH_{B\cup D}$, whereas 
$A \indep D \mid B\;[\Lambda]$ implies $A \indep D\; [\Lambda^0_{A\cup D}]$, see Lemma~\ref{lem:Lambda0}, and thus all the mass of $\Lambda$ on $\cH_{A\cup D}$ must be contained in $\cH_A\cup \cH_D$. Summarizing, ${\rm supp} ({\Lambda}) \subset (\cH_A \cup \cH_D) \cup \cH_{B\cup D}= \cH_A \cup \cH_{B \cup D}$, completing the first part of the proof. Here we used the explosiveness assumption \eqref{eq:infinite0} for the measures $\Lambda_{A \cup B}$ and $\Lambda^0_{A\cup D}$, which follows from~\eqref{eq:infinite}.

Finally we prove the case $C\neq \emptyset$.
As a consequence of Lemma~\ref{lem:Lambda0} and Lemma~\ref{lemma:marginalreducedcompatibility} we see that $A \indep B \mid C\;[\Lambda]$ implies $A \indep B\;[\Lambda^0_{A\cup B \cup D}]$, whereas $A \indep D \,|\, B \cup C \, [\Lambda]$ implies $A \indep D \,|\, B\, [\Lambda^0_{A\cup B \cup D}]$ (see~Remark~\ref{rk:CIreduced}). Hence from the first step we readily get $A \indep (B\cup D) \, [\Lambda^0_{A\cup B \cup D}]$.
According to Theorem~\ref{thm:test_class}~(ii) it is left to consider $Y\sim \p_{R_{c,\eps}}$ for $c\in C$, $\eps>0$, and to establish $Y_A\indepp Y_{B\cup D}\mid Y_C$. But the assumption in (L4) implies $Y_A\indepp Y_{B}\mid Y_C$ (see Lemma~\ref{lemma:marginalcompatibility}) and $Y_A\indepp Y_{D}\mid Y_{B\cup C}$, and we apply the classical analogue of~(L4) to complete the proof.
\end{proof}

\begin{remark} \label{rk:A1}
At first sight, the explosiveness assumption~\eqref{eq:infinite} may seem restrictive. However, it is a very natural condition for exponent measures and L\'evy measures in the context of infinitely divisible distributions; see~Section~\ref{sec:motivation}. 
Importantly, for a $-\alpha$-homogeneous $\Lambda$ as in~\eqref{def:homog}, all measures $\Lambda^0_D$ are $-\alpha$-homogeneous and so Assumption~\eqref{eq:infinite} is satisfied.
In addition, the necessity of this assumption (see~Example~\ref{ex:violation}) underlines the special role of the origin in our setting, which is not apparent through the first three properties (L1)--(L3).
\end{remark}


\subsection{A modified density} \label{subsec:backtodensity}

Here we revisit the case when $\Lambda$ has a density $\lambda$ with respect to a product measure~$\mu=\bigotimes_{v \in V} \mu_v$.
Let us first review the independence situation.
 Under assumption~\eqref{eq:infinite0} the independence $A\indep B\; [\Lambda]$ is equivalent to $\Lambda(y_A\neq 0_A, y_B\neq 0_B)=0$.
 In this case 
\begin{align*}
\lambda_D(y_D) &= \lambda_D^0(y_D), && \text{for $\mu_D$-almost all } y_D \in \cE^D \text{ for } D \in \{A,B\},
\end{align*}
whereas $\lambda(y)=0$ for $\mu$-almost all $y=(y_A,y_B) \in \cE^A \times \cE^B$.
So, unless we have the degenerate situation that the full  measure $\Lambda$ is entirely concentrated either on $\cH_A$ only or on $\cH_B$ only, a factorization of the form
$\lambda(y) = \lambda_A(y_A)\lambda_B(y_B)$ for $\mu$-almost all $y \in \cE^A \times \cE^B$ cannot hold true. 
 Hence the density does not factorize on the product set where the marginals are defined.

{However, with a slight modification of the density, we can achieve a factorization, as long it is tested only on the correct subsets of $\RR^V$.}
To this end, we will assume $\mu_v(\{0_v\})>0$ for all $v\in V$, and we define for a non-empty $D\subset V$ the modified densities
\begin{align}\label{extended_dens}
\overline \lambda_D(y_D) = 
\begin{cases}
\lambda_D(y_D) & \text{for } y_D \in \cE^D=\R^D \setminus\{0_D\},\\
1/\mu_{D}(\{0_D\}) & \text{for } y_D=0_D,
\end{cases}
\end{align}
whereas $\olambda_\emptyset = 1$. In particular, $\lambda(y)=\olambda(y)$ for all $y \in \cE=\RR^V\setminus \{0_V\}$.
As a special case of the following result we will see that $A\indep B\;[\Lambda]$ is equivalent to the factorization
\[\overline\lambda(y)=\overline\lambda_{A}(y_{A})\overline\lambda_{B}(y_{B})\]
for $\mu$-almost all $y\in\cE$ with either $y_A=0_A$ or $y_B=0_B$. 

\begin{proposition}\label{prop:trick}
Assume~\eqref{eq:infinite} and that $\Lambda$ has a $\mu$-density $\lambda$, where $\mu_v(\{0_v\})>0$ for all $v\in V$.
Then for a partition $(A,B,C)$ of $V$ with possibly empty $C$ the conditional independence $A\indep B\mid C\; [\Lambda]$ is equivalent to the factorization 
\begin{align}\label{eq:lambdabar-CI}
\overline\lambda(y)\overline\lambda_{C}(y_{C})=\overline\lambda_{A\cup C}(y_{A\cup C})\overline\lambda_{B\cup C}(y_{B\cup C})
\end{align}
for $\mu$-almost all $y\in\RR^V \setminus\{y:y_{A}\neq 0_A, y_B\neq 0_B,y_C=0_C\}$.  
\end{proposition}
\begin{proof}
For $y=0_V$ the statement \eqref{eq:lambdabar-CI} is trivial; it follows simply from the factorization of $\mu(\{0_V\})$ into the factors $\mu_{v}(\{0_v\})$, $v \in V$.
On the set $\{y_C\neq 0_C\}$ the factorization \eqref{eq:lambdabar-CI} is that of~$\lambda$ as in Theorem~\ref{thm:density_factorization}.
In view of Theorem~\ref{thm:density_factorization}
it is therefore sufficient to check that $A \indep B\; [\Lambda^0_{A\cup B}]$ is equivalent to the factorization of $\overline\lambda$ for $\mu$-almost all $y\neq 0_V$ with $y_{A\cup C}=0_{A\cup C}$ or $y_{B\cup C}=0_{B\cup C}$.

Indeed, according to Corollary~\ref{cor:indepRephrase} the independence $\indepABzero$ is equivalent to $\Lambda(y_A\neq 0_A,y_B\neq 0_B,y_C=0_C)=0$, which holds if and only if
\begin{align}\label{eq:lambdaAC0}
\lambda_{A \cup C}(y_{A \cup C})= \lambda_A^0(y_A)/\mu_C(\{0_C\}) &= \lambda(y) \mu_{B}(\{0_{B}\})
\end{align}
for $\mu$-almost all $y\neq 0_V$ with $y_{B\cup C}=0_{B\cup C}$, and the analogous statement with exchanged roles of $A$ and $B$ holds.
The equality \eqref{eq:lambdaAC0} can be rewritten as
\[\lambda(y)/\mu_C(\{0_C\})=\lambda_{A\cup C}(y_{A\cup C})/\mu_{B\cup C}(\{0_{B\cup C}\}),\]
which is the claimed factorization of $\overline\lambda$ on $\{y \in \cE : y_{B \cup C}=0_{B \cup C}\}$. The same argument applies with roles of $A$ and $B$ interchanged, and the proof is complete.
\end{proof}

\begin{remark}
If $\indepABC$, then $\Lambda(\{y:y_{A}\neq 0_A, y_B\neq 0_B,y_C=0_C\})=0$.
Proposition~\ref{prop:trick} does not need the entire extended explosiveness assumption~\eqref{eq:infinite}.
It is sufficient to assume \eqref{eq:infinite0} for the reduced measure $\Lambda^0_{A \cup B}$ instead, which is guaranteed by \eqref{eq:infinite}.
\end{remark}

\subsection{A generic construction} \label{subsec:CI3constr}
To conclude this section we give a constructive example of a measure $\Lambda$ in $d=3$ dimensions with index set $V = \{1,2,3\}$ such that the conditional independence $\{1\} \indep \{3\} \mid \{2\}\; [\Lambda]$ holds. As a dominating product measure $\mu$ we take
\begin{align}\label{product_measure}
  \mu(\D x_1,\ldots,\D x_d)=\bigotimes_{i=1}^d \mu_i(\D x_i)=
  \bigotimes_{i=1}^d(\D x_i+\delta_0(\D x_i)),
\end{align}
where $\delta_0$ is the point mass at~0. 
As building blocks, we employ two bivariate densities $\kappa_{ij}$ for $(i,j) \in \{ (1,2), (2,3)\}$ defined on $(0,\infty)^2$ with identical univariate marginal densities $\kappa_i(y_i)=m(y_i)$, $y_i \in (0,\infty)$, for all $i \in V$.  As in~\eqref {eq:Lambda} we further assume that the marginal measure on $(0,\infty)$ defined by the density $m$ puts finite mass on sets bounded away from 0.
For each of these bivariate densities $\kappa_{ij}$ we can add mass on the axes to define a density with respect to the measure $\mu_{ij}=\mu_i  \otimes \mu_j$ on $\cE_+^{ij}=[0,\infty)^{\{i,j\}} \setminus \{0_{\{i,j\}}\}$ by
\begin{align}\label{ext_density}
  \lambda_{ij}(y_{ij})=p_{ij}\ind{y_i,y_j\neq 0} \kappa_{ij}(y_{ij})+q_{ij}\ind{y_j=0}m(y_i)+q_{ij}\ind{y_i=0}m(y_j), \quad y_{ij} \in \cE_+^{ij},
\end{align}
where $p_{ij} \in [0,1]$ is a mixture probability and $q_{ij}=1-p_{ij}$; the case $p_{ij}=0$ implies independence between components $i$ and $j$ in the sense of Proposition~\ref{prop:indep}.

 By construction, the marginal densities of $\lambda_{ij}$ also satisfy $\lambda_i(y_i)=m(y_i)$ for $i \in V$. 
We can combine these two bivariate densities using the modified densities in~\eqref{extended_dens} into a trivariate density
\begin{align}\label{eq:lambdaCIconstr} 
  \lambda(y) = \overline \lambda_{12}(y_{12})\overline \lambda_{23}(y_{23})/ \overline \lambda_2(y_2),
\end{align}
for any $y\in\cE_+\setminus\{y:y_{1}\neq 0, y_3\neq 0,y_2=0\}$, and $\lambda(y) = 0$ otherwise. 
To be more precise, we can write this out in terms of the densities $\lambda_{ij}$ and obtain for any $y \in \cE_+$
\begin{align*}
  \lambda(y)&=\lambda_{12}(y_{12})\lambda_{23}(y_{23})/ m(y_2), && y_2\neq 0,\\
  \lambda(y_1,0,0)&=\lambda_{12}(y_{1},0), && y_1 \neq 0,\\ 
  \lambda(0,0,y_3)&=\lambda_{23}(0,y_{3}), && y_3 \neq 0,\\
  \lambda(y_1,0,y_3)&= 0, && y_1 \neq 0,\, y_3 \neq 0.
\end{align*}
The corresponding measure is determined for any Borel set $A\subset \cE_+$, bounded away from the origin, by
\[ \Lambda(A) = \int_A  \lambda(y) \mu(\D y).\]
By construction, $\Lambda$ has identical marginals $\Lambda(y_i>u) = \int_u^\infty m(y) \D y$, $u>0$, $i \in V$, and it satisfies the finiteness condition~\eqref{eq:Lambda}. Indeed, for any set $A\subset \mathcal E_+$ bounded away from the origin there exists $\epsilon>0$ such that $A \subset L_\epsilon = \mathcal E_+ \setminus [0,\epsilon]^d$, and therefore
\[\Lambda(A) \leq \Lambda(L_\epsilon) \leq \sum_{i\in V} \Lambda(y_i > \epsilon) = d  \int_\epsilon^\infty m(y) \D y < \infty. \]
The measure $\Lambda$ also satisfies the conditional independence $\{1\} \indep \{3\} \mid \{2\}\; [\Lambda]$ according to~Proposition~\ref{prop:trick}.
If $0 < p_{12}, p_{23} < 1$ then the density of $\Lambda$ puts mass on every sub-face of $\cE_+=[0,\infty)^3\setminus\{0\}$ except for $\{y : y_1>0, y_2=0, y_3>0\}$. Indeed, the latter is the only sub-face that cannot contain mass in this situation if $\Lambda$ satisfies the explosiveness assumption \eqref{eq:infinite}; see~Lemma~\ref{lem:Lambda0} and  Proposition~\ref{prop:indep}. Further details and an illustration of this construction are provided in Appendix~\ref{app:CIconstr}.
Concrete examples of this construction involving either the homogeneous measure from Example~\ref{ex:HR} or the non-homogeneous measure from Example~\ref{ex:gauss} will be given in Sections~\ref{sec:CIextremes} and~\ref{sec:AI}, respectively.

\section{Undirected graphical models}\label{sec:undirected}

\subsection{Fundamentals}
For the fundamental definitions we follow again  the axiomatic approach to conditional independence as in~\citet{Lauritzen}, from which we also adopt relevant notions that characterize relations in a graph.
Let our index set $V$ coincide with the vertex set (node set) of an undirected graph $\cG = (V,E)$ with set of edges $E$. 
The subset $C$ is said to \emph{separate} $A$ from $B$ if all paths from any of the nodes of $A$ to any of the nodes of $B$ necessarily intersect with $C$.

\begin{definition}\label{def:global}
The measure $\Lambda$ satisfies the \emph{global Markov property} with respect to an undirected graph $\mathcal G$ if $A\indep B\mid C\;[\Lambda]$ for any 
disjoint subsets $A,B,C$ where (possibly empty) $C$ separates $A$ from $B$. In this case we say that $\Lambda$ is an undirected graphical model on $\cG$. 
\end{definition}

\begin{remark}\label{rk:GMP}
In Definition~\ref{def:global}, it is sufficient to consider only partitions $(A,B,C)$ of $V$. If $(A,B,C)$ is instead a triplet of disjoint subsets  of $V$ not forming a partition, one can enlarge the subsets $A$ and $B$ so that separation still holds, i.e.\ there exist subsets $A',B'$, such that $(A',B',C)$ forms a partition of $V$ with $A \subset A'$, $B \subset B'$ and $C$ separates $A'$ and $B'$; and according to the semi-graphoid property (L2) $A'\indep B'\mid C\;[\Lambda]$ implies $A\indep B\mid C\;[\Lambda]$.
\end{remark}

Throughout this section we will further assume that the measure $\Lambda$ satisfies our fundamental explosiveness assumption~\eqref{eq:infinite}. This allows us to apply the independence characterization in Proposition~\ref{prop:indep} to any restricted measure $\Lambda^0_D$ for $\emptyset \neq D \subset V$ and is important as the condition $A \indep B  \, [\Lambda^0_{A\cup B}]$ appears always when checking conditional independence statements; see Section~\ref{sec:alternative}. It has critical implications on the support of~$\Lambda$.  The following simple result is a direct consequence of Lemma~\ref{lem:Lambda0} and Proposition~\ref{prop:indep} and helps understanding where $\Lambda$ is allowed to have mass.

\begin{corollary}\label{cor:mass0}
Assume~\eqref{eq:infinite} and that $\Lambda$ is globally Markov with respect to an undirected graph $\mathcal G$.  
Then
$\Lambda(y_A\neq 0_A, \, y_B\neq 0_B, \, y_C=0_C )=0$
for any 
disjoint subsets $A,B,C$ where (possibly empty) $C$ separates $A$ from $B$. 
\end{corollary}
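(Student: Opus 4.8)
The plan is to reduce the statement to an application of Lemma~\ref{lem:Lambda0} combined with the support characterization of independence in the no-partition case, i.e.\ equation~\eqref{eq:Inopartition}, together with the observation in the remark following Proposition~\ref{prop:trick} that $\indepABC$ forces $\Lambda(\{y:y_A\neq 0_A,\,y_B\neq 0_B,\,y_C=0_C\})=0$. Concretely, suppose $A,B,C$ are disjoint with $C$ separating $A$ from $B$ in $\cG$. By Definition~\ref{def:global} we have $A\indep B\mid C\;[\Lambda]$.

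First I would invoke Lemma~\ref{lem:Lambda0}, which gives $A\indep B\;[\Lambda^0_{A\cup B}]$. Since~\eqref{eq:infinite} holds for $\Lambda$, it holds in particular for the reduced measure $\Lambda^0_{A\cup B}$, so we may apply Proposition~\ref{prop:indep} (in the form of Corollary~\ref{cor:indepRephrase}) to $\Lambda^0_{A\cup B}$. Unwinding the definition of $\Lambda^0_{A\cup B}$ in~\eqref{eq:Lambda0}, the conclusion $\Lambda^0_{A\cup B}(y_A\neq 0_A\text{ and }y_B\neq 0_B)=0$ is literally the statement $\Lambda(y_A\neq 0_A,\,y_B\neq 0_B,\,y_{V\setminus(A\cup B)}=0_{V\setminus(A\cup B)})=0$. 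If $C=V\setminus(A\cup B)$ this is exactly the claim. If $(A,B,C)$ is not a partition of $V$, then one still needs to pass from $y_{V\setminus(A\cup B)}=0$ to the weaker event $y_C=0_C$; but this is immediate because $\{y_C=0_C\}\supset\{y_{V\setminus(A\cup B)}=0_{V\setminus(A\cup B)}\}$ is false in the wrong direction — so instead I would argue directly via Remark~\ref{rk:CIreduced}, which states that $A\indep B\mid C\;[\Lambda]$ implies $A\indep B\mid C'\;[\Lambda^0_{A\cup B\cup C'}]$ for any $C'\subset C$; taking $C'=C$ and then applying Proposition~\ref{prop:indep} to $\Lambda^0_{A\cup B\cup C}$ (whose explosiveness follows from~\eqref{eq:infinite}) yields precisely $\Lambda(y_A\neq 0_A,\,y_B\neq 0_B,\,y_C=0_C)=0$, as in the displayed equivalence just before Remark~\ref{rk:A1}'s subsection.

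The only subtlety — and the place where a careless argument would slip — is keeping straight which reduced measure the explosiveness assumption is being applied to, and making sure that the event "$y_C=0_C$" (rather than "$y_{V\setminus(A\cup B)}=0$") is the one that comes out. This is handled cleanly by working with $\Lambda^0_{A\cup B\cup C}$ via Remark~\ref{rk:CIreduced} rather than with $\Lambda^0_{A\cup B}$, and by noting that~\eqref{eq:infinite} was explicitly designed so that all the restricted measures $\Lambda^0_D$, $D\subset V$, inherit the dichotomy needed for Proposition~\ref{prop:indep}. I expect the whole proof to be three or four lines, with no real obstacle: it is an assembly of Lemma~\ref{lem:Lambda0}, Remark~\ref{rk:CIreduced}, and Proposition~\ref{prop:indep}, exactly as advertised in the sentence introducing the corollary. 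The proof I would write:

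\begin{proof}
Let $A,B,C$ be disjoint subsets of $V$ with (possibly empty) $C$ separating $A$ from $B$, so that $A\indep B\mid C\;[\Lambda]$ by the global Markov property. By Remark~\ref{rk:CIreduced} this implies $A\indep B\mid C\;[\Lambda^0_{A\cup B\cup C}]$, and by Lemma~\ref{lem:Lambda0} applied to the measure $\Lambda^0_{A\cup B\cup C}$ (with conditioning set $C$) we obtain $A\indep B\mid C\;[\Lambda^0_{A\cup B\cup C}]$ once more; in any case, Lemma~\ref{lem:Lambda0} gives $A\indep B\;[\Lambda^{00}]$ where $\Lambda^{00}$ denotes the restriction of $\Lambda^0_{A\cup B\cup C}$ obtained by setting the $C$-coordinates to $0_C$. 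Since~\eqref{eq:infinite} holds for $\Lambda$, it holds for every restricted measure, in particular for $\Lambda^{00}$, so Proposition~\ref{prop:indep} applies and yields $\Lambda^{00}(y_A\neq 0_A\text{ and }y_B\neq 0_B)=0$. Unravelling the definitions~\eqref{eq:Lambda0}, this says exactly $\Lambda(y_A\neq 0_A,\,y_B\neq 0_B,\,y_C=0_C)=0$, as claimed.
\end{proof}
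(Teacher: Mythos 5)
Your argument is correct when $(A,B,C)$ is a partition of $V$, and there it coincides with what the paper intends (Lemma~\ref{lem:Lambda0} followed by Proposition~\ref{prop:indep} applied to $\Lambda^0_{A\cup B}$, whose explosiveness is supplied by~\eqref{eq:infinite}). The gap is in the non-partition case, and it sits precisely in your final sentence ``unravelling the definitions, this says exactly $\Lambda(y_A\neq 0_A, y_B\neq 0_B, y_C=0_C)=0$.'' It does not. The measure $\Lambda^{00}=(\Lambda^0_{A\cup B\cup C})^0_{A\cup B}$ is, by~\eqref{eq:Lambda0}, the restriction of $\Lambda$ to the event where \emph{all} coordinates outside $A\cup B$ vanish, so Proposition~\ref{prop:indep} applied to it yields only
\[
\Lambda\big(y_A\neq 0_A,\ y_B\neq 0_B,\ y_{V\setminus(A\cup B)}=0_{V\setminus(A\cup B)}\big)=0,
\]
which is strictly weaker than the claim whenever $A\cup B\cup C\neq V$: the target event leaves the coordinates in $V\setminus(A\cup B\cup C)$ completely free, and $\{y_C=0_C\}\supset\{y_{V\setminus(A\cup B)}=0\}$, so a null set of the smaller event says nothing about the larger one. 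You in fact flag exactly this inclusion problem in your preamble, but the proposed fix via Remark~\ref{rk:CIreduced} and the \emph{reduced} measure $\Lambda^0_{A\cup B\cup C}$ reproduces it one level up, because that reduced measure already pins $y_{V\setminus(A\cup B\cup C)}$ to zero before you ever get to apply Proposition~\ref{prop:indep}.

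The repair is to work with the \emph{marginal} measure rather than the reduced one: by~\eqref{subset_equivalenece}, $\indepABC$ is equivalent to $A\indep B\mid C\;[\Lambda_{A\cup B\cup C}]$, and $(A,B,C)$ is a partition of the index set of $\Lambda_{A\cup B\cup C}$. Applying Lemma~\ref{lem:Lambda0} and Proposition~\ref{prop:indep} to $\Lambda_{A\cup B\cup C}$ gives $(\Lambda_{A\cup B\cup C})^0_{A\cup B}(y_A\neq 0_A,\ y_B\neq 0_B)=0$, and this set pulls back under marginalization to exactly $\{y\in\cE: y_A\neq 0_A,\ y_B\neq 0_B,\ y_C=0_C\}$ with the remaining coordinates unconstrained; the required explosiveness of $(\Lambda_{A\cup B\cup C})^0_{A\cup B}=(\Lambda^0_{V\setminus C})_{A\cup B}$ follows from~\eqref{eq:infinite} via Lemma~\ref{lemma:marginalreducedcompatibility}. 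Equivalently, one can enlarge $A$ and $B$ to a partition $(A',B',C)$ of $V$ still separated by $C$ as in Remark~\ref{rk:GMP}, apply the partition case, and use $\{y_A\neq 0_A\}\subset\{y_{A'}\neq 0_{A'}\}$. Either route closes the gap; as written, your proof establishes the corollary only when $A\cup B\cup C=V$.
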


Assumption~\eqref{eq:infinite} ensures the validity of all four semi-graphoid properties (L1)--(L4); see~Theorem~\ref{thm:L4}. This implies that various standard results for conditional independence in the probabilistic setting carry over automatically to the $\Lambda$-based conditional independence, as they only rely on these properties. For example, the global Markov property implies the \emph{local Markov property}, which in turn implies the \emph{pairwise Markov property} \citep[see][Prop.\ 3.4]{Lauritzen}. 


The theory of conditional independence and graphical models in the case where $\Lambda$ has no mass on any of the lower-dimensional sets $\cH_D$ defined in~\eqref{eq:face} is similar to the classical theory of probabilistic graphical models, and issues arising from the special role of the origin can then be avoided.
Our main focus in this section is therefore on the general case with mass on several lower-dimensional sub-faces, which is highly desirable in applications for instance in extremes; see Section~\ref{sec:extremes}.

\subsection{Decomposable graphs} \label{sec:DecGraphs}

Decomposable (or chordal) graphs are an important sub-class of general graphical structures. A triple of disjoint subsets $(A,B,S)$ is a \emph{decomposition} of $\mathcal G$ if 
$A\cup B\cup S = V$ and a fully connected $S$ separates $A$ from~$B$. 
A graph is then called \emph{decomposable} if it can be reduced to a set of cliques $\mathcal C$ (i.e., maximal fully connected sub-graphs) via decompositions.
Importantly, a graph is decomposable if and only if it has  the following \emph{running intersection property} (e.g. \citet{Lauritzen}~Prop.~2.17 or \citet{maathuis2018handbook}~Section~1.6):
there exists an ordering of its cliques $C_1,\ldots, C_n$ such that for all $j<n$:
\begin{itemize}
\item $S_j:= C_{j+1}\cap D_j \subset C_i$ for some $i\leq j$, where $D_j = \bigcup_{i\leq j} C_i$;
\item the multiset $\cS= \{S_1,\ldots, S_{n-1}\}$ is independent of the ordering,\\ and it is comprised of all minimal fully connected separators of~$\mathcal G$.
\end{itemize}
We note that $\mathcal S$ is a multiset since it may contain the same set several times, and it contains the empty set if $\cG$ is disconnected.
Figures~\ref{fig:chordal} and \ref{fig:forest} illustrate two examples of such  graphs with their cliques and separators.

\begin{figure}[b!]
\tikzstyle{vertex}=[circle,fill=gray!10,draw=black,thin,minimum size=12pt,inner sep=0pt]
\tikzstyle{edge} = [draw,thick,-]

\mbox{\small   
\begin{tikzpicture}[scale=1]
    \foreach \pos/\name in {{(0.33,0.33)/1}, {(1,-1)/2},{(-0.33,-1)/3},{(2,0)/4},{(3,1)/5},{(1.66,1.33)/6},{(3,-1)/7}, {(4,0)/8}}
        \node[vertex] (\name) at \pos {\scriptsize $\name$};
    \foreach \source / \dest  in {1/2, 2/3, 2/4, 4/5, 4/6, 5/6, 4/7, 7/8, 5/8, 5/7, 4/8} 
        \path[edge] (\source) -- (\dest);
\end{tikzpicture}
}

         \caption{\small  
A decomposable graph with (maximal) cliques  $\cC=\{ \{1,2\}, \{2,3\}, \{2,4\}, \{4,5,6\}, \{4,5,7,8\}\}$ 
They are separated by the (minimal) separators  $\cS=\{\{2\},\{2\},\{4\}, \{4,5\}\}$. 
Note that $\cS$ is a multiset.
}
  \label{fig:chordal}
\end{figure}
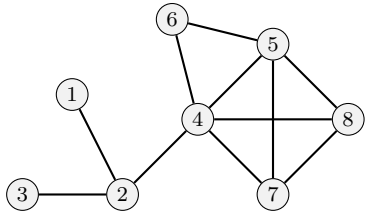

For classical probabilistic graphical models, the Hammersley--Clifford theorem \citep[][Theorem 3.9]{Lauritzen} states that the global Markov property on a decomposable graph is equivalent to the factorization of the density of the model into marginal densities on the cliques.
Considering instead the measure $\Lambda$ with density $\lambda$, we obtain a similar result. If all mass of $\Lambda$ is concentrated on $\cE_V=\{y : y_{i}\neq 0 \, \forall i \in V\}$, then the global Markov property in Definition~\ref{def:global} is equivalent to
\begin{align}\label{HC}
\lambda(y)\prod_{S\in\mathcal S}\lambda_S(y_S) = \prod_{C\in\mathcal C}\lambda_C(y_C),
\end{align}
for $\mu$-almost all $y\in \cE$ with $y_{i}\neq 0$ for all $i\in V$.
This statement is a special case of the much more general Theorem~\ref{thm:decomposable} below;
see also \citet[][Theorem 1]{eng2018} who showed such as factorization for homogeneous measures $\Lambda$ in the context of extreme value theory.

As we turn to the general case with possible mass on several lower-dimensional subsets $\cH_D$, $D \subset V$, an interesting twist of the $\Lambda$-conditional independence theory appears compared to the classical probabilistic theory.
The factorization~\eqref{HC} for $\lambda$ is in general no longer equivalent to the global Markov property and the modified densities $\olambda$ defined in~\eqref{extended_dens} become instrumental.
Our main theorem of this section is a characterization of the global Markov property of $\Lambda$ in the spirit the classical Hammersley--Clifford theorem. For its proof, we need an auxiliary result, Proposition~\ref{prop:recursive_global} in Appendix~\ref{app:recursive_global}, which allows us to derive the global Markov property from a decomposition into a globally Markov sub-graph and a clique. Its proof is based on the four semi-graphoid properties (L1)--(L4), and therefore Assumption~\eqref{eq:infinite} is critical again.
We define for an undirected graph $\cG=(V,E)$ the set
\[\mathcal Z = \mathcal Z(\mathcal G) = \bigcup_{\substack{
\text{$a,b \in V,\, S \in \cS$:}\\ \text{$S$ separates  $a$ from $b$}}} \{y \in \R^V : y_a \neq 0, y_b\neq 0, y_S=0_S\},\]
and observe that $\Lambda$ being globally Markov for $\cG$ implies $\Lambda(\mathcal Z) = 0$ by Corollary~\ref{cor:mass0}.

\begin{theorem}\label{thm:decomposable}
Assume that $\Lambda$ satisfying \eqref{eq:infinite} has a density $\lambda$ with respect to a product measure~$\mu$ with $\mu_v(\{0_v\})>0$, and the graph $\mathcal G$ is decomposable. 
Then $\Lambda$ is globally Markov if and only if
\begin{align}\label{eq:u_dec}
\olambda(y)\prod_{S\in\mathcal S}\olambda_S(y_S) = \prod_{C\in\mathcal C}\olambda_C(y_C)
\end{align} 
for $\mu$-almost all $y\notin \mathcal Z(\mathcal G)$, and $\olambda_\emptyset=1$ by convention. 
\end{theorem}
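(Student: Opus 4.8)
The plan is to prove both directions by induction on the number of cliques $n$, using the running intersection property and the auxiliary decomposition result Proposition~\ref{prop:recursive_global}. I would first dispose of the base case $n=1$, where $\cG$ is a single clique $V$, so $\cS$ is empty, the factorization~\eqref{eq:u_dec} reads $\olambda(y)=\olambda_V(y)$, which is a tautology, and the global Markov property holds trivially since no non-trivial separation exists. For the inductive step, write $\cG$ via the running intersection property with cliques $C_1,\dots,C_n$, set $D=D_{n-1}=C_1\cup\dots\cup C_{n-1}$, $C=C_n$, $S=S_{n-1}=C\cap D$, and let $A=D\setminus S$, $B=C\setminus S$, so that $(A,B,S)$ is a decomposition of $\cG$ with $S$ fully connected. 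Let $\cG_D$ be the induced subgraph on $D$ (also decomposable, with cliques $C_1,\dots,C_{n-1}$ and separator multiset $\cS\setminus\{S\}$).

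For the ``only if'' direction, assume $\Lambda$ is globally Markov for $\cG$. Then $\Lambda_D$ (the $D$-marginal) is globally Markov for $\cG_D$ — this uses the semi-graphoid properties together with the marginal compatibility in Lemma~\ref{lemma:marginalcompatibility}, and Assumption~\eqref{eq:infinite} which passes to $\Lambda_D$ and all its restrictions. By induction, $\olambda_D$ factorizes as in~\eqref{eq:u_dec} over $\cG_D$ off $\cZ(\cG_D)$. Separately, $S$ separates $A$ from $B$, so $A\indep B\mid S\;[\Lambda]$; applying Theorem~\ref{thm:density_factorization} together with the modified-density version Proposition~\ref{prop:trick} gives $\olambda(y)\olambda_S(y_S)=\olambda_{A\cup S}(y_{A\cup S})\olambda_{B\cup S}(y_{B\cup S})=\olambda_D(y_D)\olambda_C(y_C)$ for $\mu$-almost all $y\notin\{y_A\neq 0_A,y_B\neq 0_B,y_S=0_S\}$. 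Substituting the factorization of $\olambda_D$ and checking that $\cZ(\cG)=\cZ(\cG_D)\cup\{y_A\neq 0_A,y_B\neq 0_B,y_S=0_S\}$ up to the relevant indexing (every separator in $\cS$ either already appears in $\cS\setminus\{S\}$ or is $S$ itself, and separation in $\cG$ restricted to $D$ coincides with separation in $\cG_D$), yields~\eqref{eq:u_dec} off $\cZ(\cG)$.

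For the ``if'' direction, assume~\eqref{eq:u_dec} holds off $\cZ(\cG)$. Integrating out $y_B=y_{C\setminus S}$ (using $\mu_{C\setminus S}$) and tracking the modified-density bookkeeping, I would recover that $\olambda_D$ satisfies the factorization~\eqref{eq:u_dec} for $\cG_D$ off $\cZ(\cG_D)$, so by induction $\Lambda_D$ is globally Markov for $\cG_D$. On the other hand, the full factorization~\eqref{eq:u_dec} groups as $\olambda(y)\olambda_S(y_S)=\olambda_D(y_D)\olambda_C(y_C)$ off $\cZ(\cG)$, which by Proposition~\ref{prop:trick} (the equivalence between the modified-density factorization and conditional independence, valid under~\eqref{eq:infinite}) gives $A\indep B\mid S\;[\Lambda]$, i.e.\ $(D\setminus S)\indep(C\setminus S)\mid S\;[\Lambda]$. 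Now invoke Proposition~\ref{prop:recursive_global}: since $\Lambda_D$ is globally Markov for $\cG_D$, $C$ is a clique attached to $\cG_D$ along the fully connected separator $S$, and the single conditional independence across the decomposition holds, $\Lambda$ is globally Markov for $\cG$.

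The main obstacle I expect is the measure-theoretic bookkeeping of the modified densities across the two fronts — specifically, (a) verifying the exceptional set identity $\cZ(\cG)=\cZ(\cG_D)\cup\{y\in\R^V:y_A\neq 0_A,y_B\neq 0_B,y_S=0_S\}$ and that the factorization statements are genuinely $\mu$-a.e.\ equivalent after marginalizing over $y_{C\setminus S}$ (one must ensure no mass is hidden on sub-faces where $\olambda$ was defined by the $1/\mu_D(\{0_D\})$ convention rather than by $\lambda_D$), and (b) confirming that passing from $\Lambda$ to $\Lambda_D$ preserves both~\eqref{eq:infinite} and decomposability of the induced subgraph with the correct separator multiset. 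Both are routine given Appendix~\ref{app:basic} and Proposition~\ref{prop:trick}, but they require care because the modified density $\olambda_D$ is not simply the $D$-marginal of $\olambda$ — the relation $\olambda_D(y_D)=\int\olambda(y_D,y_{V\setminus D})\,\mu_{V\setminus D}(\D y_{V\setminus D})$ fails in general, and one must instead argue face by face.
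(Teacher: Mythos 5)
Your overall architecture coincides with the paper's: induction on the cliques via the running intersection property, Proposition~\ref{prop:recursive_global} to assemble the global Markov property in the converse direction, and Proposition~\ref{prop:trick} to convert the single conditional independence across the decomposition $(D_{n-1}\setminus S_{n-1},\,C_n\setminus S_{n-1},\,S_{n-1})$ into the factorization $\olambda\,\olambda_{S_{n-1}}=\olambda_{D_{n-1}}\olambda_{C_n}$. The ``only if'' direction as you describe it is essentially the paper's proof.

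There is, however, one concrete step in the ``if'' direction where your plan as stated would fail: recovering the induction hypothesis for $\olambda_{D_{n-1}}$ by ``integrating out $y_{C_n\setminus S_{n-1}}$''. On the face $\{y_{S_{n-1}}=0_{S_{n-1}},\,y_{D_{n-1}\setminus S_{n-1}}\neq 0_{D_{n-1}\setminus S_{n-1}}\}$ the fiber over a fixed $y_{D_{n-1}}$ lies entirely inside $\cZ(\cG)$ except at the single point $y_{C_n\setminus S_{n-1}}=0_{C_n\setminus S_{n-1}}$ (since $S_{n-1}$ separates $D_{n-1}\setminus S_{n-1}$ from $C_n\setminus S_{n-1}$), so the hypothesis \eqref{eq:u_dec} gives you no information to integrate over; the obstruction is the exceptional set $\cZ(\cG)$ cutting out the fiber, not merely the fact that $\olambda_{D}$ is not the marginal of $\olambda$. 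You flag that marginalization must be done face by face, but you attribute the difficulty to the $1/\mu_D(\{0_D\})$ convention and call the fix routine, which it is not. The paper's mechanism is different: it first evaluates \eqref{eq:u_dec} on $\{y_{D_{n-1}}=0_{D_{n-1}}\}$ (which is disjoint from $\cZ(\cG)$) to extract the independence \eqref{eq:indepPartDone} of the reduced measure $\Lambda^0_{V\setminus S_{n-1}}$ via Corollary~\ref{cor:indepRephrase} and Lemma~\ref{lemma:marginalreducedcompatibility}, which yields the density identity \eqref{eq:DS0} relating $\lambda(\cdot,0_{C_n})$ to $\lambda_{D_{n-1}}(\cdot,0_{S_{n-1}})$; only then can \eqref{eq:IA} be recovered on that face. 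Integration over $y_{C_n\setminus S_{n-1}}$ is legitimate only on $\{y_{S_{n-1}}\neq 0_{S_{n-1}}\}$, where membership in $\cZ_n$ does not depend on the $C_n\setminus S_{n-1}$ coordinates. A similar caveat applies to your final grouping step: to feed Proposition~\ref{prop:trick} you need \eqref{eq:finalproduct} for $\mu$-almost all $y$ with $y_{S_{n-1}}\neq 0_{S_{n-1}}$, including $y\in\cZ_n$, where the paper argues both sides vanish using $\Lambda_{D_{n-1}}(\cZ_{n-1})=0$ from the already-established global Markov property on $D_{n-1}$. With these two repairs your proof coincides with the paper's.
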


The proof of this result is based on an inductive argument using the running intersection property for decomposable graphs and given in Appendix~\ref{app:HC}.

\begin{remark}
In the setting of Theorem~\ref{thm:decomposable}, if $\Lambda$ is globally Markov for $\cG$, we have that $\Lambda(\mathcal Z) = 0$ by Corollary~\ref{cor:mass0}. Hence $\lambda(y)=0$ for $\mu$-almost all $y\in \mathcal Z$ in this situation. 
However, it is important to note that this is not(!) true for the right hand side of~\eqref{eq:u_dec}. A simple example of this phenomenon becomes apparent from the construction of Section~\ref{subsec:CI3constr}; see Table~\ref{tab:CIconstr} (the sixth row) in Appendix~\ref{app:CIconstr}.
\end{remark}

\begin{remark}\label{rem:caveat}

Alternatively to the factorization of $\olambda$ in Theorem~\ref{thm:decomposable}, one may be tempted to consider instead a characterization of the global Markov property via the factorization of $\lambda$ as in~\eqref{HC} on the set where all the relevant marginals are defined together with respective support assumptions as in Corollary~\ref{cor:mass0}.
However, first, such a factorization cannot hold for a globally Markov $\Lambda$ on a disconnected graph as discussed in Section~\ref{subsec:backtodensity}. Second, although the factorization of $\lambda$ does hold for a connected graph when $\Lambda$ is globally Markov, one needs to be cautious with the converse implication; one may not deduce the global Markov property from the factorization of $\lambda$ alone.
In Appendix~\ref{app:caveat} we give an example that illustrates this caveat. 
\end{remark}

Collectively, this shows that factorization of $\lambda$ is not a suitable characterization of Markov properties of $\Lambda$ on an undirected graph, and that the modified densities $\olambda$ are the more natural object for this purpose.

\section{Directed graphical models}\label{sec:directed}

\subsection{Fundamentals}
In this section we consider a \emph{directed acyclic graph (DAG)} $\mathcal G$ with vertex set $V$.
The classical definition of the directed Markov properties require the following graph notions; see~\citet{Lauritzen} for details.
Let $\mathrm{pa}(v)$, $\mathrm{an}(v)$ and $\mathrm{de}(v)$ denote \emph{parents}, \emph{ancestors} and \emph{descendants} of $v\in V$, respectively.
The \emph{ancestral set} of a subset $A\subset V$ in a DAG $\cG$ is defined as $A$ together with all the ancestors of every vertex in~$A$. The \emph{moral graph} of $\mathcal G$ is an undirected graph obtained by connecting the parents of every vertex in $\mathcal G$ and dropping the directions of the original edges. The following directed Markov properties are again in line with the axiomatic approach to conditional independence.

\begin{definition}[Directed Markov properties]\label{def:directedMarkov}
\textbf{(DL)} We say that $\Lambda$ satisfies the \emph{directed local Markov property} with respect to the DAG $\cG$ if for every vertex $v\in V$
\[v \indep V\setminus (\{v\}\cup \mathrm{de}(v) \cup \mathrm{pa}(v))\mid \mathrm{pa}(v) \;[\Lambda].\] 
\textbf{(DG)} We say that $\Lambda$ satisfies the \emph{directed global Markov property} with respect to $\cG$ if for every triplet of disjoint subsets $(A,B,S)$ of $V$  it holds that  $A\indep B \mid S \;[\Lambda]$ if $A$ and $B$ are separated by $S$ in the moral graph of the ancestral set of $A\cup B\cup S$.  
\end{definition}

An alternative equivalent characterization of the global Markov property via the concept of \emph{$\mathrm{d}$-separation} is also possible \citep[e.g.,][]{Lauritzen}. Further, it is shown in~\citet[Prop.~4]{lauritzen1990independence} that in the classical setting the two properties (DL) and (DG) are equivalent. The proof only requires properties (L1)--(L4) of the classical conditional independence, and so due to Theorem~\ref{thm:L4} this is also true in our setting as long as $\Lambda$ satisfies our basic explosiveness assumption~\eqref{eq:infinite}.

\begin{corollary}\label{cor:DGDL}
Assume $\Lambda$ satisfies \eqref{eq:infinite} and $V$ is the vertex set of a DAG $\cG$. Then  $\Lambda$  satisfies (DG) with respect to $\cG$ if and only if $\Lambda$ satisfies (DL) with respect to $\cG$. 
\end{corollary}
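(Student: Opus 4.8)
The plan is to reduce the statement entirely to its classical counterpart. The decisive ingredient, already isolated in the excerpt, is Theorem~\ref{thm:L4}: under Assumption~\eqref{eq:infinite} the conditional independence $\indepABC$ obeys all four semi-graphoid axioms (L1)--(L4). The classical equivalence of the directed local and directed global Markov properties for probabilistic conditional independence --- proved in \citet[Prop.~4]{lauritzen1990independence}, see also \citet[Ch.~3]{Lauritzen} --- is a purely formal derivation: every step manipulates conditional independence statements using only (L1)--(L4), together with graph-theoretic facts about DAGs (ancestral sets, moral graphs, $\mathrm{d}$-separation), and never appeals to any further structural feature of the underlying object. Consequently, substituting $\indepABC$ for probabilistic conditional independence throughout that argument yields a valid proof in our setting.

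In more detail, I would organize the write-up as follows. The implication (DG)$\,\Rightarrow\,$(DL) is immediate and genuinely combinatorial: for each $v\in V$, taking $A=\{v\}$, $B=V\setminus(\{v\}\cup\de(v)\cup\pa(v))$ and $S=\pa(v)$, the union $A\cup B\cup S$ equals $V\setminus\de(v)$, which is an ancestral set, and in its moral graph $\pa(v)$ separates $\{v\}$ from $B$; hence the defining statement of (DL) follows from (DG) without any reference to $\Lambda$. For the converse (DL)$\,\Rightarrow\,$(DG), I would fix a well-ordering of $V$ consistent with $\cG$, use weak union (L3) and decomposition (L2) to pass from (DL) to the ordered local Markov property, and then obtain (DG) by the standard induction over ancestral sets, invoking symmetry (L1) and contraction (L4) at each step. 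One may equally well phrase (DG) through $\mathrm{d}$-separation; the equivalence of the two formulations is again a graph-theoretic fact independent of $\Lambda$.

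The only point requiring real care --- and the one I would flag explicitly --- is to confirm that no step of the classical argument covertly uses the intersection property (L5), the graphoid axiom that in general demands strict positivity of a density and therefore fails for arbitrary $\Lambda$. This is not an issue: the equivalence (DL)$\,\Leftrightarrow\,$(DG) is known to hold for every probability measure with no positivity assumption, and its proof relies solely on (L1)--(L4). Alternatively, one can note that \citet[Prop.~4]{lauritzen1990independence} already state their result at the level of an abstract semi-graphoid (``dependency model''), so that the transfer to our setting is automatic once Theorem~\ref{thm:L4} has supplied (L1)--(L4).
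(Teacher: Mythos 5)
Your proposal is correct and matches the paper's own argument: the paper likewise derives the corollary by observing that the proof of \citet[Prop.~4]{lauritzen1990independence} uses only the semi-graphoid axioms (L1)--(L4), which Theorem~\ref{thm:L4} supplies under Assumption~\eqref{eq:infinite}. Your additional remarks on the two implications and on the irrelevance of the intersection axiom are accurate but go beyond what the paper records.
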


\subsection{Recursive max-linear models}
\label{sec:claudia}


Recursive max-linear models introduced by \citet{gis2018} received a lot of attention in recent years \citep[e.g.,][]{gis2018a, gis2019, amendola2022conditional}. They are defined on a DAG $\mathcal G$ by
\begin{align}\label{eq:recmaxlin}
X_i=\bigvee_{j\in \pa(i)}\beta_{ij}~X_j\vee\beta_{ii}~\eps_{i},\qquad i \in V,
\end{align}
where $\beta_{ij}>0$ and $\eps_i\sim F_i$ are independent non-negative random variables. It is a special case of the classical recursive structural equation model and as such 
$X$ satisfies the classical probabilistic (DL) property~\citep[cf.,][Thm.\ 1.4.1]{pea2009}.
Note that $\beta_{ij}$ corresponds to the arc $j \rightarrow i$ in the graph $\mathcal G$, which is a common notation in the literature.

The recursive equation~\eqref{eq:recmaxlin} can be rewritten as
\begin{align}\label{eq:gamma}X_i=\bigvee_{j \in V}\gamma_{ij}~\eps_{j}, \quad i \in V, \qquad
\text{with} \qquad \gamma_{ij}=\bigvee_{p\in \mathrm{paths}_{ji}} \,\,
\prod_{(k \rightarrow l)\in p}
\beta_{lk}, \quad i,j \in V,
\end{align}
where $\mathrm{paths}_{ji}$ is the set of paths from $j$ to $i$ with the additional convention that each path starts with the $(j \rightarrow j)$ transition, and $\gamma_{ij}=0$ for empty $\mathrm{paths}_{ji}$.

Such a model without a restriction on the non-negative $\gamma_{ij}$ is called \emph{max-linear}. It is immediate that {max-linear} models are always max-infinitely divisible since every univariate random variable $\eps_j$ is max-infinitely divisible.  We additionally assume that
\begin{align}\label{eq:assumpF}
\inf\{x:F_j(x)>0\} = 0,\qquad j \in V,
\end{align} 
which guarantees that   $X$ satisfies~\eqref{eq:support} and so the exponent measure $\Lambda$ is finite away from the origin.
Let us further recall that the support of this measure consists of $|V|$ rays, each spanned by a column $(\gamma_{\cdot j})$ \citep[e.g.,][]{yue2014}. 

\begin{lemma} \label{lemma:Lambda_maxlinear}
Let $\Lambda$ be the exponent measure of the recursive max-linear model \eqref{eq:recmaxlin} satisfying \eqref{eq:assumpF}.
Then $\Lambda$ is given by
\[
\Lambda(E)=\sum_{j \in V} \eta_j \big( \{ t > 0 \,:\, t \gamma_{\cdot j} \in E\} \big)
\]
for measurable $E \subset \cE_+$,
where $\gamma_{\cdot j}$ denotes the $j$-th column of the $\lvert V \rvert \times \lvert V \rvert$ matrix $(\gamma_{ij})$ and $\eta_j$ denotes the exponent measure of $\eps_j$.
\end{lemma}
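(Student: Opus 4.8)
The plan is to compute the distribution function of $X$ in closed form, read off the values $\Lambda(\cE_+\setminus[0,x])$ for all $x\ge 0$ from \eqref{eq:exp_measure}, and then check that the measure on the right-hand side of the claimed identity reproduces exactly these values; the two measures must then coincide, because the exponent measure is uniquely determined by the max-infinitely divisible law \citep[Prop.~5.8]{res2008}. As ingredients, note that each $\eps_j$, being a nonnegative univariate random variable with lower endpoint $0$, is max-infinitely divisible with some exponent measure $\eta_j$ supported on $(0,\infty)$ and satisfying $F_j(u)=\exp[-\eta_j((u,\infty))]$ for $u\ge 0$; and since a max-linear $X$ is max-infinitely divisible (as noted just before the lemma) and \eqref{eq:assumpF} implies \eqref{eq:support}, the exponent measure $\Lambda$ of $X$ in \eqref{eq:exp_measure} exists and satisfies \eqref{eq:Lambda}.

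First I would use the representation \eqref{eq:gamma} together with the independence of the $\eps_j$ to obtain, for $x\ge 0$,
\[
\p(X\le x)=\p\big(\gamma_{ij}\eps_j\le x_i\ \text{for all}\ i,j\big)=\prod_{j\in V}\p\big(\eps_j\le m_j(x)\big)=\prod_{j\in V}\exp\big[-\eta_j\big((m_j(x),\infty)\big)\big],
\]
where $m_j(x)=\min_{i\in V}x_i/\gamma_{ij}$, with the convention $x_i/\gamma_{ij}=\infty$ if $\gamma_{ij}=0$ (the minimum runs over a nonempty index set since $\gamma_{jj}\ge\beta_{jj}>0$, so every column $\gamma_{\cdot j}$ is nonzero). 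Comparing with \eqref{eq:exp_measure} gives $\Lambda(\cE_+\setminus[0,x])=\sum_{j\in V}\eta_j((m_j(x),\infty))$ for all $x\ge 0$. Next I would introduce the candidate $\widetilde\Lambda(E)=\sum_{j\in V}\eta_j(\{t>0:t\gamma_{\cdot j}\in E\})$ for Borel $E\subset\cE_+$; this is a Borel measure, being a sum of pushforwards of the $\eta_j$ under the continuous maps $t\mapsto t\gamma_{\cdot j}$, and it satisfies \eqref{eq:Lambda} because a set $E$ bounded away from the origin forces $\{t>0:t\gamma_{\cdot j}\in E\}\subset(\delta_j,\infty)$ for some $\delta_j>0$, where $\eta_j$ is finite. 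Since $t\gamma_{\cdot j}\notin[0,x]$ is equivalent to $t\gamma_{ij}>x_i$ for some $i$, i.e.\ to $t>m_j(x)$, we get $\widetilde\Lambda(\cE_+\setminus[0,x])=\sum_{j\in V}\eta_j((m_j(x),\infty))=\Lambda(\cE_+\setminus[0,x])$ for all $x\ge 0$; as both measures are null off $\cE_+$ and satisfy \eqref{eq:Lambda}, uniqueness of the exponent measure yields $\widetilde\Lambda=\Lambda$.

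The step requiring the most care is the concluding identification, i.e.\ knowing that the family $\{\cE_+\setminus[0,x]:x\ge 0\}$ pins down the measure. This is exactly the content of \citet[Prop.~5.8]{res2008}, but a self-contained version is available: restrict both measures to a shell $\{y\in\cE_+:\max_i y_i>\eps\}$, on which they are finite, observe that the sets $[0,x]\cap\{y:\max_i y_i>\eps\}$ form a $\pi$-system generating the trace $\sigma$-algebra, invoke Dynkin's $\pi$--$\lambda$ theorem, and let $\eps\downarrow 0$. The other point that deserves explicit mention in the write-up is the bookkeeping around vanishing coefficients $\gamma_{ij}=0$, for which the constraint $t\gamma_{ij}\le x_i$ is automatically satisfied and hence dropped --- precisely what the convention $x_i/\gamma_{ij}=\infty$ encodes. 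Alternatively, one could avoid the distribution-function route by representing each $\eps_j$ as a maximum over a Poisson point process with intensity $\D u\otimes\eta_j(\D r)$, pushing these points forward under $r\mapsto r\gamma_{\cdot j}$, superposing over $j\in V$, and reading off the intensity $\D u\otimes\widetilde\Lambda(\D x)$ of the resulting process.
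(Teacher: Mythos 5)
Your proposal is correct and follows essentially the same route as the paper: compute $\Lambda(\cE_+\setminus[0,x])$ from the distribution function via \eqref{eq:exp_measure}, observe that $t\gamma_{\cdot j}\in\cE_+\setminus[0,x]$ exactly when $t$ exceeds the componentwise minimum threshold, and conclude by the fact that the sets $\cE_+\setminus[0,x]$ form a measure-determining class. Your write-up merely makes explicit some steps the paper treats as immediate (the product-form computation of $\p(X\le x)$, the verification that the candidate measure satisfies \eqref{eq:Lambda}, and the $\pi$--$\lambda$ justification of the uniqueness step).
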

\begin{proof}
From \eqref{eq:exp_measure}, it is immediate that
\[
\Lambda(\cE_+\setminus [0,x])=\sum_{j \in V \,:\, \gamma_{\cdot j} \neq 0_V} \eta_j ((a_j,\infty))  \quad \text{with} \quad a_j=\bigwedge_{i\in V \,:\, \gamma_{ij}>0} {x_i}/{\gamma_{ij}} .
\]
It suffices to note that the case $\gamma_{\cdot j}= 0_V$ cannot appear due to $\gamma_{jj}>0$ and that for $t >0$ we have that $t > a_j$ if and only if $t \gamma_{\cdot j} \in \cE_+ \setminus [0,x]$, while sets of the form $\cE_+\setminus [0,x]$ form a separating measure-determining class.
\end{proof}

The restricted measures $\Lambda^0_D$ for $\emptyset \neq D \subset V$ are also readily obtained from Lemma~\ref{lemma:Lambda_maxlinear} and given by 
\[
\Lambda^0_D(E)=
\sum_{\substack{\text{$j \in V \,:\, \gamma_{D j} \neq 0_D$} \\ \text{$\gamma_{(V\setminus D) j} = 0_{V\setminus D}$}}} 
\eta_j \big( \{ t > 0 \,:\, t \gamma_{D j} \in E\} \big)
\]
for measurable $E \subset \cE^D_+$. The measure $\Lambda^0_D$ is supported on the rays spanned by the columns $(\gamma_{\cdot j})$ (restricted to the $D$-components), whose complementary $V\setminus D$-components are all equal to zero, if such columns exist; otherwise $\Lambda^0_D=0$. In particular, if not null,  $\Lambda^0_D$ corresponds itself to a max-linear model for a subset of the innovations $(\eps_j)$.

\begin{lemma} \label{lem:epsilon_positive}
Let $\Lambda$ be the exponent measure of the recursive max-linear model \eqref{eq:recmaxlin} satisfying \eqref{eq:assumpF}.
If $F_j$ has no mass at 0 for all $j \in V$, then $\Lambda$ satisfies~\eqref{eq:infinite}.
\end{lemma}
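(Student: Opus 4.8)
The plan is to obtain the claim directly from the explicit description of the restricted measures $\Lambda^0_D$ recorded just after Lemma~\ref{lemma:Lambda_maxlinear}, together with the one-dimensional instance of the exponent-measure formula~\eqref{eq:exp_measure}. The key preliminary observation is that, under the hypothesis, each univariate innovation has an exponent measure of infinite total mass: since $\eps_j \geq 0$, the statement that $F_j$ has no mass at $0$ means $F_j(0) = \p(\eps_j = 0) = 0$, and applying~\eqref{eq:exp_measure} in dimension one gives $F_j(0) = \exp[-\eta_j((0,\infty))]$, so $\eta_j((0,\infty)) = \infty$. Each $\eta_j$ is still finite on sets bounded away from $0$ by~\eqref{eq:Lambda}, so it genuinely explodes at the origin.

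Next I would fix $\emptyset \neq D \subset V$ and $d \in D$ and evaluate $\Lambda^0_D$ on the test set $E = \{y \in \cE^D_+ : y_d \neq 0\}$, which (being inside the positive orthant) equals $\{y_d > 0\}$. For a column $\gamma_{\cdot j}$ the set $\{t>0 : t\gamma_{\cdot j} \in E\}$ is $(0,\infty)$ when $\gamma_{dj} > 0$ and empty when $\gamma_{dj} = 0$; moreover $\gamma_{dj} > 0$ already forces $\gamma_{Dj} \neq 0_D$ since $d \in D$. Hence the formula for $\Lambda^0_D$ collapses to
\[
\Lambda^0_D(y_d \neq 0) \;=\; \sum_{\substack{j \in V \,:\, \gamma_{dj} > 0 \\ \gamma_{(V\setminus D)j} = 0_{V\setminus D}}} \eta_j\big((0,\infty)\big).
\]

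Finally, by the first step every summand equals $\infty$, so the sum is $0$ when the index set is empty and $\infty$ otherwise; either way $\Lambda^0_D(y_d \neq 0) \in \{0,\infty\}$, which is precisely~\eqref{eq:infinite}. I do not expect a genuine obstacle here: the argument is essentially bookkeeping with the formula for $\Lambda^0_D$. The only points requiring a little care are that $y_d \neq 0$ inside $\cE^D_+$ means $y_d > 0$, and that one must pass through the one-dimensional version of~\eqref{eq:exp_measure} to turn the absence of an atom at $0$ into infinitude of the total mass of $\eta_j$.
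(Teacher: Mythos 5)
Your proof is correct. The one point that needs care --- converting the hypothesis $F_j(\{0\})=0$ into $\eta_j((0,\infty))=\infty$ via the one-dimensional case of~\eqref{eq:exp_measure} --- is handled properly, and the evaluation of $\Lambda^0_D$ on $\{y_d\neq 0\}$ from the ray representation is just the bookkeeping you describe: each ray with $\gamma_{dj}>0$ and $\gamma_{(V\setminus D)j}=0_{V\setminus D}$ contributes $\eta_j((0,\infty))=\infty$, all other rays contribute $0$, so the total is $0$ or $\infty$.

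Your route differs mildly from the paper's. The paper first reduces~\eqref{eq:infinite} to the weaker condition~\eqref{eq:infinite0} by observing that every non-null restricted measure $\Lambda^0_D$ is itself the exponent measure of a max-linear model built from a subset of the innovations; it then computes $\Lambda(y_v>t)=-\sum_{j:\gamma_{vj}>0}\log F_j(t/\gamma_{vj})$ and lets $t\downarrow 0$. You instead verify~\eqref{eq:infinite} directly for every pair $d\in D\subset V$ by evaluating the explicit formula for $\Lambda^0_D$ at the set $\{y_d>0\}$, having first packaged the explosiveness of each $\eta_j$ as $\eta_j((0,\infty))=\infty$. The two arguments use the same two inputs (the ray representation from Lemma~\ref{lemma:Lambda_maxlinear} and $F_j(0+)=0$); yours avoids both the structural reduction step and the limit $t\downarrow 0$, at the cost of leaning on the displayed formula for $\Lambda^0_D$, which the paper states without proof just before the lemma. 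Either organization is fine, and yours is arguably the more self-contained of the two.
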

\begin{proof}
Since each of the restricted measures $\Lambda^0_D$ corresponds itself to another max-linear model for a subset of $(\eps_j)$ if not null, it is sufficient to prove~\eqref{eq:infinite0}. Now for any $v\in V,t>0$
\[\Lambda(y_v>t)=\sum_{j\in V \,:\, \gamma_{vj}>0}\eta_j((t/\gamma_{vj},\infty))=-\sum_{j\in V \,:\, \gamma_{vj}>0}\log F_j(t/\gamma_{vj}),\]
which either converges to $\infty$ as $t\downarrow 0$ (because $F_j(0+)=0$ by assumption) or remains 0 when all $\gamma_{vj}=0$.
\end{proof}

The main result of this section shows that the exponent measure $\Lambda$ of a recursive max-linear model \eqref{eq:recmaxlin} satisfies also the properties of a directed graphical model with respect to its underlying graph $\mathcal G$  in the sense of Definition~\ref{def:directedMarkov}
(in addition to $X$ being a classical probabilistic graphical model with respect to $\cG$).

\begin{theorem}\label{thm:DAG}
Consider the recursive max-linear model in~\eqref{eq:recmaxlin} defined on a DAG $\mathcal G$ and satisfying~\eqref{eq:assumpF}. Then the corresponding exponent measure $\Lambda$ satisfies the directed local Markov property (DL) on $\mathcal G$. It also satisfies the directed global Markov property (DG) on $\mathcal G$ if $\eps_j>0$ for all $j \in V$.
\end{theorem}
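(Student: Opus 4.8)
The plan is to prove the (DL) statement directly using the explicit ray structure of the exponent measure $\Lambda$ from Lemma~\ref{lemma:Lambda_maxlinear}, and then to deduce (DG) as a consequence. For (DL), fix a vertex $v$ and write $A=\{v\}$, $B=V\setminus(\{v\}\cup\de(v)\cup\pa(v))$ and $C=\pa(v)$. The key observation is that each ray of $\Lambda$ is spanned by a column $\gamma_{\cdot j}$, and along that ray the value of the coordinate $X_v$ is determined by $t$ and $\gamma_{vj}$, whereas whether $\gamma_{ij}>0$ for $i\in\pa(v)$ or $i\in B$ is a purely graph-theoretic fact about whether $j$ is an ancestor of $v$ through the relevant vertex. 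First I would verify the support constraint $A\indep B\,[\Lambda^0_{A\cup B}]$ required by Lemma~\ref{lem:Lambda0} and the density/kernel characterizations: using the restricted-measure formula for $\Lambda^0_{\{v\}\cup B}$, I would check that no ray has both $\gamma_{vj}>0$ and $\gamma_{ij}>0$ for some $i\in B$ while vanishing on $\pa(v)\cup\de(v)$ — if $j$ reaches $v$ it does so through a parent of $v$, so the $\pa(v)$-coordinates cannot all be zero unless $j=v$ itself, and $j=v$ gives a ray living on $\de(v)\cup\{v\}$, disjoint from $B$; this yields the required independence on $\Lambda^0_{A\cup B}$ by Proposition~\ref{prop:indep} (applicable since Lemma~\ref{lem:epsilon_positive}'s argument shows \eqref{eq:infinite} whenever the $F_j$ have no atom at $0$ — but note (DL) is claimed without that hypothesis, so here I would instead invoke Theorem~\ref{thm:test_class} directly rather than routing through \eqref{eq:infinite}).

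The core of the argument is the test-class characterization of Theorem~\ref{thm:test_class}. Since $\Lambda$ is $-\alpha$-homogeneous only if the $\eta_j$ are (which we do not assume), I would work with general test sets $R_{c,\eps}$ for $c\in C=\pa(v)$ and also handle $\Lambda^0_{A\cup B}$ separately as above. Conditioning on $Y_C=y_C$ for $Y\sim\L_{R_{c,\eps}}$: the conditional law is a mixture over the rays $j$ with $\gamma_{\cdot j}$ compatible with $y_C$ (i.e. $y_i/\gamma_{ij}$ constant over $i\in\pa(v)$ with $\gamma_{ij}>0$), and on each such ray the full vector — hence both $Y_A=Y_v$ and $Y_B$ — is a deterministic function of the single scalar $t$, which is pinned down by $y_C$ whenever some $\gamma_{ij}>0$ for $i\in C$. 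This would make $Y_A$ and $Y_B$ conditionally degenerate given $Y_C$ on each ray, hence conditionally independent; the only subtlety is rays where $\gamma_{ij}=0$ for all $i\in\pa(v)$, i.e. rays not passing through any parent of $v$, for which $\gamma_{vj}=0$ too (a path to $v$ must enter through $\pa(v)$), so $Y_v=0$ on such rays and again $Y_A\indep Y_B\mid Y_C$ trivially. Assembling these cases gives (DL).

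For the (DG) half, under the extra hypothesis $\eps_j>0$ for all $j$ we have Lemma~\ref{lem:epsilon_positive}, so \eqref{eq:infinite} holds, and hence by Theorem~\ref{thm:L4} all four semi-graphoid properties are available; then Corollary~\ref{cor:DGDL} upgrades (DL) to (DG) immediately. The main obstacle I anticipate is the bookkeeping in the (DL) step: carefully matching the graph-theoretic statement ``every directed path from $j$ to $v$ passes through $\pa(v)$'' with the linear-algebraic statement ``$\gamma_{vj}>0 \Rightarrow \gamma_{ij}>0$ for some $i\in\pa(v)$'' and correctly tracking which rays survive the conditioning on $y_C\ne 0_C$, especially distinguishing the behaviour on $\cH_D$-type sub-faces (where $\Lambda^0$ must be treated by Proposition~\ref{prop:indep}) from the behaviour on the full-support part (treated by the kernel/test-class factorization). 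Once the ray picture is set up correctly, each individual verification is routine, but getting the case division exactly right — and making sure the $\Lambda^0_{A\cup B}$ side condition is discharged without secretly using \eqref{eq:infinite} in the (DL) part — is where care is needed.
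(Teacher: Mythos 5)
Your overall architecture matches the paper's: reduce to the marginal measure on $V\setminus\de(v)$ so that $(A,B,C)=(\{v\},\,V\setminus(\{v\}\cup\de(v)\cup\pa(v)),\,\pa(v))$ becomes a partition, verify $A\indep B\;[\Lambda^0_{A\cup B}]$ from the zero-pattern of the columns $\gamma_{\cdot j}$, treat the test sets $R_{c,\eps}$ with $c\in\pa(v)$ via Theorem~\ref{thm:test_class}(ii), and upgrade (DL) to (DG) through Lemma~\ref{lem:epsilon_positive}, Theorem~\ref{thm:L4} and Corollary~\ref{cor:DGDL}. The support check and the (DG) step are correct, and you are right to insist that the (DL) half must not rely on~\eqref{eq:infinite}.

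The central step, however, has a genuine gap. You argue that conditionally on $Y_C=y_C$ the law of $Y$ is a mixture over compatible rays, that on each ray $t$ is pinned down by $y_C$, and that therefore $Y_A$ and $Y_B$ are ``conditionally degenerate given $Y_C$ on each ray, hence conditionally independent.'' This inference is invalid: degeneracy given $(Y_C,\text{ray})$ does not yield independence given $Y_C$, because the ray index is itself random given $Y_C$. A mixture of point masses $\sum_j w_j\,\delta_{(a_j,b_j)}$ factorizes only if one coordinate is constant across the mixture; with two rays $(1,1,1)$ and $(2,1,3)$ and $C=\{2\}$, the conditional law of $(Y_1,Y_3)$ given $Y_2$ is perfectly dependent. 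What rescues the theorem --- and what your proposal never invokes --- is the recursive structure: for $j\neq v$ one has $\gamma_{vj}=\bigvee_{p\in\pa(v)}\beta_{vp}\gamma_{pj}$, so any two rays $i,j$ with $\gamma_{\pa(v)i}=c\,\gamma_{\pa(v)j}\neq 0_{\pa(v)}$ also satisfy $\gamma_{vi}=c\,\gamma_{vj}$; equivalently, on the event $Y_{\pa(v)}\neq 0_{\pa(v)}$ the value $Y_v=\bigvee_{p\in\pa(v)}\beta_{vp}Y_p$ is a deterministic function of $Y_{\pa(v)}$ alone, hence constant given $Y_C$ across the entire mixture, which is what gives $Y_A\indepp Y_B\mid Y_C$. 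Your argument uses only the zero-pattern of $\gamma$ (true of any max-linear model), and (DL) fails for general non-recursive max-linear models, so some quantitative use of the recursion is unavoidable. A smaller slip: for rays with $\gamma_{\pa(v)j}=0_{\pa(v)}$ you assert $\gamma_{vj}=0$, which is false for $j=v$ where $\gamma_{vv}>0$; that ray is harmless but must be handled through the $\Lambda^0_{A\cup B}$ condition, as you in fact do in your support-check paragraph.
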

\begin{proof}
In view of Lemma~\ref{lem:epsilon_positive} and Corollary~\ref{cor:DGDL}, it is sufficient to show (DL). In other words, we need to establish $A\indep B\mid S\;[\Lambda]$ for an arbitrary vertex $v\in V$ and $A=\{v\}$, $S=\mathrm{pa}(v)$, $B=V\setminus (\{v\}\cup \mathrm{de}(v) \cup \mathrm{pa}(v))$.
This is the same as conditional independence for the marginal measure $\Lambda_{A \cup B \cup S}=\Lambda_{V\setminus \mathrm{de}(v)}$, which is the exponent measure of the original max-linear model \eqref{eq:recmaxlin} with $(X_i,\eps_i)$, $i\in \mathrm{de}(v)$ removed. Thus, we may assume that $v$ is a terminal node (it has no children), and hence $(A,B,S)$ is a partition of~$V$.

Let us first consider the measure $\Lambda^0_{A\cup B}=\Lambda^0_{V \setminus \mathrm{pa}(v)}$, which is supported by the rays corresponding to the columns $\gamma_{\cdot j}$ with the index $j$ such that $\gamma_{pj}=0$ for all parent nodes $p\in \mathrm{pa}(v)$. Now either $\gamma_{vj}=0$ or $v=j$, and in the latter case $\gamma_{iv}=0$ for all $i\neq v$ since $v$ is terminal,  in particular for $i\in B$.
Thus, we obtain $A\indep B\; [\Lambda^0_{A\cup B}]$.

Next, suppose that there is a pair of indices $(i,j)$ such that  
\[\gamma_{S i}=c\gamma_{S j}\neq 0_S\] for some $c>0$. Then, since $S=\mathrm{pa}(v)$, both $i$ and $j$ are ancestors of~$v$ and it holds that
\[\gamma_{vi}=\bigvee_{p\in pa(v)}\beta_{vp}\gamma_{pi}=c\bigvee_{p\in pa(v)}\beta_{vp}\gamma_{pj}=c\gamma_{vj}.\]
Hence the two rays (corresponding to $\gamma_{\cdot i}$ and $\gamma_{\cdot j}$) projected on $\mathrm{pa}(v)\cup \{v\}$ result in the same ray.
Choose any $p\in \mathrm{pa}(v)$ and consider the random vector $Y\sim\L_{R_{p,\eps}}$ with $R_{p,\eps}$ as in \eqref{eq:Rvep}.
It is clear from the above proportionality that conditioning on $Y_S$ renders $Y_v$ a constant.
Hence $Y_v$ is independent of $Y_B$ given $Y_S$ and the proof is complete in view of Theorem~\ref{thm:test_class} (ii).
\end{proof}

\begin{remark}
Beyond establishing the (DL) property for $\Lambda$, the proof of Theorem~\ref{thm:DAG}  also reveals a certain degeneracy of the recursive max-linear model. That is, conditioning on the parents identifies their common child. More precisely, the kernel $\nu_{\mathrm{pa}(v)}(y_{\mathrm{pa}(v)},\cdot)$ has a deterministic $v$-th component for $\Lambda_{\mathrm{pa}(v)}$-all values of $y_{\mathrm{pa}(v)}\neq 0_{\mathrm{pa}(v)}$.

The recursive max-linear models are not \emph{faithful} as a classical directed graphical model~\citep[see][]{amendola2022conditional}. That is, $X$ typically will satisfy additional conditional independence properties which can not be inferred from the graph.
This continues to be true for the respective $\Lambda$-graphical model as can be verified for instance by using the same diamond graph example as in~\citet{amendola2022conditional}.
\end{remark}

\subsection{Recursive infinitely divisible models}\label{sec:rec_id}
By replacing the maximum operation with the sum operation we obtain another basic example of a structural equation model:
\begin{align}\label{eq:Xid}
X_i=\sum_{j\in \pa(i)}\beta_{ij}X_j+\beta_{ii}\eps_{i}=\sum_{j \in V}\gamma_{ij}\eps_{j},\qquad i \in V,
\end{align}
where $\beta_{ij}\neq 0$ and $\gamma_{ij}$ is given by~\eqref{eq:gamma}  but with maximum replaced by the sum over the paths from $j$ to~$i$.
This is again a directed graphical model with respect to the underlying graph~\cite[Thm.\ 1.4.1]{pea2009}.
\citet{stableModels} assume that $\eps_i$ are mutually independent $\alpha$-stable random variables, making $X$ a multivariate $\alpha$-stable vector. More generally, we may consider $\eps_j$ to be independent infinitely divisible random variables, and so $X$ is infinitely divisible.

We remark that, while every univariate distribution is max-infinitely divisible, it is not necessarily infinitely divisible. This explains the need for the extra assumption on the innovation terms $\eps_j$ in order to employ the framework of infinitely divisible vectors and respective L\'evy measures.

\begin{lemma}\label{lem:Levyrays}
If all $\eps_j$ are infinitely divisible, then the L\'evy measure of the infinitely divisible $X$ defined in~\eqref{eq:Xid} is supported by the rays spanned by $\gamma_{\cdot j}$.
\end{lemma}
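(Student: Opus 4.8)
The plan is to decompose $X$ into independent infinitely divisible summands, each concentrated on a single ray, and to add up the corresponding L\'evy measures. Write $\gamma_{\cdot j}\in\R^V$ for the $j$-th column of the matrix $(\gamma_{ij})_{i,j\in V}$ and set $Z_j:=\gamma_{\cdot j}\,\eps_j$, so that \eqref{eq:Xid} reads $X=\sum_{j\in V}Z_j$ with $Z_1,\dots,Z_d$ independent. Observe first that $\gamma_{jj}=\beta_{jj}\neq 0$, since the only path from $j$ to $j$ contributes the single factor $\beta_{jj}$; hence no column $\gamma_{\cdot j}$ vanishes.

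First I would argue that each $Z_j$ is infinitely divisible with L\'evy measure carried by the line through the origin in direction $\gamma_{\cdot j}$. Indeed, $\eps_j$ is infinitely divisible on $\R$ with some L\'evy measure $\eta_j$ on $\R\setminus\{0\}$, and $Z_j$ is the image of $\eps_j$ under the linear map $\phi_j\colon t\mapsto t\,\gamma_{\cdot j}$. Since $\gamma_{\cdot j}\neq 0$, the map $\phi_j$ is injective away from the origin, so no mass is sent to $0$, and the standard behaviour of L\'evy triplets under linear maps \citep{sato} shows that $Z_j$ is infinitely divisible with L\'evy measure the pushforward $\eta_j\circ\phi_j^{-1}$, which is carried by $\phi_j(\R\setminus\{0\})=\{t\gamma_{\cdot j}:t\in\R\setminus\{0\}\}$.

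Next I would add the summands: for independent infinitely divisible vectors the characteristic functions multiply, hence the L\'evy--Khintchine exponents add, and by uniqueness of the characteristic triplet \citep[Thm.\ 8.1]{sato} the Gaussian covariance matrices and the L\'evy measures add separately. Consequently $X=\sum_{j\in V}Z_j$ is infinitely divisible with
\[\Lambda(E)=\sum_{j\in V}\eta_j\big(\{t\neq 0\,:\,t\gamma_{\cdot j}\in E\}\big),\qquad E\subset\cE\text{ Borel},\]
which is manifestly supported by $\bigcup_{j\in V}\{t\gamma_{\cdot j}:t\in\R\setminus\{0\}\}$, the rays spanned by the columns $\gamma_{\cdot j}$, as claimed.

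I do not expect a genuine obstacle; the argument is bookkeeping with L\'evy--Khintchine representations. The one point deserving a little care is the mismatch between the truncation functions $\ind{|t|\le 1}$ on $\R$ and $\ind{\norm{x}\le 1}$ on $\R^V$ when passing from the one-dimensional to the $V$-dimensional representation: their difference contributes a term linear in $\theta$, so it only perturbs the drift and leaves the Gaussian and L\'evy parts intact. This technicality is subsumed in the cited transformation rule, and one may alternatively avoid it altogether by writing $X=\Gamma\eps$ with $\Gamma=(\gamma_{ij})$ invertible (a topological ordering of $\cG$ makes $\Gamma$ triangular with nonzero diagonal $\gamma_{jj}=\beta_{jj}$) and pushing forward the L\'evy measure of $\eps$ — which is concentrated on the coordinate axes precisely because the $\eps_j$ are independent — under the map $y\mapsto\Gamma y$.
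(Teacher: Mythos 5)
Your argument is correct, but it takes a different route from the paper. The paper works at the level of sample paths: it passes to the associated L\'evy processes $\eps_j(t)$ and $X(t)$, invokes the fact that independent L\'evy processes almost surely never jump simultaneously, and concludes that every jump of $X(t)$ has the form $\gamma_{\cdot j}\Delta\eps_j(t)$ and hence lies on one of the rays. You instead stay entirely at the distributional level: you decompose $X=\sum_j Z_j$ with $Z_j=\gamma_{\cdot j}\eps_j$ independent, push each $\eta_j$ forward under $t\mapsto t\gamma_{\cdot j}$ (legitimately, since $\gamma_{jj}=\beta_{jj}\neq 0$ guarantees no column vanishes and no mass is sent to the origin), and add L\'evy--Khintchine exponents using uniqueness of the triplet. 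Both are sound. The paper's jump argument is shorter and more probabilistically transparent, and it is the same mechanism that underlies your fallback remark that the L\'evy measure of the independent vector $\eps$ sits on the coordinate axes. Your triplet bookkeeping buys something extra: it yields the explicit formula $\Lambda(E)=\sum_{j}\eta_j(\{t\neq 0: t\gamma_{\cdot j}\in E\})$, which is precisely the sum-analogue of Lemma~\ref{lemma:Lambda_maxlinear} for the max-linear case and is strictly more information than the support statement the lemma asserts. Your handling of the truncation-function mismatch (it only perturbs the drift) is the right observation and is indeed the only point requiring care.
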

\begin{proof}
Consider the respective L\'evy processes $\eps_j(t)$ and $X(t)$, and recall that their independence implies that they do not jump simultaneously a.s.~\citep{sato}. Thus, every jump of $X(t)$ arises from a jump $\Delta\eps_j(t)$ of a single~$\eps_j(t)$, and therefore it has a form $\gamma_{\cdot j}\Delta\eps_j(t)$. These jumps belong to the ray spanned by $\gamma_{\cdot j}$ and the proof is complete.
\end{proof}

Thus, the L\'evy measure $\Lambda$ has the same form as the exponent measure $\Lambda$ corresponding to the recursive max-linear model, and so we arrive at the analogue of Theorem~\ref{thm:DAG}.
\begin{corollary}
Consider a recursive linear model in~\eqref{eq:Xid} defined on a DAG $\mathcal G$ such that all $\eps_j$ are infinitely divisible. Then the L\'evy measure $\Lambda$ corresponding to infinitely divisible vector~$X$ satisfies the directed local Markov property (DL) with respect to $\mathcal G$. It also satisfies the directed global Markov property (DG) with respect to $\mathcal G$ when the L\'evy measures of $\eps_j$ explode at~0 for all $j \in V$.
\end{corollary}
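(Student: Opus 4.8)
The plan is to follow the proof of Theorem~\ref{thm:DAG} essentially verbatim, because Lemma~\ref{lem:Levyrays} tells us that the L\'evy measure $\Lambda$ of $X$ in~\eqref{eq:Xid} has the same structure as the exponent measure of a recursive max-linear model: it decomposes as a sum $\Lambda=\sum_{j\in V}\eta_j\circ(t\mapsto t\gamma_{\cdot j})^{-1}$, i.e.\ it is concentrated on the one-dimensional subspaces spanned by the columns $\gamma_{\cdot j}$ and, along the $j$-th such line, is the push-forward of the L\'evy measure $\eta_j$ of $\eps_j$. (Because $\beta_{ij}$ and the jumps of $\eps_j$ may be negative, a ``ray'' here should be read as the whole line through the origin; this changes nothing below.) Consequently, for every non-empty $D\subset V$ the restricted measure $\Lambda^0_D$ is concentrated on the lines spanned by those $\gamma_{\cdot j}$ whose $V\setminus D$-components vanish, and hence, if non-null, $\Lambda^0_D$ is itself the L\'evy measure of a recursive linear model of the form~\eqref{eq:Xid} driven by the sub-collection of innovations $(\eps_j)$ with $\gamma_{(V\setminus D)j}=0_{V\setminus D}$. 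This is the only structural fact needed, and everything else is bookkeeping.

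To obtain (DL), fix $v\in V$ and set $A=\{v\}$, $S=\pa(v)$, $B=V\setminus(\{v\}\cup\de(v)\cup\pa(v))$. As in Theorem~\ref{thm:DAG}, passing to the marginal measure $\Lambda_{V\setminus\de(v)}$ --- which is the L\'evy measure of the model~\eqref{eq:Xid} with the pairs $(X_i,\eps_i)$, $i\in\de(v)$, deleted --- reduces us to the case where $v$ is terminal, so that $(A,B,S)$ is a partition of $V$. Then $\Lambda^0_{A\cup B}=\Lambda^0_{V\setminus\pa(v)}$ is concentrated on the lines spanned by those $\gamma_{\cdot j}$ with $\gamma_{pj}=0$ for all $p\in\pa(v)$; for any such $j$ either $\gamma_{vj}=\sum_{p\in\pa(v)}\beta_{vp}\gamma_{pj}=0$, or $j=v$, in which case $\gamma_{iv}=0$ for all $i\neq v$ since $v$ is terminal, and in particular for $i\in B$. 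Hence on every line charged by $\Lambda^0_{A\cup B}$ the $v$-coordinate or the $B$-coordinates vanish, so $A\indep B\;[\Lambda^0_{A\cup B}]$. If $\pa(v)=\emptyset$ this already completes the argument; otherwise suppose $\gamma_{Si}=c\,\gamma_{Sj}\neq 0_S$ for some $i,j\in V$ and some $c\neq 0$. Since $S=\pa(v)$, the same recursion gives $\gamma_{vi}=\sum_{p\in\pa(v)}\beta_{vp}\gamma_{pi}=c\sum_{p\in\pa(v)}\beta_{vp}\gamma_{pj}=c\,\gamma_{vj}$, so the two lines, projected onto $\pa(v)\cup\{v\}$, coincide. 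Therefore, for any $p\in\pa(v)$ and $Y\sim\L_{R_{p,\eps}}$, conditioning on $Y_S$ (which is non-zero almost surely) restricts $Y$ to the lines with $\gamma_{Sj}$ proportional to $Y_S$ and, by the displayed proportionality, renders $Y_v$ a deterministic function of $Y_S$; in particular $Y_v\indep Y_B\mid Y_S$. By Theorem~\ref{thm:test_class}(ii) this gives $A\indep B\mid S\;[\Lambda]$, i.e.\ (DL).

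For (DG) it remains, by Corollary~\ref{cor:DGDL}, to verify the explosiveness assumption~\eqref{eq:infinite} --- the analogue of Lemma~\ref{lem:epsilon_positive}. Since by the first paragraph each restricted measure $\Lambda^0_D$, if non-null, is again the L\'evy measure of a recursive linear model, it suffices to check~\eqref{eq:infinite0} for $\Lambda$ itself. For $v\in V$ and $t>0$ the line representation gives
\[\Lambda(|y_v|>t)=\sum_{j\in V\,:\,\gamma_{vj}\neq 0}\eta_j\big(\{s:|s|>t/|\gamma_{vj}|\}\big),\]
which is identically $0$ when $\gamma_{vj}=0$ for all $j$, and tends to $\infty$ as $t\downarrow 0$ as soon as some $\eta_j$ appearing in the sum explodes at the origin; under the stated hypothesis the latter always happens unless the sum is empty. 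Hence~\eqref{eq:infinite} holds, and Corollary~\ref{cor:DGDL} upgrades (DL) to (DG).

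The adaptation is routine, so there is no serious obstacle; the one point that genuinely differs from the max-linear case is the very first claim --- that the L\'evy measure of $X$ restricted to the $j$-th line is precisely the scaled push-forward of $\eta_j$. That is exactly the content of Lemma~\ref{lem:Levyrays} together with the fact that independent L\'evy processes have no common jumps; once it is in hand, the support and proportionality arguments transfer directly, with the only change being that coefficients and proportionality constants may now be negative (so ``rays'' become lines). The mild care needed elsewhere is just to confirm that marginalization ($\Lambda\rightsquigarrow\Lambda_{V\setminus\de(v)}$) and restriction ($\Lambda\rightsquigarrow\Lambda^0_D$) preserve the recursive-linear structure, which is precisely what the first paragraph records.
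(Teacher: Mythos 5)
Your proposal is correct and follows essentially the same route as the paper: invoke Lemma~\ref{lem:Levyrays} to see that $\Lambda$ is concentrated on the lines spanned by the columns $\gamma_{\cdot j}$, repeat the proof of Theorem~\ref{thm:DAG} with maxima replaced by sums (noting that negative coefficients turn rays into lines but change nothing), and verify~\eqref{eq:infinite} as in Lemma~\ref{lem:epsilon_positive} to upgrade (DL) to (DG) via Corollary~\ref{cor:DGDL}. Your write-up is simply a more detailed version of the paper's "almost verbatim" argument.
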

\begin{proof}
 In light of Lemma~\ref{lem:Levyrays}, the proof of Theorem~\ref{thm:DAG} can be repeated almost verbatim for the present situation (replace maximum by sum when expressing $\gamma_{vi}$) to establish (DL).
Note that the arguments do not depend on $\Lambda$ being supported by the positive orthant. 
Furthermore, we may establish \eqref{eq:infinite} similarly to the proof of Lemma~\ref{lem:epsilon_positive}, where we note that 
\[\Lambda(|y_v|>t)=\sum_{j \in V} \eta_j(|y\gamma_{vj}|>t),\] where $\eta_j$ is the L\'evy measure of $\eps_j$. If all $\eta_j$ are exploding at 0, the limit (as $t \to 0$) is either 0 or $\infty$. 
\end{proof}

\section{Relation to conditional independence in extremes}\label{sec:extremes}

In this section we discuss the implications of our results on several aspects in the study of sparsity in multivariate extreme value models. When studying limit theorems in this field, the sub-class of exponent measures $\Lambda$ on the space $\cE_+ = [0,\infty)^d\setminus \{0\}$ plays a central role.
To any such measure we can associate a max-infinitely divisible random vector $Z = (Z_i:i \in V)$ with distribution function
\begin{align}\label{maxid_cdf}
  G_\Lambda(x)=\exp\{-\Lambda(\cE_+ \setminus [0,x])\}, \quad x\in \cE_+.
\end{align}
Up to a marginal standardization, these distributions $G_\Lambda$ are the only possible limits of triangular arrays of maxima \citep{balres77,dHres77}.
In the following we assume that $\Lambda$ has the same marginal measures $\Lambda(u):=\Lambda( y_i > u) > 0$ for all $u \in (0,\infty)$ and $i \in V$, so that each  univariate survival function $\PP( Z_i > u)=1-\exp(-\Lambda(u))$ is asymptotically equivalent to $\Lambda(u)$ in the sense that
\begin{align}\label{eq:SurvivalAsympEquiv}
\lim_{u \to \infty} \frac{\PP( Z_i > u)}{\Lambda(u)}=1.
\end{align}
The strength of dependence between the largest observations of the components $i$-th and $j$-th component of $Z$ can be summarized by the \emph{extremal correlation coefficient}
\begin{align}\label{eq:chidef}
\chi_{ij} &:= \lim_{u \to \infty}  \mathbb P( Z_i > u \mid Z_j > u ) = \lim_{u \to \infty} \frac{\Lambda(y_i>u, y_j>u)}{\Lambda(u)} \in [0,1],
\end{align}
whenever the limit exists and where the second equation follows from a simple Taylor expansion.
Two dependence regimes are usually distinguished: if $\chi_{ij} > 0$ we speak of \emph{asymptotic dependence} between $i,j\in V$, and if $\chi_{ij} = 0$, we say the components are \emph{asymptotically independent}. Accordingly, this section is structured as follows.

In Section~\ref{sec:CIextremes} we focus on the case of asymptotic dependence, where a homogeneous measure $\Lambda$ can be used to fully describe the extremal dependence properties.
Asymptotically independent models are far more complex to describe mathematically. Section~\ref{sec:AI} provides a concrete example of such a max-infinitely divisible distribution that shows how our theory on density factorizations of general $\Lambda$ measures opens the door to a new theory of asymptotically independent graphical models.
Finally, Section~\ref{sec:sparsity} connects extremal graphical models to other sparsity notions and the field of concomitant extremes.

\subsection{Extremal graphical models}
\label{sec:CIextremes}

Let $X\geq 0$ be a random vector and assume for simplicity that it has heavy-tailed marginal distributions with common tail-index $\alpha>0$. 
In the case of asymptotic dependence, there are two different, but closely related classical approaches for describing the extremes of the multivariate distribution of $X$. 

The first approach is concerned with scale-normalized componentwise maxima \[a(n)^{-1} \max_{i=1,2,\dots,n} X^{(i)}\] of independent copies $X^{(i)}$ of $X$, where $a(n) > 0$. The only possible limit laws of such maxima as $n \uparrow \infty$ are max-stable with distribution function~\eqref{maxid_cdf},
where the exponent measure $\Lambda$ satisfies \eqref{eq:Lambda} and is $-\a$-homogeneous as in~\eqref{def:homog}. In particular, the distribution $G_\Lambda$ has necessarily $\a$-Fr\'echet marginals.

The second approach studies the distribution of the scale-normalized exceedances
\[ u^{-1} X \,\Big\vert\, \max_{j=1,\dots,d} X_j  > u\]
of the random vector $X$, conditioning on the event that at least one component $X_i$ exceeds a large threshold $u$. The only possible limits of these peaks-over-threshold as $u\uparrow \infty$ are  multivariate Pareto distributions \citep{roo2006}, whose probability laws are induced by a homogeneous measure $\Lambda$ on the (non-rectangular) set $\cL = \cE_+ \setminus [0,1]^d$ and take the form $\p_{\cL}(\D y)=\Lambda(\D y)/\Lambda(\cL)$.

An apparent connection between these two approaches is the exponent measure $\Lambda$, which characterizes distribution functions of both, multivariate max-stable distributions and multivariate Pareto distributions. In fact, the connection is due to a fundamental limiting result, which links the two approaches via regular variation. This has been neatly summarized and extended in \citet[Thm.~1]{dombryribatet2015}. We only recall here that
\begin{align}\label{MEVD}
a(n)^{-1} \max_{i=1,2,\dots,n} X^{(i)} \to G_\Lambda \quad \iff \quad u^{-1} X \,\Big\vert\, \max_{j=1,\dots,d} X_j  > u \to \p_{\cL},
\end{align}
where $a(n)$ is the $(1-1/n)$-quantile of $\max_{j=1,\dots,d} X_j$. In addition each of these limiting statements is equivalent to the \emph{regular variation} of the random vector $X$ with limiting measure $\Lambda$ in the sense that  the measure $u \p(a(u)^{-1} X \in \cdot )$
converges vaguely to $\Lambda$ on $\overline \cE_+ = [0,\infty]^d\setminus\{0\}$, denoted by $X \sim {\rm RV}^+_\alpha(\Lambda)$. In other words, for a given $X \sim {\rm RV}^+_\alpha(\Lambda)$ the corresponding max-stable distribution $G_\Lambda$ and multivariate Pareto distribution $\p_{\cL}$ are associated via the same exponent measure $\Lambda$.

It seems therefore natural to approach conditional independence for multivariate extremes in terms of the exponent measure $\Lambda$. 
We start by linking known versions of {extremal (conditional) independence} to the $\Lambda$-based conditional independence introduced here in Definition~\ref{def:CILambda}, where $\Lambda$ assumes now the role of the associated exponent measure. Recall that such exponent measures naturally satisfy our key explosiveness assumption~\eqref{eq:infinite} due to their homogeneity; see Remark~\ref{rk:A1}. When writing $(X_A,X_B) \in {\rm RV}^+_\alpha(\Lambda)$ below, we tacitly assume that $(A,B)$ constitutes a partition of $V$, and analogously $(X_A,X_B,X_C) \in {\rm RV}^+_\alpha(\Lambda)$ implies $(A,B,C)$ being a partition of $V$.


\begin{definition}[Traditional extremal independence] \label{def:TradEI}
Let $(X_A,X_B) \in {\rm RV}^+_\alpha(\Lambda)$, then $X_A$ and $X_B$ are said to exhibit \emph{extremal independence}, denoted by
\[X_A \,{\perp_{\rm tr}}\, X_B,\]
if we have classical independence $Z_A \indepp Z_B$ for $Z \sim G_\Lambda$ .
\end{definition}

A first observation is that the notion of traditional extremal independence coincides with our new notion of independence with respect to the exponent measure $\Lambda$, since both statements are equivalent to $\Lambda(y_A \neq 0_A, y_B\neq 0_B)=0$; see~Proposition~\ref{prop:indep} and, for instance, \citet{stro2020}.

\begin{corollary}
Let $(X_A,X_B) \in {\rm RV}^+_\alpha(\Lambda)$, then $X_A \perp_{\rm tr} X_B$ if and only if $\indepAB$.
\end{corollary}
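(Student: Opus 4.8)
The plan is to reduce both sides of the asserted equivalence to one and the same support condition on $\Lambda$, namely
\[
\Lambda(y_A\neq 0_A,\ y_B\neq 0_B)=0 .
\]
The right-hand side $\indepAB$ is the easy half: a $-\alpha$-homogeneous exponent measure satisfies the explosiveness assumption~\eqref{eq:infinite} and hence~\eqref{eq:infinite0} (see Remark~\ref{rk:A1}), so Proposition~\ref{prop:indep} applies and shows that $\indepAB$ is equivalent to the displayed support condition. It therefore remains to show that \emph{traditional} extremal independence $X_A \perp_{\mathrm{tr}} X_B$, i.e.\ $Z_A\indepp Z_B$ for $Z\sim G_\Lambda$, is equivalent to the same condition.

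For this, the first step is to record that marginalizing a max-infinitely divisible law corresponds to marginalizing its exponent measure: from~\eqref{maxid_cdf}, letting the $B$-coordinates of the argument tend to $\infty$ gives $Z_A\sim G_{\Lambda_A}$, and symmetrically $Z_B\sim G_{\Lambda_B}$. Next, for $x\in\cE_+$ with strictly positive coordinates I would decompose $\cE_+\setminus[0,x]=U_A\cup U_B$, where $U_A=\{y\in\cE_+: y_i>x_i \text{ for some } i\in A\}$ and similarly $U_B$; then $\Lambda(U_A)=-\log G_{\Lambda_A}(x_A)$ and $\Lambda(U_B)=-\log G_{\Lambda_B}(x_B)$, all quantities being finite since $x$ is bounded away from $0_V$. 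Inclusion--exclusion then yields
\[
-\log G_\Lambda(x) = -\log G_{\Lambda_A}(x_A)-\log G_{\Lambda_B}(x_B)+\Lambda(U_A\cap U_B).
\]
Consequently $Z_A\indepp Z_B$, i.e.\ $G_\Lambda(x)=G_{\Lambda_A}(x_A)G_{\Lambda_B}(x_B)$ for all $x\geq 0_V$ (it suffices to test strictly positive $x$, the general case following by right-continuity of both sides), holds if and only if $\Lambda(U_A\cap U_B)=0$ for all such $x$.

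The last step passes from ``the cross term vanishes for every $x$'' to the support condition. On the one hand $U_A\cap U_B\subseteq\{y_A\neq 0_A,\ y_B\neq 0_B\}$ for every $x\geq 0_V$; on the other hand, taking $x=(\varepsilon,\dots,\varepsilon)$ and letting $\varepsilon\downarrow0$ the sets $U_A\cap U_B$ increase to $\{y_A\neq 0_A,\ y_B\neq 0_B\}$, so monotone convergence gives $\Lambda(y_A\neq 0_A,\ y_B\neq 0_B)=\lim_{\varepsilon\downarrow0}\Lambda(U_A\cap U_B)$. Hence $\Lambda(U_A\cap U_B)=0$ for all $x$ if and only if $\Lambda(y_A\neq 0_A,\ y_B\neq 0_B)=0$, which closes the chain of equivalences; alternatively, this half is exactly the known characterization of product max-infinitely divisible laws recorded e.g.\ in~\citet{stro2020}. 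The only mildly delicate points are the marginalization identity $Z_A\sim G_{\Lambda_A}$ (a consistency property of exponent measures) and the limit $\varepsilon\downarrow0$, where one must ensure the relevant restrictions of $\Lambda$ are finite; restricting attention to strictly positive arguments $x$ handles this cleanly, and no idea beyond Proposition~\ref{prop:indep} and inclusion--exclusion is needed.
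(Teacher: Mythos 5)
Your proof is correct and follows essentially the same route as the paper, which likewise reduces both sides to the common support condition $\Lambda(y_A\neq 0_A,\,y_B\neq 0_B)=0$ via Proposition~\ref{prop:indep} and the characterization of product max-infinitely divisible laws from \citet{stro2020}; you merely spell out the latter half (inclusion--exclusion plus the monotone limit $\varepsilon\downarrow 0$) that the paper delegates to the citation. One cosmetic slip: inclusion--exclusion gives $-\log G_\Lambda(x)=-\log G_{\Lambda_A}(x_A)-\log G_{\Lambda_B}(x_B)-\Lambda(U_A\cap U_B)$ (minus, not plus), but this does not affect the equivalence you draw from it.
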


\begin{remark}
If we consider more generally $Z\sim G_\Lambda$, where $\Lambda$ is not necessarily homogeneous, but still satisfies the basic assumptions~\eqref{eq:Lambda} and \eqref{eq:infinite0}, then also $Z_A \indepp Z_B$ if and only if $\indepAB$.
\end{remark}

One might be tempted now to extend Definition~\ref{def:TradEI} to extremal conditional independence in a similar manner. However, it was shown in \citet{papastathopoulos2016conditional} that this leads to a flawed approach if one considers models $G_\Lambda$ with a positive continuous density. Instead two other routes have been successfully pursued. \citet{gis2018} study spectrally discrete structural models in the maxima setting where $G_\Lambda$ does not admit a density; see also Section~\ref{sec:claudia}. In contrast, \citet{eng2018} study extremal conditional independence for multivariate Pareto distributions and propose in essence the following definition.

\begin{definition}[Extremal conditional independence based on \citet{eng2018}] \label{def:TradCEI}
Let $(X_A,X_B,X_C) \in {\rm RV}^+_\alpha(\Lambda)$, then $X_A$ and $X_B$ are said to exhibit \emph{extremal conditional independence} given $X_C$, denoted by
\[X_A \perp_e X_B \mid X_C,\]
if we have classical conditional independence $Y_A \indepp Y_B \,|\, Y_C$ for $Y \sim \p_{R_{v,1}}=\Lambda(\D y)/\Lambda(R_{v,1})$ for all $v \in V$.
\end{definition}

\begin{remark}
  \citet{eng2018} only consider the case where $\Lambda$ has a Lebesgue density on $\cE_+$ and therefore no mass on any of the sub-faces $\cE_D$, $D\subset V$ for $D\neq V$. They show in this case that it suffices to verify classical conditional independence in the above definition for one $v \in C$ in the conditioning set.
  Our Lemma~\ref{lem:single_v} in the Appendix shows that indeed any $v \in V$ is sufficient, even if $v \in V \setminus C$.
\end{remark}

Since the exponent measure $\Lambda$ is homogeneous, it is an immediate consequence of Theorem~\ref{thm:test_class} (see also Remark~\ref{rk:test_class}) that the extremal conditional independence from Definition~\ref{def:TradCEI} 
and our $\Lambda$-based conditional independence coincide. To be precise, we may formulate this finding as follows. In particular, we would like to stress that it is valid for any homogeneous exponent measure $\Lambda$ without further assumptions; $\Lambda$ need not have a Lebesgue density and $C=\emptyset$ is allowed.

\begin{corollary} \label{cor:ECI}
Let $(X_A,X_B,X_C) \in {\rm RV}^+_\alpha(\Lambda)$, then 
 \[X_A \perp_e X_B \mid X_C \quad \iff \quad \indepABC.\]
 In particular, this includes the case $C=\emptyset$.
\end{corollary}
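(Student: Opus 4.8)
The plan is to obtain both implications at once from a chain of equivalences, combining the test-class reduction of Theorem~\ref{thm:test_class} with the $-\alpha$-homogeneity of the exponent measure recorded in~\eqref{def:homog}. First I would note that, by the convention adopted just before the corollary, $(X_A,X_B,X_C)\in{\rm RV}^+_\alpha(\Lambda)$ means that $(A,B,C)$ is a partition of $V$; hence Theorem~\ref{thm:test_class}~(i) applies and $\indepABC$ is equivalent to
\[Y_A\indepp Y_B\mid Y_C\quad\text{with}\quad Y\sim\p_{R_{v,\eps}}\quad\text{for all }v\in V,\ \eps>0,\]
with the case $C=\emptyset$ covered by the final sentence of that theorem.

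Next I would reduce the scales $\eps>0$ to the single value $\eps=1$, which is the content of Remark~\ref{rk:test_class} and is the point where homogeneity enters. Writing $R_{v,\eps}=\eps R_{v,1}$, the change of variables $y\mapsto y/\eps$ gives, for Borel $B\subset R_{v,1}$,
\[\p(Y/\eps\in B)=\frac{\Lambda(\eps B)}{\Lambda(\eps R_{v,1})}=\frac{\eps^{-\alpha}\Lambda(B)}{\eps^{-\alpha}\Lambda(R_{v,1})}=\p_{R_{v,1}}(B),\]
so that $Y\sim\p_{R_{v,\eps}}$ forces $Y/\eps\sim\p_{R_{v,1}}$ (and, by the same homogeneity, $\Lambda(R_{v,\eps})>0$ iff $\Lambda(R_{v,1})>0$, so the same indices $v$ are relevant on both sides). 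Since classical conditional independence is invariant under the componentwise bijection $y\mapsto y/\eps$, the displayed condition for all $\eps>0$ is equivalent to the same condition for $\eps=1$ only, i.e.\ to $Y_A\indepp Y_B\mid Y_C$ for $Y\sim\p_{R_{v,1}}$ for every $v\in V$.

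Finally, the condition just obtained is precisely the requirement in Definition~\ref{def:TradCEI} defining $X_A\perp_e X_B\mid X_C$ (and likewise for $C=\emptyset$, where it reads $Y_A\indepp Y_B$ for $Y\sim\p_{R_{v,1}}$, $v\in V$). Concatenating the two equivalences yields the stated biconditional. I do not expect a genuine obstacle here; the only delicate step is the homogeneity-based collapse of all scales to $\eps=1$, which relies both on the scaling identity for $\p_{R_{v,\eps}}$ above and on the transformation-invariance of classical conditional independence — routine, but worth stating explicitly.
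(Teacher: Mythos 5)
Your proposal is correct and follows essentially the same route as the paper: the paper derives the corollary as an immediate consequence of Theorem~\ref{thm:test_class} together with Remark~\ref{rk:test_class}, i.e.\ the reduction to the test sets $R_{v,\eps}$ followed by the homogeneity-based collapse to $\eps=1$. Your explicit verification of the scaling identity $\p_{R_{v,\eps}}\circ(\eps\,\cdot)=\p_{R_{v,1}}$ and of the invariance of classical conditional independence under the componentwise bijection simply spells out what the paper leaves as a remark.
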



Originally, \citet{eng2018} excluded the independence case $C=\emptyset$, as their approach is based on working with a Lebesgue density of $\Lambda$ and does not allow for positive mass on the lower-dimensional subsets $\cH_A \cup \cH_B$; in view of~Proposition~\ref{prop:indep} we know that mass on such subsets is crucial for the independence case. In our new setup, Definition~\ref{def:TradCEI} and Corollary~\ref{cor:ECI} naturally encapsulate the case $C=\emptyset$ and it is furthermore in line with the traditional notion of extremal independence. We have seen now that for $(X_A,X_B) \in {\rm RV}^+_\alpha(\Lambda)$
\[ X_A \perp_{\rm tr} X_B \qquad \iff \qquad X_A \perp_{e} X_B \qquad \iff \qquad A \indep B \,\, [\Lambda].\]
The first equivalence has already been shown in \citet{stro2020}.

In Corollary~\ref{cor:ECI} it is also possible to consider discrete spectral measures,  which builds a bridge to the structural max-linear models~\eqref{eq:recmaxlin} from \citet{gis2018}; see Section~\ref{sec:claudia}. Theorem~\ref{thm:DAG} shows that we recover all the conditional independencies that are encoded in the structural model equations also within our $\Lambda$-based notion of conditional independence.

\begin{remark}
Instead of restricting oneself to the cone $\cE_+ = [0,\infty)^d \setminus \{0\}$, we can also consider a random vector $X$ on $\cE$ that is regularly varying in the sense that there exists a scaling function $a(u)$, such that $u\p(a(u)^{-1}X \in \cdot)$ converges vaguely to a measure $\Lambda$. 
Definition~\ref{def:TradCEI}  and Corollary~\ref{cor:ECI} also carry over naturally to the situation when $(X_A,X_B,X_C)\in {\rm RV}_\alpha(\Lambda)$.
\end{remark}

Extremal graphical models have been applied to assess flood risk \citep{roettger2021, ase2021}, financial risks \citep{engelke2022a} and large delays in flight networks \citep{hen2022}.
The parametric family of H\"usler--Reiss distributions \citep{Husler1989} can be seen as the analogue of the multivariate Gaussian distribution in extremes, and it is so far the only one used in these kind of statistical applications.
One advantage of our new theory here in the context of extremal graphical models is that we lay the foundation to construct much more general models than in \citet{eng2018}, overcoming key limitations that were pointed out in its discussion part. In particular, we do not require Lebesgue densities and the exponent measure $\Lambda$ can have mass on lower-dimensional sub-faces of $\cE_+$. Importantly, the graph can therefore have unconnected components, which corresponds to asymptotically independent groups of variables.
The following example provides a generalization of the widely used H\"usler--Reiss distributions that makes use of all of the above features.

\begin{figure}[hbt]
\tikzstyle{vertex}=[circle,fill=gray!10,draw=black,thin,minimum size=12pt,inner sep=0pt]
\tikzstyle{edge} = [draw,thick,-]

\mbox{\small   
\begin{tikzpicture}[scale=1]
    \foreach \pos/\name in {
    (1.43,-1)/1,(2.43,0)/2,(2.1,1.33)/3, (4,0)/4,
    (5.66,0)/5,(6.5,-1)/8,(8.16,-1)/9, (6.5,1)/6, (7.33,0)/7, (9.33,0)/10}
        \node[vertex] (\name) at \pos {\scriptsize $\name$};
    \foreach \source / \dest  in {1/2, 2/3, 2/4, 5/6, 6/7, 7/8, 7/9}
        \path[edge] (\source) -- (\dest);
\end{tikzpicture}
}

         \caption{\small  
A forest with cliques  $\cC=\{ \{1,2\}, \{2,3\}, \{2,4\}, \{5,6\}, \{6,7\}, \{7,8\}, \{7,9\}, \{10\} \}$ (edges and isolated singleton \{10\}) separated by separators  $\cS=\{\{2\}, \{2\}, \emptyset, \{6\}, \{7\}, \{7\}, \emptyset \}$. The empty set appears twice in $\cS$, as the forest consists of three connected components. For each node $i \in V=\{1,\dots,10\}$, the singleton $\{i\}$ appears $d_i-1$ times in $\cS$, where $d_i$ is the degree of $i$ in the forest.
}
  \label{fig:forest}
\end{figure}
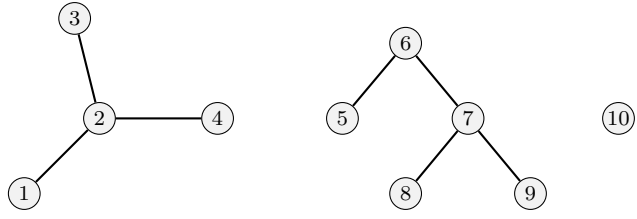

 \begin{example}\label{HR_forest}

  Let $V = \{1,\dots, d\}$ and suppose that $\cG = (V,E)$ is a forest, that is, a graph where each connected component is a tree. The (maximal) cliques of such a graph are its edges together with its isolated points, cf.~also Figure~\ref{fig:forest}, i.e.~$\cC=E \cup \mathcal I$, where $\mathcal I$ consists of isolated singletons $\{i\}$. The separator multiset $\cS$ is also comprised of two kinds of sets; on the one hand empty sets separating connected components and on the other hand singletons $\{i\}$ that are separating edges. The multiplicity of each singleton $\{i\}$ in the multiset $\cS$ is $d_i-1$ if $d_i$ denotes the degree of $i$ in the forest. Similarly, the empty set appears once less than the number of trees.
  
  We work with the same setup as in the example in Section~\ref{subsec:CI3constr} and assume a dominating product measure of the form~\eqref{product_measure}.
  As building blocks for any edge $\{i,j\} \in E$ we use the bivariate H\"usler--Reiss densities~\citep[e.g.,][]{Engelke2015} given by
  \begin{align}\label{eq:HRbivdensity}
 \kappa_{ij}(y_{ij})=\frac{1}{\sqrt{2\pi\Gamma_{ij}}}y_i^{-2}y_j^{-1}\exp\{-[\log(y_j/y_i)+\Gamma_{ij}/2]^2/(2\Gamma_{ij})\},\,\,\, y_{ij} \in (0,\infty)^2,
\end{align}
where $\Gamma_{ij} \in (0,\infty)$ are the corresponding dependence parameters on the edges as in Example~\ref{ex:HR}; note that these densities are $-3$-homogeneous and have univariate marginal densities $m(y) = y^{-2}$.
As in Section~\ref{subsec:CI3constr}, we let $\lambda_{ij}$ be the corresponding densities on $\cE_+^{ij}$ with masses on the axes as in~\eqref{ext_density} with mixture parameters $p_{ij}\in (0,1]$, $\{i,j\} \in E$. Recall that the  marginal densities of $\lambda_{ij}$ are by construction $m(y_i)$ and $m(y_j)$, respectively.

The exponent measure density of a H\"usler--Reiss forest is defined as
\begin{align}\label{forest_dens}
  \lambda(y) = \frac{\prod_{\{i,j\} \in E} \overline \lambda_{ij}(y_i, y_j)}{\prod_{\{i\} \in \mathcal S} \overline \lambda_i(y_i)} \prod_{\{i\} \in  \mathcal I} \overline \lambda_i(y_i),
\end{align}
for $\mu$-almost all $y\in \cE_+ \setminus \mathcal Z(\mathcal G)$, and it is implied that $\lambda(y)=0$ otherwise. 
By construction, the corresponding measure $\Lambda$ on $\cE_+ =[0,\infty)^V\setminus\{0_V\}$ is $-1$-homogeneous with marginal densities $\lambda_i(y_i)=m(y_i)=y_i^{-2}$. By Theorem~\ref{thm:decomposable} it is an undirected graphical model on the forest $\cG$.

We write $i \leftrightarrow j$ if $i$ and $j$ are connected, and $i \nleftrightarrow j$ otherwise. In the former case, we denote by $\mathrm{path}_{ij}$ the set of all edges on the unique shortest path between $i$ and $j$ in $\cG$. We can explicitly express the extremal correlation between arbitrary nodes $i,j\in V$ as
\[ \chi_{ij} =
\begin{cases}
\big\{2 - 2\Phi\big(\sqrt{\Gamma_{ij}} /2\big)\big\}{\displaystyle \prod_{(s,t) \in \mathrm{path}_{ij}} p_{st}  }  & \text{for } i\leftrightarrow j,\\
0  & \text{for } i\nleftrightarrow j,
\end{cases}
\]
where $\Phi$ is the standard normal distribution function, and the $\Gamma_{ij} = \sum_{(s,t) \in \mathrm{path}_{ij}} \Gamma_{st}$ are the tree-completed H\"usler--Reiss coefficients for all $i\leftrightarrow j$~\citep[e.g.,][]{eng2020, asenova2021extremes}; see Appendix~\ref{HR_forest_proof} for the proof. This means that nodes in different connected components are asymptotically independent.
\end{example}

\subsection{Asymptotic independence}\label{sec:AI}

In the previous section we have discussed the implications of our results for asymptotically dependent models, where a homogeneous measure  $\Lambda$ characterizes
the extremal limits.
The theory of this paper is much more general and allows, for the first time, to define non-trivial
$\Lambda$-based graphical models in the regime of asymptotic independence.
In general, there is no unified way of describing the dependence structures of all asymptotically independent distributions, and different approaches exist, including hidden regular variation \citep{res2002}, conditional extreme value models \citep{HeffernanTawn2004} or scale mixtures \citep{Wadsworthetal2017, eng2018a}.

Another possibility to construct distributions that exhibit asymptotic independence is given by a max-infinitely divisible distribution $Z$ with distribution function~\eqref{maxid_cdf} with suitable non-homogeneous exponent measure $\Lambda$; here we also assume equal marginal measures with $\Lambda(u)>0$ for all $u\in(0,\infty)$.
 In this case, the extremal correlations in~\eqref{eq:chidef} are not informative since they satisfy $\chi_{ij}=0$ for all $i,j\in V$.
Following the approach of \citet{LedTawn96,LedTawn97}, a refined \emph{residual tail dependence coefficient} $\eta_{ij}\in (0,1]$ can be defined through
\begin{align}\label{eq:LT-residual-tail-dep-coeff}
  \mathbb P( Z_i > u, Z_j > u ) &= \ell_{ij} (\PP(Z_i>u)^{-1}) \PP(Z_i>u)^{1/\eta_{ij}},
\end{align}
where $\ell_{ij}$ is a slowly varying function; see~Appendix~\ref{app:GaussianExample} for a review of some fundamental properties of slowly varying functions. Due to \eqref{eq:SurvivalAsympEquiv},
the relation \eqref{eq:LT-residual-tail-dep-coeff} is equivalent to
 \begin{align}\label{eq:residual-tail-dep-coeff}
  \mathbb P( Z_i > u, Z_j > u ) &= \widehat \ell_{ij} (\Lambda(u)^{-1}) \Lambda(u)^{1/\eta_{ij}}
\end{align} 
for a  slowly varying function $ \widehat \ell_{ij}$, cf.~also~Lemma~\ref{lemma:SV}.
The coefficient $\eta_{ij}$ characterizes the decay rate of the joint exceedance probability relative to the univariate survival function.
We speak about \emph{positive} and \emph{negative extremal association} between $Z_i$ and $Z_j$ if $\eta_{ij} \in (1/2, 1]$ and $\eta_{ij}\in(0,1/2)$, respectively, and about \emph{near independence} if $\eta_{ij} = 1/2$.
For max-infinitely divisible distributions we cannot obtain negative extremal association and 
the joint survival function $\PP( Z_i > u, Z_j > u )$ is asymptotically equivalent to the joint survival function of the exponent measure $\Lambda(y_i>u, y_j>u)$ for positive extremal association. A proof for this result is given in Appendix~\ref{app:etaOneHalf}.

\begin{lemma}\label{lemma:etaOneHalf}
  Let $Z$ be a max-infinitely divisible distribution with residual tail dependence coefficient $\eta_{ij}$ as in \eqref{eq:LT-residual-tail-dep-coeff} or \eqref{eq:residual-tail-dep-coeff}. Then $\eta_{ij} \geq 1/2$ for all $i,j \in V$.
   If $\eta_{ij} > 1/2$ we have that
  \begin{align}\label{eq:joint-survival-tail-equiv}
  \lim_{u \to \infty}\frac{\Lambda(y_i>u, y_j>u)}{\PP( Z_i > u, Z_j > u )} =1
\end{align} 
and   
\begin{align}\label{eq:eta-viaLambda}
  \Lambda(y_i>u, y_j>u) &= \widetilde{\ell}_{ij} (\Lambda(u)^{-1}) \Lambda(u)^{1/\widetilde{\eta}_{ij}},
\end{align} 
for a slowly varying function $\widetilde \ell_{ij}$ and $\widetilde{\eta}_{ij}=\eta_{ij}$. 

Conversely, if \eqref{eq:eta-viaLambda} holds for some $\widetilde{\eta}_{ij} > 1/2$, this entails \eqref{eq:joint-survival-tail-equiv} as well as \eqref{eq:LT-residual-tail-dep-coeff} and \eqref{eq:residual-tail-dep-coeff} with $\eta_{ij}=\widetilde{\eta}_{ij}$.
\end{lemma}

\begin{remark}
  It is known that max-infinitely divisible distributions are positively associated in the usual sense \citep[Prop.~5.29]{res2008}. The above result confirms that this translates into extremal positive association. In many practical examples we have indeed $\eta_{ij} > 1/2$, however  for $\eta_{ij} = 1/2$ (the near independence case) one has to be careful. Consider for instance the situation where $Z_i$ and $Z_j$ are independent. 
 Then the limit in \eqref{eq:joint-survival-tail-equiv} will be zero (instead of 1) and the joint survival functions are no longer tail-equivalent.
\end{remark}

An example for an exponent measure that induces an asymptotically independent max-infinitely divisible distribution $Z$ exhibiting positive association is the bivariate Gaussian type measure $\Lambda_\rho$ in Example~\ref{ex:gauss} with correlation $\rho > 0$. By construction, in this case $(Z_1,Z_2)$ is a bivariate log-Gaussian distribution, which is asymptotically independent with residual tail dependence coefficient
\[ \eta = \frac{1+\rho}{2}.\]
In order to construct examples in higher dimensions $d>2$ with graphical structure, we can draw on the density factorization in Theorem~\ref{thm:decomposable}.

Here we consider the most basic construction on the chain graph $\cG=(V,E)$ with nodes $V=\{1,2,3\}$ and edges $E = \{(1,2), (2,3)\}$. Following the example in Section~\ref{subsec:CI3constr} with no mass on the axes, we consider bivariate measures $\lambda_{12}$ and $\lambda_{23}$ that are equal to the densities of the log-Gaussian exponent measures $\Lambda_\rho$ with correlations $\rho_{12},\rho_{23} > 0$, respectively. We note that they share the same univariate marginal density $\lambda_1=\lambda_2=\lambda_3$. Setting
\begin{align}\label{3d_example}
  \lambda(y) = \lambda_{12}(y_{12}) \lambda_{23}(y_{23})/  \lambda_2(y_2), \quad y \in \mathcal E_+,
\end{align}
we obtain a trivariate density that induces a valid exponent measure $\Lambda$ and a corresponding
max-infinitely divisible distribution $Z = (Z_1, Z_2, Z_3)$. By construction,   the exponent measure $\Lambda$ with density~\eqref{3d_example} is an undirected graphical model on the chain graph $\cG$.

\begin{proposition}\label{prop:DG}
 The max-infinitely divisible distribution $Z$ 
  with exponent measure $\Lambda$ with density~\eqref{3d_example}
 is asymptotically independent with residual tail dependence coefficients
  \[ \eta_{12} = \frac{1+\rho_{12}}{2}, \qquad \eta_{23} = \frac{1+\rho_{23}}{2}, \qquad \eta_{13} = \frac{1+\rho_{12}\rho_{23}}{2}.\]
\end{proposition}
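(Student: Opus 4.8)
The plan splits into the two easy coefficients $\eta_{12},\eta_{23}$ and the substantive one, $\eta_{13}$. For the former I would integrate the density~\eqref{3d_example} over the remaining coordinate, which gives $\Lambda_{\{1,2\}}=\Lambda_{\rho_{12}}$ and $\Lambda_{\{2,3\}}=\Lambda_{\rho_{23}}$, so that $(Z_1,Z_2)$ and $(Z_2,Z_3)$ are \emph{exactly} bivariate log-Gaussian with correlations $\rho_{12}$ and $\rho_{23}$. Since the residual tail dependence coefficient depends only on the copula---here the Gaussian copula---the classical value $(1+\rho)/2$ \citep{LedTawn96} yields $\eta_{12}=(1+\rho_{12})/2$ and $\eta_{23}=(1+\rho_{23})/2$, in particular $\chi_{12}=\chi_{23}=0$. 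The remaining task is to identify the exact exponential decay rate of the joint tail of the marginal exponent measure $\Lambda_{\{1,3\}}$ and then transfer it to $Z$.

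\emph{Transfer step.} From the Poisson representation of $Z$ recalled in Section~\ref{sec:motivation}, a point-counting identity gives, for large $u$,
\[\PP(Z_1>u,Z_3>u)=\Lambda(y_1>u,y_3>u)+\bigl(\Lambda(u)-\Lambda(y_1>u,y_3>u)\bigr)^2+\oh\bigl(\Lambda(u)^2\bigr),\]
the identity underlying Lemma~\ref{lemma:etaOneHalf}. Hence, once it is shown that $\Lambda(y_1>u,y_3>u)/\Lambda(u)^2\to\infty$, the first term dominates, $\PP(Z_1>u,Z_3>u)\sim\Lambda(y_1>u,y_3>u)$, and $\eta_{13}=\lim_{u\to\infty}\log\Lambda(u)/\log\PP(Z_1>u,Z_3>u)$. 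Because $\Lambda(u)=-\log\Phi(\log u)\sim 1-\Phi(\log u)$, one has $\log\Lambda(u)\sim-(\log u)^2/2$, so the proposition reduces to establishing
\[\log\Lambda(y_1>u,y_3>u)\sim-\frac{(\log u)^2}{1+\rho_{12}\rho_{23}}\qquad(u\to\infty),\]
which yields $\eta_{13}=(1+\rho_{12}\rho_{23})/2$; since $0<\rho_{12}\rho_{23}<1$ this lies in $(1/2,1)$, so $\chi_{13}=0$ and $Z$ is asymptotically independent.

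\emph{The tail of $\Lambda_{\{1,3\}}$.} By the density factorization $\lambda(y)=\lambda_{12}(y_{12})\lambda_{23}(y_{23})/\lambda_2(y_2)$ (equivalently, the kernel factorization of Theorem~\ref{thm:kernel}), $\Lambda(y_1>u,y_3>u)=\int_{y_1>u,\,y_3>u}\int_0^\infty\lambda_{12}(y_1,y_2)\lambda_{23}(y_2,y_3)\lambda_2(y_2)^{-1}\,\D y_2\,\D y_1\,\D y_3$. Writing $t_i=\log y_i$ and letting $\phi,\phi_\rho,\phi_\Sigma$ denote the standard, the bivariate-$\rho$ and the trivariate-$\Sigma$ normal densities, differentiation of $-\log\Phi_\rho(\log\cdot)$ gives $\lambda_\rho(y_1,y_2)=(y_1y_2)^{-1}\bigl[\phi_\rho(t_1,t_2)\Phi_\rho(t_1,t_2)-\partial_1\Phi_\rho\,\partial_2\Phi_\rho\bigr]\Phi_\rho(t_1,t_2)^{-2}$, hence $\lambda_\rho(y_1,y_2)\lesssim(y_1y_2)^{-1}\phi_\rho(t_1,t_2)/\Phi(t_1\wedge t_2)$ in general, while $\lambda_\rho(y_1,y_2)\asymp(y_1y_2)^{-1}\phi_\rho(t_1,t_2)$ when $t_1,t_2$ are large of comparable order (there $\partial_1\Phi_\rho\,\partial_2\Phi_\rho\le\phi(t_1)\phi(t_2)=\oh(\phi_\rho(t_1,t_2))$); also $\lambda_2(y_2)=\phi(t_2)/(y_2\Phi(t_2))$. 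Let $\Sigma$ be the $3\times3$ correlation matrix with $\Sigma_{12}=\rho_{12}$, $\Sigma_{23}=\rho_{23}$ and $\Sigma_{13}=\rho_{12}\rho_{23}$ (the value forced by conditional independence of coordinates $1$ and $3$ given $2$), so that the Gaussian graphical factorization $\phi_{\rho_{12}}(t_1,t_2)\phi_{\rho_{23}}(t_2,t_3)/\phi(t_2)=\phi_\Sigma(t_1,t_2,t_3)$ holds. On $\{y_1,y_2,y_3\ge1\}$ these facts give the pointwise bound $\lambda(y)\le C\,e^{-t_1-t_2-t_3}\phi_\Sigma(t)$, whence
\[\int_{\{y_1,y_2,y_3\ge1,\ y_1>u,\ y_3>u\}}\lambda(y)\,\D y\;\le\;C\int_{t_1>\log u,\,t_3>\log u}\phi_\Sigma(t)\,\D t\;=\;C\,\PP(N_1>\log u,\,N_3>\log u),\]
with $(N_1,N_3)$ standard bivariate normal of correlation $\rho_{12}\rho_{23}$, whose tail has logarithm $\sim-(\log u)^2/(1+\rho_{12}\rho_{23})$ by the classical bivariate Gaussian asymptotics. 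The region $\{y_2<1\}$ I would estimate separately using $\lambda_\rho(y_1,y_2)\lesssim(y_1y_2)^{-1}\phi_\rho(t_1,t_2)/\Phi(t_2)$ together with $\lambda_2(y_2)^{-1}=y_2\Phi(t_2)/\phi(t_2)$; its contribution has logarithmic rate at most $-\bigl(\tfrac12(1-\rho_{12}^2)^{-1}+\tfrac12(1-\rho_{23}^2)^{-1}\bigr)(\log u)^2$, and since $\tfrac12(1-\rho_{12}^2)^{-1}+\tfrac12(1-\rho_{23}^2)^{-1}\ge(1-\rho_{12}\rho_{23})^{-1}>(1+\rho_{12}\rho_{23})^{-1}$ it is negligible. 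For the matching lower bound I would restrict the integral to a product of unit cubes around $t_1=t_3=\log u$ and $t_2=c^\ast\log u$, where $c^\ast=-\bigl((\Sigma^{-1})_{12}+(\Sigma^{-1})_{23}\bigr)/(\Sigma^{-1})_{22}>0$ minimizes $\tfrac12 t^\top\Sigma^{-1}t$ subject to $t_1=t_3=\log u$; there $\lambda(y)\ge c\,e^{-t_1-t_2-t_3}\phi_\Sigma(t)$, so $\Lambda(y_1>u,y_3>u)\gtrsim\exp\bigl\{-\tfrac12(\log u)^2(1,c^\ast,1)\Sigma^{-1}(1,c^\ast,1)^\top\bigr\}$ up to polynomial factors, and eliminating the free middle coordinate reduces $\tfrac12(1,c^\ast,1)\Sigma^{-1}(1,c^\ast,1)^\top$ to the $(1,3)$-marginal Gaussian exponent $(1+\rho_{12}\rho_{23})^{-1}$. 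The upper and lower bounds agree at the logarithmic scale.

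\emph{Main obstacle.} The delicate part is entirely the Laplace/large-deviations analysis of $\Lambda(y_1>u,y_3>u)$ above: one must (i) pass from the non-Gaussian building blocks $\lambda_{\rho_{ij}}$ to their Gaussian envelopes without losing the exponent, (ii) show that the off-diagonal parts of the integration region---above all $\{y_2\ll1\}$ and $\{y_2\gg u\}$, where $\lambda$ is no longer comparable to $\phi_\Sigma$---carry strictly faster-decaying mass, and (iii) verify that the polynomial and slowly varying corrections do not affect the exponent. The value $1/(1+\rho_{12}\rho_{23})$ itself is forced by the constrained minimum of $\tfrac12 t^\top\Sigma^{-1}t$ over $\{t_1,t_3\ge\log u\}$: optimizing out the unconstrained coordinate $t_2$ collapses this to the quadratic form of the $(1,3)$-marginal correlation $\rho_{12}\rho_{23}$, which is exactly why the product of the two edge correlations appears.
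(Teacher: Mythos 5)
Your overall strategy coincides with the paper's: the bivariate marginals $\Lambda_{12}=\Lambda_{\rho_{12}}$ and $\Lambda_{23}=\Lambda_{\rho_{23}}$ are exactly the log-Gaussian exponent measures, giving $\eta_{12}$ and $\eta_{23}$ from the classical Ledford--Tawn value; the transfer $\PP(Z_1>u,Z_3>u)\sim\Lambda(y_1>u,y_3>u)$ is exactly Lemma~\ref{lemma:etaOneHalf}; and the heart of the matter is the joint tail of $\Lambda_{13}$, which both you and the paper attack by comparing $\lambda$ with the trivariate Gaussian density $\varphi_\Sigma$ arising from $\varphi_{\rho_{12}}(t_1,t_2)\varphi_{\rho_{23}}(t_2,t_3)/\varphi(t_2)$, whose $(1,3)$-marginal correlation is $\rho_{12}\rho_{23}$. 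Your identification of the exponent via the constrained minimum of $\tfrac12 t^\top\Sigma^{-1}t$ is the right mechanism and explains why the product of the edge correlations appears.

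The genuine gap is in what your bounds deliver versus what the statement requires. The coefficient $\eta_{13}$ is defined in Section~\ref{sec:AI} through the representation $\PP(Z_1>u,Z_3>u)=\ell(\Lambda(u)^{-1})\,\Lambda(u)^{1/\eta_{13}}$ with $\ell$ \emph{slowly varying}, and the paper's proof (Theorem~\ref{thm:Lambda_13}) is organized precisely to produce such an $\ell$: it splits the $t_2$-integral at $u^\beta$, shows via the uniform bound on $\kappa^{(\rho)}$ (Lemma~\ref{lemma:kappaBound}) that the main part equals $(1+e(u))\Lambda^{(\rho_{12}\rho_{23})}((u,\infty)^2)$ with $e(u)\to 0$, and shows that the remainder decays by an extra \emph{power} of $\overline\Phi(u)$ (Lemmas~\ref{lemma:PhiabBound} and~\ref{lemma:auxbetadelta}), so that slow variation is inherited from the known Gaussian case. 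Your Laplace-method upper and lower bounds "agree at the logarithmic scale", i.e.\ they give $\log\Lambda(y_1>u,y_3>u)\sim-(\log u)^2/(1+\rho_{12}\rho_{23})$. That pins down the exponent but leaves an unquantified $\exp\{\mathrm{o}((\log u)^2)\}=\exp\{\mathrm{o}(\log(\Lambda(u)^{-1}))\}$ discrepancy between the two bounds, and a function of that size need not be slowly varying in $\Lambda(u)^{-1}$ (e.g.\ $\exp\{\sqrt{\log v}\,\sin(\log v)\}$ is $\exp\{\mathrm{o}(\log v)\}$ but not slowly varying). So as written you prove only the weak, ratio-of-logarithms version of $\eta_{13}=(1+\rho_{12}\rho_{23})/2$, not the proposition in the form the paper states it. To close the gap you would either need to sharpen the Laplace analysis to an exact asymptotic with an explicit subexponential prefactor, or, as the paper does, avoid the saddle-point route altogether and establish asymptotic equivalence of $\Lambda_{13}((u,\infty)^2)$ with the genuinely Gaussian quantity $\Lambda^{(\rho_{12}\rho_{23})}((u,\infty)^2)$ up to a factor converging to a positive constant. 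A secondary, more minor point: your pointwise comparison $\lambda_\rho\asymp(y_1y_2)^{-1}\varphi_\rho(t_1,t_2)$ holds only in a cone of directions $t_2/t_1$ determined by $\rho$ (outside it $\partial_1\Phi_\rho\,\partial_2\Phi_\rho$ dominates $\varphi_\rho\Phi_\rho$), so the lower bound near the saddle point $t_2=c^\ast\log u$ needs a check that $c^\ast$ lies in the admissible cones for both $\kappa^{(\rho_{12})}$ and $\kappa^{(\rho_{23})}$ — this is what the paper's Lemma~\ref{lemma:kappaBound} handles uniformly on $\{x_1,x_2\geq u\}$ after the $u^\beta$ truncation.
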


We provide a proof of Proposition~\ref{prop:DG} in Appendix~\ref{app:GaussianExample}.
This result resembles the corresponding residual joint asymptotic tail behavior that we would also have detected in  a trivariate (log-)Gaussian distribution with the same graphical structure. However, what is quite subtle in this context, is that the distribution of $Z$ is in fact not(!) log-Gaussian,  but only arises by combining two bivariate log-Gaussian exponent measure densities.

 This construction principle can be extended to more general graphs such as trees. Moreover, any valid exponent measure density of a bivariate max-infinitely divisible distribution can be used; for more examples we refer to \citet{hus2021}.
This provides a powerful route for statistical modeling of high-dimensional data which exhibit asymptotic independence to be addressed in future research.

\subsection{Sparsity}\label{sec:sparsity}
Detecting meaningful and interpretable sparse structures in multivariate extremal dependence is an active field of research. Most approaches are formulated in terms of the exponent measure $\Lambda$ of the respective limit models in~\eqref{MEVD}. The review article \citet{eng2021} distinguishes three groups of methods and formalizes the underlying notions of sparsity. In this section we show that our theory reveals an intimate connection between two of them. First, let us recall the relevant sparsity notions.

The research on \emph{concomitant extremes} is concerned with the identification of elementary sub-faces $\mathcal E_D$, $\emptyset \neq D\subset V$, that the exponent measure charges with mass, that is, $\Lambda(\mathcal E_D) > 0$. In this context, a sub-face is defined as
\begin{align}\label{eq:face2}
\mathcal E_D = \{y \in \cE_+ \,:\, y_{D} > 0, y_{V\setminus D} = 0_{V \setminus D}\}.
\end{align}
This is of interest since in the case $\Lambda(\mathcal E_D) > 0$, underlying variables indexed by the set $D$ can be jointly extreme.
The set $\mathcal E_+$ can be partitioned in the $2^d-1$ disjoint sub-faces $\mathcal E_D$ for all non-empty subsets $D\subset V$. The sparsity notion 2(a) in \citet{eng2021} requires that there is only a small number of groups of variables that can be concomitantly extreme, that is,
\begin{align}\label{sparse2}
  \left|\{ D\subset V : \Lambda(\mathcal E_D) > 0 \} \right| \ll 2^d - 1.
\end{align}
The sparsity notion 2(b) then states that the maximal cardinality of such groups is much smaller than the dimension $d = |V|$.
Many statistical approaches have been proposed to tackle the difficult problem of determining those sets \citep[e.g.,][]{sim2020, chi2017, chi2019, mey2019}.

A more classical notion of sparsity is in terms of conditional independence relations. A sparse graph $\mathcal G = (V,E)$, directed or undirected, will induce a distribution that can be explained by lower-dimensional objects; see Sections~\ref{sec:undirected} and \ref{sec:directed} here. Accordingly, sparsity notion 3 in~\citet{eng2021} is the requirement that a  graphical model on $\cG=(V,E)$ is sparse if the number of edges $|E|$ is much smaller than the number of all possible edges $d(d-1)/2$, that is,
\begin{align}\label{sparse3}
  |E| \ll d^2. 
\end{align}

Methods for sparsity in terms of concomitance as in~\eqref{sparse2} on the one hand, and in terms of sparse graphs as in~\eqref{sparse3} on the other hand, have so far not been considered together. In the present conditional independence framework for an exponent measure $\Lambda$, we can now establish a link between them.
More precisely, we draw on Corollary~\ref{cor:mass0}, which gives insight how the topology of the graph $\cG$ underlying an undirected extremal graphical model limits the number of potentially charged sub-faces  $\mathcal E_D$. It implies that $\Lambda(\mathcal E_D) = 0$ for any index set $D$ such that $\cG$ restricted to $D$ is disconnected. This gives an \emph{a priori}  upper bound on the number of charged sub-faces.

\begin{corollary}\label{cor:sparse}
   For an exponent measure $\Lambda$ corresponding to an extremal graphical model on the undirected graph $\cG=(V,E)$, an upper bound on the number of sub-faces charged with mass by $\Lambda$ is given by
  \begin{align}\label{sparse_bound}
    \left|\{ D\subset V :\, \Lambda(\mathcal E_D) > 0 \} \right| \leq \left|\{ D \subset V:\, \cG_D \text{ is connected}  \} \right|,
  \end{align}
  where the right-hand side is the cardinality of all connected sub-graphs of $\cG$. Moreover, if $\Lambda(\cE_D)>0$, then the cardinality $|D|$ is bounded from above by the number of nodes in the largest connected component of $\cG$. 
\end{corollary}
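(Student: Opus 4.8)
The plan is to derive both bounds directly from Corollary~\ref{cor:mass0}, which tells us that a globally Markov $\Lambda$ satisfying~\eqref{eq:infinite} puts no mass on $\{y_A\neq 0_A,\ y_B\neq 0_B,\ y_C=0_C\}$ whenever $C$ separates $A$ from $B$. The first step is to show: if $\cG_D$ (the subgraph of $\cG$ induced by $D$) is disconnected, then $\Lambda(\cE_D)=0$. To see this, write $D = A\sqcup B$ with no edges between $A$ and $B$ in $\cG_D$, and set $C = V\setminus D$. Then $C$ separates $A$ from $B$ in $\cG$, because any path from $A$ to $B$ that stays outside $C$ would lie entirely in $D$ and hence give an edge or chain within $\cG_D$ connecting the two parts — contradiction. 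Corollary~\ref{cor:mass0} then yields $\Lambda(y_A\neq 0_A,\ y_B\neq 0_B,\ y_{V\setminus D}=0_{V\setminus D})=0$. Since $\cE_D = \{y: y_i\neq 0\ \forall i\in D,\ y_i = 0\ \forall i\notin D\}$ is contained in this null set (on $\cE_D$ every coordinate in $A\subset D$ and in $B\subset D$ is nonzero, and every coordinate outside $D$ is zero), we conclude $\Lambda(\cE_D)=0$.

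The second step is the contrapositive bookkeeping: $\Lambda(\cE_D)>0$ forces $\cG_D$ to be connected. Hence $\{D\subset V:\Lambda(\cE_D)>0\} \subseteq \{D\subset V:\cG_D \text{ is connected}\}$, and taking cardinalities gives the inequality~\eqref{sparse_bound}. For the final claim about $|D|$, observe that if $\cG_D$ is connected then $D$ is contained in a single connected component of $\cG$ (a connected induced subgraph cannot straddle two components), so $|D|$ is at most the size of the largest connected component; combined with the implication just proved, $\Lambda(\cE_D)>0$ forces $|D|$ to respect this bound.

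I do not anticipate a serious obstacle here — the corollary is essentially a translation of Corollary~\ref{cor:mass0} into the language of sub-faces. The only point requiring a little care is the separation argument in the first step: one must verify that the absence of edges \emph{between} $A$ and $B$ \emph{within} $\cG_D$, together with $C=V\setminus D$, really does make $C$ an $A$--$B$ separator in the full graph $\cG$ (and not merely in $\cG_D$). This is immediate once one notes that a path avoiding $C$ has all its vertices in $D$, hence is a path in $\cG_D$, and its successive edges would eventually connect the $A$-part to the $B$-part of $D$. The rest is elementary set-counting, and the homogeneity of exponent measures guarantees~\eqref{eq:infinite} holds (Remark~\ref{rk:A1}), so Corollary~\ref{cor:mass0} applies.
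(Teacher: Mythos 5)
Your proof is correct and follows exactly the route the paper intends: the text preceding the corollary derives it from Corollary~\ref{cor:mass0} by observing that a disconnected $\cG_D$ yields a separation $(A,B,V\setminus D)$ with $\cE_D$ contained in the resulting null set, and you have simply filled in the (correct) details of that separation argument and the final counting. No gaps.
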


 Hence, sparsity in the sense of few edges in the graph (requirement \eqref{sparse3})
 may also enforce sparsity in terms of fewer potential concomitant extremes (requirement \eqref{sparse2}). 
However, the precise topology matters as well, as the following example illustrates.

 \begin{example}
Consider the three graphs with $d=|V| = 10$ nodes in Figure~\ref{fig:graphs}.  Table~\ref{tab:counts} displays the counts of their edges and connected sub-graphs.
The graph on the left corresponds to asymptotic independence between all $d$ nodes. There are no edges, i.e.\ the graph is sparse in the sense of \eqref{sparse3}. Trivially, we can only have $d$ sub-faces with mass in this example, which is certainly also a sparse model in the sense of concomitant extremes, i.e., \eqref{sparse2}, since $d  \ll 2^d - 1$.
The ring graph in the center is a connected, non-decomposable graph with $d$ nodes and $d$ edges, hence also \eqref{sparse3}. Using simple combinatorics, one can see that  the number of connected sub-graphs of a ring is $(d-1)d+1 \ll  2^d - 1$ and therefore by Corollary~\ref{cor:sparse} it also induces a sparse model for concomitance, cf.~\eqref{sparse2}.

However, the star graph on the right-hand side of Figure~\ref{fig:graphs} shows that the number of edges, here $d-1$, is not a sufficient indicator of whether only few sub-faces are charged with mass. Indeed, the star graph has $2^{d-1} +d -1$ connected sub-graphs, which is the largest number among any tree structure \cite[][Theorem 3.1]{sze2005}. While this tree model is simple in terms of the graph sparsity notion \eqref{sparse3}, the corresponding exponent measure $\Lambda$ may have mass on more than half of its sub-faces.
\end{example}

\begin{table}[htb]
\caption{\small Counts of edges and connected sub-graphs for graphs with $d=10$ nodes displayed in Figure~\ref{fig:graphs}. The fully connected graph (not displayed in Figure~\ref{fig:graphs}) corresponds to the maximal possible counts.}\label{tab:counts}

{\small
\renewcommand{\arraystretch}{1.2}
\begin{tabular}{|l|c|c|c|c|c}
\hline
\cellcolor{gray!30}
type of graph &  empty & ring & star & 
\cellcolor{gray!5} fully connected  \\
\hline
\cellcolor{gray!30}
\# edges  & $0$ & $d=10$ &  $d-1=9$ & 
\cellcolor{gray!5} $d(d-1)/2=45$ \\
\cellcolor{gray!30}
\# connected sub-graphs  & $d=10$ &  $(d-1)d+1=91$ &  $2^{d-1}+d-1 = 521$ & \cellcolor{gray!5} $2^d-1=1023$\\
\hline
\end{tabular}
}
\end{table}

From these examples it can be seen that connectivity properties of the graph $\cG=(V,E)$ are more important for the number of sub-faces with $\Lambda$-mass than the cardinality of the edge set $|E|$. Since our extended definition of extremal graphical models for $\Lambda$ allows for unconnected components in a graph, this enables particularly sparse models in the sense of concomitant extremes.

\begin{figure}
\includegraphics[width=0.3\textwidth]{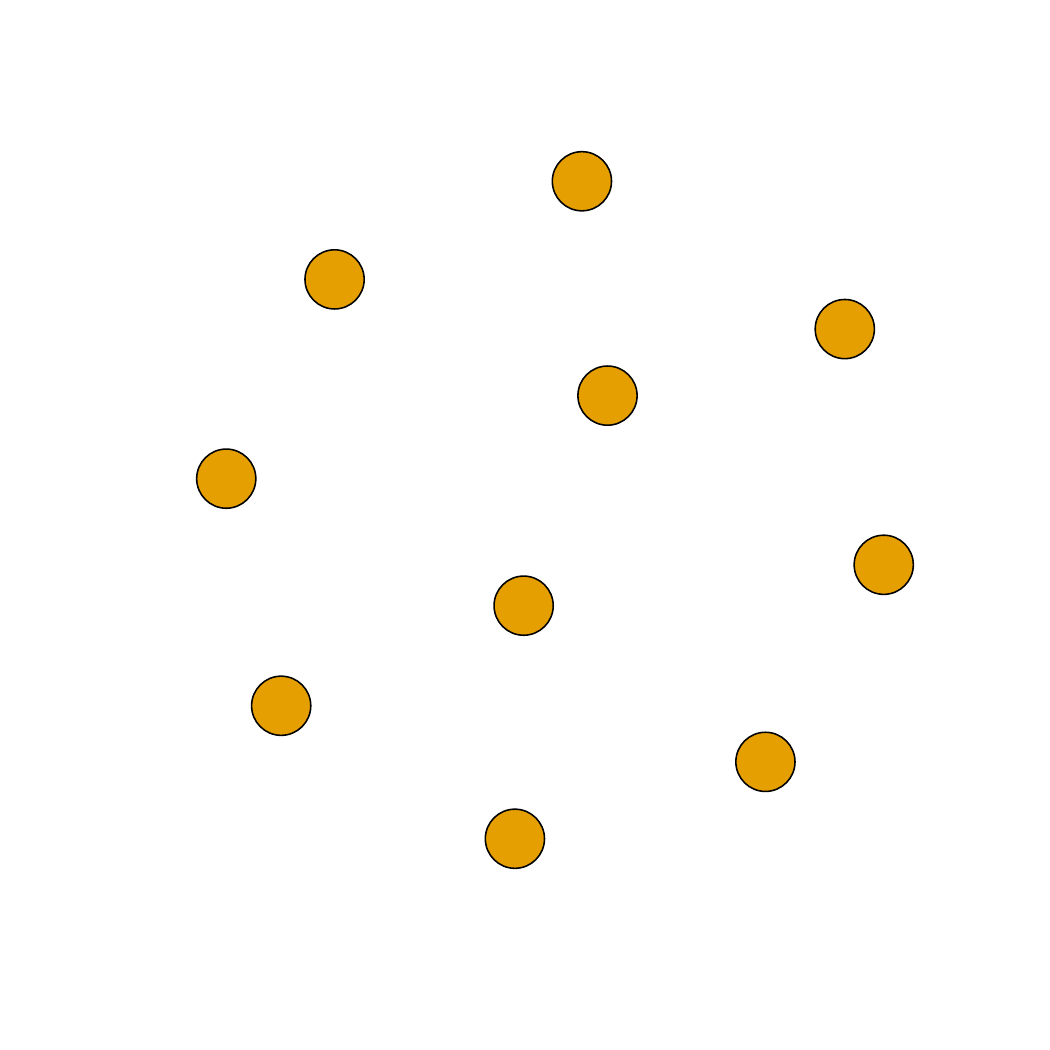}
\includegraphics[width=0.3\textwidth]{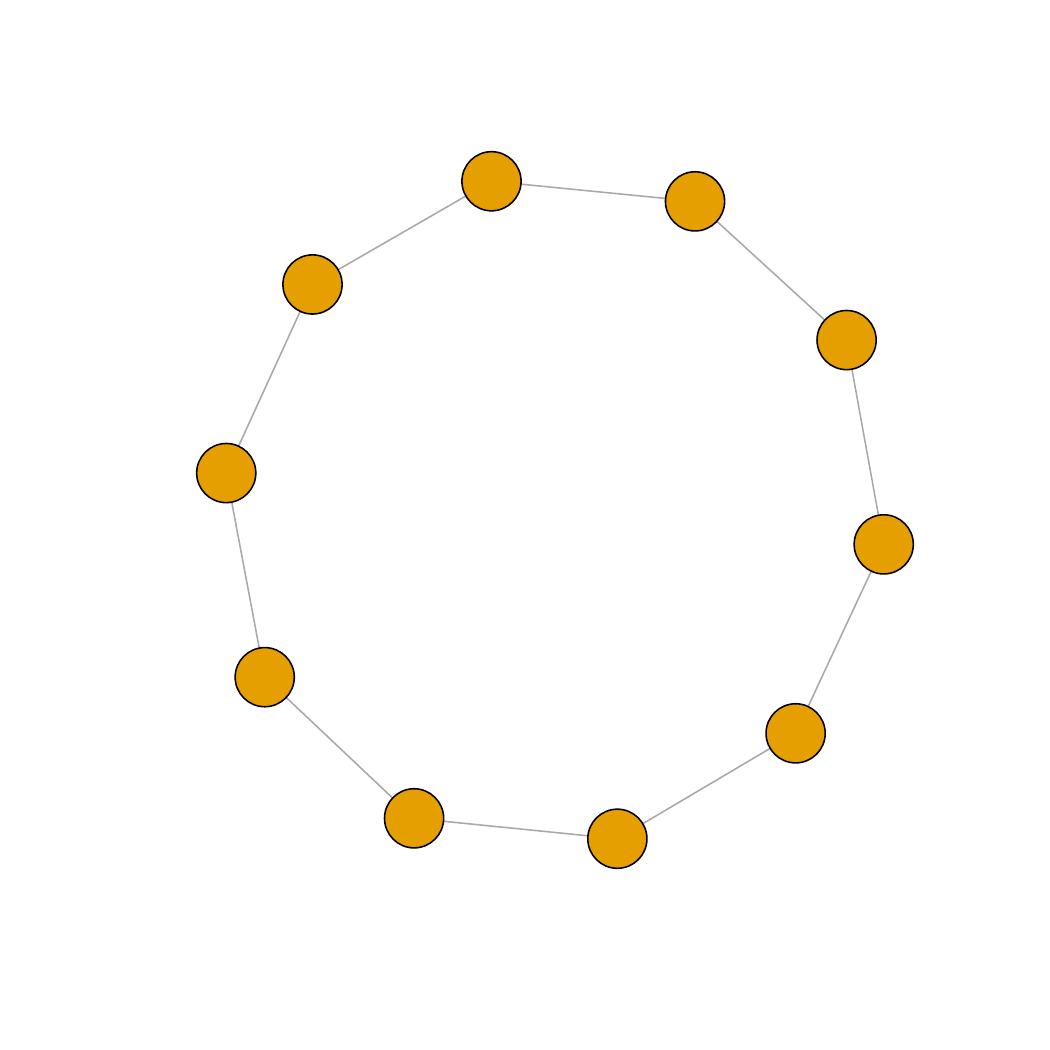}
\includegraphics[width=0.3\textwidth]{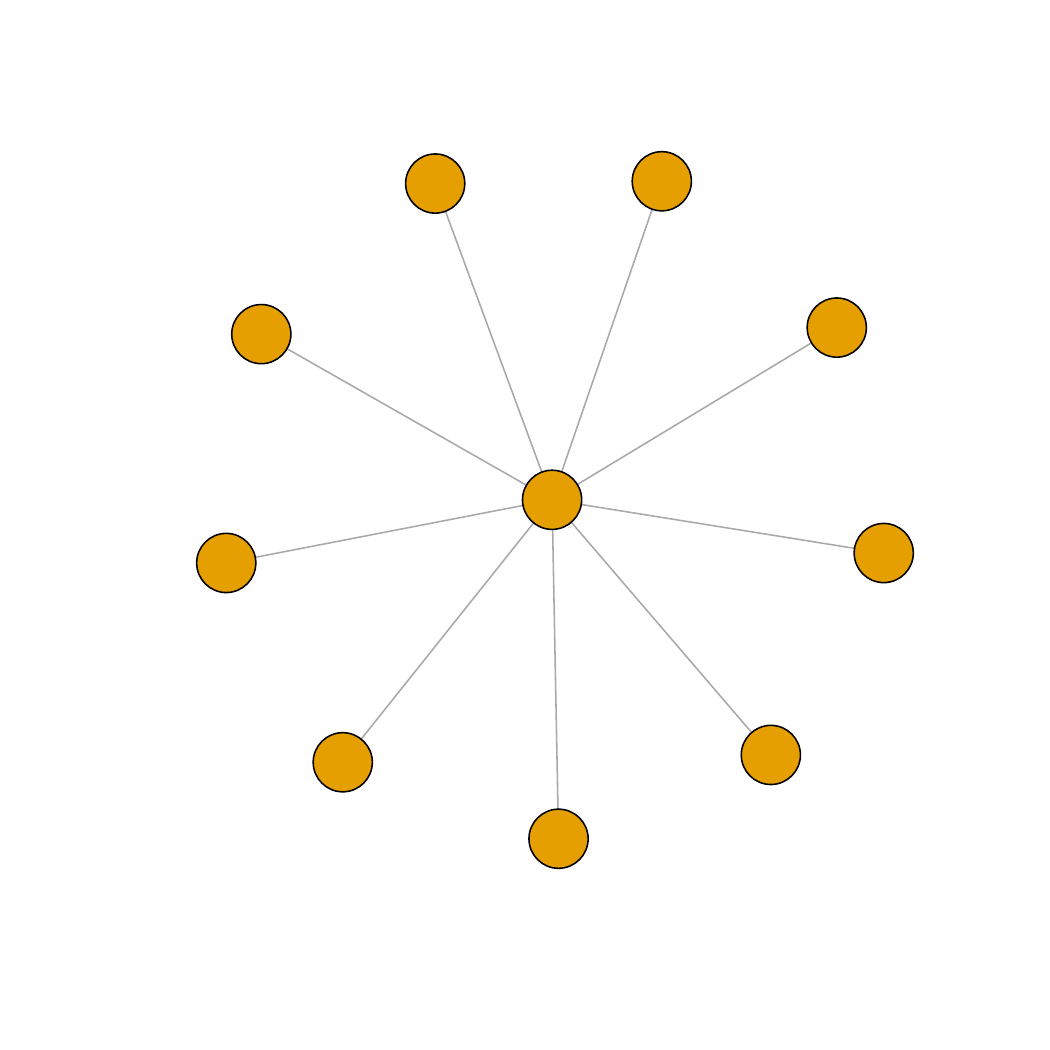}
\caption{Three graphs with $d = 10$ nodes: empty (left), ring (center) and star (right) graph. }\label{fig:graphs}
\end{figure}

\section{Outlook on applications to L\'evy processes}\label{sec:outlook}

The theory of conditional independence and graphical models developed in this paper applies to general infinite measures $\Lambda$. We concentrated on discussing the implications of our results in the field of extreme value theory, since this field has been very active recently; see Sections~\ref{sec:claudia} and \ref{sec:extremes}.

Another important instance of a measure $\Lambda$ studied in this paper is the L\'evy measure of an infinitely divisible random vector $X$ as introduced in Section~\ref{sec:motivation}. In this case, $\Lambda$ appears in the characteristic function of $X$ and it is not obvious which probabilistic interpretation a conditional independence statement $\indepABC$ has. In the particular case where the L\'evy measure $\Lambda$ concentrates on a finite number of rays this statement implies the classical conditional independence $X_A \indepp X_B \mid X_C$; see Section~\ref{sec:rec_id}. For general L\'evy measures this turns out to not be true.

Ongoing research shows that in this case a different and surprisingly intuitive probabilistic interpretation is available in terms of L\'evy processes. A L\'evy process $\{X(t): t\geq 0\}$ with values in $\mathbb R^d$ is a stochastic process with independent and stationary increments \citep[e.g.,][]{bertoin1996levy, sato}. Moreover, there is a one-to-one mapping that takes an infinitely divisible random vector $X$ with characteristic triplet $(0, 0, \Lambda)$ to the associated L\'evy process satisfying $X(1) \stackrel{d}{=} X$; here we assume the drift and Gaussian component in Section~\ref{sec:motivation} to be zero since they are well-understood.
It turns out that our $\Lambda$ conditional independence characterizes conditional independence on the level of the sample paths of the L\'evy process. Indeed, under the explosiveness assumption~\eqref{eq:infinite} it can be shown that
\[ \{X_A(t): t\geq 0\} \indepp \{X_B(t): t\geq 0\} \mid \{X_C(t): t\geq 0\} \quad \iff \quad \indepABC \]
This result enables the definition of graphical models for L\'evy processes with large potential for new theory, statistical methods and applications.


\bibliographystyle{imsart-nameyear} 


\begin{appendix}

\section{
Additional proofs for Sections~\ref{sec:theory} and \ref{sec:alternative}:
Conditional independence and additional characterization results}\label{sec:CIproofs}
\subsection{Consistency properties of the basic operations with measures}\label{app:basic}
\begin{lemma}\label{lemma:marginalreducedcompatibility} 
Let $(A,B,C)$ be a partition of $V$ with $C \neq \emptyset$. Then
\[(\Lambda_{A \cup C})_C =\Lambda_C,
\qquad (\Lambda^0_{A \cup C})^0_C =\Lambda^0_C,
\qquad (\Lambda^0_{A \cup C})_C = (\Lambda_{B \cup C})^0_C.
\]
\end{lemma}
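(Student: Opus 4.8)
The plan is to verify each of the three identities by directly unwinding the definitions of the marginal measure $\Lambda_D(\cdot)=\Lambda(y_D\in\cdot)$ and the restricted measure $\Lambda^0_D(\cdot)=\Lambda(y_D\in\cdot,\,y_{V\setminus D}=0_{V\setminus D})$ from~\eqref{eq:Lambda0}, and checking that in each case the two successive operations collapse onto a single one. Throughout, the only elementary point to keep in mind is that for a Borel set $S\subset\cE^C=\R^C\setminus\{0_C\}$ the condition $y_C\in S$ already forces $y_C\neq 0_C$, hence $y_{A\cup C}\neq 0_{A\cup C}$, so that evaluating any of the measures above on such an $S$ never touches an excluded origin; this is what makes all the iterated expressions well defined as measures on the correct domain $\cE^C$ (they again satisfy~\eqref{eq:Lambda}, as already noted after~\eqref{eq:Lambda0}, so the statement is not vacuous).

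For the first identity I would simply observe that, for $S\subset\cE^C$,
\[(\Lambda_{A\cup C})_C(S)=\Lambda_{A\cup C}(y_C\in S)=\Lambda(y_C\in S)=\Lambda_C(S),\]
i.e.\ iterating the pushforward along the coordinate projections $\R^V\to\R^{A\cup C}\to\R^C$ equals the direct projection. For the second identity, unwinding the two restrictions gives (using that within the index set $A\cup C$ the complement of $C$ is $A$, that $V\setminus(A\cup C)=B$, and that $V\setminus C=A\cup B$)
\[(\Lambda^0_{A\cup C})^0_C(S)=\Lambda^0_{A\cup C}(y_C\in S,\,y_A=0_A)=\Lambda(y_C\in S,\,y_A=0_A,\,y_B=0_B)=\Lambda^0_C(S).\]
For the third identity the left side evaluates to $(\Lambda^0_{A\cup C})_C(S)=\Lambda^0_{A\cup C}(y_C\in S)=\Lambda(y_C\in S,\,y_B=0_B)$, while the right side evaluates to $(\Lambda_{B\cup C})^0_C(S)=\Lambda_{B\cup C}(y_C\in S,\,y_B=0_B)=\Lambda(y_C\in S,\,y_B=0_B)$ — here one uses that within $B\cup C$ the complement of $C$ is $B$ — so the two coincide.

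Since all three reduce to equalities of expressions of the form $\Lambda(\text{event in }y)$, the argument is purely a matter of correctly tracking which blocks of coordinates are being projected away and which are being pinned to zero; no explosiveness assumption is needed. The closest thing to an obstacle is the bookkeeping of domains — confirming that e.g.\ $\Lambda^0_{A\cup C}$ is regarded as a measure on $\cE^{A\cup C}$ with ambient index set $A\cup C$, so that the outer operation $(\cdot)^0_C$ or $(\cdot)_C$ acts with complement $A$ rather than $V\setminus C$ — but once this is spelled out, there is nothing deeper to prove.
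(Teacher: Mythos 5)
Your proof is correct: each identity follows by unwinding the definitions in~\eqref{eq:Lambda0} and tracking which coordinates are projected away versus pinned to zero, and your domain bookkeeping (noting $S\subset\cE^C$ forces $y_C\neq 0_C$, so no iterated expression touches an excluded origin) is exactly the point that makes the statement well posed. The paper states this lemma without proof, treating it as immediate from the definitions, and your verification is precisely the argument the authors leave implicit.
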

\begin{proof}
The domain of these measures is $\cE^C = \R^{C}\setminus\{0_C\}$, and expressed in terms of $\Lambda$ they coincide with $\Lambda(y_C \in \cdot)$, $\Lambda(y_C \in \cdot, y_B=0_B, y_A=0_A)$ and $\Lambda(y_C \in \cdot, y_B=0_B)$, respectively. 
\end{proof}
    \begin{lemma} \label{lemma:marginalcompatibility} For a non-empty $D \subset V$ consider $Y\sim \L_{R_D \times \R^{V\setminus D}}$ with $R_D\in \mathcal R(\Lambda_D)$.  Then
    \begin{enumerate}[label=(\alph*)]
      \item $Y_D\sim \L_{R_D}$.
      \item For any $R=R_D \times R_{V\setminus D}$ it holds that $\L_{R}$ is the conditional law of $Y$ given $Y_{V\setminus D}\in R_{V\setminus D}$.
      \end{enumerate}
    \end{lemma}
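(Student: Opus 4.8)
The plan is to unwind both claims directly from the definition $\L_R(\cdot)=\Lambda(\cdot\cap R)/\Lambda(R)$ of a normalized restriction, together with the identity $\Lambda_D(\cdot)=\Lambda(y_D\in\cdot)$ for the marginal measure. As a preliminary I would record that $R_D\times\R^{V\setminus D}\in\cR(\Lambda)$ (the consistency remark after~\eqref{eq:Rdef}): it is of product form, $0_V\notin\overline{R_D\times\R^{V\setminus D}}=\overline{R_D}\times\R^{V\setminus D}$ since $0_D\notin\overline{R_D}$, and its $\Lambda$-mass equals $\Lambda(y_D\in R_D)=\Lambda_D(R_D)\in(0,\infty)$. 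Thus $Y\sim\L_{R_D\times\R^{V\setminus D}}$ is well defined with normalizing constant $\Lambda_D(R_D)$.

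For part (a), I would fix a Borel set $E\subset\R^D$ and compute $\p(Y_D\in E)$. Splitting coordinates as $V=D\cup(V\setminus D)$, one has $\{y:y_D\in E\}\cap(R_D\times\R^{V\setminus D})=(E\cap R_D)\times\R^{V\setminus D}$, so $\p(Y_D\in E)=\Lambda\big((E\cap R_D)\times\R^{V\setminus D}\big)/\Lambda_D(R_D)=\Lambda_D(E\cap R_D)/\Lambda_D(R_D)=\L_{R_D}(E)$, which is the claim. The only subtlety is the puncture: because $0_D\notin\overline{R_D}$ we have $0_D\notin E\cap R_D$, so $E\cap R_D$ is a genuine subset of $\cE^D$ and the pushforward identity $\Lambda\big((E\cap R_D)\times\R^{V\setminus D}\big)=\Lambda_D(E\cap R_D)$ holds without exception.

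For part (b), I would first dispose of the degenerate case $\Lambda(R)=0$: then $\L_R$ is undefined and, by the computation below, the conditioning event is $\p$-null, so there is nothing to assert; hence assume $\Lambda(R)>0$, so $R\in\cR(\Lambda)$ (again $0_V\notin\overline R\subset\overline{R_D}\times\R^{V\setminus D}$). Two set intersections then finish the job. First, $\{y:y_{V\setminus D}\in R_{V\setminus D}\}\cap(R_D\times\R^{V\setminus D})=R_D\times R_{V\setminus D}=R$, whence $\p(Y_{V\setminus D}\in R_{V\setminus D})=\Lambda(R)/\Lambda_D(R_D)\in(0,\infty)$, so elementary conditioning on this positive-probability event is legitimate (no regular conditional probabilities needed). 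Second, intersecting with the same set shows $\p(Y\in E,\,Y_{V\setminus D}\in R_{V\setminus D})=\Lambda(E\cap R)/\Lambda_D(R_D)$ for every Borel $E\subset\cE$. Dividing, the factors $\Lambda_D(R_D)$ cancel, leaving $\Lambda(E\cap R)/\Lambda(R)=\L_R(E)$, as claimed.

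I do not anticipate a genuine obstacle: the proof is elementary set manipulation plus the definition of conditioning on an event of positive probability. The two points needing a little care are (i) keeping the punctured domains straight so that every invoked marginal and restriction is defined — handled throughout by $0_D\notin\overline{R_D}$ — and (ii) verifying that the normalizing constants $\Lambda_D(R_D)$ and $\Lambda(R)$ are strictly positive and finite, which is immediate from $R_D\in\cR(\Lambda_D)$ and, in (b), from $\Lambda(R)>0$.
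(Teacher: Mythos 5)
Your proof is correct and follows essentially the same route as the paper: part (a) is the pushforward identity $\Lambda(R_D\times\R^{V\setminus D})=\Lambda_D(R_D)$ applied to \eqref{eq:probabR} and \eqref{eq:Lambda0}, and part (b) is the ratio identity $\Lambda(\D y)/\Lambda(R)=\p(Y\in\D y)/\p(Y\in R)$ on $R$, i.e.\ elementary conditioning on the positive-probability event $\{Y_{V\setminus D}\in R_{V\setminus D}\}$. Your explicit treatment of the punctured domains and of the degenerate case $\Lambda(R)=0$ is slightly more careful than the paper's terse argument but adds nothing substantively different.
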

    \begin{proof}
    
    \begin{enumerate}[label=(\alph*),wide=0mm]
    \item  follows immediately from~\eqref{eq:probabR} and the definition of the marginal measure in~\eqref{eq:Lambda0}.
    \item  is seen from 
    \[\frac{\Lambda(\D y)}{\Lambda(R)}=\frac{\Lambda(\D y)}{\Lambda(R_D \times \R^{V\setminus D})}\frac{\Lambda(R_D \times \R^{V\setminus D})}{\Lambda(R)}=\frac{\p(Y\in \D y)}{\p(Y\in R)}\]
    for $y \in R$ and $Y_D \in R_D$ a.s. \qedhere
    \end{enumerate}
    \end{proof}

\subsection{Restriction and extension}
We start with a few fundamental results which follow directly from basic probability theory. We provide short arguments for completeness.
Consider a partition $(A,B,C)$ of $V$. For two product-form sets $R,R'\in \cR(\Lambda)$ the corresponding random vectors 
as in \eqref{eq:probabR}
are denoted by $Y\sim \L_R$ and $Y'\sim \L_{R'}$ in the following.

\begin{lemma}\label{lem:condProbs}
Let $C \neq \emptyset$.
Assume that $R_{A\cup B}\supset R'_{A\cup B}\supset E_{A\cup B}$, where the latter is some Borel set. Then
\[\p(Y'_{A\cup B}\in E_{A\cup B}|Y'_C=y_C)=\frac{\p(Y_{A\cup B}\in E_{A\cup B}|Y_C=y_C)}{\p(Y_{A\cup B}\in R'_{A\cup B}|Y_C=y_C)}\]
for $\p_{Y'_C}$-almost all $y_C\in R_C\cap R'_C$. 
\end{lemma}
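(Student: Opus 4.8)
The statement is essentially a restatement of Bayes' rule for conditional densities under nested restrictions, so the plan is to reduce everything to the defining property of regular conditional probabilities together with the identity $\Lambda(R') = \Lambda(R)\,\p(Y_{A\cup B}\in R'_{A\cup B}\mid Y_C = y_C)$ integrated over $R'_C$. First I would fix the two probability measures $\p_{Y_C}$ (the law of $Y_C$) and $\p_{Y'_C}$ (the law of $Y'_C$), and note that on $R_C \cap R'_C$ they are mutually absolutely continuous with Radon--Nikodym derivative $\D\p_{Y'_C}/\D\p_{Y_C}(y_C) = \Lambda(R)/\Lambda(R') \cdot \p(Y_{A\cup B}\in R'_{A\cup B}\mid Y_C = y_C)$ up to the normalizing constant; this follows from \eqref{eq:probabR} since $\L_{R'}$ is just the renormalized restriction of $\L_R$ to $R'$, and restricting the $A\cup B$ coordinates to $R'_{A\cup B}$ while keeping $R_C$ for the $C$ coordinates corresponds to conditioning. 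I would make this precise via Lemma~\ref{lemma:marginalcompatibility}(b), which already identifies $\L_{R'}$ as a conditional law of $\L_{R}$ given the event $\{Y_{A\cup B}\in R'_{A\cup B}\}$ (when $R'_C = R_C$) and more generally handles the $C$-restriction.

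The core computation is then: for any Borel $E_C \subset R_C \cap R'_C$ and the given $E_{A\cup B}$,
\[
\Lambda\big(E_{A\cup B}\times E_C\big) \;=\; \int_{E_C} \p(Y_{A\cup B}\in E_{A\cup B}\mid Y_C = y_C)\,\Lambda_C(\D y_C)
\]
by the probability kernel representation of Lemma~\ref{lem:kernel} (or directly by disintegration of $\L_R$), and likewise for $R'$ with $\Lambda(R')$ in place of $\Lambda(R)$ and $R'_{A\cup B}$ restricting the kernel. Writing $\p(Y'_{A\cup B}\in E_{A\cup B}\mid Y'_C = y_C) = \Lambda(E_{A\cup B}\times \D y_C)/\Lambda(R'_{A\cup B}\times \D y_C)$ as a ratio of the two disintegrations — valid for $\Lambda_C$-almost all, hence $\p_{Y'_C}$-almost all, $y_C \in R_C \cap R'_C$ — and cancelling the common factor $\Lambda_C(\D y_C)$ yields exactly the claimed identity, since the numerator on the right-hand side is the $\Lambda$-measure $\Lambda(E_{A\cup B}\times \D y_C)$ (which is finite and positive because $E_{A\cup B}\subset R'_{A\cup B}\subset R_{A\cup B}$ makes the relevant sets lie in $\cR(\Lambda)$-comparable pieces). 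One has to take a countable generating field for the $\sigma$-algebra on $\R^{A\cup B}$ to pass from fixed $E_{A\cup B}$ to "for almost all $y_C$" uniformly, but this is the standard argument recalled in Section~\ref{pre_CI} (citing \citet[Thm.~6.3]{kallenberg}).

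The only mildly delicate point — and the step I would be most careful about — is the handling of the $\p_{Y'_C}$-null set and the places where $\p(Y_{A\cup B}\in R'_{A\cup B}\mid Y_C = y_C)$ could vanish: on $R_C \cap R'_C$ this conditional probability is $\p_{Y'_C}$-almost surely strictly positive (otherwise $y_C$ would not be charged by $\Lambda_C$ restricted to $R'$), so the division is legitimate off a $\p_{Y'_C}$-null set, which is all that is claimed. I expect no genuine obstacle here; the proof is a short disintegration-and-cancel argument, and the main work is bookkeeping which kernel/marginal is being restricted and invoking Lemmas~\ref{lem:kernel} and~\ref{lemma:marginalcompatibility} for the compatibility of the disintegrations across $R$ and $R'$.
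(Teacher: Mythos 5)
Your proposal is correct and follows essentially the same route as the paper: identify $\L_{R'}$ as the renormalized restriction of $\L_R$, disintegrate with respect to the $C$-marginal, and cancel, after noting that the denominator $\p(Y_{A\cup B}\in R'_{A\cup B}\mid Y_C=y_C)$ is $\p_{Y'_C}$-a.s.\ positive on $R_C\cap R'_C$. Two small caveats: in the paper's logical ordering Lemma~\ref{lem:kernel} is itself proved \emph{using} Lemma~\ref{lem:condProbs}, so you must rely on your parenthetical alternative (direct disintegration of $\L_R$ via \citet[Thm.~6.3]{kallenberg}) rather than on Lemma~\ref{lem:kernel} to avoid circularity; and your displayed identity $\Lambda(E_{A\cup B}\times E_C)=\int_{E_C}\p(Y_{A\cup B}\in E_{A\cup B}\mid Y_C=y_C)\,\Lambda_C(\D y_C)$ holds as written only when $\nu_C(y_C,R_{A\cup B})=1$ — in general the integrand should be the kernel $\nu_C(y_C,E_{A\cup B})$, which differs from the conditional probability under $\L_R$ by the factor $\nu_C(y_C,R_{A\cup B})$, a factor that cancels in your final ratio, so the conclusion is unaffected.
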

\begin{proof}
Choose a Borel set $E_C\subset R_C\cap R'_C$ and observe that
\[\p(Y'_{A\cup B}\in E_{A\cup B},Y'_C\in E_C)=\p(Y_{A\cup B}\in E_{A\cup B},Y_C\in E_C)\frac{\Lambda(R)}{\Lambda(R')}.\]
The right hand side can be rewritten as
\[\int_{E_C}\frac{\p(Y_{A\cup B}\in E_{A\cup B}|Y_C=y_C)}{\p(Y_{A\cup B}\in R'_{A\cup B}|Y_C=y_C)}\p(Y_{A\cup B}\in R'_{A\cup B},Y_C\in \D y_C)\frac{\Lambda(R)}{\Lambda(R')},\]
where the denominator is non-zero for $\p_{Y'_C}$-almost all $y_C\in E_C$.
Note that the scaled measure corresponds to $\p(Y_C'\in \D y)$ and thus we obtain
\[\p(Y'_{A\cup B}\in E_{A\cup B},Y'_C\in E_C)=\int_{E_C}\frac{\p(Y_{A\cup B}\in E_{A\cup B}|Y_C=y_C)}{\p(Y_{A\cup B}\in R'_{A\cup B}|Y_C=y_C)}\p(Y_C'\in \D y_C),\]
and the result follows.
\end{proof}

\begin{remark}
\begin{enumerate}[label=(\alph*),wide=0mm]
\item If in addition $Y_A \indepp Y_B \mid Y_C$  in Lemma~\ref{lem:condProbs}, by setting $E_{A \cup B}=E_A \times R'_B$, we can further conclude that for $\p_{Y'_C}$-almost all $y_C\in R_C\cap R'_C$ 
\[\p(Y'_{A}\in E_{A}|Y'_C=y_C)=\frac{\p(Y_{A}\in E_{A}|Y_C=y_C)}{\p(Y_{A}\in R'_{A}|Y_C=y_C)},\]
(and the analogous identity with reversed roles for $A$ and $B$). 
\item
We also note that, when $R'_{A\cup B}=R_{A\cup B}$ in Lemma~\ref{lem:condProbs}, the two conditional probabilities $\p(Y'_{A\cup B}\in E_{A\cup B}|Y'_C=y_C)$ and $\p(Y_{A\cup B}\in E_{A\cup B}|Y_C=y_C)$
coincide for almost all $y_C\in R_C\cap R'_C$ with respect to both probability measures $\p_{Y_C}$ and $\p_{Y'_C}$.
\item For $C=\emptyset$ the analogous identity
\[\p(Y'_{A \cup B}\in E_{A \cup B})=\frac{\p(Y_{A \cup B}\in E_{A \cup B})}{\p(Y_{A \cup B}\in R'_{A \cup B})}\]
holds trivially.
\end{enumerate} 
\end{remark}

\begin{lemma}[Restriction]\label{lem:restrict}
Assume that $R'\subset R$. Then $Y_A \indepp Y_B \mid Y_C$ implies  $Y'_A \indepp Y'_B \mid Y'_C$.
This result is also true for $C=\emptyset$ and the respective independence statements. 
\end{lemma}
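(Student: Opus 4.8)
The plan is to prove the Restriction Lemma by reducing the $\Lambda$-based statement to a purely probabilistic statement about the law $\L_R$ and the sub-probability obtained by conditioning on the event $\{Y\in R'\}$. The key observation is that, since $R'\subset R$ and $R,R'\in\cR(\Lambda)$, the probability measure $\L_{R'}$ is exactly the conditional law of $Y\sim\L_R$ given $Y\in R'$; this is immediate from~\eqref{eq:probabR}, because $\L_{R'}(\D y)=\Lambda(\D y)/\Lambda(R')=\mathbf 1_{\{y\in R'\}}\Lambda(\D y)/(\Lambda(R)\,\p(Y\in R'))=\p(Y\in\D y\mid Y\in R')$. So the lemma becomes: if $Y_A\indepp Y_B\mid Y_C$ under $\p$, and $R'=R'_A\times R'_B\times R'_C$ is a product-form set of positive probability, then $Y_A\indepp Y_B\mid Y_C$ also holds under $\p(\,\cdot\mid Y\in R')$.

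The core of the argument is then the classical fact that conditional independence is preserved under conditioning on a product-form event. Concretely, I would proceed as follows. First reduce to the case $C\neq\emptyset$; for $C=\emptyset$ the same steps go through with the conditioning on $Y_C$ omitted, and this can be remarked at the end. For $C\neq\emptyset$, I would invoke Lemma~\ref{lem:condProbs} (and part~(a) of the subsequent Remark): with $R'_{A\cup B}=R_{A\cup B}$ there, or more directly with $R'_{A\cup B}\subset R_{A\cup B}$ and using $Y_A\indepp Y_B\mid Y_C$, one obtains for $\p_{Y'_C}$-almost all $y_C\in R'_C$ that
\[
\p(Y'_A\in E_A\mid Y'_C=y_C)=\frac{\p(Y_A\in E_A\mid Y_C=y_C)}{\p(Y_A\in R'_A\mid Y_C=y_C)},
\]
and the analogous identity for $B$, as well as
\[
\p(Y'_{A\cup B}\in E_A\times E_B\mid Y'_C=y_C)=\frac{\p(Y_A\in E_A,Y_B\in E_B\mid Y_C=y_C)}{\p(Y_A\in R'_A,Y_B\in R'_B\mid Y_C=y_C)}.
\]
Now apply $Y_A\indepp Y_B\mid Y_C$ under $\p$ to factor both numerator and denominator on the right-hand side of the last display; the denominator factors as $\p(Y_A\in R'_A\mid Y_C=y_C)\,\p(Y_B\in R'_B\mid Y_C=y_C)$. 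Dividing, the right-hand side becomes the product of the two single-set conditional probabilities above, which is precisely $\p(Y'_A\in E_A\mid Y'_C=y_C)\,\p(Y'_B\in E_B\mid Y'_C=y_C)$. Since this holds for all Borel $E_A,E_B$ and $\p_{Y'_C}$-almost all $y_C$, we conclude $Y'_A\indepp Y'_B\mid Y'_C$ by the standard characterization of conditional independence recalled in Section~\ref{pre_CI}.

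I do not expect a serious obstacle here, since essentially all the analytic work is already packaged into Lemma~\ref{lem:condProbs}. The only points requiring a little care are: (i) making sure the null sets align — the identities from Lemma~\ref{lem:condProbs} hold for $\p_{Y'_C}$-almost all $y_C\in R'_C$, which is exactly the support of $\p_{Y'_C}$, so no information is lost; (ii) checking that the product-form structure of $R'$ is genuinely used — it enters in two places, first so that $\L_{R'}$ is the conditioning of $\L_R$ on a rectangle (needed for the factorization of the denominator), and second so that $R'_{A\cup B}=R'_A\times R'_B$; and (iii) handling the degenerate subcases where some conditional probability in a denominator vanishes, which happens only on a $\p_{Y'_C}$-null set and can be discarded. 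The $C=\emptyset$ case is then noted to follow by the same computation with the unconditional identity from part~(c) of the Remark, completing the proof.
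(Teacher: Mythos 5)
Your proposal is correct and follows essentially the same route as the paper: both reduce everything to Lemma~\ref{lem:condProbs}, factor the numerator and denominator of the resulting ratio using $Y_A\indepp Y_B\mid Y_C$, and then identify the two single-set ratios with $\p(Y'_A\in E_A\mid Y'_C=y_C)$ and $\p(Y'_B\in E_B\mid Y'_C=y_C)$ by specializing $E_B=R'_B$ and $E_A=R'_A$ (equivalently, via part~(a) of the Remark). The treatment of the $C=\emptyset$ case via the trivial unconditional identity also matches the paper.
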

\begin{proof}
Consider Borel subsets $E_A\subset R'_A$ and $E_B\subset R'_B$. 
According to Lemma~\ref{lem:condProbs} and conditional independence $Y_A \indepp Y_B \mid  Y_C$ we have
\[\p(Y'_A\in E_A,Y'_B\in E_B|Y'_C=y_C)=\frac{\p(Y_A\in E_A|Y_C=y_C)}{\p(Y_A\in R'_A|Y_C=y_C)}\,\frac{\p(Y_B\in E_B|Y_C=y_C)}{\p(Y_B\in R'_B|Y_C=y_C)}\]
for $\p_{Y'_C}$-almost all $y_C\in R'_C$. By considering the same identity with $E_B=R'_B$ and then $E_A=R'_A$ and applying Lemma~\ref{lem:condProbs} again, we find that the two ratios on the right-hand side are $\p(Y'_A\in E_A|Y'_C=y_C)$ and $\p(Y'_B\in E_B|Y'_C=y_C)$, respectively, and the first claim is proven.
The case $C=\emptyset$ follows analogously from the trivial identity 
\[\p(Y'_A\in E_A,Y'_B\in E_B)=\frac{\p(Y_A\in E_A,Y_B\in E_B)}{\p(Y_A\in R'_A,Y_B\in R'_B)}. \qedhere\]
\end{proof}

\begin{lemma}[Extension via $C$]\label{lem:extension}
Let $C\neq \emptyset$ and assume that $R'_{A\cup B}=R_{A\cup B}$.\\ Then $Y_A \indepp Y_B \mid Y_C$ and $Y'_A \indepp Y'_B \mid Y'_C$ imply $Y''_A \indepp Y''_B \mid Y''_C$ with $Y''\sim \L_{R\cup R'}$. 
\end{lemma}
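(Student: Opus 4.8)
The plan is to represent the regular conditional law of $Y''_{A\cup B}$ given $Y''_C=y_C$ separately over $R_C$ and over $R'_C$, to check that the two representations are compatible on the overlap $R_C\cap R'_C$, and then to read off the factorisation from the two hypotheses. First I would note that, since $R'_{A\cup B}=R_{A\cup B}$, we have $R\cup R'=R_{A\cup B}\times(R_C\cup R'_C)$; moreover $R\cup R'\in\cR(\Lambda)$, because $\Lambda(R\cup R')\ge\Lambda(R)>0$ and $\overline{R\cup R'}=\overline R\cup\overline{R'}$ does not contain $0_V$. So $Y''\sim\L_{R\cup R'}$ is well defined. Next, exactly as in the displayed computation in the proof of Lemma~\ref{lemma:marginalcompatibility}, for $y\in R$ one has $\Lambda(\D y)/\Lambda(R\cup R')=\L_R(\D y)\,\p(Y''_C\in R_C)$, whence the law of $Y''$ conditioned on the $\sigma(Y''_C)$-event $\{Y''_C\in R_C\}$ equals $\L_R$; symmetrically, conditioning on $\{Y''_C\in R'_C\}$ gives $\L_{R'}$. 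By the standard stability of regular conditional distributions under further conditioning on a $\sigma(Y''_C)$-event, it follows that $\p(Y''_{A\cup B}\in\cdot\mid Y''_C=y_C)$ coincides with $\p(Y_{A\cup B}\in\cdot\mid Y_C=y_C)$ for $\p_{Y''_C}$-almost all $y_C\in R_C$, and with $\p(Y'_{A\cup B}\in\cdot\mid Y'_C=y_C)$ for $\p_{Y''_C}$-almost all $y_C\in R'_C$.

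Second, I would rule out any conflict on the overlap. On Borel subsets of $R_C$ the measure $\p_{Y''_C}$ equals $\bigl(\Lambda(R)/\Lambda(R\cup R')\bigr)\p_{Y_C}$, and on Borel subsets of $R'_C$ it equals $\bigl(\Lambda(R')/\Lambda(R\cup R')\bigr)\p_{Y'_C}$, so $\p_{Y''_C}$-negligibility on $R_C$ (resp.\ $R'_C$) coincides with $\p_{Y_C}$- (resp.\ $\p_{Y'_C}$-)negligibility there. Now item (b) of the remark following Lemma~\ref{lem:condProbs}, applied with $R'_{A\cup B}=R_{A\cup B}$, says precisely that $\p(Y_{A\cup B}\in E_{A\cup B}\mid Y_C=y_C)$ and $\p(Y'_{A\cup B}\in E_{A\cup B}\mid Y'_C=y_C)$ agree for $\p_{Y_C}$- and $\p_{Y'_C}$-almost all $y_C\in R_C\cap R'_C$. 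Hence there is a single probability kernel $\kappa$ on $R_C\cup R'_C$ with $\kappa(y_C,\cdot)=\p(Y_{A\cup B}\in\cdot\mid Y_C=y_C)$ for $\p_{Y''_C}$-almost all $y_C\in R_C$ and $\kappa(y_C,\cdot)=\p(Y'_{A\cup B}\in\cdot\mid Y'_C=y_C)$ for $\p_{Y''_C}$-almost all $y_C\in R'_C$, and $\kappa(y_C,\cdot)=\p(Y''_{A\cup B}\in\cdot\mid Y''_C=y_C)$ for $\p_{Y''_C}$-almost all $y_C$.

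Finally, I would invoke the hypotheses: $Y_A\indepp Y_B\mid Y_C$ makes $\kappa(y_C,\cdot)$ a product measure on $R_A\times R_B$ for $\p_{Y_C}$-almost all $y_C\in R_C$, and $Y'_A\indepp Y'_B\mid Y'_C$ makes it a product measure for $\p_{Y'_C}$-almost all $y_C\in R'_C$; together these cover $\p_{Y''_C}$-almost all $y_C\in R_C\cup R'_C$, so $\p(Y''_{A\cup B}\in\cdot\mid Y''_C=y_C)$ factorises for $\p_{Y''_C}$-almost all $y_C$, which is exactly $Y''_A\indepp Y''_B\mid Y''_C$. I expect the only delicate point to be the compatibility on the overlap $R_C\cap R'_C$: this is where the hypothesis $R'_{A\cup B}=R_{A\cup B}$ really enters, and it is the reason item (b) of the remark after Lemma~\ref{lem:condProbs} is formulated the way it is; the remaining steps are routine manipulations of regular conditional distributions.
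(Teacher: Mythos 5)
Your argument is correct and follows essentially the same route as the paper: identify the conditional kernel of $Y''_{A\cup B}$ given $Y''_C=y_C$ with that of $Y$ on $R_C$ and with that of $Y'$ on $R'_C$ (the paper does this by applying Lemma~\ref{lem:condProbs} with $R\cup R'$ as the larger set, where the denominator is $1$ because $(R\cup R')_{A\cup B}=R_{A\cup B}$, while you derive it by conditioning on the $\sigma(Y''_C)$-event $\{Y''_C\in R_C\}$ — the two are equivalent), then read off the factorization from the two hypotheses on the covering $R_C\cup R'_C$. Your second paragraph on compatibility over $R_C\cap R'_C$ is redundant: both almost-everywhere identifications concern the one kernel $\p(Y''_{A\cup B}\in\cdot\mid Y''_C=\cdot)$, so no conflict can arise and no glued kernel $\kappa$ needs to be constructed.
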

\begin{proof}
Lemma~\ref{lem:condProbs} shows that for any Borel subsets $E_A\subset R_A,E_B\subset R_B$ we have
\begin{align*}
&\p(Y''_A\in E_A,Y''_B\in E_B|Y''_C=y_C)=\p(Y_A\in E_A,Y_B\in E_B|Y_C=y_C)\\
&=\p(Y_A\in E_A|Y_C=y_C)\p(Y_B\in E_B|Y_C=y_C)\\
&=\p(Y''_A\in E_A|Y''_C=y_C)\p(Y''_B\in E_B|Y''_C=y_C)
\end{align*} for $\p_{Y''_C}$-almost all $y_C\in R_C$.
Analogously, we get a factorization for $\p_{Y''_C}$-almost all $y_C\in R'_C$ when considering~$Y'$ instead of $Y$. This shows the stated result. 
\end{proof}
In Lemma~\ref{lem:extension} we verify an extension result for conditional independence from two sets $R$ and $R'$ with identical $A\cup B$ component to their union $R \cup R'$. Naturally, the analogous extension result holds also for extending conditional independence from countably many product-form sets $R^{(n)}$ with $R^{(n)}_{A \cup B}=R_{A \cup B}$ for all $n$, resulting in the conditional independence statement on the union $\bigcup_n R^{(n)}$.

\subsection{Main proofs for characterization results}
We are now ready to prove our first characterization result.

\begin{proof}[Proof of Theorem~\ref{thm:test_class}]
By definition $A \indep B\mid C\,[\Lambda]$ implies $Y_A \indepp Y_B\mid Y_C$ with $Y\sim\L_{R_{v,\eps}}$ for any $v\in V$ and $\eps>0$ such that $\Lambda(R_{v,\eps})>0$.
Additionally, it implies $A \indep B  \, [\Lambda^0_{A\cup B}]$, see Lemma~\ref{lem:Lambda0}.
Thus, $A \indep B\mid C\,[\Lambda]$ implies both (i) and (ii), also when $C=\emptyset$.

(i) In the opposite direction, we take any $R=\bigtimes R_v \in \cR(\Lambda)$ and note that there exists $v\in V$ such that the closure of $R_v$ does not contain 0. Hence we may choose $\eps>0$ such that $R\subset R_{v,\eps}$. 
According to Lemma~\ref{lem:restrict} we have $Y_A \indepp Y_B\mid Y_C$ with $Y\sim\L_{R}$. Hence $A \indep B\mid C\,[\Lambda]$. The argument also applies when $C=\emptyset$. 

(ii) It is sufficient to establish (i), and so we take any $v\in A\cup B$, $\eps>0$ and consider the set $R_{v,\eps}$. We partition this set according to $y_C\neq 0_C$ and $y_C= 0_C$, letting $Y$ and $Y'$ be the respective random vectors. Here we assume that both sets are charged by $\Lambda$, since otherwise the proof is only simpler. 
According to Lemma~\ref{lem:extension} it is sufficient to show both $Y_A \indepp Y_B \mid Y_C$ and $Y'_A \indepp Y'_B \mid Y'_C$.  

The latter follows directly from $A \indep B  \, [\Lambda^0_{A\cup B}]$, see also the proof of Lemma~\ref{lem:Lambda0}.
The former follows from the countably-infinite version of Lemma~\ref{lem:extension} by considering the sets $R^{(n,c)}=\{|y_v|\geq \eps, |y_c|\geq 1/n\}$ for all $c\in C$ and natural~$n$.
The respective conditional independence statements are implied by the assumption in (ii) together with the restriction Lemma~\ref{lem:restrict}.
\end{proof}

As a side result the same argument that we used in proof of Theorem~\ref{thm:test_class} (ii) above reveals that a single $v\in V$ can be taken when checking for conditional independence under the condition  that $\Lambda(y\neq 0_V,y_v=0)=0$. To be precise we formulate the following lemma.

\begin{lemma}[Single $v$ test]\label{lem:single_v}
Assume that a fixed $v\in V$ is such that $\Lambda(y\neq 0_V,y_v=0)=0$. Then 
$Y_A\indepp Y_B\mid Y_C$ for all $Y\sim\L_{R_{v,\eps}}, \eps>0$ implies $\indepABC$.
This result is also true for $C=\emptyset$ and the respective independence statements. 
\end{lemma}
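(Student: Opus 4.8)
The plan is to mirror the proof of Theorem~\ref{thm:test_class}~(ii), exploiting the extra hypothesis $\Lambda(y\neq 0_V, y_v=0)=0$ to reduce everything to the single coordinate $v$. First I would observe that by Theorem~\ref{thm:test_class}~(i) it suffices to prove $Y_A\indepp Y_B\mid Y_C$ for every $R=\bigtimes_{u\in V}R_u\in\cR(\Lambda)$. Fix such an $R$. The key point is that, up to a $\Lambda$-null set, $R$ is contained in $R_{v,\eps}$ for some $\eps>0$: indeed, if $R_v$ has $0$ in its closure, then $R\cap\{y_v=0\}$ might be nonempty, but $\Lambda$ assigns it no mass because $\Lambda(y\neq 0_V, y_v=0)=0$ and $R$ is bounded away from the origin (so $R\subset\{y\neq 0_V\}$); hence we may discard that part and assume $R\subset\{|y_v|\geq\eps\}=R_{v,\eps}$ for a suitable $\eps>0$, with $\Lambda(R)>0$ still. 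Then $R\subset R_{v,\eps}$ and the restriction Lemma~\ref{lem:restrict} applied to $R'=R\subset R_{v,\eps}=R$ yields $Y_A\indepp Y_B\mid Y_C$ for $Y\sim\L_R$ from the assumed conditional independence for $\L_{R_{v,\eps}}$.

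More carefully, I need to handle the possibility that $R$ genuinely charges $\{y_v=0\}$ with $\Lambda$-mass zero but is not literally a subset of $R_{v,\eps}$. The clean way is: since $\Lambda(R\cap\{y_v=0\})=0$, the probability measure $\L_R$ is unchanged if we replace $R$ by $R\setminus\{y_v=0\}=\bigl(R_v\setminus\{0\}\bigr)^{\{v\}}\times R_{V\setminus\{v\}}$, which is again of product form, still in $\cR(\Lambda)$, and now has $0\notin\overline{R_v\setminus\{0\}}$ only if $R_v$ was already bounded away from $0$; in general $R_v\setminus\{0\}$ may still accumulate at $0$. To force containment in some $R_{v,\eps}$ I would instead write $R\setminus\{y_v=0\}$ as the increasing union over $n$ of the product-form sets $R^{(n)}=\{|y_v|\geq 1/n\}\cap R$, each contained in $R_{v,1/n}$. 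By Lemma~\ref{lem:restrict}, $Y^{(n)}_A\indepp Y^{(n)}_B\mid Y^{(n)}_C$ holds for $Y^{(n)}\sim\L_{R^{(n)}}$, since $R^{(n)}\subset R_{v,1/n}$. All the sets $R^{(n)}$ share the same $A\cup B$-component only if $R_v\subset\RR^{\{v\}}$ is the coordinate being truncated — but $v$ need not lie in $A\cup B$. If $v\in C$, the truncation is in the conditioning coordinate, and the countably-infinite version of the extension Lemma~\ref{lem:extension} (stated right after its proof) does not directly apply because that version requires equal $A\cup B$-components; here the $A\cup B$-components are literally constant across $n$ precisely because we only vary the $\{v\}\subset C$ slot. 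So in fact the extension lemma does apply, and passing to the union $\bigcup_n R^{(n)}=R\setminus\{y_v=0\}$ gives the desired conditional independence for $\L_{R\setminus\{y_v=0\}}=\L_R$.

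If instead $v\in A\cup B$, say $v\in A$, then truncating the $v$-coordinate changes the $A$-component across $n$, so the extension lemma is not immediately available. In that case I would argue differently: restrict directly. Choose $\eps>0$ small enough that $\Lambda(R\cap R_{v,\eps})>0$ (possible since $\Lambda(R)=\Lambda(R\setminus\{y_v=0\})>0$ and these sets increase to it as $\eps\downarrow 0$). The set $R\cap R_{v,\eps}$ is of product form and contained in $R_{v,\eps}$, so Lemma~\ref{lem:restrict} gives conditional independence for $\L_{R\cap R_{v,\eps}}$. Then apply the countable extension of Lemma~\ref{lem:extension} along $\eps=1/n$: now the $A\cup B=$ everything-but-$C$ components do vary, but wait — this is exactly the obstruction. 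The honest resolution is that the extension lemma's hypothesis is $R'_{A\cup B}=R_{A\cup B}$, and here the $v\in A$ coordinate varies; but one can instead apply the extension lemma's argument with roles arranged so that we extend only in the $C$-direction, which is not what is happening. I expect this case to be the main obstacle, and the fix is to note that Lemma~\ref{lem:restrict} alone already suffices when we can find a single $\eps$: namely, $R\setminus\{y_v=0\}$ itself, while possibly unbounded away from $0$ in the $v$-coordinate, still satisfies $0_V\notin\overline{R\setminus\{y_v=0\}}$ because $R$ was bounded away from the origin in $\RR^V$ — the closure in $\cE$ of a set bounded away from $0$ cannot contain $0_V$. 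Hence $R\setminus\{y_v=0\}\in\cR(\Lambda)$, and since removing a $\Lambda$-null set leaves $\L_R$ unchanged, it is enough to handle $R$ with $0\notin$ the $v$-th coordinate's closure; but then automatically $R\subset R_{v,\eps}$ for $\eps=\mathrm{dist}(0,\overline{R_v})>0$, and Lemma~\ref{lem:restrict} closes the argument in one step with no extension lemma needed. For $C=\emptyset$ the identical reasoning applies, using the $C=\emptyset$ versions of Lemmas~\ref{lem:restrict} and \ref{thm:test_class}.
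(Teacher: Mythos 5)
Your split into the cases $v\in C$ and $v\in A\cup B$ is the right way to organize the argument, and the $v\in C$ branch is correct and matches the paper's intent (the paper proves the lemma by rerunning the argument of Theorem~\ref{thm:test_class}~(ii)): there the truncated sets $R^{(n)}=R\cap R_{v,1/n}$ all share the same $A\cup B$-component, so Lemma~\ref{lem:restrict} applied to $R^{(n)}\subset R_{v,1/n}$ followed by the countable version of Lemma~\ref{lem:extension} glues them, and the discarded piece $R\cap\{y_v=0\}$ is $\Lambda$-null by hypothesis, so $\L_{R\cap\{y_v\neq 0\}}=\L_R$.

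The case $v\in A\cup B$, however, is not closed, and your final ``fix'' rests on a false step. From $0_V\notin\overline R$ you conclude that, after deleting the null set $\{y_v=0\}$, one may ``handle $R$ with $0\notin$ the $v$-th coordinate's closure'', whence $R\subset R_{v,\eps}$. But $0_V\notin\overline R$ (the closure being the product of the $\overline{R_u}$) only guarantees $0\notin\overline{R_u}$ for \emph{some} coordinate $u$, not for the particular $v$ of the lemma; and deleting $\{y_v=0\}$ removes only the single point $0$ from the factor $R_v$, not a neighbourhood of it. For instance, with $d=2$, $v=1$ and $R=\RR\times[1,2]$ one has $R\in\cR(\Lambda)$, yet $R_1\setminus\{0\}=\RR\setminus\{0\}$ still accumulates at $0$ and $R\setminus\{y_1=0\}$ is contained in no $R_{1,\eps}$. (You correctly observe this obstruction earlier --- ``$R_v\setminus\{0\}$ may still accumulate at $0$'' --- and then contradict it at the end.) The case cannot be dodged, because the test sets $R_{u,\delta}$ with $u\neq v$, which by Theorem~\ref{thm:test_class}~(i) are exactly the ones you must treat, have $v$-th factor equal to all of $\RR$. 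What is missing is a limiting argument: writing $G_n=\{|Y_v|\geq 1/n\}$, the measure $\L_{R\cap R_{v,1/n}}$ is $\L_R$ conditioned on $G_n$, with $\p(G_n)\to 1$; the assumed factorization under $\L_{R\cap R_{v,1/n}}$ translates into an identity for $\p(Y_A\in E_A,\,Y_B\in E_B,\,G_n\mid Y_C)$ under $\L_R$, and one passes to the limit by monotone convergence of conditional probabilities. Neither Lemma~\ref{lem:restrict} (which only restricts downward) nor Lemma~\ref{lem:extension} (which requires the varying component to sit in $C$) supplies this step, so it must be argued separately.
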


Next we prove uniqueness and representation of the kernel.

\begin{proof}[Proof of Lemma~\ref{lem:kernel}]
Let $R^1_C$ be some product form set bounded away from $0_C$ and define $R^1=\R^{A\cup B}\times R^1_C$. 
Suppose for a moment that $R_C\subset R^1_{C}$.
Letting $Y\sim\L_{R^1}$ (the case of zero mass being trivial) we observe that
\[\Lambda(R)=\p(Y_{A\cup B}\in R_{A\cup B},Y_C\in R_C)\Lambda(R^1)=\int_{y_C\in R_C}\nu_{C}^{1}(y_C,R_{A\cup B})\p(Y_C\in \D y_C)\Lambda(R^1),\]
where $\nu_{C}^{1}(y_C,\cdot)$ is a probability kernel corresponding to $Y$, see~\citet[Thm.\ 6.3]{kallenberg}. Note that $\Lambda_C(\D y_C)=\p(Y_C\in \D y_C)\Lambda(R^1),y_C\in R_C$ giving the stated representation of $\Lambda(R)$.
In general, we partition $\cE^C$ into countably many product form sets $R^i_C$ and define $\nu_C(y_C,\cdot)=\nu^i_C(y_C,\cdot)$ for $y_C$ in the respective partition.
The stated form of $\nu_C$ and $\Lambda_C$-uniqueness follow from Lemma~\ref{lem:condProbs}.
\end{proof}

\section{Further details for the generic construction of Section~\ref{subsec:CI3constr}}\label{app:CIconstr}

Here we provide further details on the constructive example of a measure $\Lambda$ in $d=3$ dimensions with index set $V = \{1,2,3\}$ such that the conditional independence $\{1\} \indep \{3\} \mid \{2\}\; [\Lambda]$ holds. 
Table~\ref{tab:CIconstr} contains the reformulation of the factorization of the modified density~\eqref{eq:lambdaCIconstr} in terms of original building blocks, that is, the densities $\kappa_{ij}$, their joint marginal density $m$ and the mixture probabilities $p_{ij}$ with $q_{ij}=1-p_{ij}$. Figure~\ref{fig:CIexample} illustrates them for faces up to dimension 2.

We note that if the starting density $\kappa$ is $-\alpha$-homogeneous for some $\alpha>2$, it is easily checked that its marginal density $m$ will be $-(\alpha-1)$-homogeneous and the resulting measure $\Lambda$ will be $-(\alpha-2)$-homogeneous; see Example~\ref{HR_forest}.



\begin{table}[htb]
\caption{\small Modified densities of the  trivariate example from Section~\ref{subsec:CI3constr}; see also Figure~\ref{fig:CIexample}. The resulting measure $\Lambda$ satisfies $\{1\} \indep \{3\} \mid \{2\}\; [\Lambda]$ and has identical one-dimensional marginals. It is globally Markov for the chain graph $1-2-3$ with zero mass on $\cZ=\{y_1\neq 0,y_2=0,y_3\neq 0\}$; see Theorem~\ref{thm:decomposable}.
}\label{tab:CIconstr}
{\small
\renewcommand{\arraystretch}{1.5}
\begin{tabular}{|l|ccc||ccc|c|}
\hline
\rowcolor{gray!30}
\textbf{Dimen-} & \multicolumn{3}{c||}{\textbf{Faces}} & 
\multicolumn{4}{c|}{\textbf{Modified densities}}
\\ 
\rowcolor{gray!30}
\textbf{sion} & $y_1$ & $y_2$ & $y_3$ 
& $\overline \lambda_{12}(y_{12})$ 
& $\overline \lambda_{23}(y_{23})$ 
& $\overline \lambda_{2}(y_2)$
& $\overline \lambda(y) = \lambda(y)$  
\\
\hline
& 0 & 0 & $\neq 0$ 
& 1 & $q_{23} m(y_3)$ & 1 & {\color{blue} $q_{23} m(y_3)$}
\\
1 & $\neq 0$ & 0 & 0 
& $q_{12} m(y_1)$  & 1 & 1 & {\color{red} $q_{12} m(y_1)$}
\\
& 0 & $\neq 0$ & 0
& $q_{12} m(y_2)$  & $q_{23} m(y_2)$  & $m(y_2)$ & {\color{ blue!50!red} $q_{12}q_{23} m(y_2)$}
\\
\hline
& $\neq 0$ & $\neq 0$ & 0
& $p_{12} \kappa_{12}(y_{12})$  & $q_{23} m(y_2)$  & $m(y_2)$ & {\cellcolor{red!10} $p_{12}q_{23} \kappa_{12}(y_{12})$}
\\
2 & 0 & $\neq 0$ & $\neq 0$
&$q_{12} m(y_2)$  & $p_{23} \kappa_{23}(y_{23})$ & $m(y_2)$ & {\cellcolor{blue!10}$q_{12}  p_{23} \kappa_{23}(y_{23})$}
\\
& $\neq 0$ & 0 & $\neq 0$ 
& $q_{12} m(y_1)$  & $q_{23} m(y_3)$  & 1 & $0$
\\
\hline
3 & $\neq 0$ & $\neq 0$ & $\neq 0$
&$p_{12} \kappa_{12}(y_{12})$ & $p_{23} \kappa_{23}(y_{23})$ & $m(y_2)$ & $\frac{p_{12} p_{23}  \kappa_{12}(y_{12})  \kappa_{23}(y_{23})}{m(y_2)}$
\\
\hline
\end{tabular}
}
\end{table}

\begin{figure}[htb]
\mbox{\small   
\begin{tikzpicture}[scale = 2.5, >=stealth]
      \coordinate (O) at (0,0,0);
      \coordinate (e2+) at (1,0,0);
      \coordinate (e3+) at (0,1,0);
      \coordinate (R) at (0,0.5,0);
      \coordinate (e2+3+) at (1,1,0);
       \coordinate (M) at (0.5,0.5,0);
    \filldraw[dashed, color=blue, fill=blue!10]   (e2+3+) -- (e3+) --  (O) -- (e2+) --  (e2+3+);   
    \node at (M) {\textcolor{blue}{$p_{ij}\kappa_{ij}(y_{ij})$ }};      
         \draw[thick, blue, ->] (O) to node[midway, anchor=north] {$q_{ij} m(y_i)$} (e2+) node[anchor=north west]{\textcolor{black}{$y_{i}$}};     
    \draw[thick, blue, ->] (O) to (e3+) node[anchor=south east]{\textcolor{black}{$y_{j}$}} ;
    \node[rotate=90,yshift=4mm] at (R) {\textcolor{blue}{$q_{ij} m(y_j)$}};
      \filldraw(O) circle (1pt) node[anchor=north east] {$0$};
        \end{tikzpicture} 
        \hspace*{3mm}
    \begin{tikzpicture}[scale = 2.5, >=stealth]
      \coordinate (O) at (0,0,0);
      \coordinate (e1+) at (0,0,1);
      \coordinate (e2+) at (1,0,0);
      \coordinate (e3+) at (0,1,0);
      \coordinate (e1+2+) at (1,0,1);
      \coordinate (e2+3+) at (1,1,0);
    \filldraw[dashed, blue, fill=blue!10]   (e2+3+) -- (e3+) --  (O) -- (e2+) --  (e2+3+) node[anchor=south west]{$q_{12}p_{23} \kappa_{23}(y_{23})$ };     
       \filldraw[dashed, color=red, fill=red!10]   (e1+2+) -- (e2+) --  (O) -- (e1+) --  (e1+2+) node[anchor=north west]{$p_{12}q_{23} \kappa_{12}(y_{12})$ };      
      \draw[thick, red, ->] (O) to node[midway, left]{$q_{12} m(y_1)$} (e1+) node[anchor=north]{\textcolor{black}{$y_{1}$}};
      \draw[thick, blue!50!red, ->] (O) to node[midway, anchor=south, xshift=0cm]{$q_{12}q_{23} m(y_2)$} (e2+) node[anchor=north west]{\textcolor{black}{$y_{2}$}};
      \draw[thick, blue, ->] (O) to node[midway, left]{$q_{23} m(y_3)$} (e3+) node[anchor=south east]{\textcolor{black}{$y_{3}$}} ;
      \filldraw(O) circle (1pt) node[anchor=south east] {$0$};
        \end{tikzpicture}             
        }

\caption{Illustration of the generic construction of a measure $\Lambda$ satisfying $\{1\} \indep \{3\} \mid \{2\}\; [\Lambda]$ from Section~\ref{subsec:CI3constr}. Left: measure $\lambda_{ij}$, which serves as building block. Right: resulting densities $\lambda$ on the faces up to dimension 2; see Table~\ref{tab:CIconstr}.}\label{fig:CIexample}
\end{figure}
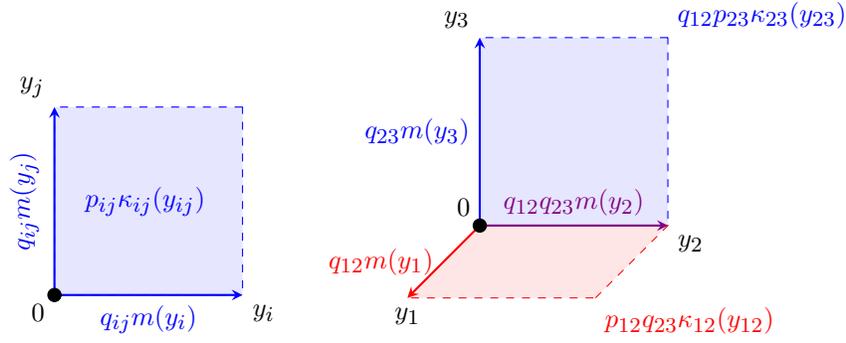

\section{Proofs and examples for Section~\ref{sec:undirected}: 
Undirected graphical models}

\subsection{Auxiliary result for the Hammersley-Clifford-type theorem}
\label{app:recursive_global}

\begin{proposition}\label{prop:recursive_global}
Assume that $\Lambda$ satisfies \eqref{eq:infinite} and that $V=D \cup C$ is the vertex set of a graph $\cG$, where $C$ is a clique and  $\widetilde S=D \cap C$ separates $D\setminus \widetilde S$ from $C \setminus \widetilde S$.
Then $\Lambda$ satisfies the global Markov property with respect to $\mathcal G$ if and only if
\begin{align}\label{eq:recursive_global}
D\setminus \widetilde S \indep C\setminus \widetilde S \mid \widetilde S\; [\Lambda]
\end{align}
and $\Lambda$ is globally Markov when restricted to $D$.
\end{proposition}

\begin{proof}[Proof of Proposition~\ref{prop:recursive_global}]
 The global Markov property for $\Lambda$ implies \eqref{eq:recursive_global} and, since $\widetilde S \subset C$ is fully connected, the global Markov property for $\Lambda$ restricted to $D$, cf.~\citet[Prop.~3.17 (start of proof)]{Lauritzen}. So we only need to establish the other direction, i.e.\ show the global Markov property for $\Lambda$ from these two properties.
 
To this end, consider disjoint sets $A,B,S\subset V$ with $S$ separating $A$ from $B$.
As outlined in Remark~\ref{rk:GMP}, it suffices to consider the partition situation, where $A\cup B\cup S=V$. Since $C$ is a clique, the sets $A$ and $B$ cannot both intersect $C$, otherwise they could not be separated by any $S$, so we may assume that $B\cap C=\emptyset$.

Let us first consider the case that $S\subset D$.
Since $\Lambda$ satisfies \eqref{eq:infinite}, we can employ the semi-graphoid property (L4). We do so with the four sets $(B, A\cap D, S, A\setminus D)$ as the sets $(A,B,C,D)$ from the original formulation of (L4).
The first condition there reads as $B\indep (A\cap D)\mid S$, which is implied by the global Markov property for $D$. The second condition reads as $B\indep (A\setminus D)\mid S \cup (A\cap D)$, which follows from the other assumption, $ (D\setminus {\widetilde S})\indep (A\setminus D)\mid {\widetilde S}$, by means of~(L3).
As a result we get $A\indep B\mid S$ as required.

If instead $E=S\cap (C\setminus \widetilde S) \neq \emptyset$, set $A'=A \cup E$ and $S'= S \setminus E$.
Then we can finish the proof by showing that $S'$ still separates $A'$ from $B$, but  with $S' \subset D$, so that we can employ the above argument again to obtain $A'\indep B\mid S'$, which subsequently implies $A\indep B\mid S$ by (L3). 
It remains to be seen that indeed $S'$ separates $A'$ from $B$. Let $b \in B$ and $a \in A'=A \cup E$. We need to show that any path $\pi$ from $b$ to $a$ (if such a path exists) necessarily visits $S'=S\setminus E$.\\
\textbf{Case 1:} $a \in E$. Since $\widetilde S$ separates $D \setminus \widetilde S$ from $C \setminus \widetilde S$, $\widetilde S$ also separates their subsets $B$ and $E$. In particular, $\pi$ must pass through an element $v \in \widetilde S$ before reaching $C \setminus \widetilde S$ for the first time. Since $\widetilde S \subset A \cup S'$, either $v \in S'$ (and we are done) or $v \in A$. However, if $v \in A$, recall that $S$ separates $B$ from $A$, so $\pi$ must first pass through an element $s \in S$ before reaching $v\in A$. Here, $s$ cannot belong to $E$, since $\pi$ has not yet arrived in $C\setminus  \widetilde S$ for the first time. Hence $s \in S'=S\setminus E$.\\
\textbf{Case 2:} $a \in A$. Since $S$ separates $B$ from $A$, $\pi$ needs to visit some $s \in S$. If $s \in S'=S\setminus E$, we are done. Else $s \in E$. But then the argument from Case 1 shows that on the way between $b$ and $s \in E$, the path $\pi$ needs to visit $S'$.
\end{proof}

\subsection{Proof of the Hammersley-Clifford-type theorem}\label{app:HC}
\begin{proof}[Proof of Theorem~\ref{thm:decomposable}] 
Let us prove the `if' and `only if' directions separately.

``Global Markov property $\Rightarrow$ Factorization:''
First we prove that the global Markov property of $\Lambda$ implies the factorization \eqref{eq:u_dec} by induction on the number of cliques.  The statement holds trivially for a single clique, as there are no separators, so the factorization reads $\olambda(y)=\olambda(y)$ for all $y \in \RR^{C_1}$, which is always true.  We let $\cG_{j}$ be the decomposable graph that arises from restricting $\cG$ to $D_{j}$ and abbreviate $\cZ_{j}=\cZ(\cG_j)$ henceforth.
Let us assume that the implication holds for $n-1$ cliques, that is, $\Lambda_{D_{n-1}}$ being globally Markov with respect to $\cG_{n-1}$ implies
\begin{align}\label{eq:IA}
\olambda(y_{D_{n-1}})\prod_{j < n-1}\olambda_{S_j}(y_{S_j}) = \prod_{j < n}\olambda_{C_j}(y_{C_j}).
\end{align} 
for $\mu_{D_{n-1}}$-almost all $y_{D_{n-1}}\notin \cZ_{n-1}$. For the induction step, suppose now that $\Lambda$ is globally Markov with respect to $\cG_n$. We need to  establish \eqref{eq:u_dec} for $\mu$-almost all $y\notin \cZ_{n}$.

The global Markov property of $\Lambda$ for $\cG_n$ implies also the global Markov property of
$\Lambda_{D_{n-1}}$ for $\cG_{n-1}$ and we obtain \eqref{eq:IA} as well by the induction assumption.
It is easily seen that 
\[\RR^V \setminus \cZ_{n} \subset (\RR^{D_{n-1}} \setminus \cZ_{n-1}) \times \RR^{V \setminus D_{n-1}}\]
and so \eqref{eq:IA} holds for $\mu$-almost all $y\notin \mathcal Z_n$.
According to the running intersection property we have
$C_n\setminus S_{n-1} \indep D_{n-1}\setminus S_{n-1} \mid S_{n-1} \;[\Lambda]$, which in view of Proposition~\ref{prop:trick} is equivalent to
\[\olambda(y)\olambda_{S_{n-1}}(y_{S_{n-1}}) = \olambda_{C_n}(y_{C_n})\olambda_{D_{n-1}}(y_{D_{n-1}})\]
for $\mu$-almost all $y \in \RR^V$ 
 excluding those with $y_{S_{n-1}}=0_{S_{n-1}}$ and 
 $y_{C_n\setminus S_{n-1}} \neq 0_{C_n\setminus S_{n-1}}$
 and $y_{D_{n-1}\setminus S_{n-1}} \neq 0_{D_{n-1}\setminus S_{n-1}}$. The factorization therefore holds for $\mu$-almost all $y\notin \mathcal Z_n$, since for any partition $(A,B,S)$ of $V$, where $S \in \cS$ separates $A$ from $B$ in the graph $\cG$ we have
\begin{align*}
\{ y \,:\, y_A\neq 0_A, \, y_B\neq 0_B, \, y_S=0_S\}
= \bigcup_{a \in A, \, b \in B} \{y  \,:\, y_a \neq 0, y_b\neq 0, y_S=0_S\} \subset \cZ(\cG).
\end{align*}
 Here $S_{n-1}$ is also allowed to be an empty set and we recall that $\olambda_\emptyset=1$.
Multiplying both sides by $\prod_{j<n-1}\olambda_{S_j}(y_{S_j})$ and using \eqref{eq:IA} yields \eqref{eq:u_dec} for $\mu$-almost all $y\notin \mathcal Z_n$, and taken together this completes the induction step.

``Factorization $\Rightarrow$ Global Markov property:''
The proof in the other direction is more subtle.
We need to show that, if the factorization \eqref{eq:u_dec} holds  for $\mu$-almost all $y\notin \mathcal Z(\mathcal G)$, then  $\Lambda$ is globally Markov with respect to $\cG$.
Again, we use induction on the number of cliques, where the case of a single clique is trivial, as there are no separators, so $\Lambda_{C_1}$ is always globally Markov with respect to $\cG_1$, a true conclusion.  Our induction assumption is now that \eqref{eq:IA} for $\mu_{D_{n-1}}$-almost all $y_{D_{n-1}}\notin \cZ_{n-1}$ implies that $\Lambda_{D_{n-1}}$ is globally Markov with respect to $\cG_{n-1}$. We need to show that
$\Lambda$ is globally Markov with respect to $\cG_{n}$, whilst assuming  \eqref{eq:u_dec} for $\mu$-almost all $y\notin \cZ_{n}$.

Note that $\{y_{D_{n-1}}=0_{D_{n-1}}\} \subset \RR^V \setminus \cZ_n$. 
So we obtain from \eqref{eq:u_dec} that for $\mu$-almost all $y \in \cE$ with $y_{D_{n-1}}=0_{D_{n-1}}$ 
\[
\lambda(0_{D_{n-1}},y_{C_n\setminus S_{n-1}}) \mu_{D_{n-1}}(\{0_{D_{n-1}}\}) 
 = \lambda_{C_n}(0_{S_{n-1}},y_{C_n\setminus S_{n-1}}) \mu_{S_{n-1}}(\{0_{S_{n-1}}\}),
\]
where the left-hand side and right-hand side are the densities of the reduced measures 
\[\Lambda^0_{C_n\setminus S_{n-1}}=(\Lambda^0_{V\setminus S_{n-1}})^0_{C_n\setminus S_{n-1}}
\quad \text{and} \quad (\Lambda_{C_n})^0_{C_n\setminus S_{n-1}}=(\Lambda^0_{V\setminus S_{n-1}})_{C_n\setminus S_{n-1}},
\] 
respectively, where the last two identities follow from Lemma~\ref{lemma:marginalreducedcompatibility} with $A=D_{n-1}\setminus S_{n-1}$, $B=S_{n-1}$ and $C=C_n\setminus S_{n-1}$ therein. 
By Corollary~\ref{cor:indepRephrase} we obtain the independence statement for the restricted measure $\Lambda^0_{V\setminus S_{n-1}}$, that is,
\begin{align}\label{eq:indepPartDone}
C_n\setminus S_{n-1} \indep D_{n-1}\setminus S_{n-1} \; [\Lambda^0_{V\setminus S_{n-1}}],
\end{align}
which further implies by Corollary~\ref{cor:indepRephrase} and Lemma~\ref{lemma:marginalreducedcompatibility} that
\[
\Lambda^0_{D_{n-1}\setminus S_{n-1}}
= (\Lambda^0_{V\setminus S_{n-1}})^0_{D_{n-1}\setminus S_{n-1}}
= (\Lambda^0_{V\setminus S_{n-1}})_{D_{n-1}\setminus S_{n-1}}
= (\Lambda_{D_{n-1}})^0_{D_{n-1}\setminus S_{n-1}}.
\]
Therefore, we have for $\mu$-almost all $y\in \cE$ with $y_{C_n}=0_{C_n}$ that
\begin{align}\label{eq:DS0}
\lambda(y_{D_n\setminus S_{n-1}},0_{C_n}) \mu_{C_n}(\{0_{C_n}\}) = 
\lambda_{D_{n-1}}(y_{D_{n-1}\setminus S_{n-1}},0_{S_{n-1}}) \mu(\{0_{S_{n-1}}\}).
\end{align}
Further we obtain from \eqref{eq:u_dec} that for $\mu$-almost all $y \in \cE \setminus \cZ_n$ with $y_{C_{n}}=0_{C_{n}}$ 
\[
\Big[\lambda(y_{D_n\setminus S_{n-1}},0_{C_n}) \mu_{C_n\setminus S_{n-1}}(\{0_{C_n \setminus S_{n-1}}\})\Big] \prod_{j < n-1}\olambda_{S_j}(y_{S_j}) = \prod_{j < n}\olambda_{C_j}(y_{C_j}),
\]
where the term in brackets can be replaced by $\lambda_{D_{n-1}}(y_{D_{n-1}\setminus S_{n-1}},0_{S_{n-1}})$ due to \eqref{eq:DS0}.
Note that $y \in \cE \setminus \cZ_n$ with $y_{C_{n}}=0_{C_{n}}$ implies $y_{D_{n-1}} \in \cE^{D_{n-1}} \setminus \cZ_{n-1}$ with $y_{S_{n-1}}=0_{S_{n-1}}$, and conversely, if $y_{D_{n-1}} \in \cE^{D_{n-1}} \setminus \cZ_{n-1}$ with $y_{S_{n-1}}=0_{S_{n-1}}$, then $(y_{D_{n-1}},0_{C_n\setminus S_{n-1}}) \in \cE \setminus \cZ_n$ and $y_{C_{n}}=0_{C_{n}}$. Hence, we recover \eqref{eq:IA} for $\mu_{D_{n-1}}$-almost all $y_{D_{n-1}}\notin \cZ_{n-1}$ with $y_{S_{n-1}}=0_{S_{n-1}}$ (note that it is trivially satisfied for $y_{D_{n-1}}=0_{D_{n-1}}$). 
If $S_{n-1}=\emptyset$, the same line of reasoning 
(with $\mu_{S_{n-1}}(\{0_{S_{n-1}}\})$ replaced by $1$ above) even recovers  \eqref{eq:IA} for $\mu_{D_{n-1}}$-almost all $y_{D_{n-1}}\notin \cZ_{n-1}$. 

Else $S_{n-1}\neq\emptyset$ and we consider 
$y \not\in \cZ_n$ with $y_{S_{n-1}}\neq 0_{S_{n-1}}$, so that $y_{D_{n-1}} \not\in \cZ_{n-1}$ with $y_{S_{n-1}}\neq 0_{S_{n-1}}$. Conversely, such a $y_{D_{n-1}}$ satisfies $y \not\in  \cZ_n$ with $y_{S_{n-1}}\neq 0_{S_{n-1}}$ for any(!) $y_{C_n\setminus S_{n-1}} \in \RR^{C_n\setminus S_{n-1}}$. So we can integrate out the $C_n\setminus S_{n-1}$ components in \eqref{eq:u_dec} for such $y$ to obtain
\[\lambda_{D_{n-1}}(y_{D_{n-1}})\lambda_{S_{n-1}}(y_{S_{n-1}})\prod_{j<n-1}\olambda_{S_j}(y_{S_j}) = \lambda_{S_{n-1}}(y_{S_{n-1}})\prod_{j<n}\olambda_{C_j}(y_{C_j})\]
for $\mu_{D_{n-1}}$-almost all $y_{D_{n-1}}\notin \cZ_{n-1}$ with $y_{S_{n-1}}\neq 0_{S_{n-1}}$.
This formula is still true when the term $\lambda_{S_{n-1}}(y_{S_{n-1}})$ is dropped on both sides, since for $\mu$-almost all $y$ with $y_{S_{n-1}}\neq 0_{S_{n-1}}$, for which the marginal density $\lambda_{S_{n-1}}(y_{S_{n-1}})$ is zero, we also must have zero densities $\lambda_{D_{n-1}}(y_{D_{n-1}})$ and  $\lambda_{C_{n-1}}(y_{C_{n-1}})=\olambda_{C_{n-1}}(y_{C_{n-1}})$, as $\lambda_{S_{n-1}}(y_{S_{n-1}})$
arises as a marginal density from them.
So, collectively, we recover  \eqref{eq:IA} for $\mu_{D_{n-1}}$-almost all $y_{D_{n-1}}\notin \cZ_{n-1}$.

The induction assumption now gives us that $\Lambda_{D_{n-1}}$ is globally Markov with respect to $\cG_{n-1}$. Hence, by Proposition~\ref{prop:recursive_global} it suffices to establish
\[C_n\setminus S_{n-1} \indep D_{n-1}\setminus S_{n-1} \mid S_{n-1}\; [\Lambda],\]
which is our goal in what follows. 
If $S_{n-1}=\emptyset$, it follows readily from \eqref{eq:indepPartDone} and the proof is complete. 

Else $S_{n-1}\neq\emptyset$ and \eqref{eq:indepPartDone} only yields the independence of the reduced measure and by Theorem~\ref{thm:density_factorization} all that remains to be seen is that
\begin{align}\label{eq:finalproduct}
\lambda(y) \lambda_{S_{n-1}}(y_{S_{n-1}}) = \lambda_{C_n}(y_{C_n})\lambda_{D_{n-1}}(y_{D_{n-1}})
\end{align}
for $\mu$-almost all $y$ with  $y_{S_{n-1}}\neq 0_{S_{n-1}}$.

Here, we note that \eqref{eq:u_dec} for $\mu$-almost all $y\notin \cZ_{n}$ together with
\eqref{eq:IA}  for $\mu_{D_{n-1}}$-almost all $y_{D_{n-1}}\notin \cZ_{n-1}$ gives
\[
\lambda(y) \lambda_{S_{n-1}}(y_{S_{n-1}})
\prod_{j < n-1}\olambda_{S_j}(y_{S_j}) = \lambda_{C_n}(y_{C_n}) \lambda_{D_{n-1}}(y_{D_{n-1}}) \prod_{j < n-1}\olambda_{S_j}(y_{S_j})
\]
for $\mu$-almost all $y\notin \cZ_n$, $y_{S_{n-1}}\neq 0_{S_{n-1}}$. The product terms $\olambda_{S_j}(y_{S_j})$ can either be canceled by the same marginal cancellation argument as above if $y_{S_j}\neq 0_{S_j}$ or else due to $\olambda_{S_j}(0_{S_j})=1/\mu_{S_j}(\{0_{S_j}\})$. Hence we have \eqref{eq:finalproduct} for $\mu$-almost all $y\not\in\cZ_n$,  $y_{S_{n-1}}\neq 0_{S_{n-1}}$.

Finally, we note that, if $y_{S_{n-1}}\neq 0_{S_{n-1}}$, then $y\not\in\cZ_n$ is equivalent to 
$y_{D_{n-1}}\not\in\cZ_{n-1}$. By the global Markov property  on $D_{n-1}$ we know $\Lambda_{D_{n-1}}(\cZ_{n-1})=0$. Hence, both sides of \eqref{eq:finalproduct} are zero for $\mu$-almost all $y$ with $y_{S_{n-1}}\neq 0_{S_{n-1}}$ and $y_{D_{n-1}}\in\cZ_{n-1}$. Taken together, this establishes \eqref{eq:finalproduct} in the general case for $\mu$-almost all $y$ with  $y_{S_{n-1}}\neq 0_{S_{n-1}}$ and the proof is complete.
\end{proof}

\subsection{Example for Remark~\ref{rem:caveat}}\label{app:caveat}

Consider the chain graph $1-2-3-4-5$ on the vertices $V=\{1,\ldots,5\}$ where the neighboring indices are connected. We construct a measure $\Lambda$ on $\cE$ which is not(!)\ globally Markov with respect to this graph $\cG$, even though
\begin{enumerate}[label=(\roman*)]
\item \eqref{eq:infinite} holds;
\item the density $\lambda$ satisfies the factorization
\[\lambda(y)\lambda_2(y_2)\lambda_3(y_3)\lambda_4(y_4) = \lambda_{12}(y_{12})\lambda_{23}(y_{23})\lambda_{34}(y_{34})\lambda_{45}(y_{45})\]
for $\mu$-almost all $y$ such that $y_2,y_3,y_4\neq 0$;
\item $\Lambda(\cZ(\cG))=0$, that is, $\Lambda(y_k\neq 0,y_l=0, y_m\neq 0)=0$ whenever $k<l<m$.
\end{enumerate} 
As in Section~\ref{subsec:CI3constr} we consider
the same dominating measure~$\mu$ as in \eqref{product_measure} (here for $d=5$) and use some $-3$-homogeneous bivariate densities $\kappa_{ij}$ on $(0,\infty)^2$ with marginal densities $y^{-2}$ as building blocks, now for $(i,j) \in \{(1,2),(2,3),(3,4),(4,5)\}$.
In addition, we ensure that $\kappa_{12}$ and $\kappa_{23}$ are compatible with a $-4$-homogeneous trivariate density $\eta_{123}$ on $(0,\infty)^3$ as follows:
\begin{itemize}
\item $\eta_{12}=\kappa_{12}, \eta_{23}=\kappa_{23}$ and, thus, respective univariate marginals have densities $y^{-2}$,
\item $\eta_{123}(y_{123})\eta_2(y_2)\neq \eta_{12}(y_{12})\eta_{23}(y_{23})$ on a subset of $(0,\infty)^3$ of positive Lebesgue measure.
\end{itemize} 
The second condition is equivalent to saying that the measure corresponding to $\eta_{123}$ does not induce the conditional independence of $1$ and $3$ given $2$, see Theorem~\ref{thm:density_factorization}.
Such a construction is always possible, e.g.~one can start from $\eta_{123}$ with the respective univariate marginals and then work with $\kappa_{12}$ and $\kappa_{23}$ that arise from $\eta_{123}$.
From there, we define
\[\lambda(y)=\begin{cases}\kappa_{12}(y_{12})\kappa_{23}(y_{23})\kappa_{34}(y_{34})\kappa_{45}(y_{45})y_2^2y_3^2y_4^2, &y_1,y_2, y_3,y_4,y_5>0,\\
\eta_{123}(y_{123}),&y_1,y_2, y_3>0,y_4=y_5=0,\end{cases}
\]
and let it be 0 otherwise.
By construction, the measure $\Lambda$ satisfies the conditions (i) and (iii).
Furthermore, the bivariate marginals on $(0,\infty)^2$ are given by
\[ 
\lambda_{12}=\kappa_{12}+\eta_{12}=2\kappa_{12}, \quad \lambda_{23}=\kappa_{23}+\eta_{23}=2\kappa_{23}, \quad \lambda_{34}=\kappa_{34}, \quad \lambda_{45}=\kappa_{45}, 
 \]
 and the univariate ones by $\lambda_2(y_2)=2y_2^{-2}, \lambda_3(y_3)=2y_3^{-2}, \lambda_4(y_4)=y_4^{-2}$. This readily yields the factorization in (ii).
 Finally, we note that $\Lambda$ is not globally Markov for the chain graph since $\{1\} \indep \{3,4,5\}\mid \{2\}\;[\Lambda]$ is not true, because 
 \[\lambda(y)\lambda_2(y_2)=\eta_{123}(y_{123})2y_2^{-2}\neq 2\eta_{12}(y_{12})\eta_{23}(y_{23})=\lambda_{12}(y_{12})\lambda_{2345}(y_{2345})\]
on the subset $\{y:y_1,y_2, y_3>0,y_4=y_5=0\}$, which has positive $\mu$-mass. Hence, according to Theorem~\ref{thm:density_factorization} the condition $\{1\} \indep \{3,4,5\}\mid \{2\}\;[\Lambda]$ is violated.

Had we checked instead the factorization condition from Theorem~\ref{thm:decomposable} in terms of $\olambda$ on this set, we would have picked up the violation of the global Markov property. The left-hand side of \eqref{eq:u_dec} reads $\eta_{123}(y_{123})2y_2^{-2}2y_3^{-2}$ in this case, and the right-hand side is $2\eta_{12}(y_{12})2\eta_{23}(y_{23})y_3^{-2}$. They cannot be equal due to our choice of $\eta_{123}$.

\section{Proofs for Section~\ref{sec:extremes}: Relation to conditional independence in extremes}

\subsection{Derivation of extremal correlation $\chi_{ij}$ in Example~\ref{HR_forest}}
\label{HR_forest_proof}
\begin{proof}
  For $i\nleftrightarrow j$, since $\Lambda$ is a graphical model on the forest $\cG$, the global Markov property implies that $i\indep j\;[\Lambda]$. By Proposition~\ref{prop:indep} we therefore have that $\Lambda(y_i\neq 0, y_j\neq 0 )=0$, which yields $\chi_{ij} = 0$ according to~\eqref{eq:chidef}.

  For $i\leftrightarrow j$ we note that by homogeneity of $\Lambda$ we have $\chi_{ij}=\Lambda(y_i> 1, y_j> 1)/\Lambda(y_i>1)$ and due to all marginal densities being $m(y)=y^{-2}$ we have $\Lambda(y_i>1) = 1$. Because of the density factorization~\eqref{forest_dens} we can first integrate out all components $k \notin \mathrm{path}_{ij}$ to get  
  \[\Lambda(y_i> 1, y_j> 1) =  \int_1^\infty \int_1^\infty \int_{(0,\infty)^{|V(ij)|}} \frac{\prod_{(s,t) \in \mathrm{path}_{ij}} \overline \lambda_{st}(y_s, y_t)}{\prod_{s \in V(ij)} \overline \lambda_s(y_s)} \D y_i \D y_j \D y_{V(ij)} ,\] 
  where $V(ij)$ denotes all nodes on the path from $i$ to $j$, excluding $i$ and $j$. 
  Thus
  \begin{align*}
    \chi_{ij} = \int_1^\infty \int_1^\infty \int_{(0,\infty)^{|V(ij)|}} \frac{\prod_{(s,t) \in \mathrm{path}_{ij}} p_{st} \kappa_{st}(y_s, y_t)}{\prod_{s \in V(ij)} y_s^{-2}} \D y_i \D y_j \D y_{V(ij)}
   = \chi_{ij}^{\text{HR}} \prod_{(s,t) \in \mathrm{path}_{ij}} p_{st} 
  \end{align*}
  where $\chi_{ij}^{\text{HR}} = 2 - 2\Phi(\sqrt{\Gamma_{ij}} /2)$ with $\Gamma_{ij} = \sum_{(s,t) \in \mathrm{path}_{ij}} \Gamma_{st}$ is the extremal correlation of the corresponding tree model on the connected component that contains $i$ and $j$ \citep[][Proposition 4]{eng2020}. This result is true since the domain of the above integral is contained in the set $\{y_i > 0, y_j > 0, y_{V(ij)} > 0\}$, on which the density coincides with the usual H\"usler--Reiss tree model.
  
\end{proof}

\subsection{Max-infinitely divisibility and asymptotic equivalence of the joint survival functions}
\label{app:etaOneHalf}

\begin{proof}[Proof of Lemma~\ref{lemma:etaOneHalf}]
Denoting $\Lambda_{ij}(u,u)=\Lambda_{ij}(y_i>u \text{ or } y_j>u)$, we can rewrite
 \begin{align}\label{eq:SurvivalRewrite}
    \PP( Z_i > u, Z_j > u ) &= 1 - 2e^{-\Lambda(u)} + e^{-\Lambda_{ij}(u,u)}.
  \end{align}
  Using $\Lambda_{ij}(u,u) \leq 2 \Lambda(u)$ for all $u>0$ gives
  \[
    \PP( Z_i > u, Z_j > u )
    \geq 1 - 2e^{-\Lambda(u)} + e^{-2\Lambda(u)}
= ( 1- e^{-\Lambda(u)})^2 =  \PP(Z_i>u)^2,
  \]
  and hence,
  \[ \PP( Z_i > u, Z_j > u ) \PP(Z_i>u)^{-2} \geq 1\]
  for all $u>0$. On the other hand \eqref{eq:LT-residual-tail-dep-coeff} tells us that the left-hand side is a function of the form $r(\PP(Z_i>u)^{-1})$ with $r$ regularly varying with index $-1/\eta_{ij}+2$. If we had $\eta_{ij}<1/2$, the left side would converge to 0 as $u \to \infty$ \citep[e.g.,][Appendix~B]{deh2006a}, contradicting the last inequality. 
  So we obtain $\eta_{ij} \geq 1/2$.
  
Further, recall the bounds $1-x \leq e^{-x} \leq  1-x + x^2/2$ for all $x>0$. Applied to \eqref{eq:SurvivalRewrite} we obtain upper and lower bounds for $\PP( Z_i > u, Z_j > u )$ as follows:
  \begin{align*}
2\Lambda(u) - \Lambda_{ij}(u,u) - \Lambda(u)^2 \leq     \PP( Z_i > u, Z_j > u ) 
                                  &\leq 2\Lambda(u) - \Lambda_{ij}(u,u) + \Lambda_{ij}(u,u)^2/2 \\
                                  &\leq  2\Lambda(u) - \Lambda_{ij}(u,u) +  2\Lambda(u)^2.
    \end{align*}  
    Since $ \Lambda(y_i>u, y_j>u)   = 2\Lambda(u) - \Lambda_{ij}(u,u)$, we may rewrite these bounds as
   \begin{align}\label{eq:joint-Lambda-bounds-by-PP}
   \PP( Z_i > u, Z_j > u ) - 2\Lambda(u)^2 \leq  \Lambda(y_i>u, y_j>u) \leq  \PP( Z_i > u, Z_j > u ) 
   + \Lambda(u)^2.\end{align}
        Hence, if we assume $\eta_{ij} > 1/2$, we obtain the assertion \eqref{eq:joint-survival-tail-equiv} from
    \[\lim_{u \to \infty} \frac{\Lambda(u)^2}{ \PP( Z_i > u, Z_j > u )}=0.\]
 Note that \eqref{eq:joint-survival-tail-equiv} together with \eqref{eq:residual-tail-dep-coeff} entails \eqref{eq:eta-viaLambda} with $\widetilde{\eta}_{ij}=\eta_{ij}$. 
 
 Conversely, observe that \eqref{eq:joint-Lambda-bounds-by-PP} can be rearranged to
 \begin{align*}
   \Lambda(y_i>u, y_j>u) - \Lambda(u)^2 \leq   \PP( Z_i > u, Z_j > u ) \leq  \Lambda(y_i>u, y_j>u) 
   + 2\Lambda(u)^2\end{align*}
   so that \eqref{eq:joint-survival-tail-equiv} also follows from \eqref{eq:eta-viaLambda}, as \eqref{eq:eta-viaLambda} entails
     \[\lim_{u \to \infty} \frac{\Lambda(u)^2}{  \Lambda(y_i>u, y_j>u) }=0.\]
     Finally, note that \eqref{eq:joint-survival-tail-equiv} together with  \eqref{eq:eta-viaLambda} gives \eqref{eq:residual-tail-dep-coeff} with $\eta_{ij}=\widetilde{\eta}_{ij}$.
\end{proof}

\subsection{Proof of Proposition~\ref{prop:DG}: An asymptotically independent example based on Gaussian distributions}\label{app:GaussianExample}

We provide an explicit construction of an exponent measure $\Lambda$ in dimension~3, which satisfies the conditional independence $\{1\} \indep \{3\} \mid \{2\}\; [\Lambda]$, but exhibits otherwise asymptotic independence. It is based on two bivariate Gaussian distributions and we will need some auxiliary results in order to derive and quantify the asymptotic independence in the sense of \citet{LedTawn96}.

Let $\Phi$ and $\varphi$ be the cdf and pdf of the univariate standard Gaussian distribution, and let $\overline \Phi(x) = 1 - \Phi(x)=\Phi(-x)$ be the corresponding survival function. 
It is well-known \cite[Eq.~7.8.2]{NIST:DLMF} that 
\begin{align}\label{eq:millsratioPhi}
\frac{2}{x+\sqrt{x^2+4}} < \frac{\overline \Phi(x)}{\varphi(x)} < \frac{2}{x+\sqrt{x^2+8/\pi}} < \frac{1}{x}, \qquad x > 0.
\end{align}
Using \eqref{eq:millsratioPhi}, one may easily derive the following result by showing positivity of the respective derivatives \citep[e.g.,][Lemma~5.4]{res2008}.

\begin{lemma} \label{lemma:PhiphiMonoton}
The functions $\Phi/\varphi$ and $\varphi/\overline \Phi$ are strictly monotonously increasing on its entire domain $\RR=(-\infty,\infty)$ from $0$ to $\infty$.
\end{lemma}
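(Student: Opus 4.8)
The claim is that $\Phi/\varphi$ and $\varphi/\overline\Phi$ are strictly increasing from $0$ to $\infty$ on all of $\RR$. The plan is to differentiate each function, express the derivative in a form that makes its sign transparent using the Mills-ratio bounds in~\eqref{eq:millsratioPhi}, and then read off the boundary behaviour from standard asymptotics of $\Phi$, $\varphi$ and $\overline\Phi$.

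\medskip

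First I would handle $g(x) = \varphi(x)/\overline\Phi(x)$, the reciprocal Mills ratio. Since $\varphi'(x) = -x\varphi(x)$ and $\overline\Phi{}'(x) = -\varphi(x)$, the quotient rule gives
\[
g'(x) = \frac{-x\varphi(x)\overline\Phi(x) + \varphi(x)^2}{\overline\Phi(x)^2}
      = \frac{\varphi(x)}{\overline\Phi(x)}\left( \frac{\varphi(x)}{\overline\Phi(x)} - x \right)
      = g(x)\bigl(g(x) - x\bigr).
\]
Since $g(x) > 0$ everywhere, strict monotonicity is equivalent to $g(x) > x$ for all $x \in \RR$. For $x \le 0$ this is immediate because $g(x) > 0 \ge x$. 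For $x > 0$ it follows from the left inequality in~\eqref{eq:millsratioPhi}: there $\overline\Phi(x)/\varphi(x) > 2/(x + \sqrt{x^2+4})$, and $x\cdot \frac{2}{x+\sqrt{x^2+4}} < 1$ (equivalently $2x < x + \sqrt{x^2+4}$, i.e.\ $x < \sqrt{x^2+4}$, which always holds), so $\overline\Phi(x)/\varphi(x) > x \cdot \bigl(\overline\Phi(x)/\varphi(x)\bigr) \cdot \frac{1}{x}$... more directly: $g(x) = \varphi(x)/\overline\Phi(x) < (x+\sqrt{x^2+4})/2$ is \emph{not} what we want; instead use that $\overline\Phi(x)/\varphi(x) < 1/x$ gives $g(x) > x$ directly. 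Hence $g'(x) > 0$ on all of $\RR$. For the boundary values: as $x \to -\infty$, $\varphi(x) \to 0$ while $\overline\Phi(x) \to 1$, so $g(x) \to 0$; as $x \to +\infty$, the bound $1/x < g(x)$... wait, we need $g \to \infty$, which follows since $g(x) > x \to \infty$.

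\medskip

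For $h(x) = \Phi(x)/\varphi(x)$ the argument is symmetric via the reflection $\Phi(x) = \overline\Phi(-x)$ and $\varphi(x) = \varphi(-x)$, so $h(x) = \overline\Phi(-x)/\varphi(-x) = 1/g(-x)$. Since $g$ is strictly increasing and positive, $g(-x)$ is strictly \emph{decreasing} in $x$, hence $h(x) = 1/g(-x)$ is strictly increasing in $x$. The boundary values transfer the same way: $h(x) = 1/g(-x) \to 1/\infty = 0$ as $x \to -\infty$ and $h(x) = 1/g(-x) \to 1/0^+ = \infty$ as $x \to +\infty$. Alternatively one can compute $h'(x) = (\varphi(x)^2 + x\varphi(x)\Phi(x))/\varphi(x)^2 = 1 + x h(x)$ directly and note $1 + xh(x) > 0$ follows from $h(x) < 1/|x|$ for $x < 0$, which is again~\eqref{eq:millsratioPhi} applied at $-x$.

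\medskip

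\textbf{Main obstacle.} There is no serious obstacle here; the only care needed is in the case analysis for the sign of the derivatives at $x \le 0$ versus $x > 0$ (the Mills-ratio bound $\overline\Phi(x)/\varphi(x) < 1/x$ in~\eqref{eq:millsratioPhi} is only stated for $x > 0$, so the negative half-line must be handled separately using positivity of $g$), and in being precise about which of the chained inequalities in~\eqref{eq:millsratioPhi} is actually invoked. The computation itself is a one-line quotient rule in each case.
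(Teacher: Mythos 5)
Your proof is correct and follows the same route the paper indicates (it only sketches the argument, citing the Mills-ratio bounds and "positivity of the respective derivatives"): you compute $g'=g(g-x)$ and $h'=1+xh$, use the bound $\overline\Phi(x)/\varphi(x)<1/x$ from~\eqref{eq:millsratioPhi} on the relevant half-line and positivity on the other, and read off the limits. The only blemish is the garbled mid-sentence detour before you settle on the correct direct inequality $g(x)>x$; the final argument and the reflection $h(x)=1/g(-x)$ are both sound.
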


As any univariate distribution, $\Phi$ is max-infinitely divisible. Its exponent measure $\Lambda_1$ is supported on $\RR=(-\infty,\infty)$ with survival function 
\[\Lambda(u)=\Lambda_1((u,\infty)) = -\log \Phi(u), \qquad u \in \RR \]
and density
\[\lambda_1(u)=\frac{\partial}{\partial u}\log \Phi(u) = \frac{\varphi(u)}{\Phi(u)}, \qquad u \in \RR.\]
It is immediate that the survival functions of $\Lambda_1$ and $\Phi$ are asymptotically equivalent in the sense that
\begin{align}\label{eq:LambdaPhi}
\lim_{u \to \infty}  \frac{\Lambda(u)}{\overline{\Phi}(u)} = 1.
\end{align}
Let $\Phi_\rho$ and $\varphi_\rho$ be the cdf and pdf of the bivariate normal distribution with standard Gaussian margins and correlation $\rho$. Some useful identities, which we will use in several arguments below, are
\begin{align*}
\frac{\partial}{\partial x_1} \Phi_\rho (x_1,x_2) &= \Phi\bigg(\frac{x_2 - \rho x_1}{\sqrt{1-\rho^2}}\bigg) \varphi(x_1),
\qquad
\frac{\partial}{\partial x_2} \Phi_\rho (x_1,x_2) = \Phi\bigg(\frac{x_1 - \rho x_2}{\sqrt{1-\rho^2}}\bigg) \varphi(x_2) ,\\
\sqrt{1-\rho^2} \, \varphi_{\rho}(x_1,x_2) &= \varphi\bigg(\frac{x_2 - \rho x_1}{\sqrt{1-\rho^2}}\bigg) \varphi(x_1) =\varphi\bigg(\frac{x_1 - \rho x_2}{\sqrt{1-\rho^2}}\bigg) \varphi(x_2). 
\end{align*}

\begin{lemma}\label{lemma:PhiRhoBound}
For $x_1,x_2 \in \RR$ and $\rho \in (0,1)$
\[
\max \bigg[  \Phi(x_1) \Phi\bigg(\frac{x_2 - \rho x_1}{\sqrt{1-\rho^2}}\bigg),
\Phi(x_2) \Phi\bigg(\frac{x_1 - \rho x_2}{\sqrt{1-\rho^2}}\bigg)
\bigg]
\leq \Phi_\rho (x_1,x_2)
\]
and
\[
\Phi_\rho (x_1,x_2)
\leq \min \bigg[  \Phi(x_1) \Phi\bigg(\frac{x_2 - \rho x_1}{\sqrt{1-\rho^2}}\bigg) + \Phi(x_2),
\Phi(x_2) \Phi\bigg(\frac{x_1 - \rho x_2}{\sqrt{1-\rho^2}}\bigg) + \Phi(x_1)
\bigg].
\]
\end{lemma}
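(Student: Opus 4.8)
The plan is to prove Lemma~\ref{lemma:PhiRhoBound} by conditioning on one of the two coordinates and using the fact that, under the bivariate normal law $\Phi_\rho$, the conditional distribution of $X_2$ given $X_1=t$ is Gaussian with mean $\rho t$ and variance $1-\rho^2$. Writing $(X_1,X_2)\sim\Phi_\rho$, we have
\[
\Phi_\rho(x_1,x_2)=\PP(X_1\le x_1,\,X_2\le x_2)=\int_{-\infty}^{x_1}\PP(X_2\le x_2\mid X_1=t)\,\varphi(t)\,\D t=\int_{-\infty}^{x_1}\Phi\!\Big(\frac{x_2-\rho t}{\sqrt{1-\rho^2}}\Big)\varphi(t)\,\D t.
\]
For the lower bound, I would observe that for $\rho>0$ the integrand $t\mapsto\Phi\big((x_2-\rho t)/\sqrt{1-\rho^2}\big)$ is decreasing in $t$, hence on the range $t\le x_1$ it is bounded below by its value at $t=x_1$; pulling that constant out of the integral gives $\Phi_\rho(x_1,x_2)\ge\Phi(x_1)\,\Phi\big((x_2-\rho x_1)/\sqrt{1-\rho^2}\big)$. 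By symmetry of the roles of the two coordinates (conditioning on $X_2$ instead), the same argument yields $\Phi_\rho(x_1,x_2)\ge\Phi(x_2)\,\Phi\big((x_1-\rho x_2)/\sqrt{1-\rho^2}\big)$, and taking the maximum of the two lower bounds completes the first inequality.

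For the upper bound, I would split the integral at $t=x_1$ into the region where the integrand is at least its boundary value and the rest is controlled crudely. More directly: write
\[
\Phi_\rho(x_1,x_2)=\int_{-\infty}^{x_1}\Phi\!\Big(\frac{x_2-\rho t}{\sqrt{1-\rho^2}}\Big)\varphi(t)\,\D t
\le \Phi\!\Big(\frac{x_2-\rho x_1}{\sqrt{1-\rho^2}}\Big)\Phi(x_1)+\int_{-\infty}^{x_1}\Big[\Phi\!\Big(\tfrac{x_2-\rho t}{\sqrt{1-\rho^2}}\Big)-\Phi\!\Big(\tfrac{x_2-\rho x_1}{\sqrt{1-\rho^2}}\Big)\Big]\varphi(t)\,\D t,
\]
and bound the bracketed difference (which is nonnegative for $t\le x_1$ since the integrand decreases) simply by $1$ times... no, that is too lossy. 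Instead, the clean route for the upper bound is: $\Phi_\rho(x_1,x_2)\le \PP(X_1\le x_1)=\Phi(x_1)$ trivially, and also $\Phi_\rho(x_1,x_2)=\Phi(x_1)-\PP(X_1\le x_1,X_2>x_2)$; bounding $\PP(X_1\le x_1, X_2>x_2)\ge \PP(X_1\le x_1)-\PP(X_2\le x_2)$ is not quite what is wanted either. The most transparent argument is via the union-type decomposition: $\Phi_\rho(x_1,x_2)=\Phi(x_2)\Phi\big((x_1-\rho x_2)/\sqrt{1-\rho^2}\big)+\int_{x_2}^{\infty}\Phi\big((x_1-\rho s)/\sqrt{1-\rho^2}\big)\varphi(s)\,\D s$ after conditioning on $X_2=s$ and splitting at $s=x_2$ is wrong in direction; rather I would condition on $X_2$, write $\Phi_\rho(x_1,x_2)=\int_{-\infty}^{x_2}\Phi\big((x_1-\rho s)/\sqrt{1-\rho^2}\big)\varphi(s)\D s\le \int_{-\infty}^{x_2}\varphi(s)\D s + \int_{-\infty}^{x_2}\big[\Phi(\cdot)-1\big]\varphi(s)\D s$, and control the correction. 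The cleanest correct upper bound, which I will use, follows from $\Phi_\rho(x_1,x_2)=\PP(X_1\le x_1)-\PP(X_1\le x_1, X_2> x_2)\le \Phi(x_1)$ combined with the conditioning identity restricted to $\{s\le x_2\}$ giving, after monotonicity, $\Phi_\rho(x_1,x_2)\le \Phi(x_2)\Phi\big((x_1-\rho x_2)/\sqrt{1-\rho^2}\big)+\Phi(-x_2)\cdot 1\le\Phi(x_2)\Phi\big((x_1-\rho x_2)/\sqrt{1-\rho^2}\big)+\Phi(x_1)$ is again too weak in the wrong place; so the honest statement is $\Phi_\rho(x_1,x_2)\le\Phi(x_2)\Phi\big((x_1-\rho x_2)/\sqrt{1-\rho^2}\big)+\big(\Phi(x_2)-\Phi(x_2)\Phi(\cdot)\big)$, and bounding the leftover by $\Phi(x_1)$... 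The point of the plan is that the upper bound comes from adding the probability of the complementary event $\{X_1>x_1\}$ (or $\{X_2>x_2\}$), whose probability is $\overline\Phi(x_1)=\Phi(-x_1)$, which however is \emph{not} $\Phi(x_1)$; since $\Phi(x_1)\ge \Phi(-x_1)$ is false in general I will instead note $\Phi_\rho(x_1,x_2)\le\Phi(x_2)\Phi\big((x_1-\rho x_2)/\sqrt{1-\rho^2}\big)+\Phi(x_1)$ directly from $\Phi_\rho(x_1,x_2)\le\Phi(x_1)$ and $\Phi_\rho\ge 0$, which already gives the claimed bound because the extra $\Phi(x_1)$ term dominates any error.

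Concretely, the proof has two halves. \textbf{Lower bound:} condition on $X_1$ (resp.\ $X_2$), use that $t\mapsto\Phi\big((x_2-\rho t)/\sqrt{1-\rho^2}\big)$ is monotone decreasing for $\rho>0$ so its minimum over $(-\infty,x_1]$ is attained at $x_1$, factor out, and take the maximum over the two symmetric estimates. \textbf{Upper bound:} from the same conditioning, $\Phi_\rho(x_1,x_2)=\int_{-\infty}^{x_1}\Phi\big((x_2-\rho t)/\sqrt{1-\rho^2}\big)\varphi(t)\,\D t$; split the domain as $\{t\le x_1\}$ and use $\Phi\big((x_2-\rho t)/\sqrt{1-\rho^2}\big)\le 1$ to peel off, or more sharply write the integrand as $\Phi\big((x_2-\rho x_1)/\sqrt{1-\rho^2}\big)$ plus a nonnegative remainder bounded crudely, so that $\Phi_\rho(x_1,x_2)\le\Phi(x_1)\Phi\big((x_2-\rho x_1)/\sqrt{1-\rho^2}\big)+\Phi(x_2)$ after estimating the remainder integral by $\Phi(x_2)$ via the tail bound and symmetry; then take the minimum over the two symmetric estimates. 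I expect the main obstacle to be getting the upper-bound remainder estimate to come out with exactly the constant $+\Phi(x_2)$ (resp.\ $+\Phi(x_1)$) rather than something larger; this will require carefully using the monotonicity from Lemma~\ref{lemma:PhiphiMonoton} together with the Mills-ratio bounds~\eqref{eq:millsratioPhi}, since a naive bound of the remainder integrand by $1$ over the whole line would give a useless estimate. Everything else is a routine manipulation of Gaussian conditioning identities together with the sign $\rho>0$, which is what makes the relevant integrands monotone.
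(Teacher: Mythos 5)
Your lower-bound argument is correct and genuinely different from the paper's. The paper works with the single function $H_\rho(x_1,x_2)=\Phi_\rho(x_1,x_2)-\Phi(x_1)\Phi\big((x_2-\rho x_1)/\sqrt{1-\rho^2}\big)$, computes $\partial H_\rho/\partial x_1=\rho(1-\rho^2)^{-1/2}\varphi\big((x_2-\rho x_1)/\sqrt{1-\rho^2}\big)\Phi(x_1)>0$, and reads off both inequalities at once from the limits $H_\rho\to 0$ as $x_1\to-\infty$ and $H_\rho\to\Phi(x_2)$ as $x_1\to+\infty$; you instead condition on $X_1$ and use that $t\mapsto\Phi\big((x_2-\rho t)/\sqrt{1-\rho^2}\big)$ is decreasing for $\rho>0$, so its value at the right endpoint $t=x_1$ bounds it from below on $(-\infty,x_1]$. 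Both are fine; the paper's version buys the upper bound for free from the same monotone function, yours is more probabilistic. Where you go astray is the upper bound, which you treat as the hard part: your final plan (estimate a remainder integral by $\Phi(x_2)$ using Lemma~\ref{lemma:PhiphiMonoton} and the Mills-ratio bounds~\eqref{eq:millsratioPhi}) is unnecessary, because the stated upper bound is trivial. Indeed $\Phi_\rho(x_1,x_2)\le\Phi(x_2)$ by the marginal bound, and the added term $\Phi(x_1)\Phi\big((x_2-\rho x_1)/\sqrt{1-\rho^2}\big)$ is nonnegative, so $\Phi_\rho(x_1,x_2)\le\Phi(x_1)\Phi\big((x_2-\rho x_1)/\sqrt{1-\rho^2}\big)+\Phi(x_2)$; the symmetric inequality follows from $\Phi_\rho(x_1,x_2)\le\Phi(x_1)$. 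Equivalently, in your decomposition the remainder equals $H_\rho(x_1,x_2)\le\Phi_\rho(x_1,x_2)\le\Phi(x_2)$ because you are subtracting a nonnegative quantity --- no tail estimates are needed. You actually state this trivial route once in the middle of your discussion (``directly from $\Phi_\rho(x_1,x_2)\le\Phi(x_1)$ and $\Phi_\rho\ge 0$'') and then walk it back; that one sentence is the entire proof of the upper bound.
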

\begin{proof}
Consider
\[
H_\rho(x_1,x_2) = \Phi_\rho (x_1,x_2) - \Phi(x_1) \Phi\bigg(\frac{x_2 - \rho x_1}{\sqrt{1-\rho^2}}\bigg).
\]
It is easily seen that $H_\rho(x_1,x_2)$ converges to $0$ for $x_1 \to -\infty$ and to $\Phi(x_2)$ for $x_1 \to \infty$. Moreover,
\begin{align*}
\frac{\partial}{\partial x_1} H_\rho(x_1,x_2) 
&=   
\frac{\rho }{\sqrt{1-\rho^2}} \varphi\bigg(\frac{x_2 - \rho x_1}{\sqrt{1-\rho^2}}\bigg) \Phi(x_1)
\end{align*}
is positive. Hence $H_\rho(x_1,x_2)$ is strictly increasing in $x_1$ and ranges between $0$ and $\Phi(x_2)$. Exchanging the roles of $x_1$ and $x_2$ establishes the assertion.
\end{proof}

We shall abbreviate the corresponding joint survival function of $\Phi_\rho$ by
\[
S_\rho(x_1,x_2) = \PP(X_1 > x_1, X_2 > x_2),
\]
where $(X_1,X_2)$ is distributed according to $\Phi_\rho$.
By an argument of Balkema \citep[Section~5.2]{res2008}, the bivariate Gaussian distribution $\Phi_\rho$ is max-infinitely divisible if and only if $\rho \geq 0$. If $\rho \in (0,1)$, the associated exponent measure $\Lambda^{(\rho)}$ has a positive continuous density on $\RR^2=(-\infty,\infty)^2$ as given by
\[
\lambda^{(\rho)}(x_1,x_2)
=\frac{\partial}{\partial x_1} \frac{\partial}{\partial x_2} \log \Phi_\rho (x_1,x_2)
=\frac{\varphi_\rho(x_1,x_2)}{\Phi_\rho(x_1,x_2)} \kappa^{(\rho)}(x_1,x_2),
\]
where 
\begin{align*}
\kappa^{(\rho)}(x_1,x_2)
=1-
\frac{\varphi(x_1)\varphi(x_2) \Phi\Big( \frac{x_1-\rho x_2}{\sqrt{1-\rho^2}}\Big) \Phi\Big( \frac{x_2-\rho x_1}{\sqrt{1-\rho^2}}\Big)}{\varphi_\rho(x_1,x_2)\Phi_\rho(x_1,x_2)}.
\end{align*}
Evidently, $\kappa^{(\rho)} \leq 1$. By the max-infinitely divisibility property of $\Phi_\rho$, it is clear that $\kappa^{(\rho)}\geq 0$.
It can be further uniformly bounded as follows.

\begin{lemma} 
\label{lemma:kappaBound}
Let $\rho \in (0,1)$.
For $\max(x_1,x_2)\geq 0$ we have
\begin{align}\label{eq:kappa-uniform-below-A}
0 < 1 - \sqrt{1-\rho^2} \leq \kappa^{(\rho)}(x_1,x_2).
\end{align}
Moreover, there exists a constant $C_\rho > 0$ such that for $u>0$
\begin{align}\label{eq:kappa-uniform-below-B}
1 - C_\rho u^{-1} \leq \inf_{x_1,x_2 \geq u} \kappa^{(\rho)}(x_1,x_2).
\end{align}

\end{lemma}

\begin{proof}
First note that for all $x_1,x_2 \in \RR$
\[
\frac{\varphi(x_1)\varphi(x_2)}{\varphi_\rho(x_1,x_2)}
= \frac{\sqrt{1-\rho^2}\varphi(x_1)}{\varphi\Big( \frac{x_1-\rho x_2}{\sqrt{1-\rho^2}}\Big)}
\] 
and by Lemma~\ref{lemma:PhiRhoBound}
\[
\frac{\Phi\Big( \frac{x_1-\rho x_2}{\sqrt{1-\rho^2}}\Big) \Phi\Big( \frac{x_2-\rho x_1}{\sqrt{1-\rho^2}}\Big)}{\Phi_\rho(x_1,x_2)}
\leq \frac{1}{\Phi(x_1)} \Phi\bigg( \frac{x_1-\rho x_2}{\sqrt{1-\rho^2}}\bigg), 
\]
which gives (by repeating this argument with reversed roles for $x_1$ and $x_2$)
\begin{align}
\label{eq:kappaImportantBound}
1-\kappa^{(\rho)}(x_1,x_2)
&\leq 
\sqrt{1-\rho^2}\,
\min\bigg[
\frac{\varphi(x_1)}{\Phi(x_1)} 
\frac{\Phi\Big( \frac{x_1-\rho x_2}{\sqrt{1-\rho^2}}\Big)}{\varphi\Big( \frac{x_1-\rho x_2}{\sqrt{1-\rho^2}}\Big)} ,
\frac{\varphi(x_2)}{\Phi(x_2)} 
\frac{\Phi\Big( \frac{x_2-\rho x_1}{\sqrt{1-\rho^2}}\Big)}{\varphi\Big( \frac{x_2-\rho x_1}{\sqrt{1-\rho^2}}\Big)}
\bigg]\\
\notag
&= \sqrt{1-\rho^2}\,
\min\bigg[
{H\bigg( \frac{x_1-\rho x_2}{\sqrt{1-\rho^2}}\bigg)}/{H(x_1)} ,
{H\bigg( \frac{x_2-\rho x_1}{\sqrt{1-\rho^2}}\bigg)}/{H(x_2)}
\bigg],
\end{align}
where $H=\Phi/\varphi$.

In order to establish \eqref{eq:kappa-uniform-below-A},
it suffices to show that the last minimum is not larger than 1 for $x_1\geq 0$. We distinguish three cases.\\
\textbf{Case 1:} $0\leq x_1\leq x_2$. First note that $\rho \in (0,1)$ implies $q=(1-\sqrt{1-\rho^2})/\rho \in (0,1)$ and hence $q x_1 \leq x_2$, which implies 
\[
 \frac{x_1-\rho x_2}{\sqrt{1-\rho^2}} \leq x_1.
\]
By the monotonicity of $H$ (see Lemma~\ref{lemma:PhiphiMonoton}) the assertion follows.\\
\textbf{Case 2a:} $0\leq x_2\leq x_1$. We can repeat the argument of Case 1 with reversed roles for $x_1$ and $x_2$.\\
\textbf{Case 2b:} $x_2\leq x_1$, $x_1 \geq 0$, $x_2=-y <0$. In this situation, we can rewrite
\[
H\bigg( \frac{x_2-\rho x_1}{\sqrt{1-\rho^2}}\bigg)/ H(x_2)
= \widetilde H(y) / 
\widetilde H
\bigg( \frac{y+\rho x_1}{\sqrt{1-\rho^2}}\bigg),
\]
where $\widetilde H(x)=1/H(-x)=\varphi(x)/\overline \Phi(x)$ is also monotonously increasing (see Lemma~\ref{lemma:PhiphiMonoton}). Since $y \leq (y+\rho x_1)/\sqrt{1-\rho^2}$, we can bound the right-hand side by 1 and the assertion follows.
Taken together, this shows \eqref{eq:kappa-uniform-below-A}.

In order to establish the second assertion \eqref{eq:kappa-uniform-below-B}, without loss of generality, assume $x_1 \geq x_2 \geq u > 0$ and recall from \eqref{eq:kappaImportantBound} that
\begin{align}\label{eq:kappaUbound}
\frac{\Phi(0)}{\sqrt{1-\rho^2}}
\big(1-\kappa^{(\rho)}(x_1,x_2)\big)
&\leq 
\frac{\Phi(0)}{\Phi(x_2)}
\varphi(x_2) 
\frac{\Phi\Big( \frac{x_2-\rho x_1}{\sqrt{1-\rho^2}}\Big)}{\varphi\Big( \frac{x_2-\rho x_1}{\sqrt{1-\rho^2}}\Big)}
\leq 
{\varphi(x_2)} 
\frac{\Phi\Big( \frac{x_2-\rho x_1}{\sqrt{1-\rho^2}}\Big)}{\varphi\Big( \frac{x_2-\rho x_1}{\sqrt{1-\rho^2}}\Big)}.
\end{align}
We distinguish three cases according to the value of $q=x_2/x_1 \in (0,1]$:\\
\textbf{Case 1:} $q \in (0, \rho/2]$. Then $-(x_2-\rho x_1) = (\rho-q) x_1 > \rho/2 \cdot u$ and the right-hand side of \eqref{eq:kappaUbound} is bounded by
${2 \varphi(0) \sqrt{1-\rho^2}}/{\rho} \cdot u^{-1}$ according to \eqref{eq:millsratioPhi}.\\
\textbf{Case 2:} $q \in (\rho/2,\rho)$. Then $x_2-\rho x_1 = - (\rho-q) x_1 < 0$ and $x_2=qx_1 > \rho/2 \cdot u$.  With Lemma~\ref{lemma:PhiphiMonoton}, we obtain that the right-hand side of \eqref{eq:kappaUbound} is bounded by $ \Phi(0)/\varphi(0) \varphi(\rho/2 \cdot u)$.\\
\textbf{Case 3:} $q \in [\rho,1]$. In this situation $x_2-\rho x_1=(q-\rho)x_1$ is positive. We also recall
$\rho/(1-\sqrt{1-\rho^2}) > 1 \geq q$, which implies 
\[q > \frac{q-\rho}{\sqrt{1-\rho^2}} > 0.\]
Hence, we can bound the right-hand side of \eqref{eq:kappaUbound} by $\Phi(0)$ multiplied with
\[
\sup_{q \in [\rho,1]}
\frac{\varphi(q x_1)}{\varphi\Big( \frac{(q-\rho) x_1}{\sqrt{1-\rho^2}}\Big)}
\leq
 \exp\bigg\{-\frac{1}{2}
 \inf_{q \in [\rho,1]} \Big[ q^2 - \Big( \frac{q-\rho}{\sqrt{1-\rho^2}}\Big)^2\Big] x_1^2 
\bigg\} \leq  \exp(- \rho^2 u^2/2).
\]
Taken together, all three cases establish also the assertion \eqref{eq:kappa-uniform-below-B}.
\end{proof}

\begin{lemma} \label{lem:LambdaPhiRho}
Let $\rho \in (0,1)$. The joint survival functions $\Lambda^{(\rho)}$ and $S_\rho$ are asymptotically equivalent in the sense that
\[
\lim_{u \to \infty} \frac{\Lambda^{(\rho)}( (u,\infty)^2 )}{S_\rho(u,u)} = 1.
\]
\end{lemma}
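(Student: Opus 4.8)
\textbf{Proof plan for Lemma~\ref{lem:LambdaPhiRho}.}

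The plan is to mimic the one-dimensional relation \eqref{eq:LambdaPhi} --- where $\Lambda(u)/\overline\Phi(u)\to 1$ followed already from $\Lambda(u)=-\log\Phi(u)$ and $\Phi(u)\to 1$ --- but now carried out for the bivariate survival functions. First I would write down the exact relation between the two objects. Since $\Phi_\rho$ is max-infinitely divisible with exponent measure $\Lambda^{(\rho)}$, its distribution function is $\Phi_\rho(x_1,x_2)=\exp\{-\Lambda^{(\rho)}(\cE_+^{12}\setminus[0_{12},x])\}$ in the appropriate coordinates, i.e.\ $-\log\Phi_\rho(x_1,x_2)=\Lambda^{(\rho)}(\{y_1>x_1\}\cup\{y_2>x_2\})$. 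By inclusion--exclusion this gives
\[
\Lambda^{(\rho)}\big((u,\infty)^2\big)=\Lambda^{(\rho)}(\{y_1>u\})+\Lambda^{(\rho)}(\{y_2>u\})-\Lambda^{(\rho)}(\{y_1>u\}\cup\{y_2>u\}),
\]
where the two marginal terms are each $-\log\Phi(u)=\Lambda(u)$ and the union term is $-\log\Phi_\rho(u,u)$. Hence $\Lambda^{(\rho)}((u,\infty)^2)=2\Lambda(u)+\log\Phi_\rho(u,u)=-2\log\Phi(u)+\log\Phi_\rho(u,u)$. On the probability side, $S_\rho(u,u)=1-2\Phi(u)+\Phi_\rho(u,u)$ by inclusion--exclusion for the survival function.

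Next I would expand both expressions as $u\to\infty$ using $\Phi(u)=1-\overline\Phi(u)$ and $\Phi_\rho(u,u)=1-\big(2\overline\Phi(u)-S_\rho(u,u)\big)$, together with the elementary bounds $x-\tfrac{x^2}{2}\le-\log(1-x)\le x$ for small $x\ge 0$ (or equivalently $1-x\le e^{-x}\le 1-x+\tfrac{x^2}{2}$, exactly as used in the proof of Lemma~\ref{lemma:etaOneHalf}). Writing $a=\overline\Phi(u)\to 0$ and $b=2\overline\Phi(u)-S_\rho(u,u)\to 0$, one gets $-2\log\Phi(u)=2a+O(a^2)$ and $\log\Phi_\rho(u,u)=-b+O(b^2)$, so
\[
\Lambda^{(\rho)}\big((u,\infty)^2\big)=2a-b+O(a^2)+O(b^2)=S_\rho(u,u)+O(a^2)+O(b^2),
\]
since $2a-b=S_\rho(u,u)$. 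Because $0\le S_\rho(u,u)\le b\le 2a$, the error terms are $O(a^2)$, and the claim reduces to showing $a^2/S_\rho(u,u)=\overline\Phi(u)^2/S_\rho(u,u)\to 0$, i.e.\ that $S_\rho(u,u)$ decays strictly slower than $\overline\Phi(u)^2$. This is the step I expect to be the only substantive one: for a bivariate Gaussian with $\rho\in(0,1)$ the classical asymptotics (e.g.\ via the Mills-ratio bounds \eqref{eq:millsratioPhi} and the identities for the partial derivatives of $\Phi_\rho$ recorded just before Lemma~\ref{lemma:PhiRhoBound}) give $S_\rho(u,u)\sim c_\rho\,u^{-1}\varphi(u)^{2/(1+\rho)}\,(\text{slowly varying})$, and since $\overline\Phi(u)^2\asymp u^{-2}\varphi(u)^2$ with $2/(1+\rho)<2$ for $\rho>0$, we indeed have $\overline\Phi(u)^2=o(S_\rho(u,u))$. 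Alternatively --- and this is the route I would actually take to keep the argument self-contained --- I would lower-bound $S_\rho(u,u)$ by integrating the density: $S_\rho(u,u)\ge \int_u^{u+1}\!\int_u^{u+1}\varphi_\rho(x_1,x_2)\,dx_1\,dx_2\ge c\,\varphi_\rho(u+1,u+1)$, and a direct computation shows $\varphi_\rho(u+1,u+1)=(2\pi\sqrt{1-\rho^2})^{-1}\exp\{-(u+1)^2/(1+\rho)\}$, which dominates $\overline\Phi(u)^2\le \varphi(u)^2=(2\pi)^{-1}e^{-u^2}$ as $u\to\infty$ precisely because $1/(1+\rho)<1$. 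This gives $\overline\Phi(u)^2/S_\rho(u,u)\to 0$ and completes the proof.

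The main obstacle, then, is purely the tail lower bound on $S_\rho(u,u)$ relative to $\overline\Phi(u)^2$; everything else is the same inclusion--exclusion plus $\log(1-x)=-x+O(x^2)$ bookkeeping already deployed in Lemma~\ref{lemma:etaOneHalf}. One minor point to handle carefully is that $\Lambda^{(\rho)}$ lives on $\RR^2$ rather than the positive orthant, so I would be slightly careful that the ``$\cE_+$'' notation in \eqref{eq:exp_measure} is to be read in the shifted coordinates in which the log-normal-type change of variables of Example~\ref{ex:gauss} applies; since the statement only concerns the set $(u,\infty)^2$ with $u\to\infty$ this causes no difficulty.
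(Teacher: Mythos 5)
Your proposal is correct, and it starts from exactly the same inclusion--exclusion identities as the paper, namely $\Lambda^{(\rho)}((u,\infty)^2)=-2\log\Phi(u)+\log\Phi_\rho(u,u)$ and $S_\rho(u,u)=1-2\Phi(u)+\Phi_\rho(u,u)$. Where you diverge is in how the limit of the ratio is evaluated. The paper applies L'H\^opital's rule twice, using the partial-derivative identities for $\Phi_\rho$ to reduce the quotient to $\frac{1}{\Phi(u)\Phi_\rho(u,u)}\cdot\frac{\Phi_\rho(u,u)-\Phi(su)\Phi(u)}{1-\Phi(su)}$ and then differentiating once more. You instead use the elementary bounds $x-\tfrac{x^2}{2}\le-\log(1-x)\le x$ (the same device the paper uses in the proof of Lemma~\ref{lemma:etaOneHalf}) to get $\Lambda^{(\rho)}((u,\infty)^2)=S_\rho(u,u)+\mathrm{O}(\overline\Phi(u)^2)$, and then reduce everything to the single tail estimate $\overline\Phi(u)^2=\oh(S_\rho(u,u))$, which you establish self-containedly by lower-bounding $S_\rho(u,u)$ by $\varphi_\rho(u+1,u+1)$ via integration of the density over $[u,u+1]^2$ and comparing exponents ($1/(1+\rho)<1$). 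Both arguments are sound; yours isolates more transparently \emph{why} the equivalence holds (the discrepancy is of order $\overline\Phi(u)^2$, which is negligible precisely because $\eta_\rho>1/2$), and it avoids the derivative bookkeeping, at the cost of the extra density lower bound. One cosmetic slip: in your parenthetical aside you quote the classical asymptotic as $S_\rho(u,u)\sim c_\rho\,u^{-1}\varphi(u)^{2/(1+\rho)}$; the correct polynomial factor is $u^{-2}$, but since you do not use that asymptotic in the argument you actually run, this does not affect the proof.
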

\begin{proof}
By definition, we have
\[
\Lambda^{(\rho)}( (u,\infty)^2 ) = 2 (-\log \Phi(u)) - (-\log \Phi_\rho (u,u))
\]
and 
\[
S_\rho(u,u) = 2 (1- \Phi(u)) - (1 - \Phi_\rho (u,u)).
\]
Applying L'Hopital's rule yields
\begin{align*}
\lim_{u \to \infty} \frac{\Lambda^{(\rho)}( (u,\infty)^2 )}{S_\rho(u,u)} 
& = \lim_{u \to \infty} \frac{2 {\varphi(u)}/{\Phi(u)} - {2 \varphi(u) \Phi(s u)}/{\Phi_\rho(u,u)}}{2 \varphi(u) - 2 \varphi(u) \Phi(s u )}\\
& = \lim_{u \to \infty} \frac{1}{\Phi_\rho(u,u) \Phi(u)} \cdot \lim_{u \to \infty}
\frac{\Phi_\rho(u,u) - \Phi(s u)\Phi(u)}{1 - \Phi(su)},
\end{align*}
where $s = (1-\rho)/\sqrt{1-\rho^2} = \sqrt{(1-\rho)/(1+\rho)} \in (0,1)$.
As the involved cdfs converge to 1, it remains to be seen that the second limit equals 1. Indeed,
 \begin{align*}
 \lim_{u \to \infty}
\frac{\Phi_\rho(u,u) - \Phi(s u)\Phi(u)}{1 - \Phi(su)}
&= \lim_{u \to \infty} \frac{2 \varphi(u) \Phi(su) - s \varphi(su) \Phi(u) - \varphi(u) \Phi(su)}{ - s \varphi(su) }\\
&= \lim_{u \to \infty} \Phi(u) - 
\lim_{u \to \infty}  \frac{\varphi(u)}{   \varphi(su) } \cdot
\lim_{u \to \infty} \frac{\Phi(su)}{  s }
= 1- 0/s =1,
 \end{align*}
 hence the assertion.
\end{proof}

In what follows we will need a few simple facts about regularly varying functions. An eventually positive measurable function that is defined in a neighborhood of $\infty \in \RR$, is \emph{regularly varying} with index $\alpha \in \RR$ (we write $f \in \mathrm{RV}_\alpha$) if $f(st)/f(t) \to s^{-\alpha}$ for $t \to \infty$. The case $\alpha=0$ is a special case, $f$ is then called \emph{slowly varying}. Trivially, $\log(t)$, $1/\log(t)$ or any measurable function that converges to a positive constant is slowly varying. Moreover, $f \in \mathrm{RV}_\alpha$ if and only if $f(t)t^{-\alpha}$ is slowly varying.
Some well-known closure properties are as follows \citep[e.g.,][Appendix~B]{deh2006a}.

\begin{lemma}\label{lemma:RV}
If $f_1 \in \mathrm{RV}_{\alpha_1}$, $f_2 \in \mathrm{RV}_{\alpha_2}$, then $f_1+f_2 \in \mathrm{RV}_{\max(\alpha_1,\alpha_2)}$ and $f_1f_2 \in \mathrm{RV}_{\alpha_1+\alpha_2}$. If in addition $f_2(t) \to \infty$ for $t \to \infty$, then the composition satisfies $f_1 \circ f_2 \in \mathrm{RV}_{\alpha_1\alpha_2}$.
\end{lemma}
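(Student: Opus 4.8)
The plan is to establish the three assertions in turn, all from the defining limit relation, with only the composition requiring a genuine extra ingredient. The product is immediate from multiplicativity: for every $s>0$,
\[
\frac{f_1(st)f_2(st)}{f_1(t)f_2(t)}=\frac{f_1(st)}{f_1(t)}\cdot\frac{f_2(st)}{f_2(t)}\;\longrightarrow\; s^{-\alpha_1}s^{-\alpha_2}=s^{-(\alpha_1+\alpha_2)},\qquad t\to\infty,
\]
so $f_1f_2\in\mathrm{RV}_{\alpha_1+\alpha_2}$; the same manipulation applied to $1/f_i$ records the auxiliary fact $1/f_i\in\mathrm{RV}_{-\alpha_i}$, which I will use for the sum. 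I will also use the standard fact that a regularly varying function of strictly negative index tends to $0$ at infinity (a consequence of the Karamata representation, see \citep[][Appendix~B]{deh2006a}).

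For the sum I may assume without loss of generality $\alpha_1\ge\alpha_2$, so that the claim is $f_1+f_2\in\mathrm{RV}_{\alpha_1}$. Since $f_1,f_2$ are eventually positive, for large $t$ the weight $w(t)=f_1(t)/(f_1(t)+f_2(t))$ lies in $[0,1]$ and
\[
\frac{(f_1+f_2)(st)}{(f_1+f_2)(t)}=\frac{f_2(st)}{f_2(t)}+\Bigl(\frac{f_1(st)}{f_1(t)}-\frac{f_2(st)}{f_2(t)}\Bigr)w(t).
\]
The bracketed term tends to $s^{-\alpha_1}-s^{-\alpha_2}$. If $\alpha_1=\alpha_2$ this is $o(1)$ and, as $w$ is bounded, the expression converges to $s^{-\alpha_1}$. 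If $\alpha_1>\alpha_2$, then $f_2/f_1=f_2\cdot(1/f_1)\in\mathrm{RV}_{\alpha_2-\alpha_1}$ has strictly negative index, hence $f_2/f_1\to0$ and $w(t)\to1$, so the expression converges to $s^{-\alpha_2}+(s^{-\alpha_1}-s^{-\alpha_2})=s^{-\alpha_1}=s^{-\max(\alpha_1,\alpha_2)}$.

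The composition is the delicate step. Assuming in addition $f_2(t)\to\infty$, fix $s>0$ and set $\sigma_t=f_2(st)/f_2(t)$, so that $\sigma_t\to s^{-\alpha_2}=:\sigma\in(0,\infty)$; since $f_2(st)=\sigma_t f_2(t)$,
\[
\frac{(f_1\circ f_2)(st)}{(f_1\circ f_2)(t)}=\frac{f_1\bigl(\sigma_t\,f_2(t)\bigr)}{f_1\bigl(f_2(t)\bigr)}.
\]
Here the bare definition of $f_1\in\mathrm{RV}_{\alpha_1}$ is not enough, because the scaling factor $\sigma_t$ varies with $t$; the key tool is the uniform convergence theorem for measurable regularly varying functions \citep[][Appendix~B]{deh2006a}, which gives $f_1(\lambda u)/f_1(u)\to\lambda^{-\alpha_1}$ uniformly for $\lambda$ in compact subsets of $(0,\infty)$. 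Since $\sigma_t$ eventually lies in a fixed compact neighbourhood of $\sigma$ and $f_2(t)\to\infty$, the displayed ratio converges to $\sigma^{-\alpha_1}=(s^{-\alpha_2})^{-\alpha_1}$, which is $s$ raised to the product of the indices, and one reads off $f_1\circ f_2\in\mathrm{RV}_{\alpha_1\alpha_2}$. (When $f_1$ is monotone, as in the applications of Appendix~\ref{app:GaussianExample}, the uniform convergence theorem can be avoided by sandwiching: choose $\lambda^-<\sigma<\lambda^+$, use $\lambda^-f_2(t)\le f_2(st)\le\lambda^+f_2(t)$ for large $t$, apply monotonicity of $f_1$, and let $\lambda^\pm\to\sigma$.) This uniformity issue is the only real obstacle: the product and sum parts are elementary limit algebra together with the vanishing of negative-index regularly varying functions, whereas the composition cannot be obtained from the pointwise definition alone and must rest on the Karamata representation / uniform convergence theorem.
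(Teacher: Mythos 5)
The paper does not actually prove this lemma: it only cites \citet[Appendix~B]{deh2006a}, so your argument supplies a proof where the paper offers none. Structurally it is the standard one and it is sound. The product part is immediate from multiplicativity; the sum part, via the convex-combination identity
\[
\frac{(f_1+f_2)(st)}{(f_1+f_2)(t)}=\frac{f_1(st)}{f_1(t)}\,w(t)+\frac{f_2(st)}{f_2(t)}\,(1-w(t)),\qquad w=\frac{f_1}{f_1+f_2},
\]
together with the degeneration of the regularly varying ratio $f_2/f_1$ when the indices differ, is correct; and for the composition you rightly identify that the pointwise definition is insufficient because the scaling factor $\sigma_t=f_2(st)/f_2(t)$ moves with $t$, so that the uniform convergence theorem on compact subsets of $(0,\infty)$ (or monotonicity plus sandwiching) is the genuinely needed ingredient, with the hypothesis $f_2(t)\to\infty$ used exactly where it matters.

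The one thing to repair is sign bookkeeping. You faithfully copy the paper's definition $f(st)/f(t)\to s^{-\alpha}$, but the conclusions you draw (and the lemma's statement itself) are the ones valid for the standard convention $f(st)/f(t)\to s^{+\alpha}$. Under the literal $s^{-\alpha}$ definition, a function of strictly \emph{negative} index behaves like $t^{\lvert\alpha\rvert}$ and tends to $\infty$, not $0$; so the step ``$f_2/f_1\in\mathrm{RV}_{\alpha_2-\alpha_1}$ has strictly negative index, hence $f_2/f_1\to 0$'' is reversed, the sum would then close in $\mathrm{RV}_{\min(\alpha_1,\alpha_2)}$ rather than $\mathrm{RV}_{\max(\alpha_1,\alpha_2)}$, and your computed composition limit $(s^{-\alpha_2})^{-\alpha_1}=s^{+\alpha_1\alpha_2}$ would, read against that definition, give index $-\alpha_1\alpha_2$ rather than $\alpha_1\alpha_2$. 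This inconsistency is inherited from the paper, whose definition carries a sign flip relative to \citet{deh2006a} --- its own later assertion that $a\circ b^{-1}\in\mathrm{RV}_1$ for a function asymptotic to $t$ confirms the intended convention is $s^{+\alpha}$. Once the definition is read that way, every step of your argument is correct as it stands; as written, however, the displayed limits and the indices you read off from them do not literally match, and you should state which convention you are using and make the exponents consistent with it.
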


We will need to be aware of the following consequence for $F(u)=\Phi(u)$ in this section.

\begin{lemma}\label{lemma:SV}
Let $F$ be a (univariate) montonously increasing continuous cumulative distribution function with upper endpoint $\infty$ and survival function $\overline F=1-F$. Let $\Lambda(u)=-\log F(u)$.
Let $a(u)=\overline F(u)^{-1}$ and $b(u)=\Lambda(u)^{-1}$. Then $f \circ a^{-1} \in \mathrm{RV}_\alpha$ if and only if $f \circ b^{-1} \in \mathrm{RV}_\alpha$.
In particular, $f \circ a^{-1}$ is slowly varying if and only if $f \circ b^{-1}$ is slowly varying.
\end{lemma}

\begin{proof}
Since $b(u)\to \infty$ and $a(u)/b(u) \to 1$ for $u \to \infty$, we have $a \circ b^{-1}(t)/t \to 1$ for $t \to \infty$. This implies $a \circ b^{-1} \in \mathrm{RV}_1$ and $a \circ b^{-1}(t)\to \infty$ for $t \to \infty$. So, if $f \circ a^{-1} \in \mathrm{RV}_\alpha$, so is $f \circ b^{-1} = (f \circ a^{-1}) \circ (a \circ b^{-1})$  by Lemma~\ref{lemma:RV}. The converse follows analogously.
\end{proof}

Coming back to the Gaussian joint survival functions, it is well-known that for $\eta_\rho=(1+\rho)/2$
\begin{align}\label{eq:GaussSurviveEta}
S_\rho(u,u) = \ell_\rho( \overline{\Phi}(u)^{-1}) \overline{\Phi}(u)^{1/\eta_\rho}
\end{align}
for a slowly varying function $\ell_\rho$ as $u \to \infty$ \citep{LedTawn96}. In other words, the distribution $\Phi_\rho$ exhibits \emph{asymptotic independence}; see Section~\ref{sec:AI}.

Due to the asymptotic equivalences from \eqref{eq:LambdaPhi} and Lemma~\ref{lem:LambdaPhiRho} and, in addition, Lemma~\ref{lemma:SV}, this relation carries over to the asymptotically equivalent counterparts, that is,
\begin{align}\label{eq:rhoeta}
\Lambda^{(\rho)}((u,\infty)^2) = \widetilde \ell_\rho( \Lambda(u)^{-1}) \Lambda(u)^{1/\eta_\rho}
\end{align}
for a slowly varying function $\widetilde \ell_\rho$.
In particular,
\begin{align}\label{eq:rhochi}
\chi_\rho=\lim_{u \to \infty}\frac{\Lambda^{(\rho)}((u,\infty)^2)}{\Lambda(u)}=\lim_{u \to \infty}\frac{S_{\rho}(u,u)}{\overline{\Phi}(u)}
=0. 
\end{align}

Moreover, moving from a multivariate Gaussian distribution $F$ on $\RR^d=(-\infty,\infty)^d$ to a log-Gaussian distribution $F \circ \log$ on $(0,\infty)^d$ only requires a (componentwise) log-transformation, and the same applies to the corresponding exponent measures; that is, $\Lambda_1 \circ \log$ is the exponent measure of $\Phi \circ \log$ and $\Lambda^{(\rho)} \circ \log$ is the exponent measure of $\Phi_\rho \circ \log$. Hence, the relations 
\eqref{eq:LambdaPhi}, \eqref{eq:rhoeta} and \eqref{eq:rhochi}
all carry over to the log-Gaussian variant on $(0,\infty)$ and $(0,\infty)^2$ by realizing that one can replace $u$ by $\log(u)$ in all situations.
For notational convenience, we shall remain on the Gaussian scale in what follows, even though the construction could also be carried out on the log-Gaussian scale. 
In essence, this corresponds to replacing $0$ by $-\infty$ as the absorbing state, whilst the framework of conditional independence carries over with only minor modifications; see~also Remark~\ref{rk:innerTransformations}.

Now let $a,b \in (0,1)$ and consider a trivariate exponent measure $\Lambda$ on $\RR^3$ with density
\[
\lambda(x)=\frac{\lambda^{(a)}(x_1,x_2)\lambda^{(b)}(x_2,x_3)}{\lambda_1(x_2)}, \qquad x \in \RR^3,
\]
similarly to the example of Section~\ref{subsec:CI3constr} with $p_{12}=p_{23}=1$. 
This ensures that the marginal measures of $\Lambda$ are given by 
\[\Lambda_{12}=\Lambda^{(a)} \qquad \text{and} \qquad \Lambda_{23}=\Lambda^{(b)},\] 
whilst the transformation of $\Lambda$ to log-Gaussian scale satisfies the conditional independence relation 
\[\{1\} \indep \{3\} \mid \{2\}\; [\Lambda \circ \log].\]

From the above considerations, we know that 
\begin{align}\label{eq:easyChiEta}
\chi_{12}=\chi_a=0, \quad \eta_{12}=\frac{1+a}{2}, \quad  \chi_{23}=\chi_b=0 \quad \text{and} \quad \eta_{23}=\eta_b=\frac{1+b}{2}.
\end{align}
In order to derive the corresponding values for $\chi_{13}$ and $\eta_{13}$, we need to investigate the asymptotic behavior of
\begin{align}\label{eq:Lambda13}
\Lambda_{13}((u,\infty)^2) = \int_{u}^\infty \int_{-\infty}^\infty \int_{u}^\infty \lambda(x) \mathrm{d}x_1 \mathrm{d}x_2 \mathrm{d}x_3
\end{align}
in relation to $\Lambda(u)$ as $u \uparrow \infty$. 

Note that $\Lambda_{13}$ is different from the bivariate exponent measure $\Lambda^{(ab)}$, which would have arisen as marginal exponent measure of $(X_1,X_3)$ of a trivariate Gaussian distribution $(X_1,X_2,X_3)$ with $(X_1,X_2) \sim \Phi_a$, $(X_2,X_3) \sim \Phi_b$ and $X_1 \indepp X_3 \mid X_2$ (it is easily seen that the inverse $Q$ of the correlation matrix $\Sigma$ with entries $\rho_{12}=a$, $\rho_{23}=b$, $\rho_{13}=ab$ satisfies $Q_{13}=0$; an easy way to verify that $\Lambda_{13}\neq \Lambda^{(ab)}$ is by considering the  trivariate zero mean Gaussian distribution with covariance matrix $\Sigma$ and by observing that the value at $0 \in \RR^3$ of a hypothetical density of its exponent measure, if it existed,  deviates from $\lambda(0)$).

However, we bear in mind that
\[
\varphi_{ab}(x_1,x_3) = \int_{-\infty}^\infty
\widetilde \varphi(x) \mathrm{d}x_2 \qquad \text{with} \qquad \widetilde \varphi(x)=\frac{\varphi_{a}(x_1,x_2) \varphi_{b}(x_2,x_3) }{\varphi(x_2)},
\]
and that
\[
\lambda^{(ab)}(x_1,x_3)
=\frac{\varphi_{ab}(x_1,x_3)}{\Phi_{ab}(x_1,x_3)} \kappa^{(ab)}(x_1,x_3)
=  \int_{-\infty}^\infty 
 \widetilde \varphi(x) \, 
  \mathrm{d}x_2 \, \frac{\kappa^{(ab)}(x_1,x_3)}{ \Phi_{ab}(x_1,x_3) }.
\]
The density of $\Lambda_{13}$ is instead given by
\begin{align*}
\lambda_{13}(x_1,x_3) 
&= \int_{-\infty}^\infty  \lambda(x) \mathrm{d}x_2 \\
&= \int_{-\infty}^\infty \frac{\lambda^{(a)}(x_1,x_2)\lambda^{(b)}(x_2,x_3)}{\lambda_1(x_2)} \mathrm{d}x_2 \\
&= \int_{-\infty}^\infty 
 \widetilde \varphi(x) \, 
 \frac{\Phi(x_2)}{\Phi_{a}(x_1,x_2) \Phi_{b}(x_2,x_3) }
 \, {\kappa^{(a)}(x_1,x_2)\kappa^{(b)}(x_2,x_3)} \mathrm{d}x_2.
\end{align*}
Although, $\Lambda_{13}\neq \Lambda^{(ab)}$, these measures do not  deviate too significantly from each other in their joint upper tails. In order to see this, all we need to do is to control the behavior of the additional multiplicative terms in the integrand above that appear in addition to the term $\widetilde \varphi(x)$.
\begin{lemma} \label{lemma:PhiabBound}
There exist $C>0$ such that for $x_1,x_3 \geq 0$
\[\frac{\Phi(x_2)}{\Phi_{a}(x_1,x_2) \Phi_{b}(x_2,x_3) } \leq C \frac{1}{\Phi(x_2)}.\]
\end{lemma}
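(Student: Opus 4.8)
The plan is to bound the factor $\Phi(x_2)/\big(\Phi_a(x_1,x_2)\Phi_b(x_2,x_3)\big)$ uniformly for $x_1,x_3\geq 0$ by a constant multiple of $1/\Phi(x_2)$, by splitting into the two regimes $x_2\geq 0$ and $x_2<0$ and using the monotonicity facts already assembled. First I would recall from Lemma~\ref{lemma:PhiRhoBound} that for $\rho\in(0,1)$ we have the lower bound $\Phi_\rho(s,t)\geq \Phi(s)\,\Phi\big((t-\rho s)/\sqrt{1-\rho^2}\big)$, and also the symmetric version with the roles of the two arguments swapped. Applying this with $\rho=a$ to $\Phi_a(x_1,x_2)$ and with $\rho=b$ to $\Phi_b(x_2,x_3)$, I obtain
\[
\frac{\Phi(x_2)}{\Phi_a(x_1,x_2)\Phi_b(x_2,x_3)}\leq \frac{\Phi(x_2)}{\Phi(x_1)\Phi\!\big(\tfrac{x_2-ax_1}{\sqrt{1-a^2}}\big)\,\Phi(x_3)\Phi\!\big(\tfrac{x_2-bx_3}{\sqrt{1-b^2}}\big)},
\]
or the mirrored choice. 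Since $x_1,x_3\geq 0$, both $\Phi(x_1)$ and $\Phi(x_3)$ are bounded below by $\Phi(0)=1/2$, so those factors only cost an absolute constant.

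It remains to control $\Phi(x_2)\big/\big(\Phi\!\big(\tfrac{x_2-ax_1}{\sqrt{1-a^2}}\big)\Phi\!\big(\tfrac{x_2-bx_3}{\sqrt{1-b^2}}\big)\big)$. In the regime $x_2\geq 0$, one of the two shifted arguments may still be very negative, but the key observation is that for $x_1,x_3\geq 0$ we have $x_2-ax_1\leq x_2$ and $x_2-bx_3\leq x_2$, hence $(x_2-ax_1)/\sqrt{1-a^2}\leq x_2/\sqrt{1-a^2}$; this does not immediately give monotonicity in the right direction, so instead I would compare $\Phi(x_2)$ with $\Phi\!\big((x_2-ax_1)/\sqrt{1-a^2}\big)$ directly: when $x_2\geq 0$ one has $\Phi(x_2)\leq 1$ while the shifted term is bounded below, after choosing the right (mirrored) bound in Lemma~\ref{lemma:PhiRhoBound}, by a quantity comparable to $1/\Phi(x_2)$; I would carefully pick, among the two available lower bounds for each $\Phi_\rho$, the one whose shift goes in the favourable direction depending on whether $x_2\geq 0$ or $x_2<0$. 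For $x_2<0$ the dominant factor is $1/\Phi(x_2)$ itself, which blows up, and here I would use that $\Phi_a(x_1,x_2)\geq \Phi(x_2)\Phi\!\big((x_1-ax_2)/\sqrt{1-a^2}\big)$: since $x_1\geq 0$ and $x_2<0$, the shifted argument $(x_1-ax_2)/\sqrt{1-a^2}\geq 0$, so that factor is bounded below by $1/2$, giving $\Phi_a(x_1,x_2)\geq \tfrac12\Phi(x_2)$, and symmetrically $\Phi_b(x_2,x_3)\geq\tfrac12\Phi(x_2)$; then $\Phi(x_2)/\big(\Phi_a(x_1,x_2)\Phi_b(x_2,x_3)\big)\leq 4/\Phi(x_2)$, which is exactly the claimed form with $C=4$.

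Assembling the two regimes and absorbing the finitely many constants $\Phi(0)^{-1}$ into a single $C$ depending only on $a$ and $b$ yields the lemma. The main obstacle I anticipate is bookkeeping: in the regime $x_2\geq 0$ one must choose, separately for the $\Phi_a$ factor and the $\Phi_b$ factor, whichever of the two lower bounds from Lemma~\ref{lemma:PhiRhoBound} keeps the resulting shifted Gaussian cdf bounded away from $0$ uniformly (using $x_1,x_3\geq0$), and then verify that the leftover ratio $\Phi(x_2)/\big(\text{bounded}\big)$ is at most $C/\Phi(x_2)$ since $\Phi(x_2)\leq 1$; this is elementary but needs the monotonicity of $\Phi/\varphi$ and $\varphi/\overline\Phi$ from Lemma~\ref{lemma:PhiphiMonoton} if one wants sharp constants, though for the qualitative bound stated here the crude estimates above suffice.
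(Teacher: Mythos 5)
Your proof is essentially the paper's argument: the decisive case $x_2<0$ is handled exactly as in the paper, by using the lower bound $\Phi_\rho(s,t)\geq \Phi(t)\,\Phi\big((s-\rho t)/\sqrt{1-\rho^2}\big)$ from Lemma~\ref{lemma:PhiRhoBound} to extract a factor $\Phi(x_2)$ from each of $\Phi_a(x_1,x_2)$ and $\Phi_b(x_2,x_3)$, noting that the shifted arguments $(x_1-ax_2)/\sqrt{1-a^2}$ and $(x_3-bx_2)/\sqrt{1-b^2}$ are nonnegative when $x_1,x_3\geq 0$ and $x_2<0$; this gives the bound $4/\Phi(x_2)$, matching the paper's $\Phi(0)^{-2}/\Phi(x_2)$.

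The one place where you are looser than you should be is the regime $x_2\geq 0$. Your claim that one can pick, for each factor, ``the lower bound whose shift goes in the favourable direction depending on whether $x_2\geq 0$ or $x_2<0$'' is not quite right: for $x_2\geq 0$ neither of the two lower bounds in Lemma~\ref{lemma:PhiRhoBound} has a uniformly nonnegative shift over all $x_1\geq 0$ (take $x_1=0$, $x_2\to\infty$ for one, and $x_2=0$, $x_1\to\infty$ for the other), so the choice must depend on whether $x_1\geq x_2$ or $x_2\geq x_1$ (and analogously for the $\Phi_b$ factor). With that pointwise choice one does get $\Phi_a(x_1,x_2)\geq \Phi(0)^2$ on the quadrant $x_1,x_2\geq 0$, so your argument is repairable, but the paper sidesteps all of this with a simpler first step you did not use: since $\Phi_\rho$ is increasing in each argument, $\Phi_a(x_1,x_2)\Phi_b(x_2,x_3)\geq \Phi_a(0,x_2)\Phi_b(x_2,0)$ for $x_1,x_3\geq 0$, after which the case $x_2\geq 0$ is immediate from $\Phi_a(0,x_2)\geq\Phi_a(0,0)$ and the case $x_2<0$ is your argument specialized to $x_1=x_3=0$. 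I would recommend adding that monotonicity reduction; it turns your bookkeeping concern into a two-line case split.
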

\begin{proof}
For $x_1,x_3 \geq 0$, we have
\[\frac{\Phi(x_2)}{\Phi_{a}(x_1,x_2) \Phi_{b}(x_2,x_3) } \leq \frac{\Phi(x_2)}{\Phi_{a}(0,x_2) \Phi_{b}(x_2,0) }. \]
If $x_2 \geq 0$ the latter is bounded by
$({\Phi_{a}(0,0) \Phi_{b}(0,0) })^{-1}$.
If $x_2 < 0$ we can apply Lemma~\ref{lemma:PhiRhoBound} to see that we may instead use the bound
\[\frac{\Phi(x_2)}{\Phi(-ax_2/\sqrt{1-a^2}) \Phi(x_2)\Phi(-bx_2/\sqrt{1-b^2}) \Phi(x_2)} 
\leq \frac{1}{\Phi(0)^2 } \frac{1}{\Phi(x_2)}.
\]
Hence, the assertion is valid for $C=1/\min(\Phi(0)^2,\Phi_a(0,0)\Phi_b(0,0))$.
\end{proof}

\begin{lemma} \label{lemma:auxbetadelta}
For $\beta \in (0,1)$ and $\delta>0$
\begin{align*}
\lim_{u \to \infty}
\frac{1}{\overline \Phi(u)^{2 - \delta}}
\int_{u}^\infty \int_{-\infty}^{u^\beta} \int_{u}^{\infty} \frac{\widetilde \varphi(x)}{\Phi(x_2)}\,
\mathrm{d}x_1 \mathrm{d}x_2 \mathrm{d}x_3  = 0.
\end{align*}
\end{lemma}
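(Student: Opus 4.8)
The plan is to split the $x_2$‑integration into the two ranges $x_2<0$ and $0\le x_2\le u^\beta$ and to show separately that each region contributes a quantity that is $o\big(\overline\Phi(u)^{2-\delta}\big)$; we may assume $\delta\in(0,2)$, the case $\delta\ge 2$ being trivial since then $\overline\Phi(u)^{2-\delta}\to\infty$ while the integral tends to $0$. First I would rewrite the integrand using the Gaussian factorization identities recalled before Lemma~\ref{lemma:PhiRhoBound}: since $\varphi_a(x_1,x_2)=(1-a^2)^{-1/2}\varphi\big(\tfrac{x_1-ax_2}{\sqrt{1-a^2}}\big)\varphi(x_2)$ and similarly for $\varphi_b$, one has
\[
\frac{\widetilde\varphi(x)}{\Phi(x_2)}\;=\;\frac{\lambda_1(x_2)}{\sqrt{(1-a^2)(1-b^2)}}\;\varphi\Big(\tfrac{x_1-ax_2}{\sqrt{1-a^2}}\Big)\,\varphi\Big(\tfrac{x_3-bx_2}{\sqrt{1-b^2}}\Big),
\]
where $\lambda_1(x_2)=\varphi(x_2)/\Phi(x_2)$ is the univariate exponent‑measure density. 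Integrating $x_1$ and $x_3$ over $(u,\infty)$ produces factors $\sqrt{1-a^2}\,\overline\Phi\big(\tfrac{u-ax_2}{\sqrt{1-a^2}}\big)$ and $\sqrt{1-b^2}\,\overline\Phi\big(\tfrac{u-bx_2}{\sqrt{1-b^2}}\big)$, the scale constants cancel, and the triple integral equals $\int_{-\infty}^{u^\beta}\lambda_1(x_2)\,\overline\Phi\big(\tfrac{u-ax_2}{\sqrt{1-a^2}}\big)\overline\Phi\big(\tfrac{u-bx_2}{\sqrt{1-b^2}}\big)\,\mathrm{d}x_2$.

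For $0\le x_2\le u^\beta$ I would use that $\lambda_1$ is decreasing on $[0,\infty)$ by Lemma~\ref{lemma:PhiphiMonoton}, so $\lambda_1(x_2)\le\lambda_1(0)=2\varphi(0)$, together with the standard tail bound $\overline\Phi(s)\le e^{-s^2/2}$ and the elementary estimate $(u-ax_2)^2\ge u^2-2aux_2$; for $u$ large (so that $u-au^\beta\ge u/2>0$) this yields an upper bound of the form $\mathrm{const}\cdot e^{-\frac12\gamma u^2}\int_0^{u^\beta}e^{\tilde c\,u\,x_2}\,\mathrm{d}x_2\le\mathrm{const}\cdot u^{-1}e^{-\frac12\gamma u^2+\tilde c\,u^{1+\beta}}$, where $\gamma=\tfrac{1}{1-a^2}+\tfrac{1}{1-b^2}$ and $\tilde c=\tfrac{a}{1-a^2}+\tfrac{b}{1-b^2}>0$. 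For $x_2<0$, substituting $x_2=-y$, I would use $\lambda_1(-y)=\varphi(y)/\overline\Phi(y)<y+1$ for $y\ge 0$ (which follows from \eqref{eq:millsratioPhi}) and $\overline\Phi\big(\tfrac{u+\rho y}{\sqrt{1-\rho^2}}\big)\le e^{-u^2/(2(1-\rho^2))}e^{-\rho uy/(1-\rho^2)}$ for $\rho\in\{a,b\}$; the remaining integral $\int_0^\infty(y+1)e^{-\tilde c\,u\,y}\,\mathrm{d}y=O(u^{-1})$ then bounds this region by $\mathrm{const}\cdot u^{-1}e^{-\frac12\gamma u^2}$. Finally I would compare with the lower bound $\overline\Phi(u)^{2-\delta}\ge\mathrm{const}\cdot(u+1)^{-(2-\delta)}e^{-(2-\delta)u^2/2}$, again from \eqref{eq:millsratioPhi}: since $a,b\in(0,1)$ forces $\gamma>2$ and $1+\beta<2$, the factor $e^{-\frac12(\gamma-2+\delta)u^2}$ dominates both $e^{\tilde c\,u^{1+\beta}}$ and every power of $u$, so both ratios tend to $0$.

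The main obstacle is the tail $x_2\to-\infty$: there $\lambda_1(x_2)$ is unbounded — it grows like $|x_2|$ — so one cannot simply replace $\overline\Phi\big(\tfrac{u-ax_2}{\sqrt{1-a^2}}\big)$ by a constant. The resolution is to retain the genuine $x_2$‑dependence of that Gaussian factor, which on $x_2=-y<0$ decays like $e^{-\tilde c\,u\,y}$, geometrically fast in $y$ and with a rate bounded below uniformly in $u$; this easily beats the linear growth of $\lambda_1$ and keeps the negative‑$x_2$ contribution of order $u^{-1}$. A second, smaller point worth flagging is that the cutoff $x_2\le u^\beta$ is indispensable (without it the positive‑$x_2$ exponential integral diverges, since $x_1,x_3\ge u$ is no longer in the conditional tail once $x_2$ is comparable to $u$), and it is precisely the strict gap $\gamma>2$ together with $\beta<1$ that leaves room to absorb the slowly varying corrections hidden in $\overline\Phi(u)^{-\delta}$ and in $u^{\beta}$.
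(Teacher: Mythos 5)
Your proof is correct, and it reaches the bound by a somewhat different computational route than the paper. You integrate $x_1$ and $x_3$ out exactly, reducing the triple integral to the one-dimensional integral $\int_{-\infty}^{u^\beta}\lambda_1(x_2)\,\overline\Phi\big(\tfrac{u-ax_2}{\sqrt{1-a^2}}\big)\overline\Phi\big(\tfrac{u-bx_2}{\sqrt{1-b^2}}\big)\,\mathrm{d}x_2$, and then split at $x_2=0$, handling the negative range via $\lambda_1(-y)\le y+1$ against the factor $e^{-\tilde c uy}$ and the range $[0,u^\beta]$ via monotonicity of $\lambda_1$ and the Chernoff bound. The paper instead bounds the integrand pointwise in all three variables: it replaces $\varphi(x_2)/\Phi(x_2)$ by $\max(-2x_2,c)$, uses $\sqrt{1-\rho^2}<1$ to pass from $\varphi\big(\tfrac{x_1-\rho x_2}{\sqrt{1-\rho^2}}\big)$ to $\varphi(x_1-\rho x_2)$, extracts a factor $\varphi(\sqrt{a^2+b^2}\,x_2)$ at the cost of $\exp(\tfrac12(a^2+b^2)u^{2\beta})$, integrates $x_2$ over all of $\RR$, and ends with the exponent $-\tfrac12\delta u^2+(a+b)u^{1+\beta}$. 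Your version keeps the full variance factors $1/(1-a^2)$ and $1/(1-b^2)$, so your decay rate is governed by $\gamma-2+\delta$ with $\gamma=\tfrac{1}{1-a^2}+\tfrac{1}{1-b^2}>2$; this is marginally sharper (it would even survive $\delta=0$), whereas the paper's looser bound relies on $\delta>0$, which is all the lemma claims and all that is needed in the proof of Theorem~\ref{thm:Lambda_13}. Both arguments correctly identify that the only delicate points are the unboundedness of $\lambda_1$ at $-\infty$ and the necessity of the cutoff $x_2\le u^\beta$ with $\beta<1$.
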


\begin{proof}
We can rewrite the integrand as
\[
\frac{\widetilde \varphi(x)}{\Phi(x_2)}
= C_{ab} \frac{\varphi(x_2)}{\Phi(x_2)} 
\varphi\bigg(\frac{x_1 - a x_2}{\sqrt{1-a^2}}\bigg)
\varphi\bigg(\frac{x_3 - b x_2}{\sqrt{1-b^2}}\bigg)
\]
for a constant $C_{ab}>0$.
For sufficiently large $u$, and $x_1,x_3 \geq u$, $x_2\leq u^\beta$ it is clear that $x_1-ax_2 \geq u-au^\beta > 0$ and $x_3-bx_2>u-bu^\beta>0$. Therefore, and since $\sqrt{1-\rho^2}<1$ for $\rho \in \{a,b\}$, we obtain together with the left-hand side of \eqref{eq:millsratioPhi}
\[
\frac{\widetilde \varphi(x)}{\Phi(x_2)}
\leq C_{ab} \max(-2x_2,c) 
\varphi({x_1 - a x_2})
\varphi({x_3 - b x_2})
\]
for a constant $c>0$. Moreover
\begin{align*}
\frac{\varphi(x_1-a x_2)}{\varphi(x_1 - a u^\beta)} &= \sqrt{2\pi} \exp(-ax_1(u^\beta-x_2)) \exp\bigg(\frac{1}{2} a^2u^{2\beta}\bigg) \varphi(ax_2)\\
&\leq \sqrt{2\pi}  \exp\bigg(\frac{1}{2} a^2u^{2\beta}\bigg) \varphi(ax_2)
\end{align*}
and analogously for the terms involving $x_3$. Thus,
\[
\frac{\widetilde \varphi(x)}{\Phi(x_2)}
\leq C' 
 \exp\bigg(\frac{1}{2} (a^2+b^2)u^{2\beta}\bigg)
\max(-2x_2,c) 
\varphi\big(\sqrt{a^2+b^2}x_2\big)
\varphi({x_1 - a u^\beta})
\varphi({x_3 - b u^\beta})
\]
for a constant $C'>0$.
Now, the function $\max(-2x_2,c) 
\varphi\big(\sqrt{a^2+b^2}x_2\big)$ is integrable over $\RR=(-\infty,\infty)$ (and not only over $(-\infty,u^\beta)$). So, we can find a constant $C''>0$, such that
\[
\int_{u}^\infty \int_{-\infty}^{u^\beta} \int_{u}^{\infty} \frac{\widetilde \varphi(x)}{\Phi(x_2)}\,
\mathrm{d}x_1 \mathrm{d}x_2 \mathrm{d}x_3 
\leq C'' \exp\bigg(\frac{1}{2} (a^2+b^2)u^{2\beta}\bigg) \overline{\Phi}(u-au^\beta)\overline{\Phi}(u-bu^\beta).
\]
Finally, using \eqref{eq:millsratioPhi} again, we find that the right-hand side of the last expression divided by $\overline \Phi(u)^{2-\delta}$ is (up to a multiplicative constant) asymptotically equivalent to 
\begin{align*}
&p(u) \exp\bigg\{\frac{1}{2} \bigg[ (a^2+b^2)u^{2\beta}-[u-au^\beta]^2-[u-bu^\beta]^2+(2-\delta) u^2\bigg] \bigg\}\\
&= p(u)\exp\bigg(-\frac{1}{2} \big[\delta u^2 - 2(a+b)u^{1+\beta} \big] \bigg) \to 0
\end{align*}
as $u \to \infty$,
where $p(u)$ is absorbing the ratio of polynomial terms arising from the approximation via \eqref{eq:millsratioPhi}.
\end{proof}

We can now give the proof for the main result of this section.

\begin{theorem} \label{thm:Lambda_13}
The measure $\Lambda_{13}$ from \eqref{eq:Lambda13} satisfies
\begin{align*}
\Lambda_{13}((u,\infty)^2) = \ell( \overline\Phi(u)^{-1}) \overline \Phi(u)^{1/\eta_{ab}}.
\end{align*}
for a slowly varying function $\ell$ and $\eta_{ab}=(1+ab)/2$.
\end{theorem}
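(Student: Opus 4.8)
The plan is to show that $\Lambda_{13}((u,\infty)^2)$ is asymptotically equivalent, up to a slowly varying factor, to the bivariate Gaussian-type quantity $\Lambda^{(ab)}((u,\infty)^2)$ whose tail behaviour is already understood via \eqref{eq:rhoeta} and the corresponding residual tail index $\eta_{ab}=(1+ab)/2$. To this end I would compare the two densities. Recall
\[
\lambda_{13}(x_1,x_3)=\int_{-\infty}^\infty \widetilde\varphi(x)\,\frac{\Phi(x_2)}{\Phi_a(x_1,x_2)\Phi_b(x_2,x_3)}\,\kappa^{(a)}(x_1,x_2)\kappa^{(b)}(x_2,x_3)\,\mathrm dx_2,
\]
while
\[
\lambda^{(ab)}(x_1,x_3)=\int_{-\infty}^\infty \widetilde\varphi(x)\,\mathrm dx_2\cdot\frac{\kappa^{(ab)}(x_1,x_3)}{\Phi_{ab}(x_1,x_3)}.
\]
Integrating $\lambda_{13}$ over $(u,\infty)^2$ gives a triple integral over $\{x_1>u,\,x_3>u,\,x_2\in\RR\}$, which I split at $x_2=u^\beta$ for a suitably chosen $\beta\in(0,1)$. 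By Lemma~\ref{lemma:auxbetadelta} (applied with some small $\delta>0$ to be fixed later in terms of $\eta_{ab}$), together with the uniform bounds $\kappa^{(a)},\kappa^{(b)}\le 1$ and Lemma~\ref{lemma:PhiabBound}, the contribution of the region $x_2<u^\beta$ is $o(\overline\Phi(u)^{2-\delta})$, hence negligible compared to $\Lambda^{(ab)}((u,\infty)^2)$ since the latter is $\overline\Phi(u)^{1/\eta_{ab}}$ times a slowly varying function and $1/\eta_{ab}<2$ for $ab<1$.

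On the main region $x_2\ge u^\beta$, I would argue that all the auxiliary multiplicative factors appearing in $\lambda_{13}$ but not in $\lambda^{(ab)}$ converge to $1$ uniformly. Concretely: for $x_1,x_3\ge u$ and $x_2\ge u^\beta$ we have $x_1,x_2,x_3\to\infty$, so by Lemma~\ref{lemma:kappaBound} the factors $\kappa^{(a)}(x_1,x_2)$, $\kappa^{(b)}(x_2,x_3)$ and $\kappa^{(ab)}(x_1,x_3)$ are all $1+O(u^{-\beta})$; and the ratio $\Phi(x_2)/(\Phi_a(x_1,x_2)\Phi_b(x_2,x_3))$ compared to $1/\Phi_{ab}(x_1,x_3)$ tends to $1$ because each bivariate Gaussian cdf in the denominators tends to $1$ (all arguments go to $+\infty$), as does $\Phi(x_2)$. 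Therefore, on this region, the integrand of $\Lambda_{13}((u,\infty)^2)$ differs from that of $\Lambda^{(ab)}((u,\infty)^2)$ by a factor $1+o(1)$ uniformly, and the same holds after integrating out $x_2$. Combining the two regions gives
\[
\lim_{u\to\infty}\frac{\Lambda_{13}((u,\infty)^2)}{\Lambda^{(ab)}((u,\infty)^2)}=1,
\]
and since by \eqref{eq:rhoeta} (with $\rho=ab$) we have $\Lambda^{(ab)}((u,\infty)^2)=\widetilde\ell_{ab}(\Lambda(u)^{-1})\Lambda(u)^{1/\eta_{ab}}$, and by \eqref{eq:LambdaPhi}, Lemma~\ref{lemma:SV} this transfers to the scale $\overline\Phi(u)$, the claimed representation $\Lambda_{13}((u,\infty)^2)=\ell(\overline\Phi(u)^{-1})\overline\Phi(u)^{1/\eta_{ab}}$ follows with $\ell$ slowly varying.

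The main obstacle I anticipate is making the uniform $1+o(1)$ comparison on the region $x_2\ge u^\beta$ rigorous, because the $x_2$-integral ranges over an unbounded set and one must ensure that the pointwise bounds do not degrade as $x_2\to\infty$ faster than $u^\beta$; here one should use that $\widetilde\varphi(x)$ already provides Gaussian decay in all variables, so that the bulk of the mass of the $x_2$-integral concentrates in a window around the conditional mean (a linear combination of $x_1,x_3$), which for $x_1,x_3\approx u$ sits comfortably in $[u^\beta,\infty)$; splitting the integrand into this dominant window and a remainder with super-polynomially small mass, exactly in the spirit of the estimates in Lemmas~\ref{lemma:kappaBound} and~\ref{lemma:auxbetadelta}, closes the argument. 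Choosing $\beta$ close to $1$ (say $\beta>ab$) and $\delta$ small enough that $2-\delta>1/\eta_{ab}$ makes both error terms negligible simultaneously.
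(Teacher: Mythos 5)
Your proposal is correct and follows essentially the same route as the paper's proof: the same split of the $x_2$-integral at $u^\beta$, negligibility of the region $x_2<u^\beta$ via Lemma~\ref{lemma:auxbetadelta} combined with Lemma~\ref{lemma:PhiabBound}, a uniform $1+\oh(1)$ comparison of the integrands on $x_2\geq u^\beta$ via Lemma~\ref{lemma:kappaBound}, and the transfer of \eqref{eq:rhoeta} to the $\overline\Phi$ scale through \eqref{eq:LambdaPhi} and Lemma~\ref{lemma:SV}. The only cosmetic differences are that you state the conclusion as $\Lambda_{13}((u,\infty)^2)/\Lambda^{(ab)}((u,\infty)^2)\to 1$ while the paper argues directly that adding the negligible term to a slowly varying function preserves slow variation, and that your anticipated obstacle about uniformity as $x_2\to\infty$ is not actually an issue, since the bounds from Lemma~\ref{lemma:kappaBound} and the monotone convergence of the Gaussian cdfs to $1$ only improve there.
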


\begin{proof} 
Let $\beta \in (0,1)$. We consider the following decompositions of integrals 
$\Lambda_{13}((u,\infty)^2)=A_\lambda(u)+B_\lambda(u)$ and $\Lambda^{(ab)}((u,\infty)^2)=A_{\widetilde \varphi}(u)+B_{\widetilde \varphi}(u)$, where
\[
A_f(u)= \int_{u}^\infty \int_{-\infty}^{u^\beta} \int_{u}^{\infty} f(x)~\mathrm{d}x_1\mathrm{d}x_2\mathrm{d}x_3 \quad \text{and} \quad B_f(u)= \int_{u}^\infty \int_{u^\beta}^\infty \int_{u}^{\infty} f(x)~\mathrm{d}x_1\mathrm{d}x_2\mathrm{d}x_3.
\]
Due to Lemma~\ref{lemma:kappaBound} we have
\[
(1-C_a u^{-\beta})(1-C_b u^{-\beta}) B_{\widetilde \varphi}(u)
\leq B_{\lambda}(u) \leq \frac{1}{\Phi_a(u,u^\beta)\Phi_b(u^\beta,u)} B_{\widetilde \varphi}(u)
\]
and therefore $B_\lambda(u)=(1+e(u))B_{\widetilde \varphi}(u)$ with $e(u) \to 0$ for $u \to \infty$. We note that therefore the mapping $u \mapsto 1+e(u)$ is slowly varying, as it converges to a positive constant.\\
Denoting $\eta=\eta_{ab}=(1+ab)/2$, we have
\[
\frac{\Lambda_{13}((u,\infty)^2)}{\overline \Phi(u)^{1/\eta}}
=\frac{A_\lambda(u)}{\overline \Phi(u)^{1/\eta}}
+ (1+e(u))\bigg[ \frac{\Lambda^{(ab)}((u,\infty)^2)}{\overline \Phi(u)^{1/\eta}} - \frac{A_{\widetilde \varphi}(u)}{\overline \Phi(u)^{1/\eta}}  \bigg]
\]
and we need to show that  
\[
\frac{\Lambda_{13}((h^{-1}(v),\infty)^2)}{\overline \Phi(h^{-1}(v))^{1/\eta}} \qquad \text{with} \qquad h(u)=\frac{1}{\overline \Phi(u)}
\]
is slowly varying as a function of $v$. To start with, we recall from the knowledge about bivariate Gaussian survival functions from above that 
\[
\frac{\Lambda^{(ab)}((h^{-1}(v),\infty)^2)}{\overline \Phi(h^{-1}(v))^{1/\eta}} 
\]
is slowly varying, which is a reformulation of~\eqref{eq:GaussSurviveEta} and \eqref{eq:rhoeta} for $\rho=ab$. 
Moreover, the function $h^{-1}$ is slowly varying, because $\Phi$ is in the Gumbel domain of attraction and $h^{-1}$ is monotone \citep[see][Prop.~0.10 and Exercise 0.4.3.1.]{res2008}, and in addition $h^{-1}(v)\to \infty$ for $v \to \infty$. Hence $(1+e(h^{-1}(v)))$ is slowly varying.\\ 
What remains to be seen to complete the proof, is that adding (or subtracting) the term $A_f(h^{-1}(v))/\overline \Phi(h^{-1}(v))^{1/\eta}$ to (or from) a slowly varying function for $f \in \{\lambda,\widetilde \varphi\}$, preserves slow variation. 
Indeed, with $C>0$ as in Lemma~\ref{lemma:PhiabBound} we have 
\[\max(A_\lambda(u),A_{\widetilde \varphi}(u)) \leq \max(1,C) \int_{u}^\infty \int_{-\infty}^{u^\beta} \int_{u}^{\infty} \frac{\widetilde \varphi(x)}{\Phi(x_2)}\,
\mathrm{d}x_1 \mathrm{d}x_2 \mathrm{d}x_3. \] 
According to Lemma~\ref{lemma:auxbetadelta} and since $\eta>1/2$, there exists (in both cases, $f=\lambda$ or $f=\widetilde \varphi$) some $\alpha>0$ such that
\[
\lim_{v \to \infty}
\frac{A_f(h^{-1}(v))}{\overline \Phi(h^{-1}(v))^{1/\eta}} \cdot v^\alpha  = \lim_{u \to \infty} \frac{A_f(u)}{\overline \Phi(u)^{1/\eta+\alpha}} = 0.
\]
It is easily checked that adding such a function $g$ (with $g(v)\cdot v^{\alpha} \to 0$) to a slowly varying function will not alter the slow variation property. \\
Taken together, we have seen that
\[\Lambda_{13}((u,\infty)^2)= \ell ( \overline \Phi(u)^{-1} )
\overline \Phi(u)^{1/\eta}\]
for a slowly varying function $\ell$.
\end{proof}

As above, we may also work in Theorem~\ref{thm:Lambda_13} with $\Lambda(u)=-\log \Phi(u)$ instead of $\overline \Phi(u)$ (see~\eqref{eq:LambdaPhi} and Lemma~\ref{lemma:SV}), that is,
\[
\Lambda_{13}((u,\infty)^2) = \widetilde \ell( \Lambda(u)^{-1}) \Lambda(u)^{1/\eta_{ab}}
\]
for a slowly varying function $\widetilde \ell$. Stated in this form, the  asymptotic behavior of the joint survival function of $\Lambda_{13}$ resembles \eqref{eq:rhoeta} with correlation $\rho=ab$ except that the slowly varying function $\widetilde \ell$ does not coincide with $\widetilde \ell_{ab}$. Nevertheless, we may conclude
\[
\chi_{13}=\chi_{ab}=0 \quad \text{and} \quad \eta_{13}=\eta_{ab}=\frac{1+ab}{2},
\]
complementing our knowledge from~\eqref{eq:easyChiEta}.
Hence, although the marginal measure $\Lambda_{13}$ does not coincide with the bivariate exponent measure $\Lambda^{(ab)}$, they share the same asymptotic behavior on their joint survival sets $(u,\infty)^2$ as $u \uparrow \infty$. Collectively, this establishes Proposition~\ref{prop:DG}.

\section{Beyond semi-graphoids --- when we have a graphoid}\label{app:graphoids}
Under the explosiveness condition \eqref{eq:infinite}, the conditional independence with respect to $\Lambda$ satisfies the four semi-graphoid properties, (L1)--(L4), cf.~Section~\ref{sec:semi-graphoid} and Theorem~\ref{thm:L4}.
It would even be a \emph{graphoid} if in addition the following property (L5) was satisfied for all disjoint subsets  $A,B,C,D$ of $V$:
\begin{itemize}
  \itemsep2mm
\item[\textbf{(L5)}]  If $A \indep B \,|\, C \cup D \, [\Lambda]$ and  $A \indep C \,|\, B \cup D \, [\Lambda]$, then  $A \indep B \cup C \,|\, D \, [\Lambda]$ \hfill (Intersection)
\end{itemize}

Importantly, the presence of a graphoid-type conditional independence guarantees the equivalence of global, local and pairwise Markov properties, be it directed or undirected; see~\citet[Theorem~3.7 and page~52]{Lauritzen} or \citet{pearlpaz}.
Hence the question if sufficiently rich conditions can be found, under which the $\Lambda$-based conditional independence defined here is in fact a graphoid, and not only a semi-graphoid; see~also the discussion contribution of Steffen Lauritzen in~\citet{eng2018}. It can be easily seen that (L5) is violated in general.

\begin{example}[Violation of (L5) for a measure on a ray] Let $\eta$ be a measure on the positive real line $(0,\infty)$ that explodes at the $0$ and is finite on sets bounded away from 0, e.g., $\eta((t,\infty))=t^{-1}$. Then the measure
\[
\Lambda(E)=
\eta \big( \{ t > 0 \,:\, t (1,1,1) \in E\} \big).
\]
on $\RR^3$, which is concentrated on the ray through $(1,1,1) \in \RR^3$, satisfies our basic requirements \eqref{eq:Lambda} and \eqref{eq:infinite}, and thus (L1)--(L4) hold. Let $Y\sim \PP_R$ for an admissible $R \in \cR(\Lambda)$. Then the law of $(Y_1,Y_2)|Y_3$ is a point mass at $(Y_3,Y_3)$, which is a product measure. Hence, $\{1\} \indep \{2\} \,|\, \{3\} [\Lambda]$, and by symmetry of the argument, $\{1\} \indep \{3\} \,|\, \{2\} [\Lambda]$. If (L5) were true, we would be able to conclude $\{1\} \indep \{2,3\} [\Lambda]$ and thereby $\Lambda(y_1 \neq 0, (y_{2},y_3)\neq (0,0))=0$; see~Prop.~\ref{prop:indep}. This contradicts 
$\Lambda(y_1 \neq 0, (y_{2},y_3)\neq (0,0))=\eta((0,\infty))=\infty$.
\end{example}

In the classical probabilistic case the existence of a positive continuous density with respect to a product measure is a well-known condition to guarantee (L5) \citep[][discussion of (3.10)]{Lauritzen}. This translates as follows to our situation.

\begin{corollary} \label{cor:graphoid-L5-density}
Assume~\eqref{eq:infinite} and let $\mu=\bigotimes_{v \in V} \mu_v$ be a product measure. Assume that each marginal measure $\Lambda_K$, $\emptyset \neq K \subset V$ (including $\Lambda=\Lambda_V$) has a positive continuous density on $\cE^K=\RR^K\setminus \{0_K\}$ with respect to $\mu_K=\bigotimes_{v \in K} \mu_v$, then conditional independence as defined in Definition~\ref{def:CILambda} satisfies all five graphoid properties (L1)--(L5).
\end{corollary}

\begin{proof}
Properties (L1)--(L4) are already established in Theorem~\ref{thm:L4}.
To prove (L5), it is sufficient to assume that the union of $A,B,C,D$ is~$V$; see Lemma~\ref{lemma:marginalcompatibility} and our assumption that each $\Lambda_K$ has a positive continuous density. Then (L5) follows directly from Definition~\ref{def:CILambda} and the respective counterpart of (L5) for probability laws.
\end{proof}

In Corollary~\ref{cor:graphoid-L5-density}
we require each marginal $\Lambda_K$ to have a positive continuous density on every $\cE^K=\RR^{K}\setminus \{0_K\}$. Instead we may consider a one-sided version (in some or all directions) in this requirement and, for instance, ask for $\Lambda_K$ to have a positive continuous density on $\cE^K_+=[0,\infty)^{K}\setminus \{0_K\}$ for every $\emptyset \neq K \subset V$, and the same argument shows that (L5) is then valid. If $\Lambda$ is $-\alpha$-homogenous and  $\Lambda$ has a positive continuous Lebesgue-density on $\cE_+=[0,\infty)^d\setminus \{0\}$, then the marginal
 measures $\Lambda_K$ will also have a positive continuous Lebesgue-density in their respective dimensions, so the assumptions in Corollary~\ref{cor:graphoid-L5-density} are met and  (L5) follows in addition to (L1)--(L4). This is the case for most of the measures considered in \citet{eng2018}, in particular the H\"usler--Reiss exponent measures, which have a positive continuous Lebesgue-density. Hence, they define a graphoid-type conditional independence.

However, we need to be cautious, when drawing conclusions from Corollary~\ref{cor:graphoid-L5-density} in our more general context. 
Take, for example, a measure $\Lambda$ that has a positive continuous $\mu$-density on each of the elementary sub-faces $\cE_K$ in~\eqref{eq:elementary-face} for $\emptyset \neq K \subset V$, for $\mu$ as in \eqref{product_measure}, where each of the factors $\mu_v$ has an atom at 0. In this case, the conditional independence notion induced by $\Lambda$ defines indeed a graphoid, but likely not for the intended reason. In fact, it is sufficient to assume that the measure $\Lambda$, which satisfies our basic explosiveness assumption \eqref{eq:infinite}, has mass on each of the  $\cE_K$ for $\emptyset \neq K \subset V$, in order to conclude that \emph{no} conditional independence statement $A \indep B \,|\, C ~[\Lambda]$ with non-empty $A$ and $B$ can hold; see~Prop.~\ref{prop:indep} and Lemma~\ref{lem:Lambda0}. So there is nothing to check in order to conclude that (L5) is valid.
On the other extreme end, if $\Lambda$ is a zero measure, then \emph{all} conditional independence relations are true, and so the zero measure induces  trivially a graphoid relation, too.

More interestingly, let us turn our attention again to situations, when some elementary sub-faces are charged and others not. We have already considered one such case above, where we have only mass in the upper most elementary sub-face $\cE_V$ and, consequently, it does not matter if we add an atom at 0 in each reference measure $\mu_v$ or not. Indeed, it is reassuring to see that a positive continuous Lebesgue-density for $\Lambda$ and its marginal measures then ensures that $\Lambda$ induces a graphoid-type conditional independence, and that this leads to the H\"usler--Reiss exponent measures from \cite{eng2018} to define a graphoid.
Still, restricting the mass of $\Lambda$ to only $\cE_V$ comes along with the drawback that associated graphical models cannot be disconnected as no independence statement can hold true; see~Section~\ref{sec:CIextremes}. More generally, we would therefore like to allow for mass on several  $\cE_K$ for non-empty $K \subset V$. The following instructive example demonstrates that caution should be exercised in such situations. Even when requiring the existence of a positive continuous density on each of the charged sub-faces $\cE_K$, the property (L5) may be violated.

\begin{example}[Violation of (L5) for a measure with positive continuous density on each of the charged sub-faces] \label{counterexample:graphoid-densities}
Let $A=\{1\}$, $B=\{2\}$, $C=\{3\}$, $D=\{4\}$, $V=\{1,2,3,4\}$ and consider and explosive measure $\Lambda$ on $\RR^4\setminus \{0\}$ with positive continuous $\mu$-density $h_{14}(y_1,y_4)=\lambda(y_1,0_2,0_3,y_4)$ on $\cE_{14}$ and $h_{234}(y_2,y_3,y_4)=\lambda(0_1,y_2,y_3,y_4)$ on $\cE_{234}$ 
 for $\mu$ as in \eqref{product_measure} (with $d=4$)
and zero mass on any other $\cE_K$, $K\subset\{1,2,3,4\}$. Then $\Lambda(y_4=0_4)=0$, and from Table~\ref{tab:graphoid-counter}, which documents all densities that are relevant for the relations that appear in (L5), we can read off that $\{1\} \indep \{2\} \,\vert \, \{3,4\}~[\Lambda]$ and $\{1\} \indep \{3\} \,\vert \, \{2,4\}~[\Lambda]$, but not $\{1\} \indep \{2,3\} \,\vert \, \{4\} ~[\Lambda]$. So (L5) cannot hold.

What is interesting to note in this example is that we have still the proportionalities
\[
\frac{\lambda(y)}{\lambda_{234}(y_{234})}
= \frac{\lambda_{134}(y_{134})}{\lambda_{34}(y_{34})}
= \frac{\lambda_{124}(y_{124})}{\lambda_{24}(y_{24})}
= \begin{cases}
{h_{14}(y_{14})}/{\int h_{14}(y_{14}) dy_1}, &\qquad y \in \cE_{14},\\
0  &\qquad y \in \cE_{1234}.
\end{cases}
\]
and 
\[
\frac{\lambda(y)}{\lambda_{234}(y_{234})}
= \frac{\lambda_{134}(y_{134})}{\lambda_{34}(y_{34})}
= \frac{\lambda_{124}(y_{124})}{\lambda_{24}(y_{24})}
= \begin{cases}
0, &\qquad y \in \cE_{4},\\
1, &\qquad y \in \cE_{234}.
\end{cases}
\]
When fixing $y_{14}$, the ratios still differ according to the $y_{23}$-component -- not their precise value, but it matters if these components are (jointly) zero or not. If one wanted to conclude  $\{1\} \indep \{2,3\} \,\vert \, \{4\} ~[\Lambda]$, one would need an argument that connects these ratios, e.g., in the form of a path through the $y_{23}$-components that can handle the phase transitions between different charged $\cE_K$. 
\end{example}

\begin{sidewaystable}
\centering
\caption{\small Densities from (Counter-)Example~\ref{counterexample:graphoid-densities} on all elementary sub-faces $\cE_K$, cf.~\eqref{eq:elementary-face}, with $y_4 \neq 0$ (i.e., $4 \in K$). Since  $\lambda(y)\lambda_{34}(y_{34})= \lambda_{134}(y_{134})\lambda_{234}(y_{234})$ and $\lambda(y)\lambda_{24}(y_{24})= \lambda_{124}(y_{124})\lambda_{234}(y_{234})$
for $y_4 \neq 0$, and $\Lambda(y_4= 0)=0$, we have $\{1\}\indep \{2\} \,|\, \{3,4\}~[\Lambda]$ and $\{1\}\indep \{3\} \,|\, \{2,4\} ~[\Lambda]$. However, we may not conclude $\{1\}\indep \{2,3\} \,|\, \{4\}~[\Lambda]$, since we do not(!) have $\lambda(y)\lambda_{4}(y_{4}) = \lambda_{14}(y_{134})\lambda_{234}(y_{234})$ for $\mu$-almost all $y$ with $y_4\neq 0$; see~Theorem~\ref{thm:density_factorization}.
}\label{tab:graphoid-counter}
{\small
\renewcommand{\arraystretch}{1.2}
\centering
\begin{tabular}{|l||c|c|c|c|c|c|c|c|}
\hline
\rowcolor{gray!30}
& $\lambda(y)$ & $\lambda_{124}(y_{124})$ & $\lambda_{134}(y_{134})$ & $\lambda_{234}(y_{234})$ & $\lambda_{14}(y_{14})$ & $\lambda_{24}(y_{24})$ & $\lambda_{34}(y_{34})$ & $\lambda_4(y_4)$ \\ 
\hline
$\cE_{4}$ & $0$ & $0$  & $0$ &  $\int h_{14}(y_{14})dy_1$ & $\int h_{234}(y_{234})dy_{23}$ & $\int h_{14}(y_{14})dy_1$ & $\int h_{14}(y_{14})dy_1$ & $*$\\
\hline
$\cE_{14}$ & $h_{14}(y_{14})$ & $h_{14}(y_{14})$ & $h_{14}(y_{14})$ & $\int h_{14}(y_{14})dy_1$ & $h_{14}(y_{14})$ & $\int h_{14}(y_{14})dy_1$ & $\int h_{14}(y_{14})dy_1$ 
& $*$\\
$\cE_{24}$ & $0$  & $\int h_{234}(y_{234})dy_3$ & $0$ & $0$ & $\int h_{234}(y_{234})dy_{23}$ & $\int h_{234}(y_{234})dy_3$ &$\int h_{14}(y_{14})dy_1$ & $*$\\
$\cE_{34}$ & $0$ & $0$  & $\int h_{234}(y_{234})dy_2$ & $0$ & $\int h_{234}(y_{234})dy_{23}$ & $\int h_{14}(y_{14})dy_1$ & $\int h_{234}(y_{234})dy_2$ & $*$\\
\hline
$\cE_{124}$ & $0$ & $0$ & $h_{14}(y_{14})$ & $0$ & $h_{14}(y_{14})$ & $\int h_{234}(y_{234})dy_3$  & $\int h_{14}(y_{14})dy_1$ & $*$\\
$\cE_{134}$ & $0$ & $h_{14}(y_{14})$
& $0$ & $0$ & $h_{14}(y_{14})$ & $\int h_{14}(y_{14})dy_1$ & $\int h_{234}(y_{234})dy_2$ & $*$\\
$\cE_{234}$ & $h_{234}(y_{234})$ & $\int h_{234}(y_{234})dy_3$ &  $\int h_{234}(y_{234})dy_2$
& $h_{234}(y_{234})$  & $\int h_{234}(y_{234})dy_{23}$ & $\int h_{234}(y_{234})dy_3$  & $\int h_{234}(y_{234})dy_2$ & $*$\\

\hline
$\cE_{1234}$ & $0$ & $0$ & $0$ & $h_{234}(y_{234})$ & $h_{14}(y_{14})$ & $\int h_{234}(y_{234})dy_3$ & $\int h_{234}(y_{234})dy_2$ & $*$\\

\hline
\end{tabular}\\[1mm]
$\phantom{a}$ \hfill $*=\int h_{14}(y_{14})dy_1 + \int h_{234}(y_{234})dy_{23}$
}
\end{sidewaystable}

We would like to conclude the discussion with a final insight how to easily construct a measure $\Lambda$ that has mass on several elementary sub-faces $\cE_K$ for non-empty $K\subset V$, so that the induced conditional independence notion defines indeed a graphoid. In fact, one possibility to do so is to use an \emph{independent concatenation} of graphoids.
Any of the situations above, where we know that we have a graphoid, may serve as an independent component in such a construction. That is, each independent component could be, for instance, a measure as in Corollary~\ref{cor:graphoid-L5-density}, a zero measure, a measure with mass on every elementary sub-face, or an independent concatenation thereof.

\begin{proposition}[Independent concatenation of graphoids] 
\label{prop:indepconcat-graphoids}
For $i=1,2$  let $\Lambda_i$ be a measure on $\RR^{V_i}$ satisfying \eqref{eq:Lambda} and \eqref{eq:infinite}, so that the conditional independence for  $\Lambda_i$ defines a graphoid. Consider the measure $\Lambda$ on $\RR^{V_1 \cup V_2}=\RR^{V_1} \times \RR^{V_2}$ such that $\Lambda^0_{V_1}=\Lambda_{V_1}=\Lambda_1$ and $\Lambda^0_{V_2}=\Lambda_{V_2}=\Lambda_2$. Then $\Lambda$ satisfies \eqref{eq:Lambda} and  \eqref{eq:infinite} and
 the $\Lambda$-induced conditional independence defines a graphoid.  
\end{proposition}
 
The proposition follows from the following auxiliary result, which may be of independent interest. Its proof depends only on semi-graphoid properties. We added the proof here, as we did not find the following lemma explicitly in the literature.
 
 \begin{lemma} 
 Assume~\eqref{eq:infinite}, and 
 suppose $(A \cup B \cup C) \indep (A' \cup B' \cup C') ~[\Lambda]$. Then
 \begin{align*}
 (A \cup A') \indep (B \cup B') \,|\, (C \cup C') ~[\Lambda] \quad\iff\quad
 A \indep B \,|\, C ~[\Lambda] \,\,\text{ and }\,\,  A' \indep B' \,|\, C' ~[\Lambda].
 \end{align*}
 \end{lemma}
 \begin{proof}
 Assumption~\eqref{eq:infinite} ensures the validity of the semi-graphoid properties (L1)--(L4), cf.~Theorem~\ref{thm:L4}. Let us abbreviate the involved statements as follows:\\  
 $(A \cup B \cup C) \indep (A' \cup B' \cup C') ~[\Lambda]$ by (Indep), 
 $(A \cup A') \indep (B \cup B') \,|\, (C \cup C') ~[\Lambda]$ by (CI-joint), 
 $A \indep B \,|\, C ~[\Lambda]$ by (CI-a) and   $A' \indep B' \,|\, C' ~[\Lambda]$ by (CI-b). 
 
 We show first that (Indep) and (CI-joint) implies (CI-a) (and by symmetry (CI-b)): Indeed, (CI-joint) implies $A \indep B \,|\, (C \cup C') ~[\Lambda]$ by (L2), whereas (Indep) gives $A \cup C\indep C'  ~[\Lambda]$ by (L2) and, subsequently, $A \indep C' \,|\, C ~[\Lambda]$ by (L3). Taken together this gives 
 $A \indep B \cup C'\,|\, C  ~[\Lambda]$ by (L4), and hence $A \indep B\,|\, C  ~[\Lambda]$ by (L2).
 
 Conversely, let us assume (CI-a) and (CI-b) in the presence of (Indep). Now, (Indep) and (L3) imply 
  $A \indep (A' \cup B' \cup C') \,|\, (B \cup C) ~[\Lambda]$. Together with (CI-a) and (L4) this gives
 $A \indep B \cup (A' \cup B' \cup C') \,|\,  C ~[\Lambda]$. Applying (L3) again gives 
 \begin{align}\label{eq:indep-concat-I}
 A \indep B \cup (A' \cup B') \,|\,  C \cup C' ~[\Lambda].
 \end{align}
 By symmetry of the prerequisites, the analogous argument gives
  \begin{align}\label{eq:indep-concat-II}
 A' \indep (B \cup B') \cup A \,|\,  C \cup C' ~[\Lambda].
 \end{align}
 With \eqref{eq:indep-concat-I} and (L3) and (L1) we get
  \begin{align}\label{eq:indep-concat-III}
 (B \cup B')  \indep A \,|\,  A' \cup (C \cup C') ~[\Lambda],
 \end{align}
 whereas \eqref{eq:indep-concat-II} and (L2) and (L1) imply
   \begin{align}\label{eq:indep-concat-IV}
 (B \cup B') \indep A'  \,|\,  C \cup C' ~[\Lambda].
 \end{align}
 Finally, \eqref{eq:indep-concat-III} and \eqref{eq:indep-concat-IV} and (L4) give the desired (CI-joint).
 \end{proof}

\end{appendix}

\section*{Acknowledgments}
SE thankfully acknowledges funding from an Eccellenza grant of the Swiss National Science Foundation (Grant 186858). JI was supported by a Sapere Aude Starting Grant of the Independent Research Fund Denmark  (Grant 8049-00021B). The authors would also like to thank two anonymous referees for their constructive feedback on this manuscript. In particular, this has led us to investigate the graphoid question in more depth.

\end{document}